\definecolor{red}{rgb}{0.7,0.15,0.15}
\definecolor{green}{rgb}{0,0.5,0}
\definecolor{blue}{rgb}{0,0,0.7}
\makeatletter \@addtoreset{equation}{section}
\newtheorem{theorem}{Theorem}[section]
\newtheorem{assumption}[theorem]{Assumption}
\newtheorem{example}[theorem]{Example}
\newtheorem{lemma}[theorem]{Lemma}
\newtheorem{proposition}[theorem]{Proposition}
\newtheorem{definition}[theorem]{Definition}
\newtheorem{remark}[theorem]{Remark}
\def \E{\mathbb{E}}
\def \F{\mathbb{F}}
\def \G{\mathbb{G}}
\def \H{\mathbb{H}}
\def \L{\mathbb{L}}
\def \M{\mathbb{M}}
\def \N{\mathbb{N}}
\def \P{\mathbb{P}}
\def \Q{\mathbb{Q}}
\def \R{\mathbb{R}}
\def \S{\mathbb{S}}
\def\Ac{{\cal A}}
\def\Bc{{\cal B}}
\def\Cc{{\cal C}}
\def\Ec{{\cal E}}
\def\Fc{{\cal F}}
\def\Gc{{\cal G}}
\def\Hc{{\cal H}}
\def\Ic{{\cal I}}
\def\Lc{{\cal L}}
\def\Mc{{\cal M}}
\def\Pc{{\cal P}}
\def\Uc{{\cal U}}
\def\Wc{{\cal W}}
\def\Fb{{\bar F}}
\def\Gb{{\overline \G}}
\def\Gcb{\overline \Gc}
\def\Pb{{\overline \P}}
\def\Sb{{\bar S}}
\def\Xb{{\overline X}}
\def\x{\times}
\def\eps{\varepsilon}
\def\Om{\Omega}
\def\Omh{\widehat{\Omega}}
\def\om{\omega}
\def\Omb{\overline{\Om}}
\def\omb{\bar \om}
\def\Fb{\overline{\F}}
\def\Gcb{\overline{\Gc}}
\def\Fcb{\overline{\Fc}}
\def\Ft{\widetilde{\F}}
\def\Xt{\widetilde{X}}
\def\Mh{\widehat{S}}
\def\Nh{\widehat{N}}
\def\Ph{\widehat{\P}}
\def\Lambdah{\widehat{\Lambda}}
\def\Fch{\widehat{\Fc}}
\def\Fh{\widehat{\F}}
\def\Xh{\widehat{X}}
\def\Yh{\widehat{Y}}
\def\Wh{\widehat{W}}
\def\Zh{\widehat{Z}}
\def\0{\mathbf{0}}
\def \Ec{\mathcal{E}}
\def \bb{\mathbf{b}}
\def \xb{\mathbf{x}}
\def \yb{\mathbf{y}}
\def \wb{\mathbf{w}}
\def \mub{\overline{\mu}}
\def \muh{\widehat{\mu}}
\def \zetah{\widehat{\zeta}}
\def \nub{\bar{\nu}}
\def \nuh{\widehat{\nu}}
\def\normeL2#1{\left\|{#1}\right\|_{L^2}}
\def\Fbb{\overline \F}
\def\Pcb{\overline \Pc}
\def \Prod{\displaystyle\prod}
\def \Lim{\displaystyle\lim}
\def \Liminf{\displaystyle\liminf}
\def \Nt{\widetilde N}
\def \Yt{\widetilde Y}
\def \Lambdat{\widetilde \Lambda}
\def \Sb{\overline S}
\def \omt{\tilde \om}
\def \Lambdab{\overline \Lambda}
\title{McKean–Vlasov optimal control: limit theory and equivalence between
different formulations\footnote{The authors would like to thank Daniel Lacker for his insightful comments.}}
\author{
	Mao Fabrice {\sc Djete}\footnote{Universit\'e Paris Dauphine - PSL, CNRS, CEREMADE, France, djete@ceremade.dauphine.fr. This author gratefully acknowledges support from the r\'egion \^Ile--de--France.}
	\and 
	Dylan {\sc Possama\"{i}} \footnote{Columbia University, Industrial Engineering \& Operations Research, 500 W 120th Street, New York, NY, 10027, dp2917@columbia.edu. This author gratefully acknowledges the support of the ANR project PACMAN ANR--16--CE05--0027.}
	\and 
	Xiaolu {\sc Tan}\footnote{Department of Mathematics, The Chinese University of Hong Kong. xiaolu.tan@cuhk.edu.hk}
}
\date{\today}
\begin{document}

\maketitle
 
\begin{abstract}
	We study a McKean--Vlasov optimal control problem with common noise, in order to establish the corresponding limit theory, as well as the equivalence between different formulations, including the strong, weak and relaxed formulation. In contrast to the strong formulation, where the problem is formulated on a fixed probability space equipped with two Brownian filtrations, the weak formulation is obtained by considering a more general probability space with two filtrations satisfying an $(H)$--hypothesis type condition from the theory of enlargement of filtrations. When the common noise is uncontrolled, our relaxed formulation is obtained by considering a suitable controlled martingale problem. As for classical optimal control problems, we prove that the set of all relaxed controls is the closure of the set of all strong controls, when considered as probability measures on the canonical space. Consequently, we obtain the equivalence of the different formulations of the control problem, under additional mild regularity conditions on the reward functions. This is also a crucial technical step to prove the limit theory of the McKean--Vlasov control problem, that is to say proving that it consists in the limit of a large population control problem with common noise.

\end{abstract}

\section{Introduction}

	We aim to study a McKean--Vlasov optimal control problem with common noise in the following form.
	Let $T>0$ be the time horizon, $\alpha$ be a control process. The (non--Markovian) controlled process $X^\alpha$ follows a McKean--Vlasov dynamic
	\begin{equation} 
	\label{eq:EDS_McKVlasov-intro}
		`\mathrm{d} X^{\alpha}_t 
		=
		b \big(t, X^{\alpha}_{t \wedge \cdot}, \Lc\big(X^{\alpha}_{t \wedge \cdot}, \alpha_t \big| B \big), \alpha_t \big) \mathrm{d}t
		+
		\sigma \big(t, X^{\alpha}_{t \wedge \cdot}, \Lc\big(X^{\alpha}_{t \wedge \cdot}, \alpha_t \big| B \big), \alpha_t \big)   \mathrm{d}W_t
		+
		\sigma_0 \big(t, X^{\alpha}_{t \wedge \cdot}, \Lc\big(X^{\alpha}_{t \wedge \cdot}, \alpha_t \big| B \big), \alpha_t \big)   \mathrm{d}B_t,`
	\end{equation}
	where $W$ and $B$ are two independent Brownian motions in some given probability space, 
	and $\Lc\big(X^{\alpha}_{t \wedge \cdot}, \alpha_t \big| B \big)$ denotes the conditional distribution of the pair $(X^{\alpha}_{t \wedge \cdot}, \alpha_t)$ given the common noise $B$. We consider the optimisation problem, written informally for now as
	\begin{equation} \label{eq:Value_McKVlasov-intro}
		`\sup_{\alpha} 
		\E \bigg[ 
		\int_0^T L \big(t, X^{\alpha}_{t \wedge \cdot}, \Lc\big(X^{\alpha}_{t \wedge \cdot}, \alpha_t \big| B \big), \alpha_t \big)  \mathrm{d}t 
		+ 
		g \big( X^{\alpha}_{T\wedge\cdot}, \Lc \big(X^{\alpha}_{T\wedge\cdot} \big| B\big) \big) \bigg].`
	\end{equation}

	The analysis of McKean--Vlasov optimal control problems has, in the recent years, drawn the attention of the applied mathematics community. 
	One of the main reasons is their close proximity mean--field games (MFGs for short), introduced in the pioneering work of \citeauthor*{lasry2006jeux} \cite{lasry2006jeux,lasry2006jeux2,lasry2007mean} and \citeauthor*{huang2003individual} \cite{huang2003individual,huang2006large,huang2007invariance,huang2007large,huang2007nash}, as  way to describe Nash equilibria for a large population of symmetric players, interacting through their empirical distribution. We refer the interested readers to \citeauthor*{carmona2013control} \cite{carmona2013control} for a more thorough discussion about the similarities and differences between these two theories.

	\medskip
Being an extension of the classical optimal control problem, McKean--Vlasov optimal control has been studied from different angles. The first one is the Pontryagin maximum principle, which aims at providing a necessary conditions characterising the optimal control, and uses techniques borrowed from calculus of variations. This approach has been applied successfully by \citeauthor*{buckdahn2011general} \cite{buckdahn2011general} and \citeauthor*{andersson2011maximum} \cite{andersson2011maximum}, in the case where the coefficients functions depend solely on some moments of the state process' distribution. In a more general framework, and using the notion of differentiability developed by \citeauthor*{lions2007theorie} \cite{lions2007theorie}, \citeauthor*{carmona2015forward} \cite{carmona2015forward} provide a general analysis of this approach (see also \citeauthor*{acciaio2018extended} \cite{acciaio2018extended} for an extension).
	A second important way to tackle optimal control problems is to use the so--called dynamic programming principle (DPP for short), which consists in decomposing a global optimisation into a series of local optimisation problems. However, compared to the classical setting, the presence of the (conditional) law of the controlled process in the coefficient functions generates heavy additional difficulty in establishing the DPP, as the problem becomes by essence time--inconsistent (see \citeauthor*{bjork2014theory} \cite{bjork2014theory,bjork2017time} or \citeauthor*{hernandez2020me} \cite{hernandez2020me} for a discussion and additional references on these issues). 
	A first breakthrough in this area was achieved in 5 years ago, when, by assuming the existence of a density with respect to Lebesgue measure for the marginal distribution of the state process, \citeauthor*{lauriere2014dynamic} \cite{lauriere2014dynamic} and \citeauthor*{bensoussan2015master} \cite{bensoussan2015master} reformulated the initial McKean--Vlasov control problem as a deterministic density control problem, associated to a family of deterministic controls, for which they could then straightforwardly establish the DPP. Without the density existence assumption, but under some regularity conditions on the coefficient functions, the DPP has been proved in \citeauthor*{pham2018bellman} \cite{pham2018bellman, pham2016dynamic}, and \citeauthor*{bayraktar2016randomized} \cite{bayraktar2016randomized} in different situations. Using abstract measurable selection arguments, 
	a general DPP has been established under minimal conditions in our accompanying article \citeauthor*{djete2019mckean} \cite{djete2019mckean}.

	\medskip

	In this paper, we are interested in establishing the limit theory for the McKean--Vlasov optimal control problem. In other words, we wish to rigorously prove that such a control problem naturally arises as the limit of a large population optimal control problem.
	In the uncontrolled case, this property is by now extremely well--known, and usually referred to as 'propagation of chaos'. Much effort has been devoted to it since the seminal works of \citeauthor*{kac1956foundations} \cite{kac1956foundations} and \citeauthor*{mckean1969propagation} \cite{mckean1969propagation}, see also the illuminating lecture notes of \citeauthor*{snitzman1991topics} \cite{snitzman1991topics}. Without any claim to comprehensiveness, we refer to \citeauthor*{oelschlager1984martingale} \cite{oelschlager1984martingale}, and \citeauthor*{gartner1988mckean} \cite{gartner1988mckean} for models in the Markovian context without common noise, 
	to \citeauthor*{budhiraja2012large} \cite{budhiraja2012large} for a large deviation principle associated to the limit theory,
	and also to \citeauthor*{meleard1987propagation} \cite{meleard1987propagation}, \citeauthor*{jourdain1998propagation} \cite{jourdain1998propagation}, and \citeauthor*{oelschlager1985law} \cite{oelschlager1985law} for the case of 'strong' and 'moderate' interactions,
	to \citeauthor*{shkolnikov2012large} \cite{shkolnikov2012large}, \citeauthor*{jourdain2013propagation} \cite{jourdain2013propagation} for rank--based models,
	and finally to \citeauthor*{meleard1996asymptotic} \cite{meleard1996asymptotic}, and \citeauthor*{graham1997stochastic} \cite{graham1997stochastic} for Boltzmann--type models.

	\medskip
In the controlled case, \citeauthor*{fischer2016continuous} \cite{fischer2016continuous} studied a mean--variance optimisation problem stemming from mathematical finance, and obtained results in this direction.
	For general McKean--Vlasov controlled equations, such a limit theory has been proved in \citeauthor*{lacker2017limit}  \cite{lacker2017limit} in a context without common noise,
	where an essential tool is a compactness argument, which is made accessible by formulating an appropriate relaxed control for McKean--Vlasov equations, in the spirit of \citeauthor*{el1987compactification} \cite{el1987compactification}, and by introducing suitable martingale problems, similar to those of \citeauthor*{stroock2007multidimensional} \cite{stroock2007multidimensional}.
	The same formulation and arguments have also been used in \citeauthor*{bahlali2017existence} \cite{bahlali2014existence,bahlali2017existence,bahlali2018relaxed,bahlali2019stability} and \citeauthor*{chala2014relaxed} \cite{chala2014relaxed} to study stability and approximation problems.

	\medskip
In the present article, our ultimate goal is to analyse a general McKean--Vlasov control problem with common noise in the form of  \eqref{eq:EDS_McKVlasov-intro}--\eqref{eq:Value_McKVlasov-intro}. Our first main objective is to establish the corresponding limit theory. To this end, we introduce three formulations. The strong one is given as in \eqref{eq:EDS_McKVlasov-intro}--\eqref{eq:Value_McKVlasov-intro}, in a fixed probability space equipped with two Brownian motions, as well as their natural filtrations. By considering more general probability spaces and filtrations, but imposing a technical $(H)$--hypothesis type condition, we obtain a weak formulation of the control problem.
	Our weak formulation is consistent with that of the classical optimal control problems, and enjoys some convexity and stability properties. More importantly, by considering them as probability measures on the canonical space, we show that any weak control rule can be approximated by strong control rules in the sense of weak convergence, which implies the equivalence between the strong and weak formulations. We emphasise that this first result is a crucial technical step in the proof of the DPP in our accompanying paper \cite{djete2019mckean}. We next restrict to the case where the common noise part $\sigma_0$ is not controlled, and the dependence of the coefficient functions $b$, $\sigma$, and $\sigma_0$ in $\Lc(X^{\alpha}_{t \wedge \cdot}, \alpha_t |B)$ is through 
	$\Lc(X^{\alpha}_{t \wedge \cdot} |B)$ only (in words, the conditional law of the control process is not included in the coefficient function), and then introduce a relaxed formulation. We subsequently prove that any relaxed control rule can be approximated by weak control rules, in the sense of weak convergence of probability measures on the canonical space. Besides, the relaxed formulation enjoys an additional closedness property, implying the existence of optimal control rules under mild additional technical conditions. The closedness property and our aforementioned equivalence results between the different formulations are also crucially used to obtain the limit theory.

	\medskip
Our main contribution lies in the fact that we are generalising several fundamental results for McKean--Vlasov control problems to a context with common noise, including the formulation of the weak and relaxed problems, their equivalence, and the corresponding limit theory.
	The presence of the common noise generates some significant technical hurdles, especially due to the appearance of the conditional distribution terms, which are generally not continuous with respect to the joint distribution. In the context of MFG, this difficulty has been tackled by \citeauthor*{carmona2014mean} \cite{carmona2014mean}, and \citeauthor*{lacker2016general} \cite{lacker2016general}. In the context of McKean--Vlasov optimal control problem however, we need to formulate appropriate notions of weak and relaxed control rules, and develop new techniques to ensure the approximation property. Another technical difficulty comes from the presence of the conditional law of the control process $\alpha$ in the coefficient functions (for the strong and weak formulations), a situation which has been rarely studied in the literature (see for instance \citeauthor*{graber2016linear} \cite{graber2016linear}, \citeauthor*{elie2016tale} \cite{elie2016tale}, \citeauthor*{zalashko2017causal} \cite{zalashko2017causal}, \citeauthor*{pham2018bellman} \cite{pham2018bellman}, \citeauthor*{acciaio2018extended} \cite{acciaio2018extended}, and \citeauthor*{basei2019weak} \cite{basei2019weak}). Our equivalence results between the strong and weak formulations is very general, and its proof is quite different from that in the case without common noise. It allows in particular to fill a subtle technical gap in the related literature (see \Cref{rem:Gap_StrongWeak} for more details). A second important point is that our approach also bypasses a second technical issue in the literature considering relaxed formulations for McKean--Vlasov control problems without common noise, namely \cite{lacker2017limit,bahlali2018relaxed}, and which proves equivalence results between several formulations. Indeed, their proofs are based on an incorrect technical result in an unpublished, and actually inaccessible, paper \cite{meleard1992martingale}\footnote{\label{foot:meleard}Through personal communications with {\color{green}S. M\'el\'eard}, it was confirmed to us that she and her co--authors discovered a mistake soon after finishing the paper, and hence abandoned it. Nevertheless, although the original manuscript is now nowhere accessible, some of its results have been announced in the conference proceedings \cite{meleard1992representation}. More specifically, the problematic result is \cite[Corollary on pages 196--197]{meleard1992representation}, which has been crucially used in \cite[Proposition 2.2.]{bahlali2018relaxed}, and \cite[Lemma 7.1.]{lacker2017limit}.}, see \Cref{rem:Gap_Weak_Relax} for more details. We instead adapt the approximation arguments in \cite{el1987compactification} to remedy this technical gap. 

	\medskip
The rest of the paper is structured as follows. After introducing some notations, we provide in \Cref{sec:different-Formulation} the notions of strong, weak and relaxed formulations for the McKean--Vlasov stochastic control problem in a common noise and non--Markovian setting, and define also an $N$--particles (strong) control problem. The main results of the paper are presented in \Cref{sec:MainResults}, including the existence of optimal control, the equivalence between the strong, weak and relaxed formulations and the limit theory. Most of the technical proofs are completed in \Cref{sec:proofs}.

\paragraph*{Notations}\label{par:notations}
	$(i)$
	Given a metric space $(E,\rho)$, let $\Bc(E)$ denote the Borel $\sigma$--algebra,
	and $\Pc(E)$ be the collection of all Borel probability measures on $E$.
	For $p \ge 0$, let $\Pc_p(E)$ denote the set of $\mu \in \Pc(E)$ such that $\int_E \rho(e, e_0)^p  \mu(\mathrm{d}e) < \infty$ for some (and thus for all) $e_0 \in E$.
	When $p \ge 1$, the space $\Pc_p(E)$ is equipped with the Wasserstein distance $\Wc_p$, defined by
	\[
		\Wc_p(\mu , \mu^\prime) 
		:=
		\Big(
			\inf_{\lambda \in \Pi(\mu, \mu^\prime)}  \int_E \int_{E} \rho(e, e^\prime)^p \lambda( \mathrm{d}e, \mathrm{d}e^\prime) 
		\Big)^{1/p},\; (\mu,\mu^\prime)\in\Pc_p(E)\times \Pc_p(E),
	\]
	where $\Pi(\mu, \mu^\prime)$ denotes the set of all probability measures $\lambda$ on $E \x E$ 
	such that $\lambda( \mathrm{d}e, E) = \mu$ and $\lambda(E,  \mathrm{d}e^\prime) = \mu^\prime( \mathrm{d}e^\prime)$.
	Let $\mu \in \Pc(E)$ and $\varphi: E \longrightarrow \R$ be a $\mu$--integrable function, we write
	\[
		\langle \varphi, \mu \rangle
		:=
		\langle \mu, \varphi \rangle
		:=
		\E^{\mu}[\varphi]
		:=
		\int_E \varphi(e) \mu(\mathrm{d}e).
	\]
	Let $(E^\prime, \rho^\prime)$ be another metric space and $\mu^\prime \in \Pc(E^\prime)$. We denote by $\mu \otimes \mu^\prime \in \Pc(E \x E^{\prime})$ their product probability measure.
	Given a probability space  $(\Om, \Fc, \P)$ equipped with a sub--$\sigma$--algebra $\Gc \subset \Fc$,
	we denote by $(\P^{\Gc}_{\om})_{\om \in \Om}$ the conditional probability measure on $\P$ knowing $\Gc$ (whenever it exists).
	For a random variable $\xi: \Om \longrightarrow E$, we write
	$\Lc^{\P}(\xi):=\P \circ \xi^{-1}$ the law of $\xi$ under $\P$, and for any $\omega\in\Omega$, $\Lc^{\P}( \xi | \Gc)(\om):=\P^{\Gc}_{\om} \circ \xi^{-1} $ the conditional distribution of $\xi$ knowing $\Gc$ under $\P$.

	%Let $\Om$ be a metric space, $\Fc$ its Borel $\sigma$--field and $\Gc \subset \Fc$ be a sub--$\sigma$--field which is countably generated.
	%Following \cite{stroock2007multidimensional}, we say that $(\P^{\Gc}_{\om})_{\om \in \Om}$ is a family of r.c.p.d. (regular conditional probability distributions) of $\P$ knowing $\Gc$ if it satisfies 
	%\begin{itemize}
	%	\item the map $\om \longmapsto \P^{\Gc}_{\om}$ is $\Gc-$measurable,
	%	and for all $A \in \Fc$ and $B \in \Gc$, one has $\P [A \cap B]=\int_B \P^{\Gc}_{\om}[A] \P(\mathrm{d}\om)$;
	%	\item $\P^{\Gc}_{\om} \big[ [\om]_{\Gc} \big] = 1$  for all $\om \in \Om$, 
	%	where $[\om]_{\Gc} := \bigcap \big\{ A\in\Fc:A \in \Gc\; \mbox{and}\; \om \in A\big\}$.
	%\end{itemize}

	\medskip
	\noindent $(ii)$	
	We let $\N^\star$ be the set of positive integers and $\R_+:=[0,+\infty)$. Given non--negative integers $m$ and $n$, we denote by $\S^{m \x n}$ the collection of all $m \x n$--dimensional matrices with real entries, equipped with the standard Euclidean norm, which we denote by $|\cdot|$ regardless of the dimensions, for notational simplicity. 
	We also denote $\S^n:=\S^{n \times n}$, and denote by $0_{m \times n}$ the element in $\S^{m \times n}$ whose entries are all $0$, and by $\mathrm{I}_n$ the identity matrix in $\S^n$. 
	Let $k$ be a non--negative integer, we denote by $C^k_b(\R^n;\R)$ the set of bounded maps $f: \R^n \longrightarrow \R$, having bounded continuous derivatives of order up to and including $k$. 
	Let $f: \R^n \longrightarrow \R$ be twice differentiable, we denote by $\nabla f$ and $\nabla^2f$ 
	the gradient and Hessian of $f$.

	\medskip	
	\noindent $(iii)$ 
	Let $(E, \rho)$ be a Polish space, and $T>0$ a time horizon. We denote by $C([0,T], E)$ the space of all continuous paths from $[0,T]$ to $E$,
	which is a Polish space under the uniform convergence topology.
	When $E = \R^n$, we write $\Cc^n := C([0,T], \R^n)$.
	For every $\xb \in \Cc^n$, we denote by $\| \xb \|_n :=\sup_{t \in [0,T]}|\xb_t|$ the uniform norm on $\Cc^n$, 
	which may also be simplified to $\|\xb \|$ when there is no ambiguity. 
	When $n=0$, the space $\R^n$, $\S^{m\x n}$ and $\Cc^n$ degenerate to be a singleton.

	\medskip
We also denote by $\M(E)$ the space of all Borel measures $q( \mathrm{d}t,  \mathrm{d}e)$ on $[0,T] \x E$, 
	whose marginal distribution on $[0,T]$ is the Lebesgue measure $ \mathrm{d}t$, 
	that is to say $q( \mathrm{d}t, \mathrm{d}e)=q(t,  \mathrm{d}e) \mathrm{d}t$ for a family $(q(t,  \mathrm{d}e))_{t \in [0,T]}$ of Borel probability measures on $E$.
	Let $\Lambda$ denote the canonical element on $\M(E)$, we define
	\begin{equation}\label{eq:lambda}
		\Lambda^t(\mathrm{d}s, \mathrm{d}e) :=  \Lambda(\mathrm{d}s, \mathrm{d}e) \big|_{ [0,t] \x E} + \delta_{e_0}(\mathrm{d}e) \mathrm{d}s \big|_{(t,T] \x E},\; \text{for some fixed $e_0 \in E$.}
	\end{equation}

	Throughout the paper, we fix a nonempty Polish space $(A, \rho)$ and an element $a_0 \in A$,
	and denote $\M := \M(A)$.
	Finally, consider the canonical space  $\Cc^n \x \M$ (resp. $\Cc^n \x A$), 
	with canonical element $(X, \Lambda)$ (resp. $(X, \alpha)$), 
	and $\nuh \in \Pc(\Cc^n \x \M)$ (resp. $\nub \in \Pc(\Cc^n \x A)$). We define, for each $t \in [0,T]$
	\begin{equation} \label{eq:muht}
		\nuh(t) := \nuh \circ (X_{t \wedge \cdot}, \Lambda^t)^{-1}, \; \mbox{$\big($resp.}\;
		\nub(t) := \nub \circ (X_{t \wedge \cdot}, \alpha)^{-1} \big).
	\end{equation}

\section{McKean--Vlasov optimal control: different formulations}
\label{sec:different-Formulation}
	
	We introduce here a strong, a weak and a relaxed formulation of the McKean--Vlasov optimal control problem,
	which can all be (re--)formulated on the same canonical space by considering appropriate martingale problems. 
	We next define a large population control problem, the limit of which is expected (and will be proved) to be the McKean--Vlasov control problem.
	
\medskip

	These formulations share a certain number of functions which we now introduce. Let $n \ge 1$, $d\ge 1$ be two positive integers, and $\ell \ge 0$ a non--negative integer, which are fixed throughout the paper. The controlled diffusion process \eqref{eq:EDS_McKVlasov-intro} has the following coefficient functions
	\[
		(b, \sigma, \sigma_0):[0,T] \x \Cc^n  \x \Pc(\Cc^n \x A) \x A 
		\longrightarrow
		\R^n \x \S^{n \x d} \x \S^{n \x \ell},
	\]
	and the reward value \eqref{eq:Value_McKVlasov-intro} is defined with the coefficient functions
	\[
		L:[0,T] \x \Cc^n \x \Pc(\Cc^n \x A) \x A \longrightarrow \R,
		\; \mbox{and}\; 
		g: \Cc^n \x \Pc(\Cc^n) \longrightarrow \R.
	\]
	Throughout the paper, we assume the following regularity and growth conditions on the coefficient functions.
	 
	\begin{assumption} \label{assum:main1}   
		The maps $(b, \sigma, \sigma_0, L, g)$ are Borel measurable
		and non--anticipative, in the sense that
		\[
			\big(b, \sigma, \sigma_0,L \big) (t,\xb, \nub, a) 
			=
			\big(b, \sigma, \sigma_0,L \big) (t,\xb(t \wedge \cdot), \nub(t), a),
			~\mbox{for all}~
			(t, \xb, \nub, a) \in [0,T] \x \Cc^n \x \Pc(\Cc^n \x A)\x A .
		\]
		Moreover, there exist positive constants $C$, $p$, $p^\prime$ and $\hat p$, such that $p^\prime > p \ge 2 \ge \hat p \ge 0$, and
		\begin{enumerate}
			\item[$(i)$] the function $(b,\sigma,\sigma_0)$ is continuous in $(\xb, \nub, a)$ and uniformly Lipschitz in $(\xb, \nub)$, i.e.
			for all $(t,\xb,\nub,a,\xb^\prime,\nub^\prime)\in[0,T]\times\Cc^n\times\Pc(\Cc^n\x A)\x A\x \Cc^n\times\Pc(\Cc^n\x A)$
		\begin{align*}
			\big| (b,\sigma,\sigma_0)(t,\xb,\nub, a )
			-
			(b,\sigma,\sigma_0)(t,\xb^\prime, \nub^\prime, a)
			\big |
			\le
			C \big( \| \xb-\xb^\prime \| + \Wc_p(\nub,\nub^\prime) \big);
		\end{align*}		
		 
		\item[$(ii)$] for all $(t,\xb, \nub, a) \in [0,T] \x \Cc^n  \x \Pc(\Cc^n \x A)\x A$
		\begin{align*}
			| b(t,\xb, \nub, a)|
			&\le 
			C \bigg ( 1+ \| \xb \| + \bigg( \int_{\Cc^n \x A}\big( \| \xb^\prime\|^p + \rho(a_0,a^\prime)^p\big) \nub(\mathrm{d}\xb^\prime,\mathrm{d}a^\prime) \bigg)^{\frac{1}{p}} + \rho(a_0,a) \bigg ),\\
			| (\sigma,\sigma_0)(t, \xb, \nub, a)|^2
			&\le 
			C \bigg ( 1+ \| \xb \|^{\hat p} + \bigg( \int_{\Cc^n\x A} \big(\| \xb^\prime\|^p + \rho(a_0,a^\prime)^p\big) \nub(\mathrm{d}\xb^\prime,\mathrm{d}a^\prime) \bigg)^{\frac{\hat p}{p}} + \rho(a_0,a)^{\hat p} \bigg );
		\end{align*}
    
		\item[$(iii)$] the function $g$ is lower semi--continuous, for every $t \in [0,T]$, the function $L$ is lower semi--continuous in $(\xb, \nub, a)$,
			and for an additional constant $C_L > 0$, we have for all $(t,\xb,\nub,\nu,a) \in [0,T] \x \Cc^n \x \Pc(\Cc^n \x A)\x \Pc(\Cc^n) \x A$
		\begin{align} \label{eq:cond_coercive}
			|g(\xb,\nu)|
			&\le  
			C \bigg ( 1+ \| \xb \|^{p} +  \int_{\Cc^n} \| \xb^\prime\|^{p} \nu(\mathrm{d}\xb^\prime)   \bigg ), \nonumber \\
			L(t, \xb, \nub, a) 
			&\le 
			C \bigg( 1+ \| \xb \|^p +  \int_{\Cc^n\x A} \big(\| \xb^\prime\|^p + \rho(a_0,a^\prime)^p\big) \nub(\mathrm{d}\xb^\prime,\mathrm{d}a^\prime)   \bigg ) - C_L\rho(a_0,a)^{p^\prime},\\
			L(t,\xb, \nub, a)
			&\geq 
			-C \bigg ( 1+ \| \xb \|^{p} +  \int_{\Cc^n\x A}\big( \| \xb^\prime\|^{p} + \rho(a_0,a^\prime)^p\big) \nub(\mathrm{d}\xb^{\prime},\mathrm{d}a^\prime) \bigg). \nonumber
		\end{align}
		\end{enumerate}

	\end{assumption}

 	\begin{remark}
		Most of the integrability conditions in Assumption {\rm\ref{assum:main1}} are consistent with $($or simply adapted from$)$ those in {\rm\citeauthor*{lacker2017limit} \cite[Assumption $A$]{lacker2017limit}}.
		Basically, they are here to ensure that the controlled processes remain sufficiently integrable to apply the weak convergence techniques.
		In particular, $(i)$ and $(ii)$ are used to ensure the well--posedness of the controlled {\rm SDE} \eqref{eq:EDS_McKVlasov-intro}, while the coercivity condition in Item $(iii)$ is used to ensure the $($pre--$)$compactness of the set of optimal relaxed control rules.
	\end{remark}

\subsection{A strong formulation}\label{sec:strong}
\label{subsec:strong_form}

	To give a strong formulation of the McKean--Vlasov optimal control problem,
	we first introduce a fixed probability space equipped with an initial random variable $X_0$, and two independent Brownian motions $W$ and $B$.
	Precisely, let us consider the canonical space
	\[ 
		\Om
		:=
		\R^n \x \Cc^{d} \x \Cc^{\ell},
	\]
	equipped with its Borel $\sigma$--algebra $\Fc := \Bc(\Omega)$ and canonical element $(X_0, W, B)$.
	Let $\F := (\Fc_t)_{0 \le t \le T}$ and $\G = (\Gc_t)_{0 \le t \le T}$ be two filtrations on $(\Om, \Fc)$ defined by
	\[
		\Fc_t
		:=
		\sigma 
		\big((X_0, W_s, B_s):s \in [0,t] \big),
		\; \mbox{and}\; 
		\Gc_t 
		:= 
		\sigma \big( B_s:s \in [0,t] \big),\; t\in[0,T].
	\]   
	Let $p \ge 2$ be the constant in \Cref{assum:main1} and $\nu \in \Pc_p(\R^n)$. We denote by $\P_{\nu}$ the probability measure on $(\Om,\Fc)$, 
	under which $X_0 \sim \nu$ and $(W,B)$ is a standard $\R^{d+\ell}$--dimensional Brownian motion, independent of $X_0$.
	%{\color{black}Denote further by $\Fc^{\nu}$ the $\P_{\nu}$--completion of the $\sigma$--algebra $\Fc$, and by $\F^{\nu}$, and $\G^{\nu}$ the $\P_{\nu}$--augmentations of the filtrations $\F$ and $\G$.}
	Recall that $a_0$ is a fixed point in $A$. We denote by $\Ac_p(\nu)$ the collection of all $\F$--predictable, $A$--valued processes $\alpha = (\alpha_s)_{0 \le s \le T}$
	satisfying
	\begin{equation} \label{eq:alpha_Hp}
		\E^{\P_{\nu}} \bigg[ \int_0^T \big( \rho( \alpha_s, a_0) \big)^p \mathrm{d}s \bigg] 
		< \infty.
	\end{equation}
	Then given a control process $\alpha \in \Ac_p(\nu)$, the controlled McKean--Vlasov SDE
	\begin{equation} \label{eq:MKV_SDE}
		X^{\alpha}_t
		= 
		X_0
		+
		\int_0^t b \big(s, X^{\alpha}_{s \wedge \cdot}, \mub^{\alpha}_s, \alpha_s \big) \mathrm{d}s
		+
		\int_0^t \sigma\big(s, X^{\alpha}_{s \wedge \cdot}, \mub^{\alpha}_s, \alpha_s \big) \mathrm{d} W_s
		+
		\int_0^t \sigma_0\big(s, X^{\alpha}_{s \wedge \cdot}, \mub^{\alpha}_s, \alpha_s \big) \mathrm{d} B_s,\; t\in[0,T],\; 
		\P_{\nu}\mbox{--a.s.},
	\end{equation}
	with $\mub^{\alpha}_s:=\Lc^{\P_{\nu}}\big(X^{\alpha}_{s \wedge \cdot}, \alpha_s \big| \Gc_s \big),$ $\mathrm{d}t\otimes\mathrm{d}\P_\nu$--a.e., 
	has a unique strong solution, that is, there is a unique $\F$--adapted continuous process $X^{\alpha}$ on $(\Om,\Fc)$ satisfying \Cref{eq:MKV_SDE} and $\E^{\P_{\nu}}\big[\sup_{t \in [0,T]} |X^\alpha_t|^p \big]< \infty$
	(see for instance \cite[Theorem A.3]{djete2019mckean}).
	
	\medskip
	Denote also $\mu^{\alpha}_t :=  \Lc^{\P_{\nu}}\big(X^{\alpha}_{t \wedge \cdot} \big| \Gc_t \big)$ for all $t \in [0,T]$. The strong formulation of the McKean--Vlasov control problem is then given by
	\begin{align}\label{eq:def_VS}
		V_S(\nu)
		:=
		\sup_{\alpha \in \Ac_p(\nu)} 
		\E^{\P_{\nu}} \bigg[
		\int_0^T L \big(t, X^{\alpha}_{t \wedge \cdot}, \mub^{\alpha}_t, \alpha_t \big) \mathrm{d}t 
		+ 
		g \big(X^{\alpha}_{T \wedge \cdot}, \mu^{\alpha}_T \big) 
		\bigg].
	\end{align}

\subsection{A weak formulation}

	As in the classical SDE theory, one can consider all possible probability spaces to define a weak solution of the controlled SDE \eqref{eq:EDS_McKVlasov-intro}.

	\begin{definition} [Weak control] \label{def:weak_ctrl}
		Let $\nu \in \Pc_p(\R^n)$, we say that a term
		\[
			\gamma := \big( \Om^{\gamma}, \Fc^{\gamma},  \P^{\gamma},  \F^{\gamma} := (\Fc^{\gamma}_t)_{0 \le t \le T},\G^{\gamma}:= (\Gc^{\gamma}_t)_{0 \le t \le T}, X^{\gamma}, W^{\gamma}, B^{\gamma},  \mub^{\gamma}, \mu^{\gamma}, \alpha^{\gamma} \big),
		\]
		is a weak control associated with the initial $($distribution$)$ condition $\nu$ if
		\begin{enumerate}
			\item[$(i)$] $(\Om^{\gamma}, \Fc^{\gamma},\P^{\gamma})$ is a probability space, 
			equipped with two filtrations $ \F^{\gamma}$ and $\G^{\gamma}$ such that,  for all $t\in [0,T]$
			\begin{equation} \label{eq:H_Hypothesis}     
				\Gc_t^{\gamma} \subseteq  \Fc_t^{\gamma},
				\; \mbox{\rm and}\; 
				\E^{\P^\gamma} \big[1_D \big| \Gc^{\gamma}_t \big]
				=
				\E^{\P^\gamma} \big[1_D \big| \Gc^{\gamma}_T \big],
				~\P^{\gamma}\mbox{\rm--a.s.,}
				\;\mbox{\rm for all}\;D \in \Fc^{\gamma}_t \lor \sigma(W^\gamma);
			\end{equation}	
			
			\item[$(ii)$]  $X^{\gamma}:=(X^{\gamma}_s)_{s \in [0,T]}$ is an $\R^n$--valued $\F^{\gamma}$--adapted continuous process and $\alpha^{\gamma} := (\alpha^{\gamma}_s)_{0 \le s \le T}$ is an $A$--valued $\F^{\gamma}$--predictable process such that
				$\E^{\P^{\gamma}} \big[ \|X^\gamma\|^p+  \int_0^T \big( \rho( \alpha^{\gamma}_s, a_0 ) \big)^p \mathrm{d}s \big] <\infty;$
		
			\item[$(iii)$]		
			$(W^{\gamma}, B^{\gamma})$ is an $\R^d \x \R^{\ell}$--valued standard Brownian motion with respect to $\F^\gamma$,
			$B^{\gamma}$ is in addition adapted to $\G^{\gamma}$, $\Fc^{\gamma}_0 \vee \sigma(W^{\gamma})$ is independent of $\Gc^{\gamma}_T$, and
			$\mu^{\gamma}$ $($resp. $\mub^{\gamma})$ is a
			$\Pc(\Cc^n)$--valued $($resp. $\Pc(\Cc^n \x A)$--valued$)$ $\G^{\gamma}$--predictable process such that
			\[
				\mu^{\gamma}_t 
				= 
				\Lc^{\P^{\gamma}} \big(X^{\gamma}_{t \wedge \cdot}\big| \Gc^{\gamma}_t \big),
				\; \mbox{\rm and}\;
				\mub^{\gamma}_t
				=
				\Lc^{\P^{\gamma}} \big( \big(X^{\gamma}_{t \wedge \cdot}, \alpha^{\gamma}_t \big) \big| \Gc^{\gamma}_t \big),\; \mathrm{d}\P^\gamma \otimes\mathrm{d}t \mbox{\rm--a.s.;}	
			\]
			\item[$(iv)$] $X^{\gamma}$ satisfies $\P^{\gamma} \circ (X^{\gamma}_0)^{-1} = \nu$ and
			\[
				X^{\gamma}_t
				=
				X^{\gamma}_0 + \int_0^t b(s, X^{\gamma}_{s\wedge \cdot}, \mub^{\gamma}_s, \alpha^{\gamma}_s) \mathrm{d}s 
				+ \int_0^t \sigma(s, X^{\gamma}_{s \wedge \cdot}, \mub^{\gamma}_s, \alpha^{\gamma}_s) \mathrm{d}W^{\gamma}_s 
				+ \int_0^t \sigma_0(s, X^{\gamma}_{s \wedge \cdot}, \mub^{\gamma}_s, \alpha^{\gamma}_s) \mathrm{d}B^{\gamma}_s, \; t\in[0,T],\; 
				\P^\gamma\mbox{\rm--a.s.}
			\]
		\end{enumerate}
	\end{definition}

	\begin{remark}
		In {\rm \Cref{def:weak_ctrl}}, $\G^{\gamma}$ plays the role of the common noise filtration, to which $B^{\gamma}$ is adapted and of which  $(X_0, W^{\gamma})$ is independent.
		In the literature on enlargement of filtrations $($see {\rm \citeauthor*{jacod1985grossissement} \cite{jacod1985grossissement}} for instance$)$, the $(H)$--hypothesis states that for all $t\in[0,T]$
		\[\E^{\P^\gamma} \big[1_D \big| \Gc^{\gamma}_t \big] = \E^{\P^\gamma} \big[1_D \big| \Gc^{\gamma}_T \big],\; \text{\rm for all}\; D \in \Fc^{\gamma}_t.
		\]
		It is generally different from {\rm Condition \eqref{eq:H_Hypothesis}}, since the independence of the increment $(W^{\gamma}_s - W^{\gamma}_t)_{s \in [t,T]}$ from $\Fc_t^{\gamma}$ and $\Gc^{\gamma}_T$ does not imply the independence between $(W^{\gamma}_s - W^{\gamma}_t)_{s \in [t,T]}$ and $\Fc_t^{\gamma} \vee \Gc^{\gamma}_T$.
		In particular, {\rm Condition \eqref{eq:H_Hypothesis}} will be reformulated later on as \eqref{eq:muh_property} and \eqref{H-property}, which are in turn crucially used in the approximation of a weak control by strong control rules in {\rm\Cref{lemma:strongApp_step1}} and {\rm\Cref{lemma:strongApp_step2}}.
	\end{remark}

	Let us denote by $\Gamma_W(\nu)$ the collection of all weak controls associated
        with the initial condition $\nu$, and introduce the weak formulation of the control problem by
	\begin{equation} \label{eq:def_VW}
		V_W(\nu) 
		:=
		\sup_{\gamma \in \Gamma_W (\nu)} J(\gamma),
		\;\mbox{with}\;
		J( \gamma)
		:=
		\E^{\P^{\gamma}} \bigg[
			\int_0^T L(s, X^{\gamma}_{s \wedge \cdot}, \mub^{\gamma}_s, \alpha^{\gamma}_s) \mathrm{d}s 
			+ 
			g(X^{\gamma}_{T\wedge\cdot}, \mu^{\gamma}_T) 
		\bigg].
	\end{equation}

\subsection{Strong, weak and relaxed formulations on the canonical space}

	The above strong and weak control problem can be reformulated on a canonical space, by considering an appropriate martingale problem.
	Based on this canonical space formulation, we also introduce a notion of relaxed controls for the McKean--Vlasov control problem.

\subsubsection{The canonical space and admissible control rules}
\label{subsubsec:canonical_space}

	Recall that $A$ is a fixed nonempty Polish space,
	$\M:=\M(A)$ denotes the space of all positive Borel measures $q$ on $ [0,T] \x A$ such that the marginal distribution of $q$ on $[0,T]$ is the Lebesgue measure, implying that we can always write $q(\mathrm{d}t, \mathrm{d}a) = q_t(\mathrm{d}a) \mathrm{d}t$, where $(q_t(\mathrm{d}a))_{t \in [0,T]}$ is a Borel measurable kernel from $[0,T]$ to $\Pc(A)$.
	We also introduce a subset $\M_0 \subset \M$, which is the collection of all $q\in\M$ such that $q(\mathrm{d}t, \mathrm{d}a) = \delta_{\psi(t)}(\mathrm{d}a) \mathrm{d}t$ for some Borel measurable function $\psi:[0,T] \longrightarrow A$.
	We will consider two canonical spaces
	\[
		\Omh := \Cc^n \x \Cc^n \x \M \x \Cc^d,
		~\mbox{and}~
		\Omb := \Cc^n \x \Cc^n \x \M \x \Cc^d \x \Cc^\ell \x \Pc \big(\Omh \big).
	\]
	The canonical space $\Omh$ is equipped with the corresponding canonical element $\big(\Xh, \Yh,\Lambdah, \Wh\big)$, its Borel $\sigma$--algebra $\widehat{\Fc}:=\Bc(\Omh)$, and its canonical filtration $\Fh:= \big(\widehat{\Fc}_t \big)_{t \in [0,T]}$ defined by
	\[
	    \Fch_t
	    :=
	    \sigma \Big(\big(\Xh_s, \Yh_s, \Lambdah([0,s] \x D), \Wh_s\big): D \in \Bc(A),\; s \in[0, t] \Big),\; t\in[0,T].
	\]
	Notice that one can choose a version of the disintegration $\Lambdah(\mathrm{d}t,\mathrm{d}a)=\Lambdah_t(\mathrm{d}a)\mathrm{d}t$
	such that $(\Lambdah_t)_{t \in [0,T]}$ is a $\Pc(A)$--valued, $\Fh$--predictable process (see e.g. \cite[Lemma 3.2.]{lacker2015mean}).  
	%$ [0,T] \x \Omt \ni (t,q) \longmapsto q_t(\mathrm{d}a) \in \Pc(A)$ is $\Ft$--predictable One can then define a $\Pc(A)$--valued, $\Ft$--predictable process $(\Lambdat_t)_{t \in [0,T]}$ such that $\Lambdat(\mathrm{d}t,\mathrm{d}a)=\Lambdat_t(\mathrm{d}a)\mathrm{d}t$.
	
	\medskip
	Similarly, we equip the canonical space $\Omb$ with the canonical element  $(X, Y, \Lambda, W, B, \muh)$, and its Borel $\sigma$--algebra  $\Fcb := \Bc(\Omb)$.
	Moreover, based on $\muh$, let us define three processes $(\mu_t)_{t \in [0,T]}$, $(\mub_t)_{t \in [0,T]}$ and $(\muh_t)_{t \in [0,T]}$ on $\Omb$ by $\big($recall \eqref{eq:lambda} for the definition of $\Lambdah^t\big)$
	\begin{equation} \label{eq:muh2mu}
		\mu_t:= \muh \circ \big( \Xh_{t \wedge \cdot}\big)^{-1},
		\;
		\mub_t(\mathrm{d} \xb, \mathrm{d}a):=\E^{\hat \mu}\Big[\delta_{\Xh_{t \wedge \cdot}}(\mathrm{d} \xb)\Lambdah_t(\mathrm{d}a)\Big],
		\;\mbox{and}\;
		\muh_t:=\muh \circ \big( \Xh_{t \wedge \cdot},\Yh_{t \wedge \cdot},\Lambdah^t,\Wh\big)^{-1},\; t\in[0,T].
	\end{equation}
	We then introduce two filtrations $\overline\F:=(\overline\Fc_t)_{t \in [0,T]}$ and $\Gb:=(\Gcb_t)_{t \in [0,T]}$ on $(\Omb,\Fcb)$ by
	\[
		\overline \Fc_t
		:=
		\sigma \Big((X_s, Y_s,\Lambda([0,s] \x D),W_s,B_s, \langle \muh_s, \phi \rangle): D \in \Bc(A), \phi \in C_b(\Cc^n \x \Cc^n \x \M \x \Cc^d),\; s \in[0,t] \Big).
	\] 
	and
	\[
		\Gcb_t
		:=
		\sigma \Big((B_s, \langle \muh_s, \phi \rangle): \phi \in C_b(\Cc^n \x \Cc^n \x \M \x \Cc^d), s\in[0,t]  \Big).
	\]

	To interpret the strong or weak controls as probability measures on the canonical space $\Omb$,
	we will consider a controlled martingale problem.
	Let us define the maps $\bar b:[0,T]\times\Cc^n\x A\x\Pc(\Cc^n\times A)\longrightarrow \R^{n+n+d+\ell}$, and $\bar a:[0,T]\times\Cc^n\x\Pc(\Cc^n\times A)\x A\longrightarrow \S^{n+n+d+\ell}$, such that for any $(t,\xb,\yb, \wb,\bb, \nub,a)\in[0,T]\times\Cc^n \x \Cc^n \x \Cc^d \x \Cc^{\ell}\x\Pc(\Cc^n\times A)\x A$
	\begin{align*} 
		\bar b \big(t,\xb, \wb,\bb, \nub,a \big)
		:=\begin{pmatrix}
			b(t,\xb, \nub, a)\\ 
			b(t,\xb, \nub, a)\\
			0_d\\
			0_\ell
		\end{pmatrix},\;
		\bar a \big(t,\xb,\wb, \bb, \nub,a \big)
		:=
		\begin{pmatrix} 
			\sigma(t,\xb, \nub, a) & \sigma_0(t,\xb, \nub, a) \\
			\sigma(t,\xb, \nub, a) &  0_{n \x \ell} \\
			\mathrm{I}_{d \x d} & 0_{d \x \ell} \\ 
			0_{\ell \x d} & \mathrm{I}_{\ell \x \ell} 
		\end{pmatrix}
		\begin{pmatrix} 
			\sigma(t,\xb, \nub, a) & \sigma_0(t,\xb, \nub, a) \\
			\sigma(t,\xb, \nub, a) &  0_{n \x \ell} \\
			\mathrm{I}_{d \x d} & 0_{d \x \ell} \\ 
			0_{\ell \x d} & \mathrm{I}_{\ell \x \ell}
		\end{pmatrix}^{\top}.
	\end{align*}
	Next, for all $t\in[0,T]$ and $\varphi\in C^2_b(\R^{n+n+d +\ell})$, we define the generator $\overline \Lc_t$ by
	\begin{align} \label{eq:first_generator}
		\overline \Lc_t \varphi \big( \xb,\yb,\wb,\bb, \nub, a \big)
		:=
		\bar b(t, \xb, \nub, a)\cdot \nabla \varphi(\xb(t),\yb(t),\wb(t),\bb(t))
		+ 
		\frac{1}{2}\mathrm{Tr}\big[ \bar a(t, \xb, \nub, a) \nabla^2 \varphi(\xb(t),\yb(t),\wb(t),\bb(t))\big].
	\end{align}
	This allows to define, for any $\varphi\in C^2_b(\R^{n+n+d +\ell})$, $\Sb^{\varphi} := (\Sb^{\varphi}_t)_{t \in [0,T]}$ on $\Omb$ by
	\begin{align} \label{eq:associate-martingale}
		\Sb^{\varphi}_t
		:=
		\varphi(X_t, Y_t, W_t, B_t) 
		-
		\iint_{[0,t]\x A} \overline \Lc_s \varphi \big(X_s, Y_s, W_s, B_s, \mub_s, a \big) \Lambda_s(\mathrm{d}a) \mathrm{d}s,\; t\in[0,T],
	\end{align}
	where for a borel function $\phi:[0,T] \to \R,$ $\int_0^{\cdot} \phi(s)\mathrm{d}s:=\int_0^\cdot \phi^{+}(s)\mathrm{d}s-\int_0^\cdot \phi^{-}(s)\mathrm{d}s$ with the convention $\infty - \infty = -\infty.$

	\begin{definition} \label{def:admissible_ctrl_rule}
		Let $\nu \in \Pc_p(\R^n)$. A probability $\Pb$ on $(\Omb, \Fcb)$ is an admissible control rule with initial condition $\nu$ if 
		\begin{itemize}			
			\item[$(i)$] $\Pb \big[X_0=Y_0,\; W_0=0,\; B_0=0 \big]=1$, $\Pb \circ X_0^{-1} = \nu$, and $(X,\Lambda)$ satisfy $\E^{\Pb} \big[ \|X\|^p+\iint_{[0,T]\x A} \big( \rho(a_0, a) \big)^p \Lambda_t(\mathrm{d}a) \mathrm{d}t \big] < \infty;$

			\item[$(ii)$]  the pair $(X_0,W)$ is independent of $\Gcb_T$ under $\Pb$, and for all $t \in [0,T]$
			\begin{equation} \label{eq:muh_property}
				\muh_t (\omb)
				=
				\Pb^{\Gcb_T}_{\omb} \circ (X_{t \wedge \cdot},Y_{t \wedge \cdot}, \Lambda^t,W)^{-1},
				\;\mbox{\rm for}\;\Pb\mbox{--a.e.}\;\omb \in \Omb;
			\end{equation}

			\item[$(iii)$] the process $\big( \Sb^{\varphi}_t \big)_{t \in [0,T]}$ is an  $(\Fb, \Pb)$--martingale for all $\varphi \in C^2_b \big(\R^n \x \R^n \x \R^d \x \R^{\ell} \big)$.
		\end{itemize}
	\end{definition}
	Let us then define for any $\nu \in \Pc_p(\R^n)$,
	\[
		\Pcb_A( \nu)
		:=
		\big\{ \mbox{All admissible control rules}\; \Pb \;\mbox{with initial condition}\; \nu \big\}.
	\]

	\begin{remark}
    		$(i)$ Under Assumption \ref{assum:main1} and the integrability condition in Definition \ref{def:admissible_ctrl_rule}.(i),
		the process $\Sb^{\varphi}$ is $\Pb$-square integrable for $\varphi \in C^2_b \big(\R^n \x \R^n \x \R^d \x \R^{\ell} \big)$.
		Then it does not change the definition of the admissible control rule if one change Definition \ref{def:admissible_ctrl_rule}.(iii) to 
		`$\big( \Sb^{\varphi}_t \big)_{t \in [0,T]}$ is an  $(\Fb, \Pb)$--local martingale for all $\varphi \in C^2_b \big(\R^n \x \R^n \x \R^d \x \R^{\ell} \big)$.`
		
		\vspace{0.5em}
		
		$(ii)$ Under an admissible control rule $\Pb$,
		$B$ and $W$ are standard Brownian motions,
		$\Lambda$ is the $\Pc(A)$--valued process induced by the control process,
		$X$ is the controlled process, and $\mub$ is the conditional distribution of the control and controlled process.
		The process $Y$ will only be really used to introduce the relaxed formulation.
		In particular, when $\sigma_0=0$ or $\ell =0,$ one has $Y=X$.
		
		\vspace{0.5em}
	
		$(iii)$ Notice that $\muh_t$ is $\Gcb_t$--measurable, it follows that \eqref{eq:muh_property} is equivalent to 
		\begin{align} \label{eq:muh_property-details}
			\muh_t (\omb)=\Pb^{\Gcb_t}_{\omb} \circ (X_{t \wedge \cdot},Y_{t \wedge \cdot}, \Lambda^t,W)^{-1}=\Pb^{\Gcb_T}_{\omb} \circ (X_{t \wedge \cdot},Y_{t \wedge \cdot}, \Lambda^t,W)^{-1},\;\mbox{\rm for}\;\Pb\mbox{--a.e.}\;\omb \in \Omb.
		 \end{align}
	\end{remark}

\subsubsection{The strong formulation on the canonical space}

	To reformulate the strong formulation \eqref{eq:def_VS} of the control problem on the canonical space $\Omb$,
	it is enough to consider the class of measures induced by the controls and the controlled processes on the canonical space.
	Recall that for each $\nu \in \Pc_p(\R^n)$, $\P_{\nu}$ is defined in \Cref{sec:strong} as a probability measure on $(\Omega,\Fc)$, and that for any $\alpha \in \Ac_p(\nu)$, the controlled McKean--Vlasov SDE \eqref{eq:MKV_SDE} has a unique strong solution $X^\alpha$. Let us further define
	\begin{align*}
		Y^{\alpha}_t
		:=
		X^\alpha_t-\int_0^t \sigma_0(s, X^\alpha_{s \wedge \cdot}, \mub^\alpha_s,\alpha_s ) \mathrm{d}B_s,\; t\in[0,T],
		\;
		\Lambda^\alpha_t(\mathrm{d}a)\mathrm{d}t
		:=
		\delta_{\alpha_t}(\mathrm{d}a)\mathrm{d}t,
		\; \mbox{and}
		\;
		\muh^\alpha
		:=
		\Lc^{\P_\nu} \big( X^\alpha, Y^\alpha,\Lambda^\alpha,W \big| \Gcb_T \big).
	\end{align*}
	Then the set of all strong control rules $\Pcb_S(\nu)$ is defined as a collection of probability measures on the canonical space $(\Omb,\Fcb)$ induced by $\alpha$:
	\begin{align*}
	    \Pcb_S(\nu)
	    :=
	    \big\{
	        \P_\nu \circ \big(X^\alpha,Y^\alpha,\Lambda^\alpha,W, B, \muh^\alpha \big)^{-1}:\alpha \in \Ac_p(\nu)
	    \big\},
	\end{align*}
	and it is straightforward to see that
	\begin{align} \label{eq:def_VS-canon}
		V_S(\nu) 
		=
		\sup_{\Pb \in \Pcb_S(\nu)} J\big(\Pb\big),
		 \;\mbox{with}\;J\big(\Pb\big) 
		:=
		\E^{\Pb} \bigg[\iint_{[0,T]\x A} L\big(t, X_{t\wedge\cdot}, \mub_t, a \big) \Lambda_t(\mathrm{d}a) \mathrm{d}t +g\big( X_{T\wedge\cdot}, \mu_T \big) \bigg].
	\end{align}

	Let
	\[
			\Lc_0[A]
			:=
			\big\{
				\text{\rm All Borel measurable functions}\;\phi:[0,T] \x \R^n \x \Cc^d \x \Cc^\ell \longrightarrow A
			\big\}.
	\]
	\begin{proposition}{\rm\cite[Definition 4.2., Lemma 4.3.]{djete2019mckean}} \label{prop:strong_canonical}
		We have, for all $\nu \in \Pc_p(\R^n)$
		\begin{align*}
			\Pcb_S(\nu)
			=
			\Big\{
				\Pb \in \Pcb_A(\nu):\exists\; \phi \in \Lc_0[A], \;\Pb\big[\Lambda_t(\mathrm{d}a)\mathrm{d}t=\delta_{\phi(t,X_0,W_{t \wedge \cdot},B_{t \wedge \cdot})}(\mathrm{d}a)\mathrm{d}t \big]=1
			\Big\}.
		\end{align*}
	\end{proposition}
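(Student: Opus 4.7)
The plan is to verify both inclusions by exploiting the functional representation of $\F$--predictable processes on the Wiener--type canonical space $\Om = \R^n \x \Cc^d \x \Cc^\ell$ together with strong existence and uniqueness of the McKean--Vlasov SDE \eqref{eq:MKV_SDE}.

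For the inclusion $\Pcb_S(\nu) \subseteq \{\cdots\}$, I would start with an arbitrary $\alpha \in \Ac_p(\nu)$. Since $\F$ is generated on $\Om$ by the canonical variables $(X_0, W, B)$, a standard measurable--representation argument (every $\F$--predictable process is $\mathrm{d}t\otimes\mathrm{d}\P_\nu$--indistinguishable from a Borel functional of the path stopped at time $t$) produces $\phi \in \Lc_0[A]$ with $\alpha_t = \phi(t, X_0, W_{t\wedge\cdot}, B_{t\wedge\cdot})$ a.e. This immediately yields the Dirac form of $\Lambda$ under the pushforward. It then remains to check that $\Pb^\alpha := \P_\nu \circ (X^\alpha, Y^\alpha, \Lambda^\alpha, W, B, \muh^\alpha)^{-1}$ lies in $\Pcb_A(\nu)$: the initial conditions and $p$--integrability are built in, the independence of $(X_0,W)$ from $\Gcb_T$ follows because $(X_0,W)$ is $\P_\nu$--independent of $B$ and $\muh^\alpha$ is $\sigma(B)$--measurable, the conditional distribution identity \eqref{eq:muh_property} is the very definition of $\muh^\alpha$, and the martingale property of $\Sb^\varphi$ drops out from It\^o's formula applied to $\varphi(X^\alpha, Y^\alpha, W, B)$ using \eqref{eq:MKV_SDE} and $Y^\alpha_t := X^\alpha_t - \int_0^t \sigma_0 \mathrm{d}B$, which is precisely the structure encoded by the block matrices $\bar b$ and $\bar a$.

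For the reverse inclusion, take $\Pb \in \Pcb_A(\nu)$ such that $\Pb[\Lambda_t(\mathrm{d}a)\mathrm{d}t = \delta_{\phi(t, X_0, W_{t\wedge\cdot}, B_{t\wedge\cdot})}(\mathrm{d}a)\mathrm{d}t] = 1$ for some $\phi\in\Lc_0[A]$. Define $\alpha_t := \phi(t, X_0, W_{t\wedge\cdot}, B_{t\wedge\cdot})$ on the reference space $(\Om, \Fc, \P_\nu)$: it is $\F$--predictable, and the integrability \eqref{eq:alpha_Hp} transfers from the corresponding $\Pb$--bound on $\Lambda$ via the pushforward identity on $\Lambda$. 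Hence $\alpha \in \Ac_p(\nu)$ and the SDE \eqref{eq:MKV_SDE} admits a unique strong solution $X^\alpha$. The task reduces to identifying $\Pb$ with the pushforward $\Pb^\alpha := \P_\nu \circ (X^\alpha, Y^\alpha, \Lambda^\alpha, W, B, \muh^\alpha)^{-1}$.

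The main obstacle lies in this last identification, since one must check that the $\mub$ appearing in the coefficients under $\Pb$ coincides, pathwise in the common noise, with $\mub^\alpha$. The natural route is to condition on $\Gcb_T$: under \eqref{eq:muh_property} and the martingale problem in \Cref{def:admissible_ctrl_rule}(iii), disintegrating $\Pb$ into $\Pb^{\Gcb_T}_{\bar\om}$ makes $B$ a given path and keeps $W$ a Brownian motion, so that $X$ satisfies a (conditional) classical SDE with drift $b(\cdot, X, \mub, \alpha)$, diffusion $\sigma(\cdot, X, \mub, \alpha)$ and an additional pathwise Stieltjes--type term in $\mathrm{d}B$. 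Strong uniqueness of the McKean--Vlasov SDE, inherited from \Cref{assum:main1}(i), then forces both the fixed--point identification $\mub = \mub^\alpha$ and the pathwise identity $X = X^\alpha$ as functionals of $(X_0, W, B)$, from which equality of the two probability measures follows. This conditioning step is the essential content of the proposition, and it is precisely where the $(H)$--hypothesis type condition encoded in \Cref{def:admissible_ctrl_rule}(ii) is indispensable.
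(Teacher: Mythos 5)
Since the paper does not actually prove this proposition here but simply cites \cite[Definition~4.2., Lemma~4.3.]{djete2019mckean}, I can only assess your argument on its own merits rather than compare it to the source proof.

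Your forward inclusion is sound: the representation of $\F$--predictable processes on the canonical Wiener--type space as Borel functionals of $(t,X_0,W_{t\wedge\cdot},B_{t\wedge\cdot})$ is standard, the independence and consistency conditions of \Cref{def:admissible_ctrl_rule}.$(ii)$ drop out from $(X_0,W)\perp B$ under $\P_\nu$ together with $\muh^\alpha$ being $\sigma(B)$--measurable, and the block structure of $\bar b$, $\bar a$ is exactly what It\^o's formula applied to $\varphi(X^\alpha,Y^\alpha,W,B)$ produces.

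On the reverse inclusion, your overall strategy (recover $\alpha$ from $\phi$, solve the reference SDE, disintegrate $\Pb$ on $\Gcb_T$, then invoke strong uniqueness to identify the fixed point) is the right one, but two steps as written are not quite sound. First, describing $\int_0^\cdot\sigma_0(s,X,\mu_s)\,\mathrm{d}B_s$ under the disintegration as ``a pathwise Stieltjes--type term'' is incorrect: a typical Brownian path has infinite variation, so no Stieltjes integral exists; the correct device is the one the paper itself uses (see Step~1 of \Cref{subsec:Equivalence-Proposition}, the map $\Ic$), namely fix a Borel functional $\Ic(t,\xb,\pi,\bb)$ with $\Ic(t,X,\mu,B)=\int_0^t\sigma_0\,\mathrm{d}B_s$ $\Pb$--a.s., work with $Y=X-\Ic(\cdot,X,\mu,B)$ whose SDE is driven only by $W$, and then disintegrate. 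Second, the phrase ``strong uniqueness forces $\mub=\mub^\alpha$'' elides the fact that the two candidate conditional laws live over different $\sigma$--algebras: under $\Pb$, $\mub_t=\Lc^\Pb(X_{t\wedge\cdot},\alpha_t\,|\,\Gcb_T)$ with $\Gcb_T=\sigma(B,\muh)\supsetneq\sigma(B)$ a priori, whereas $\mub^\alpha_t$ on the reference space is conditioned on $\sigma(B)$ only. The missing step is to observe that, because $\alpha_t=\phi(t,X_0,W_{t\wedge\cdot},B_{t\wedge\cdot})$ and $X^\alpha$ are $\sigma(X_0,W,B)$--measurable and $(X_0,W)\perp\Gcb_T$ under $\Pb$ by \Cref{def:admissible_ctrl_rule}.$(ii)$, one has
\[
\Lc^\Pb\big(X^\alpha_{t\wedge\cdot},\alpha_t\,\big|\,\Gcb_T\big)=\Lc^\Pb\big(X^\alpha_{t\wedge\cdot},\alpha_t\,\big|\,\sigma(B)\big)=\mub^\alpha_t,
\]
so that the pair $(X^\alpha,\mub^\alpha)$ solves the \emph{same} $\Gcb_T$--conditioned McKean--Vlasov fixed point as $(X,\mub)$, and only then does the strong (Lipschitz) uniqueness of \cite[Theorem~A.3]{djete2019mckean} give $X=X^\alpha$, $\mub=\mub^\alpha$, hence $\muh=\muh^\alpha$ is $\sigma(B)$--measurable and $\Pb=\Pb^\alpha$. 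You do remark that the condition in \Cref{def:admissible_ctrl_rule}.$(ii)$ is ``indispensable,'' but without spelling out the displayed reduction the fixed--point identification is asserted rather than proved, and that is precisely where a reader would stop you.
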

	
	\begin{remark} \label{rem:nonContinuityJ}
		Notice that the map $\Pc(\Cc^n \x \Cc^n \x \Cc^d \x \M) \ni  \muh \longmapsto \delta_{\mub_t}(\mathrm{d} \nub )\mathrm{d}t \in \M(\Cc^n \x A)$ is generally not continuous.
		Consequently, $\Pb \longmapsto J(\Pb)$ is not continuous in general, even if $L$ and $g$ are both bounded and continuous.
	\end{remark}

\subsubsection{The weak formulation on the canonical space}

	Now we introduce the set of weak control rules which is also a subset of $\Pcb_A(\nu)$.

	\begin{definition} \label{def:weak_ctrl_rule}
		Let $\nu \in \Pc_p(\R^n)$, an admissible control rule $\Pb \in \Pcb_A(\nu)$ is called a weak control rule with initial condition $\nu$ if 
		$\Pb \big[ \Lambda \in \M_0 \big]=1$.
		Denote
		\begin{align*}
			\Pcb_W( \nu)
			:=
			\big\{ \Pb \in \Pcb_A(\nu):\Pb \big[ \Lambda \in \M_0 \big]=1 \big\}.
		\end{align*}
	\end{definition}
	
	The next proposition links the set $\Pcb_W( \nu)$ and the weak control terms $\Gamma_W(\nu).$
	\begin{proposition}
		Let $\nu \in \Pc_p(\R^n)$ and $\gamma \in \Gamma_W(\nu)$. 
		Define, for any $t\in[0,T]$,
		\[
			Y^{\gamma}_t
			:=
			X^\gamma_t-\int_0^t \sigma_0(s, X^\gamma_{s\wedge\cdot}, \mub^\gamma_s,\alpha^{\gamma}_s ) \mathrm{d}B^{\gamma}_s,
			~
			\Lambda^\gamma_t(\mathrm{d}a)\mathrm{d}t
			:=
			\delta_{\alpha^{\gamma}_t}(\mathrm{d}a)\mathrm{d}t,
			~\mbox{\rm and}~
			\muh^\gamma
			:=
			\Lc^{\P^\gamma} \big( \big( X^\gamma, Y^\gamma,\Lambda^\gamma,W^\gamma \big) \big| {\Gc^{\gamma}_T} \big).
		\]
		Then with $J$ defined in \eqref{eq:def_VS-canon}, we have
		\begin{equation} \label{eq:redef_VW}
			\Pcb_W(\nu) 
			=
			\big\{
				\P^{\gamma} \circ \big(X^\gamma,Y^\gamma,\Lambda^\gamma,W^{\gamma}, B^{\gamma}, \muh^\gamma \big)^{-1}
				:\gamma \in \Gamma_W(\nu)
			\big\},
			~\mbox{\rm and}~
			V_W(\nu) = \sup_{\Pb \in \Pcb_W(\nu)} J\big(\Pb\big).
		\end{equation}	
	\end{proposition}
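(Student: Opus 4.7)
The plan is to prove the set equality \eqref{eq:redef_VW} by constructing a surjection from $\Gamma_W(\nu)$ onto $\Pcb_W(\nu)$ via the map $\gamma \longmapsto \Pb^\gamma := \P^\gamma \circ (X^\gamma, Y^\gamma, \Lambda^\gamma, W^\gamma, B^\gamma, \muh^\gamma)^{-1}$, and then to deduce the value identity from $J(\gamma) = J(\Pb^\gamma)$ via change of variables, using $\Lambda^\gamma_t(\mathrm{d}a)\mathrm{d}t = \delta_{\alpha^\gamma_t}(\mathrm{d}a)\mathrm{d}t$. First I would handle the forward inclusion: given $\gamma \in \Gamma_W(\nu)$, verify that $\Pb^\gamma \in \Pcb_A(\nu)$ by checking the three items of \Cref{def:admissible_ctrl_rule}. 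The integrability and initial-condition requirements are direct pushforwards of the analogous properties in \Cref{def:weak_ctrl}; the independence of $(X_0,W)$ from $\Gcb_T$ under $\Pb^\gamma$ follows from the independence of $\Fc_0^\gamma \vee \sigma(W^\gamma)$ and $\Gc^\gamma_T$ under $\P^\gamma$; and the conditional-law identity \eqref{eq:muh_property} is the image on $\Omb$ of the defining property of $\muh^\gamma$ as a regular $\Gc^\gamma_T$-conditional law, truncated at time $t$. The martingale property would then be obtained by It\^o's formula applied to $\varphi(X^\gamma_\cdot, Y^\gamma_\cdot, W^\gamma_\cdot, B^\gamma_\cdot)$, using the SDE in \Cref{def:weak_ctrl}(iv) together with $\mathrm{d}Y^\gamma_t = \mathrm{d}X^\gamma_t - \sigma_0(\cdots)\mathrm{d}B^\gamma_t$, which produces exactly the generator $\overline\Lc$ of \eqref{eq:first_generator}. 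The condition $\Pb^\gamma[\Lambda \in \M_0]=1$ is immediate from the Dirac form of $\Lambda^\gamma$.

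For the reverse inclusion, given $\Pb \in \Pcb_W(\nu)$, I would take the canonical setup $(\Omb,\Fcb,\Pb)$ equipped with the filtrations $\Fb$ and $\Gb$, and set $X^\gamma := X$, $W^\gamma := W$, $B^\gamma := B$; since $\Pb[\Lambda \in \M_0]=1$, a measurable selection yields an $\Fb$-predictable $A$-valued process $\alpha^\gamma$ with $\Lambda_t = \delta_{\alpha^\gamma_t}$. To recover the SDE in \Cref{def:weak_ctrl}(iv), I would invoke the martingale problem of \Cref{def:admissible_ctrl_rule}(iii) with $\varphi$ ranging over coordinate functions and their pairwise products: this forces $(W,B)$ to be a standard $\Fb$-Brownian motion of the correct dimension, fixes $Y = X - \int_0^\cdot \sigma_0(\cdots)\mathrm{d}B$, and produces the decomposition $\mathrm{d}X = b\mathrm{d}t + \sigma\mathrm{d}W + \sigma_0\mathrm{d}B$ by the standard argument identifying drifts and diffusion coefficients via cross-variations. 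Setting $\mu^\gamma_t$ and $\mub^\gamma_t$ by the analogues of \eqref{eq:muh2mu}, and reading the desired conditional-law identities off from appropriate marginals of \eqref{eq:muh_property}, completes the construction of $\gamma$.

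The hardest part will be verifying the $(H)$--hypothesis type condition \eqref{eq:H_Hypothesis} in the reverse direction. The key observation is that $\muh_t$ is $\Gcb_t$-measurable by the very definition of the filtration $\Gb$, while \eqref{eq:muh_property} identifies it with the regular $\Gcb_T$-conditional law of $(X_{t\wedge\cdot}, Y_{t\wedge\cdot}, \Lambda^t, W)$; since a $\Gcb_T$-measurable regular conditional law that is already $\Gcb_t$-measurable coincides with the $\Gcb_t$-conditional law of the same vector, the conditioning on $\Gcb_T$ and on $\Gcb_t$ yield the same result for any event built from $(X_{t\wedge\cdot}, Y_{t\wedge\cdot}, \Lambda^t, W)$. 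Events in $\overline{\Fc}_t \vee \sigma(W)$ are, modulo $\Gcb_t$, exactly of this type, so $\E^\Pb[\mathbf{1}_D | \Gcb_t] = \E^\Pb[\mathbf{1}_D | \Gcb_T]$ for every such $D$; combining this with \Cref{def:admissible_ctrl_rule}(ii) also delivers the $\Fc_0^\gamma \vee \sigma(W^\gamma)$-independence required in \Cref{def:weak_ctrl}(iii). Once the bijective correspondence between $\Gamma_W(\nu)$ and $\Pcb_W(\nu)$ is established, the value identity $V_W(\nu) = \sup_{\Pb \in \Pcb_W(\nu)} J(\Pb)$ follows immediately from the equality $J(\gamma) = J(\Pb^\gamma)$.
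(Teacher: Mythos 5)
Your proposal is correct and follows the same correspondence-between-formulations approach that the paper invokes, via \cite[Lemma 4.3, Corollary 4.5]{djete2019mckean}; the paper itself only sketches this by citation, so your write-up is essentially the detailed expansion of that argument. The delicate point you single out — verifying the $(H)$--hypothesis type condition \eqref{eq:H_Hypothesis} in the reverse direction — is indeed the crux, and your observation that $\muh_t$ is $\Gcb_t$--measurable by construction of $\Gb$, so that the tower property identifies the $\Gcb_T$-- and $\Gcb_t$--conditional laws of $(X_{t\wedge\cdot},Y_{t\wedge\cdot},\Lambda^t,W)$, is precisely what is recorded in \eqref{eq:muh_property-details}. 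Two small points worth making explicit in a full write-up: (a) in the forward direction, the martingale property of $\Sb^\varphi$ passes from $\F^\gamma$ to $\Fb$ because the pulled-back canonical filtration is contained in $\F^\gamma$, and for this you need $\muh^\gamma_t$ to be $\Fc^\gamma_t$--measurable, which itself uses the $(H)$--hypothesis on $\gamma$; and (b) in the reverse direction, the extension from product-type events $f(X_{t\wedge\cdot},Y_{t\wedge\cdot},\Lambda^t,W)\, g(B_{t\wedge\cdot},\muh_{t\wedge\cdot})$ to all of $\Fcb_t\vee\sigma(W)$ requires a monotone class step, which you gesture at ("modulo $\Gcb_t$") but should state.
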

	\begin{proof}
		With a slight extension of {\cite[Lemma 4.3]{djete2019mckean}} by taking into account the process $Y$ and the small changes in the presentation of the definition of weak controls $\Gamma_W(\nu)$),
		every weak control rule $\Pb\in \Pcb_W(\nu)$, together with the canonical space $\Omb$ and canonical processes, can be viewed as a weak control $\gamma \in \Gamma_W(\nu)$.
		Conversely, every weak control $\gamma$ induces a weak control rule $\Pb \in \Pcb_W(\nu)$ on the canonical space. 
		It follows that \eqref{eq:redef_VW} holds true (see also {\cite[Corollary 4.5]{djete2019mckean}}).
	\end{proof}

	\begin{remark}
		By {\rm \Cref{prop:strong_canonical}}, it is straightforward to see that for all $\nu \in \Pc_p(\R^n)$
		\begin{align*}
			\Pcb_S(\nu)
			=
			\big\{
				\Pb \in \Pcb_W(\nu):\exists\; \phi \in \Lc_0[A],\;\Pb\big[\Lambda_t(\mathrm{d}a)\mathrm{d}t=\delta_{\phi(t,X_0,W_{t \wedge \cdot},B_{t \wedge \cdot})}(\mathrm{d}a)\mathrm{d}t \big]=1
			\big\}.
		\end{align*}
		In particular, as expected, any strong control rule is also a weak control rule, i.e.
		$
			\Pcb_S(\nu) 
			\subset
			\Pcb_W(\nu).
		$
	\end{remark}

\subsubsection{The relaxed formulation}

	In the classical optimal control theory, the set of relaxed control rules has been introduced to recover a closed and convex set, while ensuring that its elements could be appropriately approximated by strong or weak control rules. The point was that it then becomes easier in this formulation to deduce the existence and stability properties of the optimal solution, while ensuring under mild conditions that the value of the problem is not modified.
	In our context, when the coefficient functions $(b, \sigma, \sigma_0,L,g)$ do not depend on the marginal distribution $\nub$ or $\nu$, so that the control problem degenerates to the classical one,
	the relaxed control rule coincides with the admissible control rule $\Pcb_A(\nu)$ in \Cref{def:admissible_ctrl_rule} (or equivalently \Cref{def:weak_ctrl_rule} by removing the constraint $\Pb[ \Lambda \in \M_0] = 1$).
	For general McKean--Vlasov control problems, it is not hard to prove that $\Pcb_A(\nu)$ is closed and convex.
	However, in general, it is not the closure of the set of strong or weak control rules in the context with common noise (see \Cref{ex_relaxed-admissible} below).
	This motivated us to consider a more restrictive case, where the common noise is not controlled, for which we are able to provide an appropriate relaxed control rule set as a subset of $\Pcb_A(\nu)$, which is both convex and the closure of $\Pcb_S(\nu)$ or $\Pcb_W(\nu)$.

	\begin{assumption}\label{assum:constant_case}
		There exist Borel measurable functions $(b^{\circ}, \sigma^{\circ}, L^{\circ}): [0,T] \x \Cc^n \x \Pc(\Cc^n) \x A \longrightarrow \R^n \x \S^{n \x d}$ and $\sigma^{\circ}_0: [0,T] \x \Cc^n \x \Pc(\Cc^n) \longrightarrow \S^{n \x \ell}$ such that, 
		for all $(t,\xb,\nub,a) \in [0,T] \x \Cc^n \x \Pc(\Cc^n \x A) \x A$, with $\nu(\mathrm{d}\xb) :=\nub(\mathrm{d}\xb,A)$
		\[
			(b,\sigma, L)(t,\xb,a,\nub)
			=
			(b^{\circ},\sigma^{\circ}, L^{\circ})(t,\xb,a,\nu),
			\;\mbox{\rm and}\;
			\sigma_0(t,\xb,a,\nub)=\sigma^{\circ}_0(t,\xb,\nu).
		\]
		By abuse of notations, we still write $(b, \sigma, L, \sigma_0)$ in lieu of $(b^{\circ},\sigma^{\circ}, L^{\circ}, \sigma^{\circ}_0)$.
	\end{assumption}    

	We next introduce a martingale problem on $(\Omh,\Fch)$. 
	For any $(t,\xb,\nu,a) \in [0,T] \x \Cc^n \x \Pc(\Cc^n) \x A$, let
	\[
		\hat b \big(t,\xb, \nu,a \big)
		:=
		\begin{pmatrix}
		b(t,\xb,\nu,a) \\ 0_d
		\end{pmatrix},\; 
		\hat a \big(t, \xb, \nu, a \big)
		:=
		\begin{pmatrix} 
			\sigma(t,\xb,a,\nu)  \\ 
			\mathrm{I}_{d} 
		\end{pmatrix}
		\begin{pmatrix} 
			\sigma(t,\xb,a,\nu)  \\ 
			\mathrm{I}_{d}
		\end{pmatrix}^{\top},\]
	and then, for all $\varphi \in C^2_b(\R^{n + d})$ and $(t,\xb,\yb,\wb, \nu, a)\in[0,T]\times\Cc^n\times\Cc^n\times\Cc^d\times\Pc(\Cc^n)\times A$,
	let
	\begin{align} \label{eq:conditionnal_generator}		
		\widehat \Lc_t \varphi \big( \xb, \yb, \wb, \nu, a \big)
		:=
		\hat b(t, \xb, \nu, a)\cdot \nabla\varphi(\yb(t),\wb(t))
		+ 
		\frac{1}{2}\mathrm{Tr}\big[ \hat a(t, \xb, \nu, a) \nabla^2 \varphi(\yb(t),\wb(t))\big].
	\end{align}
	Then given  a family $(\nu(t))_{0 \le t \le T}$ of  probability measures in $\Pc(\Cc^n)$ such that $ [0,T]\ni t \longmapsto \nu(t) \in \Pc(\Cc^n)$ is Borel measurable, and $\varphi \in C^2_b(\R^{n + d})$, we introduce a process $(\widehat{S}^{\varphi, \nu}_t)_{t \in [0,T]}$ on $(\Omh,\Fch)$ by
	\begin{equation}\label{eq:Mvarphi}
		\widehat{S}^{\varphi, \nu}_t
		:=
		\varphi \big(\Yh_t, \Wh_t \big)-\varphi(\Yh_0, \Wh_0)
		-
		\iint_{[0,t]\x A}  \widehat \Lc_s \varphi\big(\Xh, \Yh, \Wh, \nu(s), a \big) \Lambdah_s(\mathrm{d}a)\mathrm{d}s,
	\end{equation}
	where for a Borel function $\phi:[0,T] \to \R,$ we write $\int_0^{\cdot} \phi(s)\mathrm{d}s:=\int_0^\cdot \phi^{+}(s)\mathrm{d}s-\int_0^\cdot \phi^{-}(s)\mathrm{d}s$ with the convention $\infty - \infty = -\infty.$

	\begin{definition}[Relaxed control rule]
	\label{def:relaxed_ctrl_rule}
		Let $\nu \in \Pc_p(\R^n)$. A probability measure $\Pb \in \Pc(\Omb)$ is called a relaxed control rule with initial condition $\nu$, if 
		$\Pb \in \Pcb_A(\nu)$, and moreover, for $\Pb$--a.e. $\omb \in \Omb$,
		the process $\widehat{S}^{\varphi, \mu(\omb)}$ is an  $\big(\Fh,\muh(\omb) \big)$--martingale for each $\varphi \in C^2_b(\R^n \x \R^d)$,
		where $\mu(\omb) := (\mu_t(\omb))_{t \in [0,T]}$ is defined from $\muh(\omb)$ in \eqref{eq:muh2mu}.
    	\end{definition}
	
	Let $\Pcb_R(\nu)$ be the set of all relaxed control rules with initial condition $\nu$, i.e. 
		\[
			\Pcb_R(\nu)
			:=
			\Big\{
				\Pb \in \Pcb_A(\nu): \Pb\;\mbox{--a.e.}\;\omb \in \Omb,\;\big( \widehat{S}^{\varphi, \mu(\omb)}_t\big)_{t \in [0,T]}\;\mbox{\rm is an}\;\big(\Fh,\muh(\omb) \big)\mbox{--martingale for each}\;\varphi \in C^2_b(\R^n \x \R^d)
			\Big\}.
		\]
	The relaxed formulation of the McKean--Vlasov control problem is then defined by, with $J\big(\Pb\big)$ given in \eqref{eq:def_VS-canon},
	\[
		V_R(\nu) := \sup_{\Pb \in \Pcb_R(\nu)} J\big(\Pb\big).
	\]
	
	\begin{remark} \label{rem:ContinuityJ}
		Under {\rm \Cref{assum:constant_case}}, the reward function $L$ depends on $\nu$ $($instead of $\nub)$.
		In this case, and in contrast to  the general situation in {\rm \Cref{rem:nonContinuityJ}}, 
		the map $\Pc(\Cc^n \x \Cc^n \x \Cc^d \x \M) \ni  \muh \longmapsto \delta_{\mu_t}(\mathrm{d} \nu )\mathrm{d}t \in \M(\Cc^n)$ is continuous, 
		so that $\Pb \longmapsto J(\Pb)$ is lower semi--continuous $($resp. continuous$)$ as soon as $L$ and $g$ are lower semi--continuous and bounded from below $($resp. continuous and bounded$)$.
	\end{remark}
    
	We observe that $\Pcb_R(\nu) \subseteq \Pcb_A(\nu)$ by definition,
	the next example shows that $\Pcb_R(\nu)$ is a proper subset of $\Pcb_A(\nu)$.

	\begin{example} \label{ex_relaxed-admissible}
		Let us consider the case where: $n=d=\ell=1,$ $\nu=\delta_{0},$ $A = \{a_1, a_2\} \subset \R,$ $b=0,$ $\sigma(t,\xb,a,\nub)=a\mathrm{I}_{n},$ and $\sigma_0=\mathrm{I}_{n}$. 
		Consider a filtered probability space $(\Om^\star,\Fc^\star, \F^\star, \P^\star)$ supporting an $\R^{d+d+\ell}$--valued standard Brownian motion $(W^1,W^2,B^\star)$,
		let
		\[
			X^\star_t:= a_1 \frac{\sqrt{2}}{2} W^1_t + a_2  \frac{\sqrt{2}}{2} W^2_t + B^\star_t,
			\;
			W^\star_t:=  \frac{\sqrt{2}}{2} W^1_t +  \frac{\sqrt{2}}{2} W^2_t,
			\;
			\overline W^\star_t:=  \frac{\sqrt{2}}{2} W^1_t -  \frac{\sqrt{2}}{2} W^2_t,
			\;
			\Gc^\star_t := \sigma \big((B^\star_s, \overline W^\star_s): 0\leq s \le t \big).
		\]
		By setting  $Y^\star_\cdot:= X^\star_\cdot - B^\star_\cdot$ and $\Lambda^\star_t(\mathrm{d}a)\mathrm{d}t:=\frac{1}{2}\delta_{a_1}(\mathrm{d}a)\mathrm{d}t+ \frac{1}{2}\delta_{a_2}(\mathrm{d}a)\mathrm{d}t$,
		it is direct to check that
		\begin{align*}
		    \Pb
		    :=
		    \Lc^{\P^\star} \Big( X^\star, Y^\star, \Lambda^\star, W^\star, B^\star, \Lc^{\P^\star} \big( X^\star, Y^\star, \Lambda^\star, W^\star, B^\star \big| \Gc^\star_T \big) \Big)  \in\Pcb_A(\nu).
		\end{align*}
		However, one observes that
		\[
			Y^\star_\cdot = \bigg( \frac12 a_1 + \frac12 a_2 \bigg) W^\star_\cdot + \bigg( \frac12 a_1 - \frac12 a_2 \bigg) \overline W^\star_\cdot,
		\]
		is not an It\^o process under the conditional law $\P^\star$ knowing $\Gc^\star_T$.
		Consequently, one has $\Pb \notin \Pcb_R(\nu)$.
	\end{example}    
	
	\begin{remark}
		$(i)$ The martingale problem under $\Pb$ in {\rm \Cref{def:admissible_ctrl_rule}} involves conditional distributions in the coefficient functions,
		which creates some regularity problem in the approximation procedure, since conditional distributions are not continuous with respect to joint distributions.
		By considering the conditional martingale problem under $\muh(\omb)$ in {\rm \Cref{def:relaxed_ctrl_rule}}, 
		the $\mu(\omb)$ term in the coefficient functions becomes deterministic, which in turn allows to avoid the regularity problem.
		This $($conditional$)$ martingale problem is partially inspired from a technical proof of {\rm\cite{lacker2017limit}},
		but in our context with common noise, we need to consider a family of martingale problems, and deal with some non--trivial measurability issues.
		Notice also that the canonical processes $Y$ and $\Yh$ do not play an essential role in the strong or weak formulations, 
		but they are crucially used in the conditional martingale problem in {\rm\Cref{def:relaxed_ctrl_rule}}.

		\medskip
		
		$(ii)$ With our techniques, we are only able to prove the equivalence $V_W=V_R$ $($c.f. {\rm\Cref{thm:equivalence}}$)$, as well as the desired approximation results, under {\rm\Cref{assum:constant_case}}.
		For more general cases, it seems to be a very challenging problem that we would like to leave for future research. 
		We nonetheless point out the fact that the great majority of the extant literature on either mean--field games or McKean--Vlasov control problems with common noise, does not allow for $\sigma_0$ or $\sigma$ to be controlled as well, see for instance {\rm\citeauthor*{ahuja2016wellposedness} \cite{ahuja2016wellposedness}, \citeauthor*{bensoussan2015master} \cite{bensoussan2015master}, \citeauthor*{cardaliaguet2015master} \cite{cardaliaguet2015master}, \citeauthor*{carmona2013mean2} \cite{carmona2013mean2}, \citeauthor*{carmona2014mean} \cite{carmona2014mean}, \citeauthor*{graber2016linear} \cite{graber2016linear}, \citeauthor*{gueant2011mean} \cite{gueant2011mean}, \citeauthor*{kolokoltsov2019mean} \cite{kolokoltsov2019mean}, \citeauthor*{lacker2016general} \cite{lacker2016general}}, and  {\rm\citeauthor*{lacker2015translation} \cite{lacker2015translation}}. Notable exceptions are {\rm\citeauthor*{carmona2014master} \cite{carmona2014master}}, though the discussion in the general setting remains at a rather informal level there, the monograph by {\rm\citeauthor*{carmona2018probabilisticII} \cite{carmona2018probabilisticII}}, although all the main results given have uncontrolled common noise, {\rm\citeauthor*{pham2016dynamic} \cite{pham2016dynamic}}, though the problem is considered in a Markovian setting, with feedback controls, and no limit theory is explored, {\rm\citeauthor*{pham2016linear} \cite{pham2016linear}} and {\rm\citeauthor*{yong2013linear} \cite{yong2013linear}} where only linear quadratic problems are considered, {\rm\citeauthor*{bayraktar2016randomized} \cite{bayraktar2016randomized}}, though no limit theory is addressed there as well, and our companion paper {\rm\cite{djete2019mckean}}, which encompasses the last two mentioned ones. We would also like to highlight the recent work of {\rm\citeauthor*{acciaio2018extended} \cite{acciaio2018extended}} which derives a general stochastic Pontryagin maximum principle for McKean--Vlasov control problems in strong formulation without common noise, where the coefficients depend on the joint law of the control and the state process. The authors also consider a weak formulation for their problem, but with uncontrolled volatility and for a drift which does not depend on the law of the controls, deriving again a stochastic maximum principle. Finally {\rm\citeauthor*{elie2016tale} \cite{elie2016tale}} considers a contract theory problem with a principal and mean--field agents, without common noise and where only the drift is controlled but can depend on the law of the controls, as well as {\rm\citeauthor*{elie2019mean} \cite{elie2019mean}} which also considers a contract theory problem, but with common noise and volatility controls.
	\end{remark}
	
	\begin{remark}
		As in {\rm\cite{djete2019mckean}}, our formulation covers the case without common noise by taking $\ell=0$ $($or $\sigma_0 \equiv 0)$. 
		Nevertheless, unlike  {\rm\cite{djete2019mckean}}, 
		we need to consider the case $\ell = 0$ separately $($see {\rm \Cref{thm:equivalence}} and {\rm \Cref{rem:3.2}.$(i)$} below$)$.
	\end{remark}

	We next show that $\Pcb_W(\nu) \subset \Pcb_R(\nu)$, where we use crucially the fact that $\muh_t$ is the conditional law of $(X_{t \wedge \cdot},Y_{t \wedge \cdot},\Lambda^t, W)$, and not only of $(X_{t \wedge \cdot}, \Lambda^t)$.

	\begin{proposition} \label{Proposition:property_WeakControl}
		Let $\nu \in \Pc_p(\Cc^n)$ and $\Pb \in \Pcb_A(\nu)$.
		Then for $\Pb$--almost every $\omb \in \Omb$, 
		$\Wh$ is an $\big(\Fh,\muh(\omb)\big)$--Brownian motion.
		In particular,  under {\rm\Cref{assum:constant_case}}, every $\Pb \in \Pcb_W(\nu)$ belongs to $\Pcb_R(\nu)$.
	\end{proposition}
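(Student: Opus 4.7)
The strategy is to establish the Brownian motion property via L\'evy's characterisation combined with the conditional independence hidden in \eqref{eq:muh_property}, and then obtain the second assertion through It\^o's formula under the conditional measure. Specialising the martingale condition in Definition~\ref{def:admissible_ctrl_rule}.$(iii)$ to $\varphi(x,y,w,b)=w_i$ and $\varphi(x,y,w,b)=w_iw_j$ identifies $W$ as an $(\Fb,\Pb)$--Brownian motion via L\'evy's theorem, so that $M^\varphi_t:=\varphi(W_t)-\varphi(W_0)-\tfrac12\int_0^t\Delta\varphi(W_u)\mathrm{d}u$ is an $(\Fb,\Pb)$--martingale for every $\varphi\in C^2_b(\R^d)$. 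Moreover, since $\muh_s$ is $\Gcb_s$--measurable, \eqref{eq:muh_property} is equivalent to \eqref{eq:muh_property-details}, which expresses the conditional independence $\sigma(X_{s\wedge\cdot},Y_{s\wedge\cdot},\Lambda^s,W)\perp\Gcb_T\mid\Gcb_s$ under $\Pb$; equivalently, $\E^{\Pb}[Z\mid\Gcb_T]=\E^{\Pb}[Z\mid\Gcb_s]$ $\Pb$--a.s. for every bounded $Z$ that is $\sigma(X_{s\wedge\cdot},Y_{s\wedge\cdot},\Lambda^s,W)$--measurable.

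Now fix $\varphi\in C^2_b(\R^d)$, $0\le s<t$, and a bounded $\Fch_s$--measurable $\psi$, identified on $\Omb$ with $\psi(X_{s\wedge\cdot},Y_{s\wedge\cdot},\Lambda^s,W_{s\wedge\cdot})$. Since $\psi\cdot(M^\varphi_t-M^\varphi_s)$ is $\sigma(X_{s\wedge\cdot},Y_{s\wedge\cdot},\Lambda^s,W)$--measurable, the conditional independence collapses the conditioning:
\[
\E^{\Pb}\big[\psi(M^\varphi_t-M^\varphi_s)\,\big|\,\Gcb_T\big]
=\E^{\Pb}\big[\psi(M^\varphi_t-M^\varphi_s)\,\big|\,\Gcb_s\big]
=\E^{\Pb}\big[\,\psi\cdot\E^{\Pb}[M^\varphi_t-M^\varphi_s\mid\Fb_s]\,\big|\,\Gcb_s\big]
=0,
\]
$\Pb$--a.s., using $\Gcb_s\subseteq\Fb_s$, the $\Fb_s$--measurability of $\psi$, and the $\Fb$--martingale property of $M^\varphi$. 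Translating to $\Omh$ via \eqref{eq:muh_property} yields $\E^{\muh(\omb)}[\psi(M^\varphi_t-M^\varphi_s)]=0$ for $\Pb$--a.e. $\omb$. A standard countable--dense--family argument (in $\varphi$, $\psi$, and rationals $s<t$) furnishes a single $\Pb$--null set outside of which $M^\varphi$ is an $(\Fh,\muh(\omb))$--martingale for every $\varphi\in C^2_b(\R^d)$, and L\'evy's characterisation concludes that $\Wh$ is an $(\Fh,\muh(\omb))$--Brownian motion.

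For the ``in particular'' statement, take $\Pb\in\Pcb_W(\nu)$: then $\Lambda_t(\mathrm{d}a)\mathrm{d}t=\delta_{\alpha_t}(\mathrm{d}a)\mathrm{d}t$, and Assumption~\ref{assum:constant_case} reduces $b$ and $\sigma$ to functions of $\mu$ rather than $\mub$, so that $Y_t=X_0+\int_0^tb(s,X_{s\wedge\cdot},\mu_s,\alpha_s)\mathrm{d}s+\int_0^t\sigma(s,X_{s\wedge\cdot},\mu_s,\alpha_s)\mathrm{d}W_s$ $\Pb$--a.s. Under $\muh(\omb)$ the path $\mu(\omb)$ is deterministic and, by the first part, $\Wh$ is an $(\Fh,\muh(\omb))$--Brownian motion; applying It\^o's formula to $\varphi(\Yh_t,\Wh_t)$, and observing that the construction of $\hat a$ is exactly the quadratic covariation matrix of $(\Yh,\Wh)$, identifies $\widehat{S}^{\varphi,\mu(\omb)}$ with a stochastic integral in $\mathrm{d}\Wh$, hence a local martingale that is upgraded to a genuine martingale by the $L^p$--bounds of Assumption~\ref{assum:main1}.$(ii)$ and Definition~\ref{def:admissible_ctrl_rule}.$(i)$, giving $\Pb\in\Pcb_R(\nu)$. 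The main obstacle throughout is reconciling the $\Gcb_T$--conditioning, which carries ``future'' information about the common noise and $\muh$, with the $\Fb$--martingale property of $W$: this is made legitimate only because \eqref{eq:muh_property} builds in the necessary $(H)$--hypothesis--type property, and because $\muh_s$ records the conditional law of the full path $W$ (not only $W_{s\wedge\cdot}$), so that the product $\psi(M^\varphi_t-M^\varphi_s)$ still lies in the $\sigma$--algebra where the conditional independence applies.
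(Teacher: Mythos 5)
Your proof is correct, and the crucial mechanism is the same as the paper's: the $(H)$--hypothesis--type condition in \eqref{eq:muh_property} (equivalently \eqref{eq:muh_property-details}) encodes the conditional independence $\sigma(X_{s\wedge\cdot},Y_{s\wedge\cdot},\Lambda^s,W)\perp\Gcb_T\mid\Gcb_s$, and the fact that $\muh_s$ records the conditional law of the \emph{full} path $W$ rather than only $W_{s\wedge\cdot}$ is exactly what keeps the increment functional inside that $\sigma$--algebra. You correctly isolate this as the pivotal point.

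The organisation differs slightly from the paper's. The paper first factorises $\E^{\Pb}[\phi(W_t-W_s)\varphi(\cdots)\mid\Gcb_s]$ to establish that $\Wh$ has independent increments with respect to $\Fh$ under $\muh(\omb)$, and separately uses the independence of $W$ from $(B,\muh)$ to conclude that $W$ is already a Brownian motion under $\Pb^{\Gcb_T}_{\omb}$, so that the increments are Gaussian and stationary; the two pieces are then assembled. You instead push the L\'evy characterisation martingale $M^\varphi$ through the conditional independence directly, obtaining the full martingale problem for $\Wh$ under $\muh(\omb)$ in a single stroke, which has the mild advantage of never explicitly invoking the independence $W\perp\Gcb_T$. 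Either route works; yours is closer to the style the authors themselves use in later martingale--problem verifications such as \Cref{Propostion:convergence}.

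One small imprecision: $\varphi(x,y,w,b)=w_i$ and $\varphi(x,y,w,b)=w_iw_j$ are not in $C^2_b$, so you cannot literally specialise \Cref{def:admissible_ctrl_rule}.$(iii)$ to them. The clean statement is that for $\varphi\in C^2_b(\R^d)$ viewed as a function of $w$ alone, the drift $\bar b$ has zero $w$--block and the $(w,w)$--block of $\bar a$ is $\mathrm{I}_d$, so $\Sb^\varphi$ reduces to your $M^\varphi$; being a martingale for every such $\varphi$ is precisely the Stroock--Varadhan martingale--problem characterisation of Brownian motion, which is what you then transfer to $\muh(\omb)$. The second assertion is handled as in the paper, with the same level of detail glossed over in transferring the $\Pb$--a.s.\ SDE for $Y$ to an SDE for $\Yh$ under $\muh(\omb)$, which both you and the authors treat as direct.
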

    
	\begin{proof}
		Let $\Pb \in \Pcb_A(\nu)$, $0 \le s \le t \le T,$ $\phi \in C_b(\R^d),$ $\varphi \in C_b(\Cc^n \x \Cc^n \x \M \x \Cc^d)$ and $\psi \in C_b(\Cc^\ell \x C([0,T]; \Cc^n \x \Cc^n \x \M \x \Cc^d))$. Notice that $W$ is an $(\Fb,\Pb)$--Brownian motion, independent of $\Gcb_T$ under $\Pb$. Therefore, it follows that
		\begin{align*}
			&\ \E^\Pb \big[ \phi (W_t-W_s) \varphi (X_{s \wedge \cdot},Y_{s \wedge \cdot},\Lambda^s,W_{s \wedge \cdot}) \psi (B_{s \wedge \cdot},\muh_{s \wedge \cdot}) \big]\\
			=&\ 
			\E^\Pb \big[ \phi (W_t-W_s) \big] \E^\Pb \Big[ \E^\Pb \big[\varphi (X_{s \wedge \cdot},Y_{s \wedge \cdot},\Lambda^s,W_{s \wedge \cdot}) \big| \Gcb_s \big] \psi (B_{s \wedge \cdot},\muh_{s \wedge \cdot}) \Big] \\
			=&\
			\E^\Pb \Big[ \E^\Pb \big[ \phi (W_t-W_s) \big| \Gcb_s \big] \E^\Pb \big[\varphi (X_{s \wedge \cdot},Y_{s \wedge \cdot},\Lambda^s,W_{s \wedge \cdot}) \big| \Gcb_s \big] \psi (B_{s \wedge \cdot},\muh_{s \wedge \cdot}) \Big].
		\end{align*}
        This implies that
        \[
            \E^{\Pb} \big[ \phi (W_t-W_s) \varphi (X_{s \wedge \cdot},Y_{s \wedge \cdot},\Lambda^s,W_{s \wedge \cdot}) \big| \Gcb_s \big]
            =
            \E^{\Pb} \big[ \phi (W_t-W_s) \big| \Gcb_s\big]\E^{\Pb} \big[ \varphi (X_{s \wedge \cdot},Y_{s \wedge \cdot},\Lambda^s,W_{s \wedge \cdot}) \big| \Gcb_s \big],\;\Pb\mbox{--a.s.}
        \]
        By \eqref{eq:muh_property} in  \Cref{def:admissible_ctrl_rule}, it follows that for $\Pb$--a.e. $\omb\in\Omb$
	\[
		\E^{\hat\mu(\omb)} \big[ \phi (\Wh_t- \Wh_s) \varphi (\Xh_{s \wedge \cdot},\Yh_{s \wedge \cdot},\Lambdah^s, \Wh_{s \wedge \cdot}) \big]
		=
		\E^{\hat\mu(\omb)} \big[ \phi (\Wh_t- \Wh_s) \big] ~ \E^{\hat\mu(\omb)} \big[ \varphi (\Xh_{s \wedge \cdot},\Yh_{s \wedge \cdot},\Lambdah^s, \Wh_{s \wedge \cdot}) \big].
	\]
	In other words, $\Wh$ has independent increments with respect to $\Fh$ under $\muh(\omb)$, for $\Pb$--almost every $\omb \in \Omb.$ 
	
	\medskip
	
	Further, notice that under $\Pb$,  $W$ is a Brownian motion independent of $(B, \muh)$, 
	then $W$ is still a Brownian motion under the conditional law of $\Pb$ knowing $\Gcb_T$.
	It follows that the continuous process $\Wh$ has independent and (Gaussian) stationary increment w.r.t. $(\Fh, \muh(\omb))$,
	and hence it is an $(\Fh,\muh(\omb))$--Brownian motion, for $\Pb$--a.e. $\omb \in \Omb$. 

	\medskip

	Now, let \Cref{assum:constant_case} hold true and $\Pb \in \Pcb_W(\nu).$
	Using the definition of weak control rules in \Cref{def:weak_ctrl_rule} (see also proof of Proposition \ref{prop:eqivalence_def_relaxed}), it is direct to deduce that for $\Pb$--a.e. $\omb \in \Omb$, $\Wh$ is an $\big(\Fh,\muh(\omb)\big)$--Brownian motion, and
        \begin{align*}
		\Yh_t
		=
		\Xh_0 + \int_0^t b \big(s, \Xh_{\cdot},\hat \alpha_s, \mu_s(\omb) \big) \mathrm{d}s 
			+ \int_0^t \sigma \big(s, \Xh_{\cdot}, \hat \alpha_s, \mu_s(\omb) \big) \mathrm{d}\Wh_s,
		~t \in [0,T], ~ \muh(\omb)\mbox{\rm--a.s.},
	\end{align*}
	where $(\hat \alpha_t)_{t \in [0,T]}$ is an $\Fh$--predictable process satisfying $\Lambdah_t(\mathrm{d}a)\mathrm{d}a=\delta_{\hat \alpha_t}(\mathrm{d}a)\mathrm{d}t.$
	It follows that, for $\Pb$--a.e. $\omb \in \Omb$, $\widehat{S}^{\varphi, \mu(\omb)}$ is an $\big(\Fh,\muh(\omb) \big)$--martingale for each $\varphi \in C^2_b(\R^n \x \R^d)$,
	and hence $\Pb \in \Pcb_R(\nu)$.
	\end{proof}

\subsection{A large population stochastic control problem with common noise}
\label{subsec:N_controlpb}
	One of the main objectives of this paper is to provide the limit theory for the McKean--Vlasov control problem, that is,  the problem $V_S(\nu)$ in \eqref{eq:def_VS} can be seen as the limit of a large population problem.
	Let $N$ be a positive integer, we consider the canonical space
	\[ 
		\Om^N 
		:=
		\big(\R^n \x \Cc^{d} \big)^N \x \Cc^{\ell},
	\]
	with canonical process $\big((X_0^1, \dots, X_0^N), (W^1, \dots, W^N), B \big)$
	and canonical filtration $\F^N := (\Fc^N_t)_{0 \le t \le T}$ defined by
	\[
		\Fc^N_t
		:=
		\sigma \big( (X_0^i, W^i_s, B_s): i\in\{1, \dots, N\}, \; s \in [0,t] \big),\; t\in[0,T].
	\]
	Fix some $(\nu^1,\dots,\nu^N) \in \Pc_p(\R^n)^N$, and define $\nu_N:=\nu^1 \otimes \dots \otimes \nu^N$ the corresponding product measure. 
	We consider the probability measure $\P^N_{\nu}$ on $\big(\Om^N, \Fc^N)$ with $\Fc^N:= \Bc(\Om^N)$,
	under which $X_0 := (X_0^1, \dots, X_0^N) $\ has distribution $ \nu_N$, and $(W^1, \dots, W^N, B)$ is a standard Brownian motion, independent of $X_0$.
	%{\color{black}Let us denote by $\F^{N, \nu_N}$ the $\P^N_{\nu}$--augmentation of $\F^N$},
	Let us denote by $\Ac^N_p(\nu_N)$ the collection of all processes $\alpha := (\alpha^i)_{i=1, \dots, N}$,
	where each $\alpha^i := (\alpha^i_t)_{0 \le t \le T}$ is an $A$--valued, $\F^N$--predictable process satisfying
	\[
		\E^{\P^N_{\nu}} \bigg[ \int_0^T \big( \rho(\alpha^i_s, a_0) \big)^p \mathrm{d}s \bigg] 
		< \infty.
	\]
	Then under standard Lipschitz conditions on the coefficient functions (see \Cref{assum:main1}), for every fixed  $(\alpha^1,\dots,\alpha^N) \in \Ac^N_p(\nu_N)$, 
	there is a unique $(\R^n)^N$--valued $\F^N$--adapted continuous process $(X^{\alpha,1}, \dots, X^{\alpha,N})$ satisfying: for $i\in\{1,\dots,N\},$ $\E^{\P^N_{\nu}}\big[\|X^{\alpha,i}\|^p \big]< \infty$ and
	\begin{equation} \label{eq:N-agents_StrongMV_CommonNoise}
		X^{\alpha,i}_t
		=
		X_0^i
		+
		\int_0^t b\big(s, X^{\alpha,i}_{s\wedge\cdot},\varphi^{N}_s ,\alpha^i_s \big) \mathrm{d}s
		+
		\int_0^t\sigma \big(s,X^{\alpha,i}_{s\wedge\cdot},\varphi^{N}_s ,\alpha^i_s \big) \mathrm{d}W^i_s
		+
		\int_0^t\sigma_0 \big(s,X^{\alpha,i}_{s\wedge\cdot},\varphi^{N}_s, \alpha^i_s \big) \mathrm{d}B_s,\; t\in[0,T],
	\end{equation}
	with
	\[
		\varphi^{N}_{s}(\mathrm{d} \xb, \mathrm{d}a) := \frac{1}{N}\sum_{i=1}^N \delta_{(X^{\alpha,i}_{s\wedge \cdot},\alpha^i_s )}(\mathrm{d} \xb, \mathrm{d}a),
		~\mbox{and}~
		\varphi^{N,X}_{s}(\mathrm{d} \xb) := \frac{1}{N}\sum_{i=1}^N \delta_{X^{\alpha,i}_{s\wedge \cdot}}(\mathrm{d} \xb),
		~
		s \in [0,T].
	\]
	The value function of the large population stochastic control problem is then defined by
	\begin{equation} \label{eq:N-Value_StrongForm}
		V^{N}_S(\nu^1,\dots,\nu^N)
		:=
		\sup_{\alpha\in \Ac^N_p(\nu_N)}
		J_N(\alpha),
		\;\mbox{where}\;
		J_N(\alpha)
		:=
		\frac{1}{N}\sum_{i=1}^N
		\E^{\P^N_{\nu}} \bigg[
		\int_0^T L\big(t,X^{\alpha,i}_{t\wedge\cdot},\varphi^{N}_{t} ,\alpha^i_t \big) \mathrm{d}t 
		+ 
		g \big( X^{\alpha,i}_{T\wedge\cdot}, \varphi^{N,X}_{T} \big)
		\bigg].
	\end{equation}

\section{Main results}    
\label{sec:MainResults}

	Let us now provide the main results of the paper.
	The first one consists in the equivalence between different formulations of the McKean--Vlasov control problem.
	Recall that the constants $p$, $p^\prime$, and $\hat p$ are fixed in {\rm\Cref{assum:main1}}.
	\begin{theorem} \label{thm:equivalence}
		$(i)$ 
		Let  {\rm\Cref{assum:main1}} hold true. Then, for every $\nu \in \Pc_p(\R^n)$, the set $\Pcb_W(\nu)$ is non--empty and convex.
		Suppose in addition that {\rm\Cref{assum:constant_case}} holds true,
		then $\Pcb_R(\nu)$ is a non--empty convex closed subset of $\Pc_p(\Omb)$, under the Wasserstein topology $\Wc_p$.  

		\vspace{0.5em}

		$(ii)$ Let {\rm\Cref{assum:main1}} hold true. 
		Then for every $\nu \in \Pc_p(\R^n)$, one has $V_S(\nu) = V_W(\nu)$. 
		If in addition $\ell \neq 0$, then every weak control rule in $\Pcb_W(\nu)$ is the limit of a sequence of strong control rules in $\Pcb_S(\nu)$, 
		under the Wasserstein distance $\Wc_p$ on $\Pc_p(\Omb)$.
		
		\vspace{0.5em}
		
		$(iii)$ Let  {\rm\Cref{assum:main1}} and {\rm\Cref{assum:constant_case}} hold true, $\nu \in \Pc_{p^{\prime}}(\R^n)$, and $A \subset \R^j$ for some $j \ge 1$.
		Then the set $\Pcb_W(\nu)$ is dense in the closed set $\Pcb_R(\nu)$ under $\Wc_p$,
		and consequently
		\[
			V_S(\nu)=V_W(\nu)= V_R(\nu).
		\]
		If, in addition, $L$ and $g$ are continuous in all arguments, there exists some $\Pb^\star \in \Pcb_R(\nu)$ such that $V_R(\nu)=J\big(\Pb^\star\big)$.
	\end{theorem}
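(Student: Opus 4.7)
The statement has three parts, which we address in order using progressively stronger inputs.

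For part $(i)$, non--emptiness of $\Pcb_W(\nu)$ follows from the strong well--posedness of \eqref{eq:MKV_SDE} under \Cref{assum:main1}: a fixed constant control $\alpha_t \equiv a_0$ generates a strong control rule, and $\Pcb_S(\nu) \subset \Pcb_W(\nu)$. Under \Cref{assum:constant_case}, \Cref{Proposition:property_WeakControl} then gives $\Pcb_S(\nu) \subset \Pcb_R(\nu)$, so $\Pcb_R(\nu)$ is also non--empty. For convexity, each defining property of an admissible/weak/relaxed control rule is stable under mixtures $\lambda \Pb_1 + (1-\lambda) \Pb_2$: the initial distribution is linear in the law, the martingale property of $\Sb^\varphi$ (resp.\ of $\Mh^{\varphi,\mu(\omb)}$) passes through convex combinations by linearity of conditional expectations on a common filtration, the constraint $\Pb[\Lambda \in \M_0]=1$ is a $0$--$1$ set constraint, and the identification \eqref{eq:muh_property} is preserved because $\muh$ is itself a canonical coordinate of $\Omb$ which is $\Gcb_T$--measurable, so the conditional distribution given $\Gcb_T$ under the mixture splits along the $\muh$--fibres and agrees a.s.\ with the canonical $\muh$ under each $\Pb_i$. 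For closedness of $\Pcb_R(\nu)$, the growth and coercivity conditions of \Cref{assum:main1}$(ii)$--$(iii)$ give a uniform $\Wc_p$--moment bound; passing to the limit in each martingale identity uses continuity of $(b,\sigma)$ in all arguments together with \Cref{assum:constant_case}, which ensures only the marginal $\mu$ enters the coefficients, and $\muh \longmapsto \mu$ is continuous (cf.\ \Cref{rem:ContinuityJ}).

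For part $(ii)$, $V_S \le V_W$ is immediate from $\Pcb_S(\nu) \subset \Pcb_W(\nu)$. For the converse I would approximate any $\Pb \in \Pcb_W(\nu)$ in $\Wc_p$ by strong control rules. The plan is the two--step strategy alluded to by the lemmas \ref{lemma:strongApp_step1}--\ref{lemma:strongApp_step2}: first discretise in time to make the control piecewise constant on a grid, its value on $[t_k,t_{k+1})$ being a measurable function of finitely many random variables extracted from $\Fc^\gamma_{t_k}$; second, use the $(H)$--hypothesis condition \eqref{eq:H_Hypothesis} to show that these auxiliary random variables may be taken independent of the future increments of $W^\gamma$, and then invoke a measurable selection / density argument (in the spirit of the classical weak--to--strong approximation) to replace them by Borel functionals of $(X_0, W_{t\wedge\cdot}, B_{t\wedge\cdot})$. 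Passing to the limit in the McKean--Vlasov SDE along this approximation uses the Lipschitz bound of \Cref{assum:main1}$(i)$ to control the perturbation in $\mub^\gamma$; the lower semicontinuity of $L$ and $g$ then yields convergence of $J$. When $\ell \neq 0$, the above produces the stated $\Wc_p$--density $\Pcb_S(\nu) \subset \overline{\Pcb_W(\nu)}$; when $\ell = 0$ the common noise filtration is trivial and the equality of values can still be obtained, but the density statement needs not hold in that form.

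For part $(iii)$ I would use the chattering/mimicking strategy of \citeauthor*{el1987compactification} \cite{el1987compactification}, which is exactly the argument the authors advertise to bypass the gap in \cite{meleard1992representation}. Given $\Pb \in \Pcb_R(\nu)$ and the $\Fh$--predictable process $(\Lambdah_t)_{t \in [0,T]}$ on $\Omh$, averaging on a small mesh and combining with a measurable selection (valid since $A \subset \R^j$) produces a sequence of Dirac--supported relaxed kernels $\delta_{\alpha^n_t}(\mathrm{d}a) \mathrm{d}t$ converging weakly to $\Lambdah_t(\mathrm{d}a) \mathrm{d}t$; under \Cref{assum:constant_case} the coefficients only see $\mu(\omb)$, which is continuous in $\muh$, so the induced laws lie in $\Pcb_W(\nu)$ and converge to $\Pb$. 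The uniform $p$--moment control needed for $\Wc_p$--convergence is provided by $\nu \in \Pc_{p'}(\R^n)$ and the coercive term $-C_L \rho(a_0,a)^{p'}$ in \eqref{eq:cond_coercive}. Combining with part $(ii)$ yields $V_S = V_W \le V_R$, while $V_R \le V_W$ follows from the density and the lower semicontinuity of $J$ on $\Pcb_R(\nu)$ under \Cref{assum:constant_case}. Existence of an optimal $\Pb^\star$ is then a standard Weierstrass argument: the coercivity gives tightness of any $\varepsilon$--optimising sequence in $\Pc_p(\Omb)$, the closedness of $\Pcb_R(\nu)$ from $(i)$ identifies the limit, and continuity of $L$, $g$ upgrades lower semicontinuity of $J$ to continuity along the sequence, so that a maximiser is reached.

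I expect the main obstacle to be the strong--to--weak approximation argument in part $(ii)$: the combination of the conditional law $\mub^\gamma$ in the coefficients (which is not continuous in the joint law), the need to preserve the $(H)$--hypothesis \eqref{eq:H_Hypothesis} along the approximation, and the requirement that the limiting control be a Borel function of $(X_0, W, B)$ only, force a rather delicate use of the martingale representation and measurable selection on an enlarged canonical space. The chattering step for part $(iii)$ is also a genuine technical step, but it essentially mirrors \cite{el1987compactification} once \Cref{assum:constant_case} is imposed; the subtlety is that one must carry out the chattering conditionally on the common noise, not pathwise.
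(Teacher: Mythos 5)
Your proposal is correct and follows essentially the same approach as the paper: the convexity/closedness argument for part $(i)$, the two-lemma weak-to-strong approximation plus lower semicontinuity of $J$ along the enhanced convergence for part $(ii)$, and the \cite{el1987compactification}-style chattering carried out conditionally on the common noise for part $(iii)$ all match the paper's strategy. The one mechanism your sketch leaves implicit in part $(ii)$ — and it is the crux of why $\ell \neq 0$ admits direct $\Wc_p$--approximation while $\ell = 0$ needs a convex combination — is that the paper \emph{freezes} the controlled dynamics on $[0,\eps_m]$ (\Cref{lemma:strongApp_step1}) precisely so that the increment $B_{\eps_m}$ of the common noise becomes a free source of auxiliary uniform randomness $U^m$ with which to realise the $\Gcb_T$--measurable quantities as Borel functionals of $(X_0,W,B)$ in \Cref{lemma:strongApp_step2}; also, a small imprecision in your part $(i)$: closedness of $\Pcb_R(\nu)$ needs only the polynomial growth bounds \eqref{eq:Smartingale_growth}--\eqref{eq:Mmartingale_growth} to pass to the limit in the martingale identities, not the coercivity in \eqref{eq:cond_coercive} (which is used for tightness of optimising sequences, not closedness).
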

	
	\begin{remark}\label{rem:3.2}
		When $\ell =0$, or $\ell \neq 0$ and $\sigma_0=0$, the $($strong formulation of the$)$ McKean--Vlasov control problem \eqref{eq:def_VS} or \eqref{eq:def_VS-canon}, reduces to the non--common noise context.
		However, in the weak formulation \eqref{eq:redef_VW}, the $($conditional$)$ distribution term $\muh$ may still be random under a weak control rule $\Pb \in \Pcb_W(\nu)$.
		In the case $\ell \neq 0$ and $\sigma_0=0$, the Brownian motion $B$ can be seen as an external noise in \eqref{eq:def_VS-canon}, which allows to track the randomness of $\muh$ and approximate a weak control rule $\Pb \in \Pcb_W(\nu)$ by strong control rules.
		This is also the main reason why we consider the case $\ell \neq 0$ separately in {\rm\Cref{thm:equivalence}.$(ii)$}.
	\end{remark}
		
	\begin{remark}
		For the equivalence result $V_W=V_R$, {\rm\Cref{Proposition:Continuity-Existence}}, and also {\rm\Cref{thm:limit}}, assume that $A \subset \R^j$. But this is by no means a crucial point. Roughly speaking, what we actually need is that the set $A$ can be appropriately approximated by compact sets. For instance, our results still hold if $A$ is a $\sigma$--compact space $($that is to say the union of countably many compact subspaces$)$. We assumed here that $A \subset \R^j$ for simplicity.
	\end{remark}

	\begin{remark}
		The results in {\rm\Cref{thm:equivalence}} extend those in the no--common noise setting in {\rm\citeauthor*{lacker2017limit} \cite{lacker2017limit}}.
		Nevertheless, we insist on the fact that the equivalence results, the formulation of the weak and relaxed control rules, and the technical proofs below are not merely extensions of those in {\rm \cite{lacker2017limit}}, and are in fact quite different.
		The main reason is that
		with the presence of the common noise, the $\mub^{\alpha}$ term in \eqref{eq:MKV_SDE}--\eqref{eq:def_VS} is a conditional distribution term, 
		which, in general, is not continuous with respect to the joint distribution of $(X^{\alpha}, \alpha, W^{\alpha}, B^{\alpha})$.
		Moreover, the equivalence result  $V_S = V_W$ is also crucially used to establish the dynamic programming principle in our companion paper {\rm\cite[Theorem 3.4]{djete2019mckean}}.
	\end{remark}
    
    \begin{remark}
    A natural question that we have not addressed is that of the existence of so--called feedback controls, since {\rm\Cref{thm:equivalence}.$(iii)$} only gives existence of an optimal relaxed control. It is known in classical control theory that {\rm\citeauthor*{filippov1962certain}'}s condition {\rm\cite{filippov1962certain}}, which was notably used by {\rm\citeauthor*{haussmann1990existence} \cite{haussmann1990existence}}, and by {\rm\citeauthor*{lacker2015mean} \cite{lacker2015mean,lacker2017limit}} for {\rm MFGs} and McKean--Vlasov control problems without common noise, is usually sufficient to obtain, from any relaxed control, a control depending on the trajectories of $X$ only, and which achieves no worse value. In the common noise context, things become slightly more subtle. The intuitive result is that one should be able to obtain a similar result but with controls depending on the trajectories of both $X$ and $\mu$. In a work in progress, {\rm\citeauthor*{lacker2020superposition} \cite{lacker2020superposition}} will exactly prove such a result, with the additional desirable property that the feedback controls preserve the marginal laws of $(X,\mu)$.
    \end{remark}
	We next provide some results related to the limit theory,
	that is, the large population control problem converges to the McKean--Vlasov control problem under technical conditions.
	For every $\nu \in \Pc_p(\R^n)$, we denote by $\Pcb^\star_R(\nu)$ the set of optimal relaxed controls
	\[
		\Pcb^\star_R(\nu) 
		:=
		\big\{
			\Pb \in \Pcb_R(\nu) :
			V_R(\nu)=J(\Pb)
		\big\}.
	\]
	Let $(\nu^1,\dots,\nu^N) \in \Pc_p(\R^n)$, $\nu_N:=\nu^1 \otimes \dots \otimes \nu^N$ and $\alpha = (\alpha^1,\dots,\alpha^N) \in \Ac^N_p(\nu_N)$, we define
	\begin{equation} \label{eq:def_PN}
		\P^N(\alpha^1,\dots,\alpha^N)
		:=
		\frac{1}{N} \sum_{i=1}^N\Lc^{\P^N_{\nu}} 
		\big(X^{\alpha,i},Y^{\alpha,i}, \delta_{\alpha^i_t}(\mathrm{d}a)\mathrm{d}t,W^i,B,\overline{\varphi}_{N} \big)
		\in
		\Pc(\Omb),
	\end{equation}
	where 
	$ Y^{\alpha,i}_{\cdot}:=X^{\alpha,i}_{\cdot}-\int_0^\cdot \sigma_0 (s,X^{\alpha,i},\varphi^N_s,\alpha^i_s)\mathrm{d}B_s$
	and
	$\overline{\varphi}_{N}
		:=
		\frac{1}{N} \sum_{i=1}^N \delta_{\big(X^{\alpha,i},Y^{\alpha,i},\delta_{\alpha^i_t}(\mathrm{d}a)\mathrm{d}t, W^i \big)}.
	$

	\begin{theorem} \label{thm:limit}
		Let {\rm \Cref{assum:main1}} and {\rm \Cref{assum:constant_case}} hold true, 
		assume that $A \subset \R^j$ for some $j \ge 1$, and that $L$ and $g$ are continuous in all their arguments.
		With the constants $p$ and $p^\prime$ given in {\rm\Cref{assum:main1}}, 
		let $(\nu^i)_{i \ge 1} \subset \Pc_{p^{\prime}}(\R^n)$ be such that
		$\sup_{N \ge 1}  \frac{1}{N}\sum_{i=1}^N \int_{\R^n}| x|^{p^\prime} \nu^i(\mathrm{d} x) < \infty$.
\medskip
		{\color{black}
		
		$(i)$ Let $\big(\Pb^N\big)_{N \ge 1}$ be given by $\Pb^N := \P^N(\alpha^{N,1}, \dots, \alpha^{N,N})$,
		where
		$(\alpha^{N,1}, \dots, \alpha^{N,N}) \in \Ac^N_p(\nu_N)$ satisfies
		\begin{equation} \label{eq:eps_optimal_ctrl}
			J(\alpha^{N,1}, \dots, \alpha^{N,N}) \ge V_S^N(\nu^1,\dots,\nu^N) - \varepsilon_N,
			~\mbox{\rm for all}~N \ge 1,
		\end{equation}
		for a sequence $(\varepsilon_N)_{N \ge 1} \subset \R_+$ satisfying $\lim_{N\to\infty} \varepsilon_N=0$. Then the sequence $(\Pb^N)_{N \ge 1}$ is relatively compact under $\Wc_p$,
		and for any converging subsequence $\big(\Pb^{N_m}\big)_{m \ge 1}$, we have
		\[
			\lim_{m \to \infty} \Wc_p \bigg( \frac{1}{N_{m}}\sum_{i=1}^{N_{m}} \nu^i , \nu \bigg) = 0,
			~\mbox{\rm for some}\; \nu \in \Pc_p(\R^n),
			~\mbox{\rm and}~
			\lim_{m \to \infty} \Wc_{p} \big( \Pb^{N_{m}}, \Pb^\infty \big)=0,
			~\mbox{\rm for some}
			~\Pb^\infty \in  \Pcb^\star_R(\nu).
		\]
		
		$(ii)$ Assume in addition that $\Wc_p \big(N^{-1}\sum_{i=1}^{N} \nu^i , \nu \big)\underset{N\to\infty}{\longrightarrow}0,$ for some $\nu \in \Pc_p(\R^n)$, and let $\Pb^\star \in \Pcb^\star_R(\nu)$. Then we can construct a sequence $(\Pb^N)_{N \ge 1}$, together with $(\alpha^{N,1}, \dots, \alpha^{N,N})_{N \ge 1}$ satisfying \eqref{eq:eps_optimal_ctrl}, such that $\Wc_{p} \big( \Pb^{N}, \Pb^\star \big) \underset{N\to\infty}{\longrightarrow} 0$.
		
\medskip
		
		$(iii)$ Finally, we have
		\begin{equation} \label{eq:continuity_VNS_VS}
			\lim_{N \to \infty}
			\bigg| V_S^N\big(\nu^1,\dots,\nu^N \big)-V_S\bigg(\frac{1}{N}\sum_{i=1}^N \nu^i \bigg) 
			\bigg|
			=
			0.
		\end{equation}
		}
	\end{theorem}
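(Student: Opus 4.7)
I would treat the three parts in the stated order: $(i)$ carries almost all the technical content, $(ii)$ is a propagation--of--chaos construction relying on the equivalence results of \Cref{thm:equivalence}, and $(iii)$ is a direct corollary. For $(i)$, I split into three sub--steps: (a) uniform moment estimates and tightness of $(\Pb^N)_{N}$ in $\Pc_p(\Omb)$ under $\Wc_p$, (b) identification of any limit point as an element of $\Pcb_R(\nu)$, and (c) optimality of the limit. Step (a) starts from standard SDE estimates based on \Cref{assum:main1}.$(i)$--$(ii)$ and the $p^\prime$--moment bound on $\nu^i$, which control $\frac{1}{N}\sum_i \E^{\P^N_\nu}\big[\|X^{\alpha,i}\|^{p^\prime}\big]$ in terms of $\frac{1}{N}\sum_i \E^{\P^N_\nu}\big[\int_0^T \rho(a_0, \alpha^{N,i}_s)^{p^\prime} \mathrm{d}s\big]$; the latter is bounded uniformly in $N$ by combining the coercivity term $-C_L \rho(a_0,a)^{p^\prime}$ in \eqref{eq:cond_coercive} with the near--optimality \eqref{eq:eps_optimal_ctrl} and a lower bound on $V_S^N$ obtained from the constant control $\alpha \equiv a_0$. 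Since $p^\prime > p$, this yields $\Wc_p$--uniform integrability, and Aldous' criterion for the continuous processes $(X^{\alpha,i}, Y^{\alpha,i}, W^i, B)$, together with the compactness in $\M$ of sublevel sets of $q \mapsto \iint_{[0,T] \x A} \rho(a_0, a)^{p^\prime} q(\mathrm{d}t, \mathrm{d}a)$, then gives tightness of $(\Pb^N)_N$.

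Step (b) is the main obstacle. Along a converging subsequence $(\Pb^{N_m})$, the same moment bound also extracts a $\Wc_p$--limit $\nu = \lim_m \frac{1}{N_m}\sum_{i \le N_m} \nu^i \in \Pc_p(\R^n)$. The unconditional martingale property of \Cref{def:admissible_ctrl_rule}.$(iii)$ is obtained by applying It\^o's formula to $\varphi(X^{\alpha,i}, Y^{\alpha,i}, W^i, B)$ under the $N$--particle SDE \eqref{eq:N-agents_StrongMV_CommonNoise}, averaging over $i$ (which is precisely the symmetrisation built into \eqref{eq:def_PN}), and passing to the limit using the joint continuity of the coefficients from \Cref{assum:main1}.$(i)$. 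The identification \eqref{eq:muh_property} of $\muh_t$ as $\Pb^{\Gcb_T}_\omb \circ (X_{t\wedge\cdot}, Y_{t\wedge\cdot}, \Lambda^t, W)^{-1}$ is more delicate: by construction $\overline\varphi_N$ is the empirical joint law of the exchangeable family $(X^{\alpha,i}, Y^{\alpha,i}, \delta_{\alpha^i_\cdot}, W^i)_{i}$ conditionally on the common noise $B$, and a de Finetti--type argument combined with the $\Wc_p$--convergence of $\Pb^{N_m}$ identifies its limit with the conditional law of the canonical coordinates given $\Gcb_T$. The conditional $\muh(\omb)$--martingale property of \Cref{def:relaxed_ctrl_rule} is then read off on the $\Omh$--space: under \Cref{assum:constant_case} the dynamics of each $Y^{\alpha,i}$ involve the marginal $\mu_s$ only and are driven solely by $W^i$, so passing to the limit in the corresponding single--particle martingale problem yields the required structure. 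Optimality $\Pb^\infty \in \Pcb^\star_R(\nu)$ of step (c) then follows from \Cref{rem:ContinuityJ}: continuity of $L$ and $g$ with $p^\prime$--growth makes $J$ continuous on $\Pc_p(\Omb)$, giving $J(\Pb^\infty) = \lim_m J_{N_m}(\alpha^{N_m, \cdot}) \ge \lim_m V_S^{N_m} - \varepsilon_{N_m}$, combined with $J(\Pb^\infty) \le V_R(\nu)$.

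For $(ii)$, given an optimal $\Pb^\star \in \Pcb^\star_R(\nu)$, I would combine the density results of \Cref{thm:equivalence}.$(ii)$--$(iii)$ to approximate $\Pb^\star$ in $\Wc_p$ by strong control rules in $\Pcb_S(\nu)$, each corresponding via \Cref{prop:strong_canonical} to a feedback $\phi \in \Lc_0[A]$. Setting $\alpha^{N,i}_t := \phi(t, X_0^i, W^i_{t \wedge \cdot}, B_{t \wedge \cdot})$ in the $N$--particle system \eqref{eq:N-agents_StrongMV_CommonNoise} yields conditionally i.i.d. particles given $B$, so classical conditional propagation of chaos gives $\Wc_p$--convergence of the corresponding $\Pb^N$ to the law of the strong rule; a diagonal extraction then produces the desired sequence, and $J_N(\alpha^{N,\cdot}) = J(\Pb^N) \to J(\Pb^\star)$ by continuity of $J$, from which \eqref{eq:eps_optimal_ctrl} is immediate. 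Part $(iii)$ follows by sandwich: $(i)$ gives $\limsup_N V_S^N(\nu^1, \dots, \nu^N) \le V_R(\nu) = V_S(\nu)$ via \Cref{thm:equivalence}.$(iii)$, and $(ii)$ applied to any optimal $\Pb^\star$ yields $\liminf_N V_S^N \ge J(\Pb^\star) = V_S(\nu)$; combined with the continuity of $\nu \mapsto V_S(\nu)$ under $\Wc_p$ (itself established by analogous approximation arguments), this yields \eqref{eq:continuity_VNS_VS}. The heart of the proof is step (b): the non--continuity of conditional laws flagged in \Cref{rem:nonContinuityJ} means that \eqref{eq:muh_property} cannot simply be passed to the limit, and the whole machinery of the relaxed formulation together with the symmetrised definition \eqref{eq:def_PN} of $\Pb^N$ is designed precisely to circumvent this.
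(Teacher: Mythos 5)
Your high-level decomposition matches the paper's: tightness from moment estimates and the coercivity term in \eqref{eq:cond_coercive}, identification of any limit point as a relaxed control rule, optimality from continuity of $J$ together with the lower bound $V_S(\nu) \le \liminf_N V_S^N$ (the paper's \Cref{propogation of chaos-2}), and part $(ii)$ via the approximation chain $\Pcb_R \to \Pcb_W \to \Pcb_S \to$ $N$--particle system. Part $(iii)$ is also essentially the same sandwich, though you should be more careful that the averages $\frac1N\sum \nu^i$ need not converge a priori; the paper extracts a subsequence realising the limsup and applies \Cref{Proposition:Continuity-Existence} along it.

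However, there is a genuine gap in your step $(b)$, precisely where the whole difficulty lives. You assert that the conditional $\muh(\omb)$--martingale property of \Cref{def:relaxed_ctrl_rule} is obtained by ``passing to the limit in the corresponding single--particle martingale problem.'' This is not enough. What needs to vanish is $\E^{\Pb}\big[\phi(\muh)\,\E^{\hat\mu}\big[(\widehat{S}^{\varphi,\mu}_t - \widehat{S}^{\varphi,\mu}_r)\Psi_r\big]\big]$ for test functions $\phi$, $\Psi_r$. At the $N$--particle level, the corresponding quantity involves the \emph{empirical average} $\frac1N\sum_i(\widehat{S}^{\varphi,N,i}_t - \widehat{S}^{\varphi,N,i}_r)\psi(\cdot)$, and the single--particle martingale property alone does not kill it when an $\Fcb_r$--measurable weight $\phi(\muh)$ is present. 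The mechanism the paper uses is that $(\widehat{S}^{\varphi,N,i})_i$ are \emph{orthogonal} martingales (driven by the independent Brownian motions $W^i$), so Cauchy--Schwarz followed by the orthogonality makes the $L^2$--norm of the empirical average of order $1/\sqrt{N}$. This concentration argument is exactly what circumvents the non--continuity of conditional laws flagged in \Cref{rem:nonContinuityJ}, and it is absent from your write--up. Relatedly, the paper does not directly verify \Cref{def:admissible_ctrl_rule}.$(iii)$ and \Cref{def:relaxed_ctrl_rule} separately; it goes through the equivalent reformulation of \Cref{prop:eqivalence_def_relaxed}, isolating $Z=X-Y$ and $B$ (handled directly because $\sigma_0$ is uncontrolled) from the $\Yh$--dynamics (handled by the conditional martingale problem). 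Your direct Itô--plus--averaging route for the unconditional martingale problem can be made to work under \Cref{assum:constant_case}, but you should be explicit that it is the uncontrolled common--noise assumption that rescues continuity of the relevant integrand in $\muh$. Finally, your invocation of a ``de Finetti--type argument'' for the identification of $\muh_t$ is unnecessary: the paper obtains it by a one--line direct computation exploiting that $\overline\varphi_N$ is, by definition, the empirical measure over which $\Pb^N$ also averages --- no exchangeability or representation theorem is needed.
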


	\begin{proposition} \label{Proposition:Continuity-Existence}
		Let {\rm \Cref{assum:main1}} and {\rm \Cref{assum:constant_case}} hold true, 
		suppose in addition that $A \subset \R^j$ for some $j \ge 1$, and that $L$ and $g$ are continuous in all their arguments.
		With the constants $p$ and $p^\prime$ given in {\rm\Cref{assum:main1}}, 
		let $(\nu^m)_{m \ge 1} \subset \Pc_{p^{\prime}}(\R^n)$ and  $\nu \in \Pc_{p}(\R^n)$ be such that
		\[
			\sup_{m\ge 1} \int_{\R^n}|x|^{p^{\prime}} \nu^m (\mathrm{d}x) < \infty,
			\;\mbox{\rm and}\;
			\lim_{m \rightarrow\infty} \Wc_p \big( \nu^m, \nu \big) = 0.
		\]
		Then
		\begin{equation} \label{eq:continuity_VS}
			\lim_{m \to \infty} V_S(\nu^m)
			=
			V_S \big( \nu \big).
		\end{equation}
		In particular, the map $V_S:\Pc_{p^{\prime}}(\R^n) \longrightarrow \R$ is continuous.
	\end{proposition}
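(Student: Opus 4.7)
The strategy is to leverage the equivalence $V_S=V_R$ and the existence of optimal relaxed control rules established in Theorem \ref{thm:equivalence}.(iii), together with compactness/closedness in $\Wc_p$. We split the continuity into the two inequalities $\limsup_{m\to\infty} V_S(\nu^m)\le V_S(\nu)$ and $\liminf_{m\to\infty}V_S(\nu^m)\ge V_S(\nu)$. Both sides rely on the fact that the hypothesis $\sup_m \int|x|^{p'}\nu^m(\mathrm{d}x)<\infty$ together with the coercivity term $-C_L\rho(a_0,a)^{p'}$ in Assumption~\ref{assum:main1}.(iii) provides uniform $p'$-moment estimates on state and control processes, which in turn yields $\Wc_p$-relative compactness of admissible families and uniform integrability for the growth terms of $L$ and $g$.

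For the upper bound, for every $m$ select $\Pb^{\star,m}\in\Pcb^\star_R(\nu^m)$ with $V_S(\nu^m)=V_R(\nu^m)=J(\Pb^{\star,m})$, whose existence is granted by Theorem~\ref{thm:equivalence}.(iii) since $L,g$ are continuous. The coercivity, together with the growth bounds in Assumption~\ref{assum:main1}.(ii)-(iii) and standard SDE/BDG estimates applied through the martingale problem of Definition~\ref{def:admissible_ctrl_rule}, yields a uniform bound
\[
\sup_{m\ge 1}\E^{\Pb^{\star,m}}\bigg[\|X\|^{p'}+\|Y\|^{p'}+\|W\|^{p'}+\|B\|^{p'}+\iint_{[0,T]\x A}\rho(a,a_0)^{p'}\,\Lambda_t(\mathrm{d}a)\mathrm{d}t\bigg]<\infty.
\]
This gives $\Wc_p$-relative compactness of $(\Pb^{\star,m})_m$ in $\Pc_p(\Omb)$; extract a subsequence $\Pb^{\star,m_k}\to\Pb^\infty$. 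One verifies that $\Pb^\infty\in\Pcb_R(\nu)$ exactly as in the closedness proof of $\Pcb_R(\nu)$ used for Theorem~\ref{thm:equivalence}.(i): the initial law is $\nu$ by $\Wc_p(\nu^{m_k},\nu)\to 0$; independence of $(X_0,W)$ and $\Gcb_T$ is preserved in the limit because these are canonical processes; the martingale problems for $\Sb^\varphi$ and $\widehat S^{\varphi,\mu(\omb)}$ pass to the limit thanks to the uniform $p'$-moments and continuity of $(b,\sigma,\sigma_0)$ in $(\xb,\nub,a)$; and the identity \eqref{eq:muh_property} is stable because $\muh$ is part of the canonical variable and the $\Gcb_T$-measurability together with $\Wc_p$-convergence transfers the disintegration. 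Finally, continuity of $L,g$ with uniform integrability of the $p$-powers yields $J(\Pb^{\star,m_k})\to J(\Pb^\infty)$, so
\[
\limsup_{k\to\infty} V_S(\nu^{m_k})=\lim_{k\to\infty} J(\Pb^{\star,m_k})=J(\Pb^\infty)\le V_R(\nu)=V_S(\nu),
\]
and a standard subsequence argument gives $\limsup_m V_S(\nu^m)\le V_S(\nu)$.

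For the lower bound, pick $\Pb^\star\in\Pcb^\star_R(\nu)$ and aim to construct $\Pb^m\in\Pcb_R(\nu^m)$ with $\Wc_p(\Pb^m,\Pb^\star)\to 0$. Realize $\Pb^\star$ on an auxiliary space carrying the canonical variables $(X,Y,\Lambda,W,B,\muh)$ with $X_0\sim\nu$. Take $\xi^m\sim\nu^m$ coupled optimally to $X_0$ so that $\E[|\xi^m-X_0|^p]\to 0$, and independent of $(\Lambda,W,B,\muh)$. Under Assumption~\ref{assum:constant_case}, the coefficient $\sigma_0$ does not depend on the control, so one can re-solve the McKean--Vlasov SDE starting from $\xi^m$ while keeping the same driving noises $(W,B)$ and measure-valued control $\Lambda$, producing a new state $X^m$ and conditional law $\mub^m$. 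A joint Gronwall argument on $\E\sup_{s\le t}|X^m_s-X_s|^p$ and $\Wc_p(\mub^m_t,\mub_t)$, using the Wasserstein-Lipschitz property of Assumption~\ref{assum:main1}.(i) and BDG, yields $\sup_t\E|X^m_t-X_t|^p\to 0$. Defining $\Pb^m$ as the image law on $\Omb$ of the new variables (with the corresponding $\muh^m=\Lc(X^m,Y^m,\Lambda,W\,|\,\Gcb_T)$), the construction ensures $\Pb^m\in\Pcb_R(\nu^m)$ and $\Wc_p(\Pb^m,\Pb^\star)\to 0$. Continuity of $J$ then gives $V_S(\nu^m)=V_R(\nu^m)\ge J(\Pb^m)\to J(\Pb^\star)=V_S(\nu)$, completing the proof.

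\emph{Main obstacle.} The delicate step is the lower-bound construction: since the coefficients depend on the conditional distribution $\mub$, perturbing the initial law modifies the conditional law simultaneously with the state, forcing the Gronwall argument to run jointly on the state difference and on the $\Wc_p$-distance of conditional laws. This is feasible precisely because, under Assumption~\ref{assum:constant_case}, $\sigma_0$ is uncontrolled and depends only on $\nu$ (not on $\nub$), so the common-noise block of the SDE remains structurally unchanged. A secondary subtlety in the upper-bound argument is stability of the conditional-law identity \eqref{eq:muh_property} under $\Wc_p$-limits; this is handled by the canonical-space formulation, where $\muh$ is itself a canonical variable, together with a measurable-selection-free passage to the limit in the martingale problem satisfied under $\muh(\omb)$ in Definition~\ref{def:relaxed_ctrl_rule}.
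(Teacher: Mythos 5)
Your upper bound essentially reproduces the paper's argument: the paper works with $\eps_m$-optimal $\Pb^m\in\Pcb_R(\nu^m)$ and invokes Proposition \ref{Propostion:convergence}.(ii), while you use exactly optimal $\Pb^{\star,m}$ and rederive the relative compactness and limit identification; the underlying mechanism is the same, and this half is correct.

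The lower bound, however, has several genuine gaps, and the paper takes a different route precisely to avoid them. (a) Under a relaxed control rule $\Pb^\star$, the pair $(X,Y)$ does \emph{not} solve an SDE driven by $(W,B)$ with a ``measure-valued control $\Lambda$'' plugged in: the martingale part associated with $\Lambda$ is represented by a martingale measure (see \Cref{Proposition:diffusion_McKV-relaxed}), defined conditionally on $\omb$, not by a single stochastic integral against $W$. So ``re-solve the McKean--Vlasov SDE starting from $\xi^m$ while keeping the same $(W,B)$ and $\Lambda$'' is not a well-defined operation at this level; you would have to transport the entire family $(\Nh^{\omb})_{\omb}$ together with the disintegration $\muh$, which your argument does not address. (b) Your coupling requirement is inconsistent: you ask $\xi^m$ to be coupled to $X_0$ and simultaneously independent of $\Lambda$, but in general $\Lambda$ depends on $X_0$ (in the weak/strong formulation one even has $\Lambda_t(\mathrm{d}a)=\delta_{\phi(t,X_0,W_{t\wedge\cdot},B_{t\wedge\cdot})}(\mathrm{d}a)$), so both requirements cannot hold. (c) Even restricting to strong controls $\alpha_t=\phi(t,X_0,W,B)$, replacing $X_0$ by $\xi^m$ gives $\alpha^m_t=\phi(t,\xi^m,W,B)$, and your Gronwall estimate requires $\alpha^m_t\to\alpha_t$, which fails for merely Borel $\phi$; one must first approximate $\phi$ by bounded continuous functions. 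This is exactly what \Cref{lemm:continuity_initial-variable} and \Cref{propogation of chaos-2} do, and the paper's lower bound $V_S(\nu)\le\liminf_m V_S(\nu^m)$ (Proposition \ref{propogation of chaos-2p}) is obtained by working in the strong formulation, writing controls in feedback form $\phi(t,X_0,W,B)$, approximating $\phi$ by bounded continuous maps, and only then perturbing the initial law -- thereby avoiding (a)--(c) entirely.
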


	\begin{remark}
		$(i)$ As far as we know, the above results are new in the setting with presence of common noise.
		Even without taking into account the common noise,
		our results in {\rm\Cref{thm:limit}} and {\rm\Cref{Proposition:Continuity-Existence}} are also more general than the existing ones.  
		In particular, by taking $\ell=0,$ we recover the most essential results in {\rm\citeauthor*{lacker2017limit} \cite{lacker2017limit}} for the case without common noise.
		But in {\rm \Cref{thm:limit}}, the initial distribution does not need to be convergent, 
		it is only required to have finite moments in a uniform way, and the initial condition for each agents are allowed to have different distributions.
		Moreover, the continuity result of the value function $V_S(\nu)$ in {\rm\Cref{Proposition:Continuity-Existence}}
		requires less technical conditions $($such as the Lipschitz assumptions on $L$ and $g)$ than in {\rm\cite[Lemma 3.3]{pham2016dynamic}}. 

		\vspace{0.5em}
		
		$(ii)$
		{\rm \Cref{thm:limit}} shows that any $\eps_N$--optimal control of the large population stochastic control problem converges towards an optimal control of the McKean-Vlasov stochastic control problem.
		In particular, when there exists a unique strong optimal control of the McKean--Vlasov control problem, any $\eps_N$--optimal control of the large population control problem converges to the optimal control. 
	\end{remark}

\section{Technical proofs} 
\label{sec:proofs}
    
    	We first provide a moment estimate of the solution to the controlled SDEs, which will be repeatedly used in the upcoming proofs.
	This is in fact an easy extension of \citeauthor*{lacker2017limit} \cite[Lemmata 3.1 and 3.3]{lacker2017limit} (which are a succession of application of Gronwall's lemma),
	then for brevity we omit the proof.

	\begin{lemma} \label{lemma:estimates}
		Let {\rm\Cref{assum:main1}} hold true, and $q \ge p$.
		Then there exists a constant $C>0$ such that, for all $N \ge 1$, $(\nu,\nu^1,\dots,\nu^N)\in\big(\Pc_{q}(\R^n)\big)^{N+1}$ and $(\alpha^1,\dots,\alpha^N) \in \Ac^N_q(\nu_N)$,
		\[
			\frac{1}{N}\sum_{i=1}^N
			\E^{\P^N_\nu} \bigg[ 
			\sup_{t \in [0,T]}|X^{\alpha,i}_t|^{q}
			\bigg]  
			\le C 
			\bigg( 1+\int_{\R^n}|x^\prime|^{q}\frac{1}{N}\sum_{i=1}^N\nu^i(\mathrm{d}x^\prime)+ \frac{1}{N}\sum_{i=1}^N\E^{\P^N_\nu}\bigg[ \int_0^T \rho(a_0,\alpha^i_t)^{q}\mathrm{d}t \bigg]
			\bigg),
		\]
		and for each $\Pb \in \Pcb_W(\nu)$ $($or $\Pb \in \Pcb_R(\nu)$ when in addition {\rm\Cref{assum:constant_case}} holds$)$,
		we have
		\[
			\E^{\Pb} \bigg[\sup_{t \in [0,T]}|X_t|^{q}
			\bigg]
			+
			\E^{\Pb} \bigg[\sup_{t \in [0,T]}|Y_t|^{q}
			\bigg]
			\le 
			C 
			\bigg(1+\int_{\R^n}|x^\prime|^{q}\nu(\mathrm{d}x^\prime)+ \E^{\Pb} \bigg[ \iint_{[0,T]\times A} \rho(a_0,a)^{q} \Lambda_t(\mathrm{d}a)\mathrm{d}t \bigg] \bigg).
		\]
	\end{lemma}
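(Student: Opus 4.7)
The plan is to adapt the successive Gronwall arguments of \citeauthor*{lacker2017limit} \cite[Lemmata 3.1 and 3.3]{lacker2017limit} to the present setting with common noise, the only new ingredient being that the empirical or conditional measure appearing in the coefficients produces, after taking expectations, exactly the quantity one is trying to bound.

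For the $N$-particle estimate, I would fix $i\in\{1,\dots,N\}$, raise \eqref{eq:N-agents_StrongMV_CommonNoise} to the power $q$, take $\sup_{s\le t}$, apply the Burkholder--Davis--Gundy inequality to the $\mathrm{d}W^i$ and $\mathrm{d}B$ integrals, and then the growth bounds in \Cref{assum:main1}$(ii)$ (recall $\hat p\le 2\le p\le q$) to obtain
\[
\E^{\P^N_\nu}\Big[\sup_{s\le t}\big|X^{\alpha,i}_s\big|^q\Big]
\le
C\,\E^{\P^N_\nu}\big[|X^i_0|^q\big]
+
C\int_0^t\E^{\P^N_\nu}\bigg[1+\|X^{\alpha,i}_{s\wedge\cdot}\|^q+\int_{\Cc^n\x A}\!\!\big(\|\xb^\prime\|^p+\rho(a_0,a^\prime)^p\big)\varphi^N_s(\mathrm{d}\xb^\prime,\mathrm{d}a^\prime) + \rho(a_0,\alpha^i_s)^q\bigg]\mathrm{d}s,
\]
up to a purely cosmetic use of Jensen to pass from exponent $p$ to $q$ inside the empirical integral. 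The crucial observation is that $\varphi^N_s$ is the empirical measure, so $\int\|\xb^\prime\|^p\varphi^N_s(\mathrm{d}\xb^\prime,\mathrm{d}a^\prime)=\frac{1}{N}\sum_{j=1}^N\|X^{\alpha,j}_{s\wedge\cdot}\|^p$. Averaging the above inequality over $i=1,\dots,N$, setting $u(t):=\frac1N\sum_{i=1}^N\E^{\P^N_\nu}\!\left[\sup_{s\le t}|X^{\alpha,i}_s|^q\right]$, and collecting the initial and control terms into a constant $K$, one obtains a Gronwall inequality $u(t)\le K + C\int_0^t u(s)\,\mathrm{d}s$. Gronwall's lemma then yields the claim.

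For the weak (or relaxed) rule estimate, the argument is identical in spirit but played on the canonical space. Under $\Pb\in\Pcb_W(\nu)$, the process $X$ satisfies the controlled SDE with $\mub_s=\Lc^{\Pb}\big((X_{s\wedge\cdot},\alpha_s)\big|\Gcb_s\big)$; apply the same BDG-plus-growth estimate, then take $\E^{\Pb}$. The tower property trivialises the measure-dependent terms: $\E^{\Pb}\!\left[\int\|\xb^\prime\|^p\mub_s(\mathrm{d}\xb^\prime,\mathrm{d}a^\prime)\right]=\E^{\Pb}\!\left[\|X_{s\wedge\cdot}\|^p\right]$, which again closes the Gronwall loop in the single quantity $v(t):=\E^{\Pb}\!\left[\sup_{s\le t}|X_s|^q\right]$. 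The bound on $Y$ is obtained as a corollary from the identity $Y_t=X_t-\int_0^t\sigma_0(s,X_{s\wedge\cdot},\mub_s,\alpha_s)\,\mathrm{d}B_s$ together with BDG and the already established bound on $v$. The relaxed case is handled by replacing $\delta_{\alpha_s}(\mathrm{d}a)\mathrm{d}s$ with the general kernel $\Lambda_s(\mathrm{d}a)\mathrm{d}s$ throughout; under \Cref{assum:constant_case}, the coefficients depend only on $\nu(\mathrm{d}\xb)=\mub(\mathrm{d}\xb,A)$, so exactly the same computation goes through.

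There is no real obstacle here: the proof is essentially a bookkeeping exercise combining BDG, the polynomial growth of \Cref{assum:main1}$(ii)$, and Gronwall's lemma. The only mildly delicate point is to check that the empirical-measure term in the $N$-particle case, and the conditional-measure term in the canonical case, reproduce (after expectation) the very quantity on the left-hand side, which is why it is useful to write the Gronwall loop on the averaged quantity $u(t)$ rather than on each $i$ separately.
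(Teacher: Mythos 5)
Your proposal follows exactly the route the paper intends: the authors state that the lemma is ``an easy extension of \cite[Lemmata 3.1 and 3.3]{lacker2017limit} (which are a succession of applications of Gronwall's lemma)'' and omit the proof. Your BDG--plus--growth argument, the averaging over $i$ so that the empirical term $\frac1N\sum_j\E[\|X^{\alpha,j}\|^q]$ reappears and closes the Gronwall loop, the tower-property reduction $\E^{\Pb}[\int\|\xb^\prime\|^p\,\mub_s(\mathrm{d}\xb^\prime,\mathrm{d}a^\prime)]$-type step in the weak/relaxed case, and the corollary argument for $Y$, are all exactly what ``a succession of applications of Gronwall'' means here. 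The single point worth flagging explicitly (and which you leave implicit): Gronwall requires that $u(t)$, resp. $v(t)$, be a priori finite, which is not automatic from the right-hand side of your integral inequality being finite. The paper handles this in \Cref{lemma:estimates}'s companion remark by invoking the existence result of \cite[Theorem A.3.]{djete2019mckean} to get $\E^{\P^N_\nu}[\sup_{t}|X^{\alpha,i}_t|^q]<\infty$ (and the analogous weak/relaxed finiteness) before running the Gronwall argument; you should say this.
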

	
	\begin{remark}
	{\rm
	        Notice that by the existence result (such as in \cite[Theorem A.3.]{djete2019mckean} for the McKean-Vlasov case), we know that: under Assumption \ref{assum:main1}, for all $(\nu,\nu^1,\dots,\nu^N)\in\big(\Pc_{q}(\R^n)\big)^{N+1}$ and $(\alpha^1,\dots,\alpha^N) \in \Ac^N_q(\nu_N),$ $\E^{\P^N_\nu} \big[ \sup_{t \in [0,T]}|X^{\alpha,i}_t|^{q}
			\big]< \infty$ and for each $\Pb \in \Pcb_W(\nu)$ or $\Pb \in \Pcb_R(\nu),$ $\E^{\Pb} \big[\sup_{t \in [0,T]}|X_t|^{q}
			\big]
			+
			\E^{\Pb} \big[\sup_{t \in [0,T]}|Y_t|^{q}
			\big] < \infty$.
		The results in Lemma \ref{lemma:estimates} provide essentially more precise estimations of these quantities.
	}
	\end{remark}

\subsection{{Proof of Theorem \ref{thm:equivalence}}}

	To prove \Cref{thm:equivalence}, the crucial steps consist in first approximating weak control rules by strong control rules, and then relaxed control rules by weak control rules.
	We will provide the two approximation results in \Cref{subsubsec:strong2weak} and \Cref{subsubsec:weak2relaxed} respectively. The subsequent proof of \Cref{thm:equivalence} will then be the object of \Cref{subsubsec:proofofTheorem-equivalence}.

\subsubsection{Approximating weak control rules by strong control rules}
\label{subsubsec:strong2weak}

	This part is devoted to the approximation of weak control rules by strong controls.
	Let {\rm\Cref{assum:main1}} hold true, $\nu \in \Pc_p(\R^n)$ and $\Pb \in \Pcb_W(\nu).$ 
	From the martingale problem in Definition \ref{def:admissible_ctrl_rule} and by using \citeauthor*{stroock2007multidimensional} \cite[Theorem 4.5.2]{stroock2007multidimensional}, 
	on the filtered probability space $(\Omb,\Fb,\Fcb,\Pb)$, $(W, B)$ are standard Brownian motions, 
	$(W,X_0)$ are independent of $(B,\muh)$,
	and there exists a $\Fb$--predictable $A$--valued process $(\alpha_t)_{t \in [0,T]}$, such that, $\Pb\mbox{--a.s.}$,
        \begin{align*}
		X_t&=
		X_0 
		+ 
		\int_0^t b\big(r,X,\mub_r,\alpha_r\big)  \mathrm{d}r 
		+ 
		\int_0^t \sigma \big(r,X,\mub_r,\alpha_r\big) \mathrm{d}W_r 
		+  
		\int_0^t \sigma_0 \big(r,X,\mub_r,\alpha_r\big) \mathrm{d}B_r,\; t\in[0,T], \\
		Y_t&=X_t-\int_0^t \sigma_0 \big(r,X,\mub_r,\alpha_r\big) \mathrm{d}B_r,\; t\in[0,T],
        \end{align*}
        with $\Lambda_t(\mathrm{d}a)\mathrm{d}t =\Lambda^{\alpha}_t(\mathrm{d}a)\mathrm{d}t :=\delta_{\alpha_t}(\mathrm{d}a) \mathrm{d}t$ and
        \begin{align} \label{eq_H-hypothesis}
            \muh_t=\Lc^\Pb\big(X_{t \wedge \cdot},Y_{t \wedge \cdot},\Lambda^t,W \big|B_{t \wedge \cdot},\muh_{t \wedge \cdot} \big)
            =
            \Lc^\Pb\big(X_{t \wedge \cdot},Y_{t \wedge \cdot},\Lambda^t,W\big|B,\muh \big),
            \;
            \mub_t(\mathrm{d} \xb, \mathrm{d}a)
            :=
            \E^{\hat \mu}\big[\delta_{\Xh_{t \wedge \cdot}}(\mathrm{d} \xb)\Lambdah_t(\mathrm{d}a)\big].
        \end{align}
	Let us take a sequence $\big((t^m_i)_{0\leq i\leq m}\big)_{m \ge 1}$ of partitions of $[0,T]$, with $0=t^m_0<t^m_1<\dots<t^m_m=T$, and such that 
	\[\sup_{0\leq i\leq m-1} |t^m_{i+1}-t^m_{i}| \underset{m \to \infty}{\longrightarrow} 0.\]
	For any integer $m \ge 1$, define for simplicity the map $[0,T]\ni t\longmapsto [t]^m:=\sum_{i=0}^{m-1}t^m_i\mathbf{1}_{[t^m_i,t^m_{i+1})}(t)$, as well as $\eps_m:=t_1^m$.
	Let $W^m_\cdot:=W_{\varepsilon_m \vee \cdot}-W_{\varepsilon_m}$ and $B^m_\cdot:=B_{\varepsilon_m \vee \cdot}-B_{\varepsilon_m}$, 
	we define also two filtrations $\Fb^{m}:=(\Fcb^{m}_t)_{t \in [0,T]}$ and $\Gb^{m}=(\Gcb^{m}_t)_{t \in [0,T]}$ by 
	\[
		\Fcb^{m}_t:= \sigma \big(X_{t \wedge \cdot},Y_{t \wedge \cdot},\Lambda^t,W^{m}_{t \wedge \cdot},B^{m}_{t \wedge \cdot}, \muh_{t \wedge \cdot} \big),
		\; \text{\rm and}\;  
		\Gcb^{m}_t:= \sigma \big(B^{m}_{t \wedge \cdot}, \muh_{t \wedge \cdot} \big),\; t \in [0,T].
	\]
	
	\begin{lemma}[Approximation with piecewise constant controls]
	\label{lemma:strongApp_step1}
		In the filtered probability space $(\Omb,\Fbb,\Fcb,\Pb)$,
		there exists a sequence of $\Fbb$--predictable processes $(\alpha^m)_{m \ge 1}$, and a sequence a $\Fbb$--adapted continuous processes $(X^m)_{m \ge 1}$ such that for any $m\geq 1$
		\begin{equation} \label{firstWeak_app}
			\alpha^m_0 = a_0,
			\;
			\alpha^m_t = \alpha^{m}_{[t]^m},\; \text{\rm on}\; [0,T],
			\;
			\lim_{m\rightarrow\infty} \E^{\Pb} \bigg[\int_0^T \rho(\alpha_t,\alpha^m_t)^p \mathrm{d}t \bigg]=0,\; \mbox{\rm and}\;
			\lim_{m\rightarrow\infty} \E^\Pb \bigg[ \sup_{s \in [0,T]} |X_s-X^m_s|^{p} \bigg]=0,
		\end{equation}
		where for each $m \ge 1$, $(X^m_t)_{t \in [0,T]}$ is the unique strong solution of 
		\begin{equation} \label{eq:weakMV_delay}
			X^m_t
			=
			X_0 
			+
			\int_{\eps_m}^{t \vee \eps_m} b\big(r,X^{m}_{r\wedge \cdot},\mub^m_r,\alpha^m_r\big) \mathrm{d}r
			+
			\int_{\eps_m}^{t \vee \eps_m} \sigma \big(r,X^m_{r\wedge \cdot},\mub^m_r,\alpha^m_r\big) \mathrm{d}W^{m}_r
			+  
			\int_{\eps_m}^{t \vee \eps_m} \sigma_0 \big(r,X^m_{r\wedge \cdot},\mub^m_r,\alpha^m_r\big) \mathrm{d}B^{m}_r,
		\end{equation}
		with $\E^{\Pb} \big[\|X^m\|^p \big]< \infty$ and $\mub^m_t:=\Lc^\Pb \big(X^m_{t \wedge \cdot},\alpha^m_t \big| \Gcb^{m}_t\big)$.
		Moreover, if we denote $\Lambda^m_t(\mathrm{d}a)\mathrm{d}t:=\delta_{\alpha^m_t}(\mathrm{d}a)\mathrm{d}t,$ as well as
		\[
			\muh^m_t:=\Lc^\Pb \big(X^m_{t \wedge \cdot},Y^m_{t \wedge \cdot},(\Lambda^m)^t,W^{m} \big| \Gcb^{m}_t\big)\; \mbox{\rm and}\;
			Y^m_{t}:=X^m_t-\int_{\eps_m}^{t \vee \eps_m} \sigma_0 \big(r,X^m_{r\wedge \cdot},\mub^m_r,\alpha^m_r\big) \mathrm{d}B^{m}_r,\; \mbox{\rm for all}~t \in [0,T],
		\]
		then $(X_0,W^{m})$ is $\Pb$--independent of $(B^{m},\muh^m)$,
		$\muh^m_t= \muh^m_T \circ \big(\Xh_{t \wedge \cdot},\Yh_{t \wedge \cdot}, \Lambdah^t, \Wh \big)^{-1}$, 
		and
		\begin{equation} \label{H-property}
			\muh^m_t
			=
			\Lc^\Pb \big(X^m_{t \wedge \cdot},Y^m_{t \wedge \cdot},(\Lambda^m)^t,W^{m} \big| B^{m}, \muh^m\big),\; \Pb\mbox{\rm --a.s.},\; \mbox{\rm for all}\; t \in [0,T].
		\end{equation}
	\end{lemma}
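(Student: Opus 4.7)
\textbf{Proof plan for Lemma \ref{lemma:strongApp_step1}.} The plan is to build $\alpha^m$ in three stages: discretize the control in a $L^p$-sense while respecting the filtration, solve the shifted McKean--Vlasov SDE \eqref{eq:weakMV_delay} with that discretized control, and finally verify the conditional-law identity \eqref{H-property} together with the required independence. The coercivity growth in \Cref{assum:main1}.$(iii)$ and the integrability of $\alpha$ guarantee throughout that all the $L^p$-quantities we manipulate are finite.

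\emph{Step 1: piecewise-constant approximation of $\alpha$.} Using the Lipschitz regularity from \Cref{assum:main1}.$(i)$ and the fact that $(A,\rho)$ is Polish, the classical density argument (embedding $A$ via a Kuratowski map and applying conditional expectations on the time-grid, or directly approximating by step processes valued in a countable dense subset of $A$ whose values on $[t^m_i,t^m_{i+1})$ are $\Fcb^m_{t^m_i}$-measurable) yields, for each $m\geq 1$, an $\Fbb$-predictable process $\alpha^m$ with $\alpha^m_t=\alpha^m_{[t]^m}$, $\alpha^m_0=a_0$, and $\E^{\Pb}[\int_0^T\rho(\alpha_t,\alpha^m_t)^p\mathrm{d}t]\to 0$. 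In the construction, $\alpha^m_t$ for $t\in[t^m_i,t^m_{i+1})$ will be chosen $\Fcb^m_{t^m_i}$-measurable (hence a functional of $X_0$, $W^m_{t^m_i\wedge\cdot}$, $B^m_{t^m_i\wedge\cdot}$, $\muh_{t^m_i\wedge\cdot}$), which is what enables the conditional-law identities in Step 3.

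\emph{Step 2: stability of the state equation.} Given the sequence $(\alpha^m)$, the SDE \eqref{eq:weakMV_delay} is a standard controlled McKean--Vlasov SDE on $[\eps_m,T]$ with the Lipschitz and growth coefficients of \Cref{assum:main1}, so \cite[Theorem A.3]{djete2019mckean} provides a unique strong solution $X^m$ with $\E^{\Pb}[\|X^m\|^p]<\infty$. To get $\E^{\Pb}[\sup_{s\in[0,T]}|X_s-X^m_s|^p]\to 0$, write
\begin{align*}
X_t-X^m_t
&= \int_0^t\!\!\big(b(r,X,\mub_r,\alpha_r)-b(r,X^m,\mub^m_r,\alpha^m_r)\big)\mathrm{d}r
+\int_0^t\!\!\big(\sigma(r,X,\mub_r,\alpha_r)-\sigma(r,X^m,\mub^m_r,\alpha^m_r)\big)\mathrm{d}W_r\\
&\quad+\int_0^t\!\!\big(\sigma_0(r,X,\mub_r,\alpha_r)-\sigma_0(r,X^m,\mub^m_r,\alpha^m_r)\big)\mathrm{d}B_r +R^m_t,
\end{align*}
where $R^m_t$ collects the boundary contribution from $[0,\eps_m)$, which vanishes in $L^p$ since $\eps_m\to 0$ and the coefficients satisfy the moment bounds of \Cref{assum:main1}.$(ii)$. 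Applying the Burkholder--Davis--Gundy inequality, the Lipschitz bounds on $(b,\sigma,\sigma_0)$, and the Wasserstein estimate $\Wc_p(\mub_r,\mub^m_r)^p\leq \E^{\Pb}[\|X-X^m\|^p+\rho(\alpha_r,\alpha^m_r)^p]$, Gronwall's lemma delivers the claimed convergence.

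\emph{Step 3: conditional-law identity and independence.} This is the main obstacle. The point is that $\muh$ carries information only from $(B,\muh)$, and one has to propagate the $(H)$-type hypothesis \eqref{eq:muh_property-details} for $\Pb$ through the approximation. By construction $\alpha^m_t$, for $t\in[t^m_i,t^m_{i+1})$, is a Borel function of $(X_0,W^m_{t\wedge\cdot},B^m_{t\wedge\cdot},\muh_{t\wedge\cdot})$, and then $X^m$ is a strong-solution functional of the same data, so that $(X^m_{t\wedge\cdot},Y^m_{t\wedge\cdot},(\Lambda^m)^t,W^m)$ is a measurable functional of $(X_0,W^m,B^m_{t\wedge\cdot},\muh_{t\wedge\cdot})$. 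Since under $\Pb$ the pair $(X_0,W)$ is independent of $(B,\muh)$, the same holds for $(X_0,W^m)$ versus $(B^m,\muh)$; hence $(X_0,W^m)$ is independent of $(B^m,\muh^m)$, for $\muh^m$ is itself a measurable functional of $(B^m,\muh)$. To obtain \eqref{H-property}, use this independence to write, for any bounded Borel test function $\Phi$,
\[
\E^{\Pb}\!\big[\Phi(X^m_{t\wedge\cdot},Y^m_{t\wedge\cdot},(\Lambda^m)^t,W^m)\,\big|\,B^m,\muh\big]
=
\E^{\Pb}\!\big[\Phi(X^m_{t\wedge\cdot},Y^m_{t\wedge\cdot},(\Lambda^m)^t,W^m)\,\big|\,B^m_{t\wedge\cdot},\muh_{t\wedge\cdot}\big],
\]
exactly as in the proof of \Cref{Proposition:property_WeakControl}, since conditioning on the tail of $(B^m,\muh)$ cannot add information relevant to $W^m$ (by independence) nor to the restricted $\muh_{t\wedge\cdot}$-part (by construction of $\alpha^m$ and the causal structure of the SDE). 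This identifies both conditional laws with $\muh^m_t$, yields the time-restriction relation $\muh^m_t=\muh^m_T\circ(\Xh_{t\wedge\cdot},\Yh_{t\wedge\cdot},\Lambdah^t,\Wh)^{-1}$, and therefore proves \eqref{H-property}. The delicate bookkeeping is precisely tracking which $\sigma$-algebra one is conditioning on at each step, which is why the shift by $\eps_m$ and the choice to make $\alpha^m$ piecewise constant on the left endpoints of the partition are essential.
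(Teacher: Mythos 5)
There is a genuine gap, concentrated in your Step 1 and Step 3.

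In Step 1 you assert that the piecewise-constant approximation can be built so that $\alpha^m_t$, for $t\in[t^m_i,t^m_{i+1})$, is $\Fcb^m_{t^m_i}$-measurable, and then parenthetically that this makes it a functional of $(X_0,W^m_{t^m_i\wedge\cdot},B^m_{t^m_i\wedge\cdot},\muh_{t^m_i\wedge\cdot})$. Neither part is available at this stage. The Liptser--Shiryaev-style discretisation of an $\Fbb$-predictable $\alpha$ only produces a piecewise constant $\Fbb$-predictable process, i.e.\ $\alpha^m_{t^m_i}$ is $\Fcb_{t^m_i}$-measurable; it is not automatically $\Fcb^m_{t^m_i}$-measurable, because $\Fcb^m_{t^m_i}\subsetneq\Fcb_{t^m_i}$ (the increments of $W,B$ on $[0,\eps_m]$ are missing from $\Fb^m$). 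Moreover, even $\Fcb^m_{t^m_i}$-measurability would \emph{not} give a functional of $(X_0,W^m,B^m,\muh)$: by definition $\Fcb^m_{t^m_i}$ also contains the canonical paths $(X,Y,\Lambda)$ up to $t^m_i$, which do carry information about $(W,B)$ on $[0,\eps_m]$. Producing a version of the control that is a measurable functional of $(X_0,W^m,B^m,\muh)$ is precisely the content of \Cref{lemma:strongApp_step2}, which needs the Kuratowski/Blackwell--Dubins representation \cite[Lemma 1.3.]{kurtz2014weak} and a careful propagation of the $(H)$-hypothesis through the grid; you cannot absorb that into Step 1 of the present lemma.

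This collapse propagates into Step 3. You conclude the identity \eqref{H-property} by arguing that ``conditioning on the tail of $(B^m,\muh)$ cannot add information \dots by construction of $\alpha^m$ and the causal structure of the SDE.'' Made precise, this is exactly the fallacy the paper flags in \Cref{rem:Gap_StrongWeak}: independence of $Z=f(W,U)$ from $W$ does \emph{not} imply $\sigma(U)$-measurability of $Z$. The paper's own argument avoids this by first establishing, before any discretisation, the two filtration facts \eqref{eq:epsilon_muh} and \eqref{General_H-hypotheses} (an $(H)$-hypothesis-type identity for $\Gb^m$ relative to $\Fb^m\vee\sigma(W^m)$), and then combining them with the conditional Brownian motion property of $W$ given $\Gcb_T$ (from \Cref{Proposition:property_WeakControl}) to obtain $\muh^m_t=\Lc^\Pb(\,\cdot\,|\Gcb^m_t)=\Lc^\Pb(\,\cdot\,|\Gcb^m_T)$, from which $\muh^m_t=\muh^m_T\circ(\Xh_{t\wedge\cdot},\Yh_{t\wedge\cdot},\Lambdah^t,\Wh)^{-1}$, and finally \eqref{H-property} follows by the independence of $(X_0,W^m)$ from $(B^m,\muh^m)$. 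Your proposal never establishes \eqref{General_H-hypotheses} nor invokes \Cref{Proposition:property_WeakControl}, so the heart of the conditional-law identity is unsupported. Your Step 2 (the Gronwall/Burkholder--Davis--Gundy stability estimate and the treatment of the $[0,\eps_m)$ boundary term) does match the paper's Step 4; make sure to use dominated convergence plus uniform $L^p$-integrability of $(\alpha^m)$ to show the analogue of $C_m\to 0$, since continuity of $(b,\sigma,\sigma_0)$ alone is not enough.
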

	
%	\begin{remark}
%		$(i)$ With the convergence results in \eqref{firstWeak_app}, it is easy to deduce that
%		\begin{align*}
%			\lim_{m \to \infty}
%			\Pb \circ \Big(X^m,Y^m,B,W,\muh^m_T,\delta_{(\mub^m_t,\alpha^m_t)}(\mathrm{d}m^{\prime},\mathrm{d}a)\mathrm{d}t
%			\Big)^{-1} 
%			~=~
%			\Pb \circ \Big(X,Y,B,W,\muh_T,\delta_{(\mub_t,\alpha_t)}(\mathrm{d}m^{\prime},\mathrm{d}a)\mathrm{d}t \Big)^{-1},\;\mbox{in}\;\Wc_p.
%		\end{align*}
    
%		$(ii)$ By \Cref{Proposition:property_WeakControl}, one knows that for $\Pb$--almost every $\omb \in \Omb,$ 
%		$\Wt$ is an $(\Ft,\muh(\omb))$--Brownian motion on $\Omt$. 
%		With the the properties of $(X^m,Y^m,\Lambda^m,W^{m},B^{m},\muh^m)$,
%		together with an easy adaptation of \Cref{Proposition:property_WeakControl}, 
%		one can deduce that for $\Pb$--almost every $\omb \in \Omb$, 
%		$\Wt^{m}$ is an $(\Ft, \muh(\omb))$--Brownian motion on $[\eps_m, T]$.
%	\end{remark}	

	\begin{proof}%[Proof of Lemma \ref{lemma:strongApp_step1}]
	First, we claim that for each $m \ge 1$,
	\begin{align} \label{eq:epsilon_muh}
		\muh_t=\Lc^\Pb \big(X_{t \wedge \cdot},Y_{t \wedge \cdot},\Lambda^t,W \big| B^m_{t \wedge \cdot},\muh_{t \wedge \cdot} \big)=\Lc^\Pb \big(X_{t \wedge \cdot},Y_{t \wedge \cdot},\Lambda^t,W \big| B^m,\muh \big),\;\Pb\mbox{\rm--a.s.},\; \mbox{for all}\;t \in [0,T].
	\end{align}
	Indeed, for all $\phi \in C_b(\Cc^n \x \Cc^n \x \M \x \Cc^d)$ and $\psi \in C_b(\Cc^\ell \x C([0,T],\Pc(\Cc^n \x \Cc^n \x \M \x \Cc^d)))$, it follows by \eqref{eq:muh_property} that
	\[
		\E^\Pb \big[\langle \phi,\muh_t \rangle \psi(B^m,\muh) \big]
		=
		\E^\Pb \big[\big\langle \phi,\Lc^\Pb \big(X_{t \wedge \cdot},Y_{t \wedge \cdot},\Lambda^t,W | B^m,\muh \big)\big \rangle \psi(B^m,\muh) \big].
	\]
	This implies \eqref{eq:epsilon_muh} by arbitrariness of $(\phi, \psi)$.  
   	We further observe that $(\Fb^m,\Gb^m)$ satisfies
	\begin{align} \label{General_H-hypotheses}
		\E^{\Pb}[1_D \big| \Gcb^{m}_t]=\E^{\Pb}[1_D \big| \Gcb^{m}_T],\;\mbox{for all}\;D \in \Fcb^{m}_t\lor \sigma(W^m)\;\mbox{and}\; t \in [0,T].
	\end{align}
	
	Next, as $\E^\Pb \big[\int_0^T \rho(\alpha_t,a_0)^p \mathrm{d}t \big] < \infty$,
	it follows (this is a straightforward extension of, for instance, \citeauthor*{lipster1977statistics} \cite[Lemma 4.4]{lipster1977statistics}) that there exists a sequence of piecewise constant and $\Fbb$--predictable process $\alpha^m$ satisfying the first two properties in \Cref{firstWeak_app}.
	Without loss of generality, let us also set $\alpha^m_T := \alpha^m_{t^m_{m-1}}$.
	
\medskip
	
	Then given $\alpha^m$, let $X^m$ be the unique $\Fb^{m}$--adapted solution of the McKean--Vlasov SDE \eqref{eq:weakMV_delay} 
	(see also \cite[Theorem A.3]{djete2019mckean} for its well--posedness),
	with $\mub^m_t:=\Lc^\Pb \big(X^m_{t \wedge \cdot},\alpha^m_t \big| \Gcb^{m}_t\big)$.
	Let $\muh^m$, $\Lambda^m_t(\mathrm{d}a)\mathrm{d}t$ and $Y^m$ be defined as in the statement of Lemma \ref{lemma:strongApp_step1}.

	\medskip
	
	The independence between $(X_0, W^m)$ and $(B^m, \muh^m)$ follows directly from the independence of $(X_0, W)$ and $\Gcb_T$.
	Further, by \Cref{Proposition:property_WeakControl}, $W$ is a Brownian motion under the conditional law of $\Pb$ knowing $\Gcb_T$.
	It follows that, for each $t \in [0,T]$, 
	$(W^m_{t+s} - W^m_t)_{s \in [0, T-t]} =(W_{(t+s) \vee \eps_m} - W_{t \vee \eps_m})_{s \in [0, T-t]}$ and $(X^m_{t \wedge \cdot},Y^m_{t \wedge \cdot},(\Lambda^m)^t,W^{m}_{t \wedge \cdot})$
	are independent under the conditional law of $\Pb$ knowing $\Gcb_t^m$ (or $\Gcb_T^m$).
	Together with \eqref{General_H-hypotheses}, it follows that   
	\[
		\muh^m_t=\Lc^\Pb \big(X^m_{t \wedge \cdot},Y^m_{t \wedge \cdot},(\Lambda^m)^t,W^{m} \big| \Gcb^{m}_t\big)=\Lc^\Pb \big(X^m_{t \wedge \cdot},Y^m_{t \wedge \cdot},(\Lambda^m)^t,W^{m} \big| \Gcb^{m}_T\big),
		\;\mbox{and}\;
		\mub^m_t(\mathrm{d} \xb,\mathrm{d}a)=\E^{\hat \mu^m}\Big[\delta_{\Xh_{t \wedge \cdot}}(\mathrm{d} \xb)\Lambdah_t(\mathrm{d}a)\Big],
	\]
	and therefore
	\[
		\muh^m_t= \muh^m_T \circ \big(\Xh_{t \wedge \cdot},\Yh_{t \wedge \cdot}, \Lambdah^t, \Wh \big)^{-1},\;\Pb\;\mbox{\rm--a.s.,}\; \mbox{for all}\; t \in [0,T].
	\]
	Since $(\muh^m_t)_{t \in [0,T]}$ is a function of $\muh^m_T$,
	and  $(B^{m},\muh^m)$ and $(X_0, W^{m})$ are $\Pb$--independent, it follows by using the definition of $\Gb^{m}$  that \eqref{H-property} holds true.
	 
	\medskip
	
	To conclude, it is enough to prove that $\lim_{m\rightarrow\infty} \E^{\Pb} \big[ \sup_{s \in [0,T]} |X_s-X^m_s|^{p} \big]=0$.
	For any $t \in[ \eps_m,T]$, one has
	\begin{align*}
		X_t-X^m_t
		=&\
		X_{\eps_m} -X_0
		+ \int_{\eps_m}^{t} \big(b(r,X_{r\wedge\cdot},\mub_r,\alpha_r)-b(r,X^m_{r\wedge\cdot},\mub^m_r,\alpha^m_r)\big) \mathrm{d}r \\
		&+ 
		\int_{\eps_m}^{t} \!\! \big(\sigma(r,X_{r\wedge\cdot},\mub_r,\alpha_r)-\sigma(r,X^m_{r\wedge\cdot},\mub^m_r,\alpha^m_r)\big) \mathrm{d}W_r
		+ 
		\int_{\eps_m}^{t} \big(\sigma_0(r,X_{r\wedge\cdot},\mub_r,\alpha_r)-\sigma_0(r,X^m_{r\wedge\cdot},\mub^m_r,\alpha^m_r)\big)\mathrm{d}B_r.
    \end{align*}
	Next, using Jensen's inequality, Burkholder--Davis--Gundy inequality, the Lipschitz property of $(b,\sigma,\sigma_0)$, 
	and the inequality
	\[
		\Wc_p \big(\mub_t,\mub^m_t \big)^p
		=
		\Wc_p \Big(\Lc^\Pb \big(X_{t \wedge \cdot},\alpha_t \big| \Gcb^{m}_t\big),\Lc^\Pb \big(X^m_{t \wedge \cdot},\alpha^m_t \big| \Gcb^{m}_t\big) \Big)^p
		\le
		\E^{\Pb} \bigg[\sup_{s \in [0,t]}\big|X_s-X^m_s \big|^{p} + \rho \big(\alpha^m_t,\alpha_t \big)^p \Big|\Gcb^{m}_t \bigg],
	\]
	there exists a constant $C > 0$, which may vary from line to line, such that
	\begin{align*}
		\E^{\Pb} \bigg[ \sup_{s \in [\eps_m,t]} |X_s-X^m_s|^{p} \bigg]
		\le&\ C 
		\E^{\Pb} \bigg[  |X_{\eps_m}- X_0|^{p} 
		+
		\int_{\eps_m}^{t} \big|(b,\sigma,\sigma_0)(r,X_{r\wedge\cdot},\mub_r,\alpha_r)-(b,\sigma,\sigma_0)(r,X^m_{r\wedge\cdot},\mub^m_r,\alpha^m_r) \big|^p \mathrm{d}r \bigg]
		\\
		\le&\
		C \bigg(\E^{\Pb} \big[  |X_{\eps_m}- X_0|^{p} \big]
		+
		\E^{\Pb} \bigg[ \int_{\eps_m}^{t} \sup_{u \in [\eps_m,r]}\big|X_u-X^m_u \big|^{p} \mathrm{d}r \bigg] + C_m \bigg),
	\end{align*}
	where
	\begin{align*}
		C_m
		:=
		\E^\Pb \bigg[
			\int_0^T \Big(\big|(b,\sigma,\sigma_0)(r,X,\mub_r,\alpha_r)-(b,\sigma,\sigma_0)(r,X,\mub_r,\alpha^m_r)\big|^p
			+ 
			\rho \big(\alpha^m_r,\alpha_r \big)^p\Big)  \mathrm{d}r
		\bigg].
	\end{align*}
 	By Gronwall's lemma (recall that all expectations appearing here are finite), we deduce that for all $t \in [\eps_m,T]$
	\begin{align*}
		\E^{\Pb} \bigg[ \sup_{s \in [\eps_m,t]} |X_s-X^m_s|^{p} \bigg]
		&\le C \Big(\E^{\Pb} \big[  |X_{\eps_m}-X_0|^{p} \big]
		+ C_m \Big),
	\end{align*}
	so that
	\begin{align*}
		\E^{\Pb} \bigg[ \sup_{s \in [0,T]} |X_s-X^m_s|^{p} \bigg]
		&\le C \bigg(\E^{\Pb} \big[  |X_{\eps_m}-X_0|^{p} \big]
		+
		\E^{\Pb} \bigg[ \sup_{r \in [0,\eps_m]}  |X_r-X_0|^{p} \bigg]
		+ C_m \bigg).
	\end{align*}
	By \Cref{assum:main1}, one has, for all $r \in [0,T]$,
	\begin{align*}
		|(b,\sigma,\sigma_0)(r,X,\mub_r,\alpha_r)-(b,\sigma,\sigma_0)(r,X,\mub_r,\alpha^m_r)\big|^p
		\le &\
		C \Big(\big\| X_{r \wedge \cdot} \big\|^p + \E^{\Pb} \Big[\big\| X_{r \wedge \cdot} \big\|^p + \rho \big( a_0,\alpha_r\big)^p \Big| \Gcb_T \Big] + \rho \big( a_0,\alpha_r\big)^p \Big) \\
		&+ C \rho \big(\alpha^m_r,\alpha_r\big)^p.
	\end{align*}
	By dominated convergence and the continuity of coefficients $(b,\sigma,\sigma_0)$, it follows that for all  $K >0$,
	\begin{align*}
		\lim_{m\rightarrow\infty} \E^\Pb \bigg[\int_{0}^{T} \big|(b,\sigma,\sigma_0)(r,X,\mub_r,\alpha_r)-(b,\sigma,\sigma_0)(r,X,\mub_r,\alpha^m_r)\big|^p \mathbf{1}_{\{\rho(\alpha^m_r,\alpha_r) \le K \} } \mathrm{d}r\bigg]=0.
	\end{align*}
	In addition, since $\big(\big\| X_{r \wedge \cdot} \big\|^p + \rho \big( a_0,\alpha_r\big)^p \big)\mathbf{1}_{ \{\rho(\alpha^m_r,\alpha_r) \ge K \}} \le \big(\big\| X_{r \wedge \cdot} \big\|^p + \rho \big( a_0,\alpha_r\big)^p \big)$, which is $\Pb$--integrable, using the uniform integrability of the sequence $(\alpha^m)_{m \ge 1}$, one obtains that
	\begin{align*}
		& \limsup_{K\rightarrow\infty}\; \limsup_{m\rightarrow\infty} \E^\Pb \bigg[\int_{0}^{T} \big|(b,\sigma,\sigma_0)(r,X,\mub_r,\alpha_r)-(b,\sigma,\sigma_0)(r,X,\mub_r,\alpha^m_r)\big|^p \mathbf{1}_{ \{\rho(\alpha^m_r,\alpha_r) > K \}} \mathrm{d}r\bigg]
		\\
		&\le \limsup_{K\rightarrow\infty}\; \limsup_{m\rightarrow\infty}  K\E^\Pb \bigg[\int_{0}^{T} \Big( \big(\big\| X_{r \wedge \cdot} \big\|^p + \rho \big( a_0,\alpha_r\big)^p \big) + \rho \big(\alpha^m_r,\alpha_r\big)^p \Big) \mathbf{1}_{ \{\rho(\alpha^m_r,\alpha_r) > K \} } \mathrm{d}r\bigg]
		\\
		&\le
		\limsup_{K\rightarrow\infty}\; \sup_{m>0} K\E^\Pb \bigg[\int_{0}^{T} \rho \big(\alpha^m_r,\alpha_r\big)^p \mathbf{1}_{ \{\rho(\alpha^m_r,\alpha_r) > K \} } \mathrm{d}r\bigg]=0.
	\end{align*}
	This implies that $\Lim_{m\rightarrow\infty} C_m=0,$ and hence \eqref{firstWeak_app} does hold.
	\end{proof}

	\begin{lemma} \label{lemma:strongApp_step2}
		In the context of {\rm \Cref{lemma:strongApp_step1}}, let $m \ge 1$. In the $($possibly enlarged$)$ filtered probability space $(\Omb,\Fbb,\Fcb,\Pb)$,
		there exists a sequence of i.i.d. random variables $U^m = (U^m_i)_{i \ge 1}$, with uniform distribution on $[0,1]$, and $\Pb$--independent of $(X_0,B^m,W)$,
		together with a $(\sigma(U^m,X_0,W_{t \wedge \cdot},B^m_{t \wedge \cdot} ))_{t \in [0,T]}$--predictable process 
		$(\widetilde{\gamma}^m_t)_{t \in [0,T]}$, such that if we let $(\widetilde{X}^m_t)_{t \in [0,T]}$ be the unique strong solution of
		\begin{align*}
			\widetilde{X}^m_t
			=
			X_0 
			+
			\int_{\eps_m}^{t \vee \eps_m} b \big(r,\widetilde{X}^{m}_{r\wedge\cdot},\overline{\zeta}^m_r,\widetilde{\gamma}^m_r\big) \mathrm{d}r
			+
			\int_{\eps_m}^{t \vee \eps_m} \sigma \big(r,\widetilde{X}^m_{r\wedge\cdot},\overline{\zeta}^m_r,\widetilde{\gamma}^m_r\big) \mathrm{d}W^m_r
			+  
			\int_{\eps_m}^{t \vee \eps_m} \sigma_0 \big(r,\widetilde{X}^m_{r\wedge\cdot},\overline{\zeta}^m_r,\widetilde{\gamma}^m_r\big) \mathrm{d}B^m_r,
		\end{align*}
		with $\overline{\zeta}^m_t:=\Lc^\P \big(\widetilde{X}^m_{t \wedge \cdot},\widetilde{\gamma}^m_t \big| B^m,U^m\big)$,
		and define further $\widetilde{\Lambda}^m_t(\mathrm{d}a)\mathrm{d}t:=\delta_{\widetilde{\gamma}^m_t}(\mathrm{d}a)\mathrm{d}t,$ as well as
		\begin{align*}
			\widetilde{Y}^m_{t}
			:=
			\widetilde{X}^m_t-\int_{\eps_m}^{t \vee \eps_m} \sigma_0 \big(r,\widetilde{X}^m_{r\wedge \cdot},\overline{\zeta}^m_r,\widetilde{\gamma}^m_r\big) \mathrm{d}B^{m}_r,
			\; \mbox{\rm and}\; 
			\widehat{\zeta}^m_t
			:=
			\Lc^\P \big(\widetilde{X}^m_{t \wedge \cdot},\widetilde{Y}^m_{t \wedge \cdot},(\widetilde{\Lambda}^m)^t,W^{m} \big| B^m,U^m\big),			
		\end{align*}
		then, with $(X^m,Y^m, \Lambda^m, W^m,B^m, \muh^m )$ defined in {\rm \Cref{lemma:strongApp_step1}}, we have
		\begin{equation} \label{eq:X2Xh}
			\Lc^\Pb \Big(\widetilde{X}^m,\widetilde{Y}^m, \widetilde{\Lambda}^m, W^m,B^m, \widehat{\zeta}^m_T \Big)
			=
			\Lc^\Pb \Big(X^m,Y^m, \Lambda^m, W^m,B^m, \muh^m \Big).
		\end{equation}

		Finally, when $\ell=0$ and $\muh^m$ is deterministic, then one can take $(\widetilde{\gamma}^m_t)_{t \in [0,T]}$ to be $(\sigma(X_0,W_{t \wedge \cdot}))_{t \in [0,T]}$--predictable.
	\end{lemma}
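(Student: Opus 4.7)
The strategy is to carry out a randomization that replaces, at each partition point $t_i^m$, the $\muh^m$-dependence of the control $\alpha^m_{t_i^m}$ by an independent uniform noise variable, and then to show that this matches the full joint distribution of all relevant quantities---including the random conditional laws $\widehat{\zeta}^m_T$ and $\muh^m$. First I would enlarge the probability space $(\Omb,\Fbb,\Fcb,\Pb)$ to support an i.i.d.\ sequence $U^m=(U^m_i)_{i \ge 1}$ of uniform $[0,1]$ random variables, $\Pb$-independent of $(X_0,W,B^m)$.

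The recursive construction then proceeds as follows. Set $\widetilde{\gamma}^m_0 := a_0$. For $i=1,\ldots,m-1$, denote by $\kappa_i$ a regular version of the conditional distribution under $\Pb$ of $\alpha^m_{t_i^m}$ given $\xi_i := (X_0,W_{t_i^m\wedge\cdot},B^m_{t_i^m\wedge\cdot},\alpha^m_{t_0^m},\ldots,\alpha^m_{t_{i-1}^m})$. By the standard noise--outsourcing / measurable--selection lemma on standard Borel spaces (in the spirit of Kallenberg's Lemma~3.22), there exists a Borel map $G_i$ such that, for $U$ uniform on $[0,1]$ independent of $\xi_i$, the conditional law of $G_i(U,\xi_i)$ given $\xi_i$ coincides with $\kappa_i(\xi_i,\cdot)$. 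I define
\[
\widetilde{\gamma}^m_{t_i^m} := G_i\bigl(U^m_i,\,X_0,\,W_{t_i^m \wedge \cdot},\,B^m_{t_i^m \wedge \cdot},\,\widetilde{\gamma}^m_{t_0^m},\ldots,\widetilde{\gamma}^m_{t_{i-1}^m}\bigr),
\]
and extend piecewise--constantly on $[t_i^m,t_{i+1}^m)$. The independence of $U^m_i$ from $\xi_i$, combined with an induction on $i$, yields the marginal identity $\Lc^\Pb\big(X_0,W,B^m,\widetilde{\gamma}^m\big) = \Lc^\Pb\big(X_0,W,B^m,\alpha^m\big)$, and by construction $\widetilde{\gamma}^m$ is $\big(\sigma(U^m,X_0,W_{t\wedge\cdot},B^m_{t\wedge\cdot})\big)$-predictable.

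Letting $\widetilde{X}^m$ be the unique strong solution of its McKean--Vlasov SDE (existence by \Cref{assum:main1} together with \cite[Theorem A.3]{djete2019mckean}) and $\widetilde{Y}^m,\widetilde{\Lambda}^m,\widehat{\zeta}^m$ be defined as in the statement, the crux of the proof is lifting the above marginal identity to the joint identity \eqref{eq:X2Xh}. This is the main obstacle, because the two equations involve \emph{different} conditional--law functionals: $\overline{\zeta}^m$ conditions on $\sigma(B^m,U^m)$ whereas $\mub^m$ conditions on $\sigma(B^m,\muh^m)$, and matching only the inputs $(X_0,W,B^m,\text{control})$ in law does not, a priori, force those conditional--law terms to play interchangeable roles in the dynamics. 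Here the $(H)$-hypothesis--type property~\eqref{H-property} obtained in \Cref{lemma:strongApp_step1} is essential: it tells us that $\muh^m$ is a \emph{sufficient statistic}, in the sense that conditioning on $\sigma(B^m,\muh^m)$ produces the same conditional law as conditioning on the full $\Gcb^m_T$, and an analogous property holds by construction for $\widehat{\zeta}^m$ with $\sigma(B^m,U^m)$. Combined with the independence of $U^m$ from $(X_0,W,B^m)$ and McKean--Vlasov strong uniqueness applied on each fibre of the common--noise $\sigma$-algebra, this forces $\overline{\zeta}^m\stackrel{d}{=}\mub^m$ jointly with all remaining variables, and~\eqref{eq:X2Xh} follows.

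Finally, in the degenerate situation $\ell = 0$ with $\muh^m$ deterministic, the component $B^m$ disappears and $\mub^m$ reduces to a deterministic curve in $\Pc(\Cc^n \x A)$, so that the kernels $\kappa_i$ effectively depend only on $(X_0,W_{t_i^m\wedge\cdot})$ and the already--built $\widetilde{\gamma}^m$-values. The atomless randomness in the Wiener measure on $\Cc^d$ is then rich enough to furnish, via a standard measurable extraction from $W$ using fresh Brownian increments at each step, a $(\sigma(X_0,W_{t\wedge\cdot}))$-measurable i.i.d.\ uniform sequence that plays the role previously played by the external $U^m$; feeding it into the $G_i$'s yields a $\widetilde{\gamma}^m$ that is $(\sigma(X_0,W_{t\wedge\cdot}))$-predictable, for which \eqref{eq:X2Xh} continues to hold.
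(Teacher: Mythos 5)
Your overall scheme---enlarge the space with an independent uniform sequence $U^m$, then use a noise--outsourcing / Kallenberg-type lemma recursively over the partition points to produce a control $\widetilde{\gamma}^m$ that is $\big(\sigma(U^m,X_0,W_{t\wedge\cdot},B^m_{t\wedge\cdot})\big)$-predictable---is the right skeleton, and your handling of the case $\ell=0$ by replacing $U^m$ with uniforms extracted from the Brownian increments of $W$ on $[0,\eps_m]$ is precisely the trick the paper employs (via the map $\kappa^d$). However, there is a genuine gap in the transfer from the marginal identity $\Lc^{\Pb}(X_0,W,B^m,\widetilde{\gamma}^m)=\Lc^{\Pb}(X_0,W,B^m,\alpha^m)$ to the joint identity \eqref{eq:X2Xh}, and you acknowledge the obstacle but do not actually close it.

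The missing piece is the randomisation of the \emph{measure process} $\muh^m$. Your kernel $\kappa_i$ conditions only on $\xi_i=(X_0,W_{t^m_i\wedge\cdot},B^m_{t^m_i\wedge\cdot},\alpha^m_{t^m_0},\ldots,\alpha^m_{t^m_{i-1}})$ and thereby averages out the dependence of $\alpha^m_{t^m_i}$ on $\muh^m_{t^m_i\wedge\cdot}$, which is nontrivial because $\alpha^m$ is $\Fbb$-predictable and the original $\muh$ can carry randomness strictly beyond $B^m$. As a result, your construction may match the law of the control jointly with $(X_0,W,B^m)$, but it gives no control over the joint law with the $\Pc(\Cc^n\x\Cc^n\x\M\x\Cc^d)$-valued random variables $\muh^m_T$ and $\widehat{\zeta}^m_T$ appearing in \eqref{eq:X2Xh}. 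Invoking ``strong uniqueness on each fibre'' does not close this, because the fibration is by different $\sigma$-algebras ($\sigma(B^m,\muh^m)$ versus $\sigma(B^m,U^m)$), and equality in law of the inputs does not force these two conditioning structures to produce identically distributed conditional-law processes. The paper's proof handles this by introducing, \emph{in addition} to the map $G^{\alpha}_i$ for the control, a second family of Borel maps $G^{\mu}_i:\Cc^\ell\x\Pc(\cdot)^i\x[0,1]\to\Pc(\cdot)$ and recursively defining a $\sigma(U^m_1,\ldots,U^m_i,\{B^m\}_i)$-measurable surrogate $\zetah^m_i$ for $\muh^m_{t^m_i}$; the kernel for the control then conditions on $\{\muh^m\}_i$ as well. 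The induction in Step~2, using the independence of the increments of $(W^m,B^m)$ together with the $(H)$-type property \eqref{H-property}, establishes the joint identity $\Lc^{\Pb}(X_0,\{B^m\}_i,\{W^m\}_i,\{\muh^m\}_i,\{\alpha^m\}_i)=\Lc^{\Pb}(X_0,\{B^m\}_i,\{W^m\}_i,\{\zetah^m\}_i,\{\widetilde{\gamma}^m\}_i)$, which includes the measure process; finally, Step~3 writes $(\widetilde{X}^m,\widetilde{Y}^m)$ as a measurable functional $H^m$ of $(X_0,W^m,B^m,\widetilde{\Lambda}^m,\zetah^m_T)$ and verifies $\widehat{\zeta}^m_t=\zetah^m_t$, from which \eqref{eq:X2Xh} follows. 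Without the $G^{\mu}_i$ layer tracking $\muh^m$, your argument cannot reach \eqref{eq:X2Xh}.
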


	\begin{proof}
	Let us fix $m \ge 1$, and introduce $\{ W^m \}_0=\{ B^m \}_0:=0$, and then for $i \in \{1,\dots,m \}$,
	\begin{align*}
		\{ B^m \}_i:=\big(B^{m,(k-1)}\big)_{1 \le k \le i},\;\{ W^m \}_i:=\big(W^{m,(k-1)}\big)_{1 \le k \le i}\;, \;\{ \muh^m \}_i:=\big(\muh^m_{t^m_{k}}\big)_{0 \le k \le i},\;\mbox{and}\;\{ \alpha^m \}_i:=(\alpha^m_k)_{0 \le k \le i},
	\end{align*}
	where $B^{m,(k-1)}_t:=B^m_{(t \vee t^m_{k-1}) \wedge t^m_{k}}-B^m_{t^m_{k-1}}$ and $W^{m,(k-1)}_t:=W^m_{(t \vee t^m_{k-1}) \wedge t^m_{k}}-W^m_{t^m_{k-1}}$, $t \in [0,T].$
    
    \medskip
	$\underline{Step\;1.}$
	For each $i \in \{1,\dots,m \}$, 
	there exists (see \citeauthor*{kurtz2014weak} \cite[Lemma 1.3.]{kurtz2014weak}) a Borel measurable function 
	$G^{\mu}_i:\Cc^\ell \x \Pc(\Cc^n\x \Cc^n \x \M \x \Cc^d)^{i} \x [0,1] \longrightarrow \Pc(\Cc^n\x \Cc^n \x \M \x \Cc^d)$
	such that, for any uniform random variable $U^m_i$ independent of $\big( \{B^m\}_{i}, \{\muh^m \}_{i-1} \big)$ and $\widehat{G}^{\mu}_i :=G^{\mu}_i \big( \{B^m\}_{i}, \{\muh^m \}_{i-1},  U^m_i \big)$, we have
	\begin{align} \label{identity_law}
		\Lc^\Pb \big(X_0,\{B^m\}_{i}, \{W^m\}_{i}, \{\muh^m \}_{i-1}, \muh^m_{t^m_i}\big)
		=
		\Lc^\Pb \big(X_0,\{B^m\}_{i}, \{W^m\}_{i}, \{\muh^m \}_{i-1}, \widehat{G}^{\mu}_i\big),
	\end{align}
	Above, $G^{\mu}_i$ is a function of $(\{B^m\}_{i}, \{\muh^m \}_{i-1},  U^m_i )$ rather than of 
	$(X_0, \{B^m\}_{i}, \{W^m\}_{i}, \{\muh^m \}_{i-1},  U^m_i)$, since $\muh^m_{t^m_i}$ is actually $\Pb$--independent of $(X_0, W^m)$.
	We can apply a similar argument to find a Borel measurable function 
	$G^{\alpha}_i : \R^n \x (\Cc^\ell \x \Cc^d)^{i} \x \Pc(\Cc^n \x \Cc^n \x \M \x \Cc^d)^{(i+1)} \x A^{i} \x \R \longrightarrow A ,$ and  uniform random variable $V^m_i$ independent of the variables $\big( X_0,\{ B^m \}_i , \{ W^m \}_i, \{ \muh^m \}_i, \{ \alpha^m \}_{(i-1)} \big)$
	such that
	\begin{align} \label{identity_control}
		\Lc^\Pb \big(X_0,\{ B^m \}_i , \{ W^m \}_i, \{ \muh^m \}_i, \{ \alpha^m \}_{(i-1)},  \alpha^m_i \big)
		=
		\Lc^\Pb \big(X_0,\{ B^m \}_i , \{ W^m \}_i, \{ \muh^m \}_i, \{ \alpha^m \}_{(i-1)},  \widetilde{G}^\alpha_i \big),
	\end{align}
	where
	\[
		\widetilde{G}^\alpha_i := G^\alpha_i \Big( X_0,\{ B^m \}_i , \{ W^m \}_i,\{ \muh^m \}_i, \{ \alpha^m \}_{(i-1)}, V^m_i\Big).
	\]
	Observe that  one can take $(U^m_1,\dots,U^m_m)$ to be independent of $(V^m_1,\dots,V^m_m).$ 
	We can then find a Borel function $\kappa^d: \R^d \longrightarrow [0,1]$ such that $\Lc^{\Pb} \big( \kappa^d \big(W_{i\eps_m/m}-W_{(i-1)\eps_m/m}\big) \big)$ is a uniform distribution.
	Define next $\widetilde{\gamma}^m_0:=\alpha^m_0 = a_0$,
	$\widehat{\zeta}_0:=\muh_{t^m_0 \wedge \cdot},$ and for any $i\in\{1,\dots,m-1\}$
	\[
		\zetah^{m}_i := G^{\mu}_i \Big( X_0,\{B^m\}_{i}, \{\zetah^m \}_{(i-1)}, U^m_i \Big),
		\;
		\widetilde{\gamma}^m_i
		:=
		G^\alpha_i \Big( X_0, \{B^m\}_{i}, \{W^m\}_{i}, \{\zetah^m\}_{i}, \{\widetilde{\gamma}^m\}_{(i-1)}, \kappa^d \big(W_{i\eps_m/m}-W_{(i-1)\eps_m/m}\big) \Big).
	\]
	Then, for each $i\in\{0,\dots,m\}$, $\zetah^m_i$ is $\sigma (U^m_1,\dots,U^m_i, \{B^m\}_{i} )$--measurable, 
	and
	\begin{equation} \label{measurability_control}
		\widetilde{\gamma}^m_i
		\; \mbox{is}\;
		\sigma \big(X_0,W_{\eps_m \wedge \cdot}, \{W^{m}\}_i,\{\zetah^m\}_{i},\{B^{m}\}_i \big)\mbox{\rm--measurable}.
	\end{equation}    
	
	\medskip
	When $\ell=0$ and $\muh^m$ is deterministic, 
	the previous construction implies that $\{\zetah^m\}_m=\{\muh^m\}_m$ is deterministic and $\widetilde{\gamma}^m_i$ is $\sigma \big(X_0,W_{\eps_m \wedge \cdot}, \{W^{m}\}_i \big)$--measurable.
    
    \medskip
	{$\underline{Step\;2.}$}
	We next prove by induction that, for each $i \in \{0,\dots,m \}$
	\begin{align} \label{general_LawEquality}
		\Lc^\Pb \big(X_0, \{B^m\}_{i}, \{W^m\}_{i}, \{\muh^m\}_{i}, \{\alpha^m\}_{i} \big)
		=
		\Lc^\Pb \big(X_0, \{B^m\}_{i}, \{W^m\}_{i}, \{\zetah^m\}_{i}, \{\widetilde{\gamma}^m\}_{i} \big).
	\end{align}
	When $i=0$, \eqref{general_LawEquality} holds true since $\alpha^m_0$ and $\muh_{t^m_0}$ are deterministic constants. 

	\vspace{0.5em}
	
	Now, assume that \eqref{general_LawEquality} is true for some $i\in\{0,\dots,m-1\}$.
	First, take $\phi \in C_b(\R^n \x \Cc^n \x \Cc^\ell \x \Cc^d \x \Pc(\Cc^n \x \Cc^n \x \M \x \Cc^n)^i \x A^i),$ $\psi \in C_b(\Cc^d)$, 
	$\varphi \in C_b(\Cc^\ell)$ and $h \in C_b(\Pc(\Cc^n \x \Cc^n \x \M \x \Cc^d))$. Using the independence of the increments of the Brownian motion $W^m$, together with \eqref{H-property} and \eqref{identity_law}, we have
	\begin{align} \label{eq:mu2G}
		&\ \E^\Pb \Big[ \phi \big(X_0, B^m_{t^m_i \wedge \cdot},  W^m_{t^m_i \wedge \cdot}, \{\muh^m\}_{i}, \{\alpha^m\}_{i}  \big) \psi \big( W^{m,(i)} \big) \varphi \big( B^{m,(i)} \big) h \big(\muh^m_{t^m_{i+1}} \big) \Big] 
		\nonumber \\
		=&\
		\E^\Pb \Big[ \E^\Pb \Big[ \psi \big( W^{m,(i)} \big) \Big]\E^\Pb \Big[ \phi \big(X_0,B^m_{t^m_i \wedge \cdot}, W^m_{t^m_i \wedge \cdot},\{\muh^m\}_{i}, \{\alpha^m\}_{i}  \big) \Big| B^{m}_{t^m_i \wedge \cdot},\muh^m_{t^m_i \wedge \cdot} \Big] \varphi \big( B^{m,(i)} \big) h \big(\muh^m_{t^m_{i+1}} \big) \Big] 
		\nonumber \\
		=&\
		\E^\Pb \Big[ \E^\Pb \Big[ \psi \big( W^{m,(i)} \big) \Big]\E^\Pb \Big[ \phi \big(X_0,B^m_{t^m_i \wedge \cdot}, W^m_{t^m_i \wedge \cdot},\{\muh^m\}_{i}, \{\alpha^m\}_{i} \big) \Big| B^{m},\muh^m \Big] \varphi \big( B^{m,(i)} \big) h \big(\widehat{G}^\mu_{i+1} \big) \Big] 
		\nonumber \\
		=&\
		\E^\Pb \Big[ \E^\Pb \Big[ \phi \big(X_0,B^m_{t^m_i \wedge \cdot}, W^m_{t^m_i \wedge \cdot}, \{\muh^m\}_{i},  \{\alpha^m\}_{i}  \big) \psi \big( W^{m,(i)} \big) \Big| B^{m},\muh^m \Big] \varphi \big( B^{m,(i)} \big) h \big(\widehat{G}^\mu_{i+1} \big) \Big]. 
	\end{align}
	
	Further, let $\varphi_1 \in C_b(\Cc^\ell \x \Pc(\Cc^n \x \Cc^n \x \M \x \Cc^d)^{i})$ and $\varphi_2 \in C_b([0,1])$,
	using the independence of $U^m_{i+1}$ and that of the increments of the Brownian motions $(B^m, W^m)$,
	and the induction assumption, we obtain
	\begin{align} \label{eq:H_Hypo_deriv}
		&\
		\E^\Pb \Big[ \E^\Pb \Big[ \phi \big(X_0, B^m_{t^m_i \wedge \cdot}, W^m_{t^m_i \wedge \cdot}, \{\muh^m\}_{i}, \{\alpha^m\}_{i}  \big) \psi \big( W^{m,(i)} \big) \Big| B^{m},\muh^m \Big] \varphi_1 \big(B^m_{t^m_{i+1} \wedge \cdot}, \{\muh^m\}_{i}  \big) \varphi_2 (U^m_{i+1}) \Big]
		\nonumber \\
		=&\
		\E^\Pb \Big[ \phi \big(X_0, B^m_{t^m_i \wedge \cdot}, W^m_{t^m_i \wedge \cdot}, \{\muh^m\}_{i}, \{\alpha^m\}_{i}  \big) \psi \big( W^{m,(i)} \big) \varphi_1 \big(B^m_{t^m_{i+1} \wedge \cdot}, \{\muh^m\}_{i}  \big) \Big] \E^\Pb \Big[ \varphi_2 (U^m_{i+1}) \Big]
		\nonumber \\
		=&\
		\E^\Pb \Big[ \phi \big(X_0, B^m_{t^m_i \wedge \cdot}, W^m_{t^m_i \wedge \cdot}, \{\zetah^m\}_{i}, \{\widetilde{\gamma}^m\}_{i}  \big) \psi \big( W^{m,(i)} \big) \varphi_1 \big(B^m_{t^m_{i+1} \wedge \cdot}, \{\zetah^m\}_{i}  \big) \varphi_2 (U^m_{i+1}  ) \Big].
	\end{align}
	Using the arbitrariness of $(\varphi_1,\varphi_2)$,
	and a classical density argument, we can replace 
	$\varphi_1 \big(B^m_{t^m_{i+1} \wedge \cdot}, \{\zetah^m\}_{i}  \big) \varphi_2 (U^m_{i+1}  ) $
	by $\varphi(B^{m,(i)})h \big(G^{\mu}_{i+1}\big(B^m_{t^m_{i+1} \wedge \cdot}, \{\muh^m\}_{i}  ,U^m_{i+1} \big) \big)$, for arbitrary continuous and bounded functions $\varphi$ and $h$,
	in \eqref{eq:H_Hypo_deriv}, and it leads to
	\begin{align*} 
		\eqref{eq:mu2G}
		=
		\E^\Pb \Big[ \phi \big(X_0, B^m_{t^m_i \wedge \cdot}, W^m_{t^m_i \wedge \cdot},\{\zetah^m\}_{i}, \{\widetilde{\gamma}^m\}_{i}  \big) \psi \big( W^{m,(i)} \big) \varphi(B^{m,(i)})h \big(\zetah^m_{i+1} \big) \Big],
	\end{align*}
	and hence 
	\[
		\Lc^\Pb  \Big( X_0, B^m_{t^m_{i+1} \wedge \cdot}, W^m_{t^m_{i+1} \wedge \cdot}, \{\muh^m\}_{(i+1)}, \{\alpha^m\}_{i} \Big)
		=
		\Lc^\Pb  \Big( X_0, B^m_{t^m_{i+1} \wedge \cdot}, W^m_{t^m_{i+1} \wedge \cdot}, \{\zetah^m\}_{(i+1)}, \{\widetilde{\gamma}^m\}_{i} \Big).
	\]
	Together with the result \eqref{identity_control}, and by the independence of $V^m_{i+1}$ w.r.t. the other variables, it follows that
	\[
		\Lc^\Pb \Big(X_0, \{ B^m \}_{(i+1)}, \{ W^m \}_{(i+1)}, \{\muh^m\}_{(i+1)}, \{\alpha^m\}_{i}, \alpha^m_{i+1}\Big)
		=
		\Lc^\Pb \Big(X_0, \{ B^m \}_{(i+1)}, \{ W^m \}_{(i+1)}, \{ \zetah^m\}_{(i+1)}, \{ \widetilde{\gamma}^m \}_{i}, \widetilde{\gamma}^m_{i+1} \Big),
	\]
	which concludes the proof of \eqref{general_LawEquality} by induction.
    
	\medskip
	
	{$\underline{Step\;3.}$}
	Under \Cref{assum:main1},
	the solution of SDE \eqref{eq:weakMV_delay} can be expressed as function of $(X_0,W^m, B^m, (\Lambda^m), \muh^m$).
	More precisely, there exists a Borel function $H^m: [0,T] \x \R^n \x \Cc^d \x \Cc^\ell \x \M \x \Pc(\Cc^n \x \Cc^n \x \M \x \Cc^d) \longrightarrow \Cc^n \x \Cc^n$ such that 
	\[
		(X^m_t,Y^m_t)
		=
		H^m_t \Big(X_0,W^m_{t \wedge \cdot}, B^m_{t \wedge \cdot}, (\Lambda^m)^t, \muh^m_T \Big),
		\; t\in[0,T],\; \Pb\mbox{\rm--a.s.}
	\]
	Moreover, by Lemma \ref{lemma:strongApp_step1},
	the processes $(\muh^m_t)_{t \in [0,T]}$ and $(\mub^m_t)_{t \in [0,T]}$ are actually functions of $\muh^m_{T}.$

	\medskip

	Define $\widetilde{\gamma}^m_t:=\widetilde{\gamma}^m_i$ for $t \in [t^m_i,t^m_{i+1}),$ $i\in\{0,\dots,m-1\}$, 
	$\widetilde{\Lambda}^m_t(\mathrm{d}a)\mathrm{d}t:=\delta_{{\widetilde{\gamma}}^m_t}(\mathrm{d}a)\mathrm{d}t,$ 
	and
	\[
		\zetah^m_t:=\zetah^m_m \circ \big(\Xh_{t \wedge \cdot},\Yh_{t \wedge \cdot}, \Lambdah^t, \Wh \big)^{-1},
		~\mbox{and}~
		\overline{\zeta}^m_t(\mathrm{d} \xb,\mathrm{d}a):=\E^{\widehat \zeta^m_m}\Big[\delta_{\Xh_{t \wedge \cdot}}(\mathrm{d} \xb)\Lambdah_t(\mathrm{d}a)\Big],\;\mbox{for all}\;t \in [0,T],
	\]
	and then
	\[
		(\widetilde{X}^m_t,\widetilde{Y}^m_t)
		:=
		H^m_t\big(X_0,W^m_{t \wedge \cdot}, B^m_{t \wedge \cdot}, (\Lambdat^m)^t, \zetah^m_{T} \big).
	\]
	It follows from \Cref{general_LawEquality} that \eqref{eq:X2Xh} holds true,
	and $\big(\Xt^m,\Yt^m \big)$ satisfies the SDE in the statement of Lemma \eqref{lemma:strongApp_step2}. It remains to prove that
	\begin{equation} \label{eq:repre_zeta}
		\widehat{\zeta}^m_t
		=
		\Lc^\Pb \big(\Xt^m_{t \wedge \cdot},\Yt^m_{t \wedge \cdot},(\Lambdat^m)^t,W^{m} \big| B^m,U^m\big),
		\Pb\mbox{\rm--a.s., for all}\; t \in [0,T].
	\end{equation}
	Recall that $\muh^m_t=\muh^m_T \circ \big( \Xh_{t \wedge \cdot},\Yh_{t \wedge \cdot},\Lambdah^t ,\Wh  \big)^{-1}$ for all $t \in [0, T]$.
	Let  $\phi \in C_b(\Pc(\Cc^n \x \Cc^n \x \M \x \Cc^d))$, 
	$\varphi \in C_b(\Cc^\ell \x C([0,T];\Pc(\Cc^n \x \Cc^n \x \M \x \Cc^d)))$. By \Cref{general_LawEquality}, we have
	\begin{align*}
     		&\
		\E^\Pb \big[ \langle \phi,\zetah^m_t \rangle \varphi \big(B^m,\zetah^m_{T \wedge \cdot} \big) \big]
		=
		\E^\Pb \big[ \langle f,\muh^m_t \rangle \varphi \big(B^m,\muh^m_{T \wedge \cdot} \big) \big]   
		=
		\E^\Pb \big[ f \big( X^m_{t \wedge \cdot},Y^m_{t \wedge \cdot}, (\Lambda^m)^t,W^m \big) \varphi \big(B^m,\muh^m_{T \wedge \cdot} \big) \big] \\
		=&\
		\E^\Pb \big[ f \big( \widetilde{X}^m_{t \wedge \cdot},\widetilde{Y}^m_{t \wedge \cdot}, (\widetilde{\Lambda}^m)^t,W^m \big) \varphi \big(B^m,\zetah^m_{T \wedge \cdot} \big) \big]
		=
		\E^\Pb \big[ \langle f,\Lc^\P \big(\widetilde{X}^m_{t \wedge \cdot},\widetilde{Y}^m_{t \wedge \cdot}, (\widetilde{\Lambda}^m)^t,W^m \big| B^{m}, \zetah^m_{T \wedge \cdot} \big) \rangle \varphi \big(B^m,\zetah^m_{T \wedge \cdot} \big) \big].
	\end{align*}
	%and similarly,
	%\[
		%\E^\Pb \big[ \langle f,\zetah^m_t \rangle \varphi \big(B^m_{t \wedge \cdot},\zetah^m_{t \wedge \cdot} \big) \big]
		%=
		%\E^\Pb \big[ \langle f,\Lc^\P \big(\widehat{X}^m_{t \wedge \cdot},\widehat{Y}^m_{t \wedge \cdot}, (\widehat{\Lambda}^m)^t,W^m \big| B^{m}_{t \wedge \cdot}, \zetah^m_{t \wedge \cdot} \big) \rangle \varphi \big(B^m_{t \wedge \cdot},\zetah^m_{t \wedge \cdot} \big) \big].
	%\]
	This implies that 
	\[
		\zetah^m_{t}
		=
		\Lc^\Pb \big(\widetilde{X}^m_{t \wedge \cdot},\widetilde{Y}^m_{t \wedge \cdot}, (\widetilde{\Lambda}^m)^t,W^m \big| B^{m}, \zetah^m_{T \wedge \cdot} \big).
		%=
		%\Lc^\Pb \big(\widehat{X}^m_{t \wedge \cdot},\widehat{Y}^m_{t \wedge \cdot}, (\widehat{\Lambda}^m)^t,W^m \big| B^{m}_{t \wedge \cdot}, \zetah^m_{t \wedge \cdot} \big).
	\]
	Recall from \eqref{measurability_control} that
	$\widetilde{\gamma}^m_i$ is $\sigma(X_0,W_{\eps_m \wedge \cdot},\{W^m \}_{i}, \{\zetah^m\}_{i},\{B^m \}_{i} )$--measurable,
	$\widehat{\zeta}^{m}_{i}$ is $\sigma(\{B^m \}_{i} , U^m)$--measurable for each $i\in\{0,\dots,m-1\}$, 
	and $U^m$ is independent of $(X_0, B^m, W_{\eps_m \wedge \cdot}, W^m)$ under $\Pb$.
	It follows that \eqref{eq:repre_zeta} holds true.
\end{proof}

	For Proposition \ref{prop:approximation} below, let us 
	denote by $(\alpha_t)_{t \in [0,T]}$ an $A$--valued $\Fb$--predictable process on the canonical space $\Omb$,
	satisfying that $\Lambda_t(\mathrm{d}a)\mathrm{d}t=\delta_{\alpha_t}(\mathrm{d}a) \mathrm{d}t,$ $\overline{\P}$--a.e., for all $\Pb \in \Pcb_W(\nu)$.

	\begin{proposition} \label{prop:approximation}
		Let {\rm\Cref{assum:main1}} hold true, $\nu \in \Pc_p(\R^n)$ and $\Pb \in \Pcb_W(\nu)$. 
		
		\medskip
		$(i)$ When $\ell \neq 0$, there exists a sequence $(\Pb^m)_{m \ge 1} \subset \Pcb_S(\nu)$ such that
		\begin{align} \label{eq:weak_app1}
			\lim_{m \rightarrow\infty} 
			\Lc^{\Pb^m} \big(X,Y,  \Lambda, W,B,\muh,\delta_{(\mub_t,\alpha_t)}(\mathrm{d} \nub, \mathrm{d}a)\mathrm{d}t \big)
			=
			\Lc^{\Pb} \big(X,Y,  \Lambda, W,B,\muh,\delta_{(\mub_t,\alpha_t)}(\mathrm{d} \nub, \mathrm{d}a)\mathrm{d}t \big),\;
			\mbox{\rm in}
			\; \Wc_p.
		\end{align}
		    
		$(ii)$ When $\ell=0,$ there exists a family $(\Pb^m_u)_{u \in [0,1], m \ge 1} \subset \Pcb_S(\nu)$,
		such that $u \longmapsto \Pb^m_u$ is  Borel measurable, and
		\begin{align} \label{eq:weak_app0}
			\lim_{m \rightarrow\infty} \int_0^1 \Lc^{\Pb^m_u} \big(X,Y,  \Lambda, W,B,\muh,\delta_{(\mub_t,\alpha_t)}(\mathrm{d} \nub,\mathrm{d}a)\mathrm{d}t \big) \mathrm{d}u
			=
			\Lc^{\Pb} \Big(X,Y,  \Lambda, W,B,\muh,\delta_{(\mub_t,\alpha_t)}(\mathrm{d} \nub,\mathrm{d}a)\mathrm{d}t \Big),\;
			\mbox{\rm in}
			\; \Wc_p.
		\end{align}
	\end{proposition}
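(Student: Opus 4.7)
The overall strategy is to successively apply \Cref{lemma:strongApp_step1} and \Cref{lemma:strongApp_step2} in order to reduce the weak control rule $\Pb$ to one in which the control is a deterministic Borel functional of the strong data $(X_0, W_{t \wedge \cdot}, B^m_{t \wedge \cdot})$ together with an auxiliary i.i.d. uniform sequence $U^m$ that is $\Pb$--independent of $(X_0,W,B)$. The remaining task is then to eliminate $U^m$, which I do by encoding it into the unused initial segment $B_{\eps_m \wedge \cdot}$ when $\ell \neq 0$, and by disintegrating the resulting mixture along a single parameter $u \in [0,1]$ when $\ell=0$.

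First, \Cref{lemma:strongApp_step1} produces piecewise constant controls $\alpha^m$ and delayed SDE solutions $(X^m, Y^m)$ with $\|X^m - X\|_\infty \to 0$ and $\alpha^m \to \alpha$ in $L^p$, while the $(H)$--type property \eqref{H-property} holds for the associated $(\muh^m, \mub^m)$. Combined with the uniform moment bound of \Cref{lemma:estimates} and the continuity of conditional expectations in $L^p$, this yields convergence in $\Wc_p$ of the joint law of $(X^m, Y^m, \Lambda^m, W, B, \muh^m, \delta_{(\mub^m_t, \alpha^m_t)}(\mathrm{d}\nub, \mathrm{d}a)\mathrm{d}t)$ to the corresponding law under $\Pb$. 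Applying \Cref{lemma:strongApp_step2} to the restructured tuple $(\widetilde X^m, \widetilde Y^m, \widetilde \Lambda^m, W^m, B^m, \widehat \zeta^m_T)$ preserves this joint law, while ensuring that the new control $\widetilde \gamma^m$ is a deterministic Borel function of $(X_0, W_{t \wedge \cdot}, B^m_{t \wedge \cdot}, U^m)$.

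For Case $(i)$ with $\ell \neq 0$, the process $B$ decomposes $\Pb$--independently as $(B_{\eps_m \wedge \cdot}, B^m)$, and $C([0,\eps_m]; \R^\ell)$ endowed with the restricted Wiener measure is a diffuse standard Borel space. Hence there exists a Borel map $\kappa^\ell : C([0,\eps_m];\R^\ell) \to [0,1]^\N$ whose push--forward is the uniform measure on $[0,1]^\N$, and which is $\Pb$--independent of $(X_0, W, B^m)$. Setting
\[
\widehat \gamma^m_t := \widetilde \gamma^m_t\big(X_0, W_{t \wedge \cdot}, B^m_{t \wedge \cdot}, \kappa^\ell(B_{\eps_m \wedge \cdot})\big),
\]
I obtain an $A$--valued $\F$--predictable process, whose induced probability $\Pb^m$ on $\Omb$ lies in $\Pcb_S(\nu)$ by \Cref{prop:strong_canonical}. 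Since $(X_0, W, B^m, \kappa^\ell(B_{\eps_m \wedge \cdot}))$ has the same joint law as $(X_0, W, B^m, U^m)$, the full canonical tuple under $\Pb^m$ (including the conditional--law coordinate) has the same law as the tilde tuple under $\Pb$, and \eqref{eq:weak_app1} follows from Step 1.

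For Case $(ii)$ with $\ell = 0$, no Brownian path $B$ is available to absorb $U^m$, so I parametrise directly. Fix a Borel map $\chi : [0,1] \to [0,1]^\N$ whose push--forward of Lebesgue measure is the product uniform measure (e.g.\ by splitting binary digits). For each $u \in [0,1]$, substitute $\chi(u)$ in place of $U^m$ to obtain $\widehat \gamma^m_u \in \Ac_p(\nu)$ depending Borel measurably on $u$, and let $\Pb^m_u \in \Pcb_S(\nu)$ be the associated strong control rule via \Cref{prop:strong_canonical}. Fubini's theorem then identifies $\int_0^1 \Lc^{\Pb^m_u}(\cdots) \mathrm{d}u$ with the law of the tilde tuple under $\Pb$, giving \eqref{eq:weak_app0}. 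The hardest part I anticipate is in Case $(i)$: one must carefully verify that the encoding $\kappa^\ell(B_{\eps_m \wedge \cdot})$ preserves not merely the marginal law of $U^m$ but also its independence with respect to the full $\sigma$--algebra generated by $(X_0, W, B^m)$ needed to compute the conditional laws $\widehat \zeta^m_t$ and $\mub^m_t$, which ultimately rests on the $(H)$--type identity \eqref{H-property} established in \Cref{lemma:strongApp_step1}.
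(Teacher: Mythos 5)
There is a genuine gap in your argument, and it is exactly the step the paper devotes the most care to. You write that, after substituting $U^m := \kappa^\ell(B_{\eps_m \wedge \cdot})$, ``the full canonical tuple under $\Pb^m$ (including the conditional--law coordinate) has the same law as the tilde tuple under $\Pb$.'' This is false. The tilde tuple $(\widetilde X^m,\widetilde Y^m,\widetilde\Lambda^m,W^m,B^m,\widehat\zeta^m_T)$ is constructed from a \emph{delayed} SDE driven by the \emph{shifted} noises $W^m,B^m$ (which are frozen at $0$ on $[0,\eps_m]$ and hence are not Brownian motions), with conditioning on $\sigma(B^m,U^m)$. A genuine strong control rule in $\Pcb_S(\nu)$ must instead be the law of a tuple in which the $W,B$ coordinates carry Wiener measure, the controlled process solves \eqref{eq:MKV_SDE} over all of $[0,T]$ against the unshifted $W,B$, and the conditional law is taken given $\Gc_t=\sigma(B_{s\wedge\cdot})$. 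Those are different objects; merely matching the joint law of the inputs $(X_0,W,B^m,U^m)$ does not make the output processes agree in law, because they solve different equations with different conditioning $\sigma$--algebras. Consequently $\Pb^m$ as you describe it is either not in $\Pcb_S(\nu)$ (if you push forward the tilde tuple directly, the $W$ and $B$ marginals are wrong) or does not have the same law as the tilde tuple (if you define it by solving the strong SDE with control $\widehat\gamma^m$).

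What is missing is the paper's intermediate process $\widetilde S^m$: with the same control $\widetilde\gamma^m$ now expressed through $B$ alone, one solves the \emph{undelayed} McKean--Vlasov SDE driven by the true $W,B$ on $[0,T]$ with conditioning on $\sigma(B_{t\wedge\cdot})$, obtaining $\Pb^m := \Pb\circ(\widetilde S^m,\widetilde Z^m,\widetilde\Lambda^m,W,B,\widehat\beta^m_T)^{-1}\in\Pcb_S(\nu)$, and then proves
$\lim_{m}\E^{\Pb}\big[\sup_{t\in[0,T]}|\widetilde S^m_t-\widetilde X^m_t|^p\big]=0$
by a Gronwall estimate in the style of \Cref{lemma:strongApp_step1}. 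This estimate, together with the convergence of the conditional--law terms, is what yields \eqref{eq:weak_app1}; your argument stops short of it. The same missing reconstruction affects case $(ii)$. Finally, the point you flag as the anticipated ``hardest part''---that $\kappa^\ell(B_{\eps_m\wedge\cdot})$ is independent of $\sigma(X_0,W,B^m)$---is in fact immediate, since $B_{\eps_m\wedge\cdot}$ is independent of $(X_0,W,B^m)$ and any measurable function of an independent random variable remains independent; the paper even uses only the scalar endpoint $B_{\eps_m}$ rather than the whole path. The actual difficulty is the SDE re-splicing argument above.
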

	\begin{proof}
		First, let $(\widetilde{X}^m,\widetilde{Y}^m,B^{m},W^{m},\zetah^m, \overline{\zeta}^m,  \widetilde \gamma^m, \widetilde \Lambda^m )$ be given as in \Cref{lemma:strongApp_step2}. Using \Cref{lemma:strongApp_step1} and \Cref{lemma:strongApp_step2}, we have
		\[
			\lim_{m \to \infty} \Lc^{\Pb}  \Big(\widetilde{X}^m,\widetilde{Y}^m,B^{m},W^{m},\zetah^m_T,\delta_{(\overline{\zeta}^m_t,\widetilde{\gamma}^{m}_t)}(\mathrm{d} \nub,\mathrm{d}a)\mathrm{d}t \Big)
			=
			\Lc^{\Pb}  \Big(X,Y,B,W,\muh,\delta_{(\mub_t,\alpha_t)}(\mathrm{d} \nub,\mathrm{d}a)\mathrm{d}t \Big),
			\;\mbox{\rm in}\;
			\Wc_p.
		\]

	$(i)$ When $\ell \neq 0$, since $B_{\varepsilon_m}$ is independent of $(X_0,W,B^m)$,
	one can take $U^m := \kappa \big(B_{\varepsilon_m} \big)$ for some measurable function $\kappa: \R \longrightarrow [0,1]^m$.
	Consequently, we have
	$\widehat{\zeta}^m_t=\Lc^\Pb \big(\widetilde{X}^m_{t \wedge \cdot},\widetilde{Y}^m_{t \wedge \cdot},(\widetilde{\Lambda}^m)^t,W^{m} \big| B\big),$ $\Pb$--a.s., for all $t \in [0,T]$.
	Let us then define $(\widetilde{S}^{m}_t)_{t \in [0,T]}$ as the unique strong solution of
	\[
		\widetilde{S}^{m}_t
		=
		X_0 
		+
		\int_0^t b\big(r, \widetilde{S}^{m},\overline{\beta}^{m}_r,\widetilde{\gamma}^m_r\big) \mathrm{d}r
		+
		\int_0^t \sigma \big(r, \widetilde{S}^{m},\overline{\beta}^{m}_r,\widetilde{\gamma}^m_r\big)\mathrm{d}W_r
		+  
		\int_0^t \sigma_0\big(r, \widetilde{S}^{m},\overline{\beta}^{m}_r,\widetilde{\gamma}^m_r\big) \mathrm{d}B_r, 
	\]
	with $ \overline{\beta}^{m}_t:= \Lc^\Pb \big(\widetilde{S}^{m}_{t \wedge \cdot}, \widetilde{\gamma}^m_t |B_{t \wedge \cdot}\big) = \Lc^\Pb \big(\widetilde{S}^{m}_{t \wedge \cdot}, \widetilde{\gamma}^m_t |B\big)$.
	Denote, for all $t \in [0,T]$
	\[
		\widetilde{Z}^{m}_t := \widetilde{S}^{m}_t-\int_0^t \sigma_0 \big(r,\widetilde{S}^{m},\overline{\beta}^{m}_r,\widetilde{\gamma}^m_r\big) \mathrm{d}B_r,
		\;\mbox{and}\; 
		\widehat{\beta}^{m}_t := \Lc^{\Pb} \big( \widetilde{S}^{m}_{t \wedge \cdot},\widetilde{Z}^{m}_{t \wedge \cdot}, (\widetilde \Lambda^m)^t, W \big| B \big). 
	\]
	Using almost the same arguments as in the proof of \eqref{firstWeak_app} in \Cref{lemma:strongApp_step1}, 
	we can deduce that \[
	\lim_{m\rightarrow\infty} \E^\Pb \bigg[\sup_{t \in [0,T]}|\widetilde{S}^{m}_t- \widetilde{X}^m_t|^p\bigg]=0,\]
	and moreover
	\begin{align} \label{eq:convergence_WeakStrong}
		&\
		\lim_{m\rightarrow\infty}
		\Lc^\Pb \Big(\widetilde{S}^{m}, \widetilde {Z}^{m}, B, W, \widehat{\beta}^{m}_T, \delta_{(\overline{\beta}^{m}_t,\widetilde{\gamma}^m_t)}(\mathrm{d} \nub,\mathrm{d}a)\mathrm{d}t  \Big)
		=
		\lim_{m\rightarrow\infty} 
		\Lc^\Pb \Big(\widetilde{X}^m,\widetilde{Y}^m, B^m, W^m,\zetah^m_T, \delta_{(\overline{\zeta}^m_t,\widetilde{\gamma}^m_t)}(\mathrm{d} \nub,\mathrm{d}a)\mathrm{d}t  \Big) \nonumber
		\\
		=&\
		\lim_{m\rightarrow\infty} 
		\Lc^\Pb \Big(X^m,Y^m, B^m, W^m,\muh^m_T, \delta_{(\mub^m_t,\alpha^m_t)}(\mathrm{d} \nub,\mathrm{d}a)\mathrm{d}t  \Big)
		=
		\Lc^\Pb \Big(X,Y, B, W,\muh, \delta_{(\mub_t,\alpha_t)}(\mathrm{d} \nub,\mathrm{d}a)\mathrm{d}t  \Big),
		\; \mbox{in}\; \Wc_p.
	\end{align}
	Then it is enough to denote $\Pb^m := \Pb \circ( \widetilde{S}^{m}, \widetilde {Z}^{m},  \widetilde \Lambda^m, W, B, \widehat{\beta}^m_T )^{-1}$ to conclude the proof of $(i)$.
	
\medskip
	
	$(ii)$. When $\ell=0$, so that the process $B$ disappears,
	one has
	$\widehat{\zeta}^m_t=\Lc^\Pb \big(\widetilde{X}^m_{t \wedge \cdot},\widetilde{Y}^m_{t \wedge \cdot},(\widetilde{\Lambda}^m)^t,W^{m} \big| U^m\big),$ $t \in [0,T],$ $\Pb$--a.s.,
	where $U^m$ is independent of $\big(\widetilde{X}^m, \widetilde{Y}^m, \widetilde{\Lambda}^m, W\big)$.
	Let us define $(\widetilde{S}^{m}_t)_{t \in [0,T]}$ as the unique strong solution of
	\[
		\widetilde{S}^{m}_t
		=
		X_0 
		+
		\int_0^t b\big(r,\widetilde{S}^{m},\overline{\beta}^{m}_r, \widetilde{\gamma}^m_r\big) \mathrm{d}r
		+
		\int_0^t \sigma \big(r,\widetilde{S}^{m},\overline{\beta}^{m}_r, \widetilde{\gamma}^m_r\big)\mathrm{d}W_r, 
	\]
	with
	\[
		\overline{\beta}^{m}_t
		:=
		\Lc^\Pb \big(\widetilde{S}^m_{t \wedge \cdot}, \widetilde{\gamma}^m_t |U^m\big)
		=
		\Lc^\Pb \big(\widetilde{S}^m_{t \wedge \cdot}, \widetilde{\gamma}^m_t |U^m\big),
		\;
		\widetilde{Z}^{m}_t := \widetilde{S}^{m}_t,
		\; \mbox{and}\;
		\widehat{\beta}^{m}_t := \Lc^\Pb \big(\widetilde{S}^{m}_{t \wedge \cdot}, \widetilde{Z}^{m}_{t \wedge \cdot}, (\widetilde{\Lambda}^m)^t, W \big| U^m \big),
	\]
	As in $(i)$, we can apply almost the same arguments as in the proof of \Cref{lemma:strongApp_step1} to deduce that
	\[
		\lim_{m\rightarrow\infty}
		\Lc^\Pb \Big(\widetilde{S}^{m}, \widetilde {Z}^{m}, B, W, \widehat{\beta}^{m}_T, \delta_{(\overline{\beta}^{m}_t,\widetilde{\gamma}^m_t)}(\mathrm{d} \nub,\mathrm{d}a)\mathrm{d}t  \Big)
		=
		\Lc^\Pb \Big(X,Y, B, W,\muh, \delta_{(\mub_t,\alpha_t)}(\mathrm{d} \nub,\mathrm{d}a)\mathrm{d}t  \Big),
		\; \mbox{in}\;\Wc_p.
	\]
	
	Beside, it is easy to check that
	\[
		\Lc^\Pb \Big( \widetilde{S}^m, \widetilde{Z}^m, \widetilde{\Lambda}^m,W,B,\widehat{\beta}^m   \Big| U^m\Big) \in \Pcb_S(\nu),  ~\Pb\mbox{--a.s.},
	\]
	which concludes the proof of $(ii)$.
	\end{proof}
	
	\begin{remark}
		 When $\ell =0$, if we assume in addition that $\muh$ is deterministic under $\Pb \in \Pcb_W(\nu)$,
		 we can omit the term $U^m$ in the proof of {\rm\Cref{prop:approximation}.$(ii)$} by {\rm\Cref{lemma:strongApp_step2}},
		 and hence there is no need to consider the conditional law of $(\widetilde{S}^m, \widetilde{Z}^m, \widetilde{\Lambda}^m,W,B,\widehat{\beta}^m)$ knowing $U^m$.
		 It follows that we can find a sequence $(\Pb^m)_{m \ge 1} \subset \Pcb_S(\nu)$ such that \eqref{eq:weak_app1} holds.
	\end{remark}

%  	\begin{remark} \label{rem:strong2weak}
%		As in Remark \ref{rem:nonContinuityJ},
%		Notice that {\rm\Cref{eq:weak_app1}} implies that $
%		\lim_{k\rightarrow\infty} \Wc_p (\Pb^m, \Pb) = 0$,
%		but the converse is not true in general. 
%		To see this, it is enough to notice that the map $\Pc(\Cc^n \x \Cc^n \x \Cc^d \x \M) \ni  \muh \longmapsto \delta_{\mub_t}(\mathrm{d} \nub )\mathrm{d}t \in \M(\Cc^n \x A)$ is not continuous in general.
%	\end{remark}

 	\begin{remark} \label{rem:Gap_StrongWeak}
		In summary, our proof for approximating weak control by strong control rules consists in three main steps:
		
		\begin{itemize}
		\item[$(i)$] approximate the $($weak$)$ control process by piecewise constant processes and freeze the controlled process on $[0, \eps];$
		
		\item[$(ii)$] represent the piecewise constant control process as functionals of the Brownian motions and some independent randomness using the $(H)$--hypothesis type condition \eqref{eq:H_Hypothesis}$;$

		\item[$(iii)$] replace the independent randomness by the increment of the Brownian motions on $[0,\eps]$, so that the control processes becomes functionals of the Brownian motions only.
		\end{itemize}
		This is quite different from the steps in {\rm \citeauthor*{lacker2017limit} \cite{lacker2017limit}} for McKean--Vlasov control problem without common noise,
		and in spirit closer to the technical steps in {\rm \citeauthor*{karoui2013capacities2} \cite[Theorem 4.5]{karoui2013capacities2}}, which approximates weak control rule by strong control rules for classical stochastic control problems.
		In particular, our approach allows to avoid a subtle gap in the proof of {\rm\cite[Lemma 6.7]{lacker2016general}}.
		In that proof, a key technical step uses implicitly the following erroneous argument $($see the paragraph after $(6.19)$ in {\rm\cite{lacker2016general}}$)$: let $W$ and $U$ be two independent random variables on a probability space $(\Om^{\star},\Fc^{\star},\P^{\star})$, and $f: \R \x \R \longrightarrow \R$ be such that $Z := f(W,U)$ is independent of $W$, then $Z$ is measurable with respect to the $($completed$)$ $\sigma$--algebra generated by $U$. 
		For a counter--example, let us consider the case that $W \sim N(0,1)$ and $U \sim \Uc[-1,1]$ and that $W$ is independent of $U$, then $Z:=U \mathbf{1}_{\{W \ge 0\}} - U \mathbf{1}_{\{W < 0\}}$ is independent of $W$, but not measurable w.r.t. $\sigma(U)$.
	\end{remark}

\subsubsection{Approximating relaxed controls by weak control rules}
\label{subsubsec:weak2relaxed}

	We provide here an approximation result of relaxed control rules by weak control rules, when in addition \Cref{assum:constant_case} holds.
	For the classical optimal control problem, such an approximation result is achieved by representing the martingale problems in \Cref{def:admissible_ctrl_rule}  and \Cref{def:relaxed_ctrl_rule} using the notion of martingale measures, as introduced by \citeauthor*{el1990martingale} \cite{el1990martingale} (see Section \ref{proof_Proposition-measurability-check} for a brief reminder on its definition).

\medskip

	%In preparation for our subsequent proof, we provide first an improved version of {\rm\cite[Theorem IV--2]{el1990martingale}}.	
	Recall that $\Omh := \Cc^n \x \Cc^n \x \M \x \Cc^d$ is defined in \Cref{subsubsec:canonical_space}.
	Let us also introduce an abstract filtered probability space $(\Om^\star, \Fc^{\star}, \F^{\star} := (\Fc^\star_t)_{t \in [0,T]}, \P^{\star})$,
	equipped with $2 (n+d)$ i.i.d. martingale measures $(N^{\star,i})_{1\leq i\leq 2(n+d)}$, with intensity $\nu_0(\mathrm{d}a) \mathrm{d}t$, for some diffuse probability measure $\nu_0$ on $A$,
	and a sequence of i.i.d. standard $d$--dimensional Brownian motions $(W^{\star,i})_{i \ge 1}$.   
	Let us define
	\[
		\Omh^\star := \Omh \x \Om^{\star},
		\;\Fch^\star := \Fch \otimes \Fc^\star,
		\;\Fch^\star_t := \Fch_t \otimes \Fc^\star_t,
	\;\Ph_{\omb} := \muh(\omb) \otimes \P^\star,
		\; \mbox{for all}\; 
		t \in [0,T],\; \mbox{and}\;  \omb \in \Omb.
	\]
	The random elements $(\Xh, \Yh, \Lambdah, \Wh)$ and $(N^\star, W^{\star,i}, i \ge 1)$ can then naturally be extended to $\Omh^\star$.
	Let us first provide an improved version of {\rm\cite[Theorem IV--2]{el1990martingale}}, whose proof is completed in Appendix \ref{proof_Proposition-measurability-check}.

	\begin{proposition} \label{Proposition:diffusion_McKV-relaxed}
		Let $\nu \in \Pc_p(\R^n)$ and $\Pb \in \Pcb_R(\nu)$.
		Then there exists a family of measure--valued processes $(\widehat{N}^{\omb})_{\omb \in \Omb}$ such that,
		for $\Pb$--{\rm a.e.} $\omb \in \Omb$,
		$\widehat{N}^{\omb} = \big( \widehat{N}^{1,\omb}, \dots, \widehat{N}^{d,\omb} \big)$ is an $\big( \Fh^\star,\Ph_{\omb}\big)$--martingale measure with intensity $\Lambdah_t(\mathrm{d}a)\mathrm{d}t$, the martingales $(\widehat{N}^{i,\omb})_{1\leq i\leq d}$ are orthogonal, and satisfy
		\begin{equation}
		\label{eq:X_repres_Nt}
			\Yh_t
			=
			\Xh_0 
			+	
			\iint_{[0,t] \times A} 
			b \big(r, \Xh, \mu(\omb), a \big) \Lambdah_r(\mathrm{d}a)\mathrm{d}r
			+
			\iint_{[0,t] \times A} 
			\sigma\big(r, \Xh, \mu(\omb), a \big) \widehat{N}^{\omb}(\mathrm{d}a,\mathrm{d}r),
			\;
			\Wh_t
			=
			\iint_{[0,t] \times A}  
			\widehat{N}^{\omb}(\mathrm{d}a,\mathrm{d}s),
			\; \Ph_{\omb} \mbox{\rm --a.s.}
		\end{equation}
		Moreover,
		let
		$\widehat{\H}^\star = (\widehat{\Hc}^\star_t)_{t \in [0,T]}$ with $ \widehat{\Hc}^\star_t := \Gcb_t \otimes \Fch^\star_t$ be a filtration on $\Omb \x \Omh^\star$,
		denote by $\Pc^{\widehat{\H}^\star}$ the predictable $\sigma$--algebra on $[0,T] \x \Omb \x \Omh^\star$ with respect to $\widehat{\H}^\star$. 
		Then for all bounded $\Pc^{\widehat{\H}^\star} \otimes \Bc(A)$--measurable function $f:[0,T] \times \Omb \times \Omh^\star \times A  \longrightarrow \R$, 
		one can define the stochastic integral $\iint_{[0,t] \times A}  f^{\omb}(s,a) \widehat{N}^{\omb}(\mathrm{d}s, \mathrm{d}a)$ in such a way that
		\begin{equation} \label{eq:Nt_measurability}
			(t, \omb, \hat \om^\star)
			\longmapsto	
				\bigg(\iint_{[0,t] \times A}  f^{\omb} (s, a) \widehat{N}^{\omb}(\mathrm{d}a,\mathrm{d}s) \bigg)(\hat \om^\star)\; \mbox{\rm is}\; 
			\Pc^{\widehat{\H}^\star}\mbox{\rm --measurable}.
		\end{equation}
	\end{proposition}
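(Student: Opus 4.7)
The plan is to follow the martingale measure construction of \cite[Theorem IV-2]{el1990martingale} for relaxed controls, carried out on the enlarged canonical space $\Omh^\star$, while carefully tracking joint measurability in $\omb$ (which is the ``improvement'' alluded to in the statement). I first fix $\omb$ in a set of full $\Pb$-measure and observe that, under $\Ph_\omb = \muh(\omb) \otimes \P^\star$, the product structure preserves the martingale property of $\widehat{S}^{\varphi, \mu(\omb)}$ on the enlarged filtration $\Fh^\star$, while the auxiliary $(N^{\star,i}, W^{\star,i})$ remain independent orthogonal martingale measures and Brownian motions. Specialising $\varphi$ to $w^k$ and to $w^j w^k$ in \eqref{eq:Mvarphi}, and using the block identity structure of $\hat a$ in the $w$-component, shows that $\Wh$ is a standard $(\Fh^\star, \Ph_\omb)$-Brownian motion.

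Next, I construct $\widehat{N}^\omb$ along the lines of El Karoui--M\'el\'eard. From the relaxed martingale problem one reads off that $\Yh_t - \Xh_0 - \iint_{[0,t] \times A} b(r, \Xh, \mu(\omb), a) \Lambdah_r(\mathrm{d}a) \mathrm{d}r$ is an $\Ph_\omb$-martingale whose bracket is $\iint_{[0,t]\times A} \sigma \sigma^\top(r, \Xh, \mu(\omb), a) \Lambdah_r(\mathrm{d}a) \mathrm{d}r$ and whose joint bracket with $\Wh$ is $\iint_{[0,t]\times A} \sigma^\top(r, \Xh, \mu(\omb), a) \Lambdah_r(\mathrm{d}a) \mathrm{d}r$. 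The $d$-dimensional martingale measure $\widehat{N}^\omb$ is then obtained by disintegrating $\mathrm{d}\Wh_s$ along $\Lambdah_s(\mathrm{d}a)$; in directions where $\sigma$ is rank-deficient, the independent auxiliary $N^{\star,i}$ and $W^{\star,i}$ are used to complete the measure with the correct intensity $\Lambdah_s(\mathrm{d}a)\mathrm{d}s$ and with mutually orthogonal components. It\^o's isometry applied to the martingale part of $\Yh$ against this candidate representation then yields \eqref{eq:X_repres_Nt}.

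The delicate part is the joint measurability \eqref{eq:Nt_measurability}. The construction above depends on $\omb$ only through $\muh(\omb)$ and $\mu(\omb)$, both $\Gcb_T$-measurable; moreover, by the $\Gcb_t$-measurability of $\muh_t$ (equivalently \eqref{eq:muh_property-details}), this dependence respects the filtration $\widehat{\Hc}^\star_t = \Gcb_t \otimes \Fch^\star_t$. I verify \eqref{eq:Nt_measurability} by a monotone class argument: for simple integrands of the form $f(s, \omb, \hat\om^\star, a) = 1_{(u,v]}(s) 1_F(\omb, \hat\om^\star) \phi(a)$ with $F \in \widehat{\Hc}^\star_u$ and bounded Borel $\phi$, the stochastic integral reduces to $1_F(\omb, \hat\om^\star) \iint_{(u \wedge t, v \wedge t] \times A} \phi(a) \widehat{N}^\omb(\mathrm{d}a, \mathrm{d}s)$, whose $\Pc^{\widehat{\H}^\star}$-measurability is readable from the explicit construction of $\widehat{N}^\omb$; the standard $L^2$-approximation defining the stochastic integral then extends the measurability to arbitrary bounded $\Pc^{\widehat{\H}^\star} \otimes \Bc(A)$-measurable $f$.

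The main obstacle is the measurable-in-$\omb$ version of the construction in the second step: the classical El Karoui--M\'el\'eard proof fixes the underlying probability space and relies on abstract It\^o representation theorems that do not come with parameter-measurability. The remedy is to give an explicit recipe for $\widehat{N}^\omb$ using a Borel-measurable version of the Moore--Penrose pseudo-inverse of $\sigma(r, \Xh, \mu(\omb), a)$, to orthogonally project $\mathrm{d}\Wh_s$ onto the range of $\sigma^\top$ and to fill the orthogonal complement with linear combinations of the $N^{\star,i}$ and $W^{\star,i}$. Because this recipe is a jointly Borel function of all its arguments, the resulting kernel $(t, \omb, \hat\om^\star) \longmapsto (\widehat{N}^\omb([0,t] \times \cdot))(\hat\om^\star)$ is predictable in $(t, \hat\om^\star)$ and $\Gcb_T$-measurable in $\omb$ simultaneously, which is precisely what is needed to deduce \eqref{eq:Nt_measurability}.
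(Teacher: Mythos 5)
Your plan coincides with the paper's: construct $\widehat N^{\omb}$ explicitly following \citeauthor*{el1990martingale} \cite{el1990martingale}, replacing abstract existence by a Borel recipe (pseudo-inverse of the diffusion coefficient plus the auxiliary noises) so that everything is jointly measurable in $(t,\omb,\hat\om^\star)$, then establish \eqref{eq:Nt_measurability} by monotone class and $L^2$-approximation. However, as stated the proposal has two concrete gaps.

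First, the phrase ``disintegrating $\mathrm{d}\Wh_s$ along $\Lambdah_s(\mathrm{d}a)$ and using $N^{\star,i},W^{\star,i}$ to complete the measure with intensity $\Lambdah_s(\mathrm{d}a)\mathrm{d}s$'' skips a genuine step: the $N^{\star,i}$ have intensity $\nu_0(\mathrm{d}a)\mathrm{d}t$ for a \emph{fixed} diffuse $\nu_0$, not $\Lambdah_t(\mathrm{d}a)\mathrm{d}t$, and converting the one to the other is nontrivial. The paper invokes \cite[Theorem III-2]{el1990martingale} to obtain a predictable $\varphi(s,\hat\om,\cdot):A\to A$ with $\Lambdah_s=\nu_0\circ\varphi(s,\hat\om,\cdot)^{-1}$, and pushes the $N^{\star,i}$ forward through $\varphi$, together with $\Sigma=\begin{pmatrix}\sigma\\ \mathrm I_d\end{pmatrix}$, before any pseudo-inverse enters; moreover the pseudo-inverse has to be applied both to $\Sigma\Sigma^\top$ pointwise and to the $\Lambdah$-integrated matrix $\Gamma^{\omb}(s,\mathbf 1)$, because the bracket of the relaxed martingale $\Mh^{\omb,i}=\widehat S^{\pi_i,\mu(\omb)}$ is the integrated quantity, not the pointwise one. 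Mentioning only ``the pseudo-inverse of $\sigma$'' does not encode the joint constraint $\Wh_t=\iint\widehat N^{\omb}$.

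Second, the concluding ``standard $L^2$-approximation then extends the measurability to arbitrary bounded $f$'' glosses over a real issue: the $L^2$-limit fixes $N_t(f^\omb)$ only $\Ph_\omb$-a.s.\ for each $\omb$, and a joint $\Pc^{\widehat{\H}^\star}$-measurable version of those limits is exactly what must be produced. The paper's Lemma B.3 handles this with a measurable selection of convergent subsequences borrowed from \cite[Lemma 3.2]{neufeld2014measurability}; some device of this sort is indispensable, and the monotone class argument alone does not supply it.
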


%{\color{blue}
	\begin{remark}
		With a fixed probability measure $\Pb$ on $\Om$, one can define a probability measure $\widetilde \P^{\star} := \Pb \otimes \Ph_{\cdot}$ on $\Omb \x \Omh^{\star}$ by
		\[
			\E^{\widetilde \P^{\star}} [\phi] 
			:=
			\int_{\Om \x \Omh^{\star}} \phi(\omb, \hat \om^\star) \Ph_{\omb}(\mathrm{d} \hat \om^{\star}) \Pb(\mathrm{d} \omb),
			~\mbox{for all bounded r.v.}~
			\phi: \Omb \x \Omh^{\star} \to \R.
		\]
		We can also consider the augmented filtration $\widehat{\H}^{\star, \widetilde \P^{\star}}$ of $\widehat{\H}^\star$ under $ \widetilde \P^{\star}$,
		which in particular contains all $ \widetilde \P^{\star}$--null sets in $\Omb \x \Omh^{\star}$.
		At the same time, any $\widehat{\H}^{\star, \widetilde \P^{\star}}$--predictable process is $ \widetilde \P^{\star}$--indistinguishable to a $\widehat{\H}^{\star}$--predictable process (see e.g. \cite[Theorem IV.78]{dellacherie1978probabilities}).
	\end{remark}
%}  

	\begin{proposition} \label{Equivalence-Proposition_General}
		Let {\rm \Cref{assum:main1}} and {\rm \Cref{assum:constant_case}} hold, assume that $A \subset \R^j$ for $j \ge 1$,
		and that $\nu \in \Pc_{p^\prime}(\R^n)$ with the constant $p^\prime$ given in {\rm \Cref{assum:main1}}.
		Then for every $\Pb \in \Pcb_R(\nu)$, there exists a sequence $\big( \Pb^m \big)_{m \ge 1} \subset \Pcb_W(\nu)$ such that
		\[
			\lim_{m \to \infty} \Wc_p \big( \Pb^m, \Pb \big) = 0.
		\]
	\end{proposition}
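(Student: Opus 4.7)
The plan is to adapt the compactification/chattering approach of \citeauthor*{el1987compactification} \cite{el1987compactification} to our McKean--Vlasov context with common noise, leveraging the martingale measure representation from \Cref{Proposition:diffusion_McKV-relaxed}. The argument proceeds in four conceptual steps, working on the enlarged probability space $(\Omb\times\Omh^\star,\Fcb\otimes\Fch^\star,\widetilde\P^\star:=\Pb\otimes\Ph_{\cdot})$.

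First, I would invoke \Cref{Proposition:diffusion_McKV-relaxed} to represent $\Yh$, for $\Pb$--a.e.\ $\omb$, as the solution of \eqref{eq:X_repres_Nt} driven by the orthogonal $\Pc^{\widehat\H^\star}$--measurable martingale measure $\widehat{N}^{\omb}$ with intensity $\Lambdah_t(\mathrm{d}a)\mathrm{d}t$. This is crucial: it lets us handle the controlled $\sigma$ term, which under a relaxed control is only meaningful through martingale measures. Under \Cref{assum:constant_case}, the coefficients depend on $\mu(\omb)$ only (not on the conditional joint law of the control), so freezing $\omb$ turns the McKean--Vlasov dynamics into a standard It\^o diffusion with a relaxed control, parametrized measurably by $\omb$ thanks to \eqref{eq:Nt_measurability}.

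Second, I would discretize time along a sequence of partitions $(t^m_i)_{0\le i\le m}$ of mesh going to zero, and replace $\Lambdah$ by a piecewise constant (in time) kernel $\widetilde\Lambda^m$ obtained by averaging $\Lambdah$ over each interval $[t^m_i,t^m_{i+1})$. Standard SDE stability estimates (based on \Cref{lemma:estimates} and the Lipschitz properties in \Cref{assum:main1}) give $L^p$--convergence of the resulting controlled process towards $\Yh$. Third, and this is the heart of the argument, on each subinterval I would apply a chattering construction: subdivide $[t^m_i,t^m_{i+1})$ into $k$ finer pieces and select, via a measurable selection theorem applied to the map $\omb\longmapsto \widetilde\Lambda^m_{t^m_i}(\omb)$ (valued in $\Pc(A)$ with $A\subset\R^j$), points $a^{m,k,i}_l(\omb)$ so that $\frac1k\sum_l\delta_{a^{m,k,i}_l(\omb)}$ approximates $\widetilde\Lambda^m_{t^m_i}(\omb)$ in $\Wc_p$ uniformly in $\omb$ after suitable truncation. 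The chattering control $\alpha^{m,k}_t(\omb)$ then defines a strict (Dirac) kernel that converges to $\Lambdah^{\omb}$ in the topology of $\M$, for each $\omb$ and measurably in $\omb$. The stronger moment assumption $\nu\in\Pc_{p'}(\R^n)$ combined with the coercivity \eqref{eq:cond_coercive} furnishes the uniform integrability required to upgrade weak convergence to $\Wc_p$--convergence (this is where $p'>p$ is used).

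Fourth, one plugs these chattering controls into the SDE driven by the martingale measures. Since for strict controls $\delta_{\alpha^{m,k}_t}(\mathrm{d}a)\mathrm{d}t$ the martingale measure integral reduces to a genuine Brownian stochastic integral with respect to $\Wh$, the resulting solutions $(\widetilde X^{m,k},\widetilde Y^{m,k})$ fit into a weak control framework, from which one constructs $\Pb^{m,k}\in\Pcb_W(\nu)$ on $\Omb$ by pushing forward through the map $(\widetilde X^{m,k},\widetilde Y^{m,k},\delta_{\alpha^{m,k}_t}(\mathrm{d}a)\mathrm{d}t,\Wh,B,\widehat\mu^{m,k})$. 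A diagonal extraction in $(m,k)$, together with the $L^p$--stability and the dominated convergence guaranteed by the coercive growth of $L$, yields $\Wc_p(\Pb^{m_j,k_j},\Pb)\to 0$.

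The principal obstacle I expect is verifying that the chattering construction can be carried out so that (i) the selections $a^{m,k,i}_l$ depend \emph{measurably} on $\omb$ (uniformly in $m,k$), and (ii) the corresponding approximate processes interact correctly with the conditional distribution $\muh^{m,k}$ through the coefficients. Item (i) requires a careful application of measurable selection, since $\omb\mapsto\widetilde\Lambda^m(\omb)$ takes values in a space of kernels and the target approximating Dirac configuration must be globally measurable; \Cref{assum:constant_case} is essential here because without it the dependence of the coefficients on $\mub$ (the conditional joint law of state \emph{and} control) would not pass to the limit continuously, as noted in \Cref{rem:nonContinuityJ}. The reduction to the deterministic $\mu(\omb)$ via \Cref{Proposition:diffusion_McKV-relaxed} is what makes the El Karoui--M\'el\'eard machinery tractable in this common noise setting.
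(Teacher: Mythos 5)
Your high-level outline tracks the paper's strategy in many respects: the invocation of \Cref{Proposition:diffusion_McKV-relaxed} to freeze $\omb$ and work with a measurably parametrized family of standard diffusions, the reliance on \Cref{assum:constant_case} to make the conditional law a deterministic parameter, and the final step of mixing over $\omb$ to build $\Pb^m\in\Pcb_W(\nu)$. However, there is a genuine gap at the heart of the argument, namely at the point where you pass from the dynamics driven by the martingale measure $\widehat N^{\omb}$ to dynamics driven by a Brownian motion under a strict control.

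Your fourth step asserts that ``one plugs these chattering controls into the SDE driven by the martingale measures'' and that ``the martingale measure integral reduces to a genuine Brownian stochastic integral.'' This does not happen automatically. The martingale measure $\widehat N^{\omb}$ supplied by \Cref{Proposition:diffusion_McKV-relaxed} has intensity $\Lambdah_t(\mathrm{d}a)\mathrm{d}t$, which is not the intensity $\delta_{\alpha^{m,k}_t}(\mathrm{d}a)\mathrm{d}t$ of a strict control; so the integral $\iint\sigma(s,\cdot,a)\widehat N^{\omb}(\mathrm{d}a,\mathrm{d}s)$ is not the It\^o integral $\int\sigma(s,\cdot,\alpha^{m,k}_s)\mathrm{d}\Wh^{\omb,m}_s$ for any single Brownian motion available a priori, nor does it become so when you simply substitute your chattering control into the coefficients. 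One has to \emph{construct} a new driving Brownian motion $\Wh^{\omb,m}$ tailored to the chattering pattern, and then show that the resulting SDE solution converges to $\Yh$. In the paper's \Cref{Lemm:app-Weak_Relaxed} this is precisely the content of Step 3: after discretizing $A$ into atoms $a^e_i$ (Step 2), the martingale measure is decomposed into orthogonal ``partial Brownian motions'' $\Zh^{\omb,e,i}$ at rates $q^{e,i}_t$, and then each $\Zh^{\omb,e,i}$ is \emph{time-compressed} into a sub-subinterval of length proportional to $q^{e,i}$, so that the chattering control dwells on $a^e_i$ exactly while the corresponding compressed increment is running. The resulting superposition $\Wh^{\omb,m}$ is a genuine Brownian motion, and the convergence \eqref{eq:cvg_Wt12} of $(\Lambdah^{2,m},\Wh^{\omb,m,1},\Wh^{\omb,m,2},\ldots)$ to $(\Lambdah^2,\int\sqrt{q^{2,1}}\mathrm{d}\Zh^{\omb,2,1},\ldots)$ is what makes the limiting SDEs match. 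Your ``select points $a^{m,k,i}_l(\omb)$ so that $\frac1k\sum_l\delta_{a^{m,k,i}_l}$ approximates $\widetilde\Lambda^m_{t^m_i}$'' is a quantization of the measure, not a construction of a compatible Brownian driver; without the time-rescaling, it is not clear how to couple the new Brownian integrals to the original martingale-measure integrals, and the SDE stability estimate you appeal to has no noise-convergence term to feed it. This missing device is not cosmetic -- it is the core technical content of the lemma -- so the proposal as written would not go through.

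A secondary but real omission is that you never use the auxiliary i.i.d.\ Brownian motions $(W^{\star,i})_{i\ge1}$ living on $\Om^\star$: these are needed to define the auxiliary Brownian motions on the sets $\{q^{e,i}_s=0\}$ (cf.\ the definition of $\Zh^{\omb,e,i}$), i.e.\ to handle the degeneracy of the intensity. This is again a consequence of trying to work with $\widehat N^{\omb}$ directly rather than through the decomposition $\Zh^{\omb,e,i}$.
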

	\begin{proof}

	We only provide here the proof with the additional condition that $\sigma_0$ is a constant, 
	which illustrates better our main ideas. We refer to \Cref{subsec:Equivalence-Proposition} for a proof in the general case.

\medskip
	First, let $\Pb \in \Pcb_R(\nu)$, recall from \Cref{Proposition:diffusion_McKV-relaxed} that on the enlarged filtered space 
	$\big(\Omh^\star, \Fch^\star, \Fh^\star\big)$, 
	we have a family $(\widehat{N}^{\omb})_{\omb \in \Omb}$ such that
	$\widehat{N}^{\omb}$ is a martingale measure with intensity $\Lambda_t(\mathrm{d}a)\mathrm{d}t$ under the probability measure $\Ph_{\omb} := \muh(\omb) \otimes \P^\star$, for $\Pb$--a.e. $\omb \in \Omb$,
	and
	\begin{align*}
		\Xh_t
		&=
		\Xh_0 
		+	
		\iint_{[0,t] \times A}
		b \big(r, \Xh, \mu(\omb), a \big) \Lambdah_r(\mathrm{d}a)\mathrm{d}r
		+
	\iint_{[0,t] \times A}
		\sigma\big(r, \Xh, \mu(\omb), a \big) \widehat{N}^{\omb}(\mathrm{d}a,\mathrm{d}r)
		+
		\sigma_0 B_t(\omb),\; t\in[0,T],\; \Ph_{\omb} \mbox{\rm --a.s.},
		\\
		\Wh_t
		&=
		\iint_{[0,t] \times A} 
		\widehat{N}^{\omb}(\mathrm{d}a,\mathrm{d}s),
		\; t\in[0,T],\; \Ph_{\omb} \mbox{\rm --a.s.}
	\end{align*}
	By \Cref{Lemm:app-Weak_Relaxed} below,
	there exists, on $(\Omh^\star, \Fch^\star)$, a sequence $(\Fh^{\star,m})_{m \ge 1}$ of sub--filtrations of $\Fh^\star$,
	together with a sequence of family of processes $\big(\hat \alpha^m, (\Wh^{\omb, m})_{\omb \in \Omb}, (\Xh^{\omb,m})_{\omb \in \Omb} \big)_{m \ge 1}$, 
	where $\hat \alpha^m$ is an $A$--valued $\Fh^m$--predictable process for each $m \ge 1$,
	and for $\Pb$--a.e. $\omb \in \Omb$, $\Wh^{\omb, m}$ is an $(\Fh^{\star,m},\Ph_{\omb})$--Brownian motion, and with $\Lambdah^m(\mathrm{d}a, \mathrm{d}t) = \delta_{\hat \alpha^m_t}(\mathrm{d}a) \mathrm{d}t$,
	\begin{equation} \label{eq:cvg_conditional_law}
		\Ph_{\omb} \Big[ \lim_{m \to \infty} \Lambdah^m(\mathrm{d}a, \mathrm{d}t) = \Lambdah(\mathrm{d}a, \mathrm{d}t) \Big] = 1,
		\;
		\lim_{m \to \infty}
		\Wc_p \Big(
			\Lc^{\Ph_{\omb}} \big( \Xh^{\omb,m}, \Lambdah^m(\mathrm{d}a, \mathrm{d}t), \Wh^{\omb,m} \big),
			~
			\Lc^{\Ph_{\omb}} \big( \Xh, \Lambdah_t(\mathrm{d}a)\mathrm{d}t, \Wh \big)
		\Big) 
		= 0,
	\end{equation}
	and
	\begin{equation} \label{eq:def_Xt_ombm}
		\Xh^{\omb,m}_t
		=
		\Xh_0 
		+ 
		\int_0^t b\big(r, \Xh^{\omb,m}, \Ph_{\omb} \circ (\Xh^{\omb,m})^{-1}, \hat \alpha^m_r\big) \mathrm{d}r
		+ 
		\int_0^t \sigma \big(r, \Xh^{\omb,m}, \Ph_{\omb} \circ (\Xh^{\omb,m})^{-1}, \hat \alpha^m_r \big)  \mathrm{d} \Wh^{\omb,m}_r + \sigma_0 B_t(\omb),
	\end{equation}
	and for each $m \ge 1$
	\begin{equation} \label{eq:C_adapt}
	    (t, \omb, \hat \om^\star) \longmapsto
		\big( \Xh^{\omb,m}_{t \wedge \cdot}(\hat \om^\star), (\Lambdah^m)^t(\hat \om^\star), \Wh^{\omb,m}_{t \wedge \cdot}(\hat \om^\star) \big)\; \mbox{\rm is}\; 
			\Pc^{\widehat{\H}^\star}\mbox{\rm--measurable},
	\end{equation}
	so that, with the predictable $\sigma$--algebra $\Pc^{\Gb}$ on $[0,T] \x \Omb$ with respect to $\Gb$
	\begin{equation*} 
		(t, \omb) \longmapsto
		\Lc^{\Ph_{\omb}} \big( \Xh^{\omb,m}_{t \wedge \cdot}, (\Lambdah^m)^t, \Wh^{\omb,m} \big)\; \mbox{is}\; 
		\Pc^{\Gb}\mbox{--measurable}.
	\end{equation*}

\medskip	
	Further, let us denote $\Yh^{\omb,m} := \Xh^{\omb,m} - \sigma_0 B(\omb)$ and
	\[
		\Pb^m
		:=
		\int_{\Omb} \Lc^{\Ph_{\omb}} 
		\Big( \Xh^{\omb,m}, \Yh^{\omb,m}, \Lambdah^m, \Wh^{\omb,m}, B(\omb), 
			\Lc^{\Ph_{\omb}} \big(\Xh^{\omb,m}, \Yh^{\omb,m}, \Lambdah^m, \Wh^{\omb,m} \big) 
		\Big) \Pb(\mathrm{d}\omb).
	\]
	It follows by \eqref{eq:cvg_conditional_law} that $\lim_{m \to \infty} \Wc_p (\Pb^m, \Pb) = 0$.
	To conclude, it is enough to show that $\Pb^m \in \Pcb_W(\nu)$.
	Since, by construction, $\Pb^m [ \Lambda \in \M_0]=1$, then it is enough
	to show that $\Pb^m \in \Pcb_R(\nu)$.
	To this end, let us check that $\Pb^m$ satisfies all the conditions in \Cref{prop:eqivalence_def_relaxed}.

\medskip
	It is easy to check that 
	$\Pb^m \big[ \muh \circ (\Xh_0)^{-1} = \nu, X_0=Y_0, W_0=0, B_0=0 \big] = 1$
	and
	$\E^{\Pb^m} \big[ \|X\|^p+	\iint_{[0,T] \times A}  |a - a_0|^p \Lambda_t(\mathrm{d}a) \mathrm{d}t \big]< \infty$.
	Furthermore, for every $\phi \in C_b(\Cc^n \x \Cc^n \x \M \x \Cc^d)$, 
	$\varphi \in C_b(\Pc(\Cc^n \x \Cc^n \x \M \x \Cc^d))$ and $t \in [0,T]$, we have
	\begin{align*}
		\E^{\Pb^m} \big[ \langle \phi,\muh_t \rangle \varphi \big(B,\muh \big) \big]
		&=
		\int_{\Omb} 
			\E^{\Ph_{\omb}} \big[
				\phi \big( 
					\Xh^{\omb,m}_{t \wedge \cdot}, \Yh^{\omb,m}_{t \wedge \cdot}, (\Lambdah^m)^t , \Wh^{\omb,m} 
				\big) 
			\big]
			\varphi \Big( B(\omb), \Lc^{\Ph_{\omb}} \big( \Xh^{\omb,m}, \Yh^{\omb,m}, \Lambdah^{m}, \Wh^{\omb,m} \big) \Big) 
		\Pb(\mathrm{d}\omb)
		\\
		&=
		\E^{\Pb^m} \big[ \phi \big( X_{t \wedge \cdot},Y_{t \wedge \cdot},\Lambda^t,W \big) \varphi \big(B,\muh \big) \big].
	\end{align*}
	%and similarly
	%\[
		%\E^{\Pb^m} \big[ \langle \phi,\muh_t \rangle \varphi \big(B_{t \wedge \cdot},\muh_{t} \big) \big]
		%=
		%\E^{\Pb^m} \big[ \phi \big( X_{t \wedge \cdot},Y_{t \wedge \cdot},\Lambda^t,W \big) \varphi \big(B_{t \wedge \cdot},\muh_{t} \big) \big].
	%\]
	This implies that
	\[
		\muh_t
		=
		(\Pb^m)^{\Gcb_T} \circ \big( X_{t \wedge \cdot},Y_{t \wedge \cdot},\Lambda^t,W \big)^{-1},
		\; \Pb^m \mbox{--a.s.}
	\]

	Next, for all $\phi \in C_b(\R^{\ell})$, $\psi \in C_b(\Cc^n \x \Cc^n \x \M \x \Cc^d \x \Cc^\ell \x C([0,T];\Pc(\Cc^n \x \Cc^n \x \M \x \Cc^d)))$, and $s \in[0, t]$,
	we have 
	\begin{align*}
		&\
		\E^{\Pb^m} \big[ \phi (B_t-B_s) \psi \big( X_{s \wedge \cdot},Y_{s \wedge \cdot},\Lambda^s,W_{s \wedge \cdot},B_{s \wedge \cdot},\muh_{s \wedge \cdot}\big) \big]
		\\
		=&\
		\int_{\Omb} 
			\phi \big(B_t(\omb)-B_s(\omb) \big)  
			\E^{\Ph_{\omb}} \Big[ 
				\psi \big( \Xh^{\omb,m}_{s \wedge \cdot}, \Yh^{\omb,m}_{s \wedge \cdot}, (\Lambdah^m)^s, \Wh^{\omb,m}_{s \wedge \cdot}, B_{s \wedge \cdot}(\omb), \Lc^{\Ph_{\omb}}\big( \Xh^{\omb,m}_{s \wedge \cdot}, \Yh^{\omb,m}_{s \wedge \cdot},(\Lambdah^m)^s, \Wh^{\omb,m} \big) \big) 
			\Big] 
		\Pb(\mathrm{d}\omb)
		\\
		=&\
		\E^{\Pb^m} \big[ \phi \big(B_t(\omb)-B_s(\omb) \big)  \big]
		\int_{\Omb} 
			\E^{\Ph_{\omb}} \Big[ 
				\psi \big( \Xh^{\omb,m}_{s \wedge \cdot}, \Yh^{\omb,m}_{s \wedge \cdot}, (\Lambdah^m)^s, \Wh^{\omb,m}_{s \wedge \cdot}, B_{s \wedge \cdot}(\omb), \Lc^{\Ph_{\omb}}\big( \Xh^{\omb,m}_{s \wedge \cdot}, \Yh^{\omb,m}_{s \wedge \cdot},(\Lambdah^m)^s, \Wh^{\omb,m} \big) \big) 
			\Big] 
		\Pb(\mathrm{d}\omb)
		\\
		=&\
		\E^{\Pb^m} \big[ \phi (B_t-B_s) \big]
		\E^{\Pb^m} \big[ \psi \big( X_{s \wedge \cdot},Y_{s \wedge \cdot},\Lambda^s,W_{s \wedge \cdot},B_{s \wedge \cdot},\muh_{s \wedge \cdot}\big) \big],
		\end{align*}
		which implies that $B$ has independent increments with respect to $(\Fb, \Pb^m)$.
		Besides, since $\Pb^m \circ B^{-1}$ is the Wiener measure, 
		it follows that $B$ is an $( \Fb,\Pb^m)$--Brownian motion.
		Also, as $Z = X - Y = \sigma_0 B$, one has immediately that $S^f$ (defined in \eqref{eq:K_process}) is an $( \Fb^\circ,\Pb^m)$--martingale for all $f \in C^2_b(\R^{n + \ell})$.
		Finally, by construction, Condition $(iii)$ in \Cref{prop:eqivalence_def_relaxed} is also satisfied.
		Therefore, $\Pb^m \in \Pcb_R(\nu)$, and hence $\Pb^m \in \Pcb_W(\nu)$.
	\end{proof}

	\begin{lemma} \label{Lemm:app-Weak_Relaxed}
		Let us stay in the context of {\rm\Cref{Equivalence-Proposition_General}}, and assume in addition that $\sigma_0$ is a constant.
		Then on the space $(\Omh^\star, \Fch^\star)$, 
		there exists a sequence $(\Fh^{\star,m})_{m \ge 1}$ of sub--filtrations of $\Fh^\star$,
		together with a sequence of family of processes $\big(\hat \alpha^m, (\Wh^{\omb, m})_{\omb \in \Omb}, (\Xh^{\omb,m})_{\omb \in \Omb} \big)_{m \ge 1}$,
		where $\hat \alpha^m$ is an $A$--valued $\Fh^{\star,m}$--predictable process for each $m \ge 1$,
		and for $\Pb$--a.e. $\omb \in \Omb$, $\Wh^{\omb, m}$ is an $(\Fh^{\star,m},\Ph_{\omb})$--Brownian motion,
		and with $\Lambdah^m(\mathrm{d}a, \mathrm{d}t) = \delta_{\hat \alpha^m_t}(\mathrm{d}a)\mathrm{d}t$
		and $\Xh^{\omb, m}$ be defined in \eqref{eq:def_Xt_ombm}, 
		the convergence and measurability results in \eqref{eq:cvg_conditional_law} and \eqref{eq:C_adapt} hold true.     
	\end{lemma}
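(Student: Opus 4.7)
The plan is to adapt the compactification approach of \citeauthor*{el1987compactification} \cite{el1987compactification} to the McKean--Vlasov setting: chatter the relaxed control $\Lambdah$ into piecewise constant strict controls using the independent randomness available on $\Om^\star$, and then solve the resulting (strict--control) McKean--Vlasov SDE on $(\Omh^\star,\Ph_\omb)$ with the common noise frozen through $\omb$.

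\medskip

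\emph{Construction of $\hat\alpha^m$, $\Fh^{\star,m}$ and $\Wh^{\omb,m}$.} Fix a refining sequence of time partitions $0=t^m_0<\cdots<t^m_m=T$ and, for each $m$, a finite Borel partition of a compact exhaustion of $A\subset\R^j$ with representatives $(a^m_j)_j$. Using the diffuseness of $\nu_0$, extract from $N^{\star,1}$ a countable family of $\P^\star$--independent $[0,1]$--uniforms $(U^m_i)_{m,i}$, which are $\Ph_\omb$--independent of $(\Xh_0,\Lambdah,\Wh)$ for every $\omb$; then sample $\hat\alpha^m_t$ on each cell $(t^m_i,t^m_{i+1}]$ among the $a^m_j$'s using $U^m_i$, with probabilities proportional to the averages $(t^m_{i+1}-t^m_i)^{-1}\int_{t^m_i}^{t^m_{i+1}}\Lambdah_s(B^m_j)\mathrm{d}s$, and set $\hat\alpha^m\equiv a_0$ on the uncovered remainder. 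Take $\Wh^{\omb,m}:=W^{\star,2}$ (which is $\Ph_\omb$--independent of everything above), and let $\Fh^{\star,m}$ be the filtration generated by $(\Xh_0,\Lambdah,W^{\star,2},(U^m_i)_i)$. Then $\hat\alpha^m$ is $\Fh^{\star,m}$--predictable, $\Wh^{\omb,m}$ is an $(\Fh^{\star,m},\Ph_\omb)$--Brownian motion for every $\omb$, and the standard chattering argument (in the spirit of \cite{el1987compactification}) combined with a subsequence extraction yields the $\Ph_\omb$--a.s.\ weak convergence $\Lambdah^m\to\Lambdah$ in $\M$, for $\Pb$--a.e.\ $\omb$.

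\medskip

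\emph{Solution of the SDE and measurability.} Since $\sigma_0$ is constant, \Cref{assum:main1} guarantees that the McKean--Vlasov SDE \eqref{eq:def_Xt_ombm} admits a unique strong $\Fh^{\star,m}$--adapted solution $\Xh^{\omb,m}$ on $(\Omh^\star,\Ph_\omb)$, constructed for example by a Picard iteration on a Wasserstein ball using the Lipschitz property \Cref{assum:main1}.$(i)$ (see also \cite[Theorem A.3]{djete2019mckean}). Joint $\Pc^{\widehat\H^\star}$--measurability in \eqref{eq:C_adapt} then follows because the coefficients of \eqref{eq:def_Xt_ombm} depend on $\omb$ only through the $\Gb$--measurable data $B(\omb)$ and $\mu(\omb)$, and because the measurability clause \eqref{eq:Nt_measurability} of \Cref{Proposition:diffusion_McKV-relaxed} is preserved throughout the Picard iteration that constructs $\Xh^{\omb,m}$.

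\medskip

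\emph{Convergence in $\Wc_p$ and main obstacle.} For $\Pb$--a.e.\ $\omb$, the relaxed martingale representation \eqref{eq:X_repres_Nt} gives that the martingale part of $\Xh$ under $\Ph_\omb$ has quadratic covariation $\int_0^\cdot\!\int_A\!\sigma\sigma^\top(s,\Xh,\mu(\omb),a)\Lambdah_s(\mathrm{d}a)\mathrm{d}s$, and this is precisely the limit of the corresponding quadratic variation of the strict--control martingale part $\int_0^\cdot\sigma(s,\Xh^{\omb,m},\mu^{\omb,m},\hat\alpha^m_s)\mathrm{d}\Wh^{\omb,m}_s$, thanks to the chattering convergence from Step 1 and the continuity of $\sigma$ in $a$. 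Combining this with the Lipschitz property in \Cref{assum:main1}.$(i)$, the moment estimates of \Cref{lemma:estimates}, and a martingale central limit argument in the spirit of \cite{el1987compactification}, one derives weak convergence of the joint law, and the upgrade to $\Wc_p$--convergence comes from the uniform $p'$--integrability granted by \Cref{lemma:estimates} together with the standing assumption $\nu\in\Pc_{p'}(\R^n)$. The chief difficulty here is precisely this convergence step: because both the relaxed and the approximating SDEs are of McKean--Vlasov type, their coefficients and their solutions depend on their own conditional laws under $\Ph_\omb$, so stability under chattering cannot be obtained by a direct SDE comparison and has to be established by a contraction argument on the space of conditional distributions, jointly with the relaxed--to--strict approximation. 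A secondary, more technical obstacle is maintaining joint $(t,\omb,\hat\om^\star)$--measurability throughout the entire construction, for which the measurability clause \eqref{eq:Nt_measurability} of \Cref{Proposition:diffusion_McKV-relaxed} is indispensable.
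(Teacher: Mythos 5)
Your proposal diverges from the paper at the single most delicate step, and the divergence is fatal: you set $\Wh^{\omb,m}:=W^{\star,2}$, an auxiliary Brownian motion on $\Om^\star$ that is $\Ph_\omb$--independent of $\Nh^\omb$, of $\Wh$, of $\Lambdah$, and of the randomization $U^m$. But \eqref{eq:cvg_conditional_law} asks for convergence in $\Wc_p$ of the \emph{joint} law $\Lc^{\Ph_\omb}\big(\Xh^{\omb,m},\Lambdah^m,\Wh^{\omb,m}\big)$ to $\Lc^{\Ph_\omb}\big(\Xh,\Lambdah,\Wh\big)$. With your choice, $\Lambdah^m$ is a function of $(\Lambdah,U^m)$ and is therefore $\Ph_\omb$--independent of $W^{\star,2}$ for every $m$; hence $\Lc^{\Ph_\omb}(\Lambdah^m,\Wh^{\omb,m})$ is always a product measure, and any limit along your sequence of the pair's law is the product of the marginals. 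The target law $\Lc^{\Ph_\omb}(\Lambdah,\Wh)$ need not be a product: under a generic relaxed rule $\Pb\in\Pcb_R(\nu)$, the control $\Lambdah_t$ may be $\Fch_t$--measurable in a way that depends on $\Wh_{t\wedge\cdot}$ (e.g.\ $\Lambdah_t=\tfrac12\delta_{h(\Wh_{t\wedge\cdot})}+\tfrac12\delta_{a_0}$ for a measurable $h$). In that situation no sequence with your independence property can approximate the joint law, so the limit in \eqref{eq:cvg_conditional_law} fails. This is not a cosmetic issue: the $\Wh$--component is needed because $\muh_t$ in \Cref{def:admissible_ctrl_rule} is the conditional law of $(X_{t\wedge\cdot},Y_{t\wedge\cdot},\Lambda^t,W)$, and the proof of \Cref{Equivalence-Proposition_General} feeds \eqref{eq:cvg_conditional_law} directly into $\Wc_p(\Pb^m,\Pb)\to0$ on $\Omb$, which carries the $\muh$ coordinate.

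The device you are missing is precisely what Steps 2--3 of the paper's proof supply. Rather than importing a fresh independent Brownian motion, the paper extracts from the martingale measure $\Nh^\omb$ a bank of $\Ph_\omb$--Brownian motions $\Zh^{\omb,e,i}$, one per discretized control value $a^e_i$ (via the time change $\mathrm{d}\Zh^{\omb,e,i}=(q^{e,i})^{-1/2}\mathrm{d}\Nh^\omb(A^e_i)$ with an auxiliary $W^{\star,i}$ only on the degenerate event $\{q^{e,i}=0\}$), so that $\Wh=\sum_i\int_0^{\cdot}\sqrt{q^{e,i}_r}\,\mathrm{d}\Zh^{\omb,e,i}_r$ and the state SDE is rewritten in these coordinates. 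The chattering is then a \emph{deterministic time compression}: on each subinterval $[t^1_i,t^1_{i+1}]$, one ``plays'' the increment of $\Zh^{\omb,e,1}$ compressed into $[t^1_i,\theta^1_i]$ while the strict control equals $a^e_1$, then the increment of $\Zh^{\omb,e,2}$ compressed into $[\theta^1_i,t^1_{i+1}]$ while the control equals $a^e_2$, etc. The concatenated process $\Wh^{\omb,m}$ is again a Brownian motion (after a small time shift to restore $\Fh^\star$--adaptedness), and the crucial point is the \emph{pathwise} joint convergence \eqref{eq:cvg_Wt12}: $(\Lambdah^{2,m},\Wh^{\omb,m,1},\Wh^{\omb,m,2})\to(\Lambdah^2,\int_0^{\cdot}\sqrt{q^{2,1}}\,\mathrm{d}\Zh^{\omb,2,1},\int_0^{\cdot}\sqrt{q^{2,2}}\,\mathrm{d}\Zh^{\omb,2,2})$, $\Ph_\omb$--a.s., which reconstructs $\Wh$ in the limit and preserves the correlation of $(\Lambdah,\Wh)$. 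The rest of the convergence (tightness via \cite[Proposition B.1]{carmona2014mean} and identification via strong uniqueness for \eqref{eq:Xtm2}) is close to what you sketched, but it only bears fruit because the driving noise of the approximating SDE was built out of the original one. Your randomized chattering with an extraneous Brownian motion is a correct strategy for the compactification of classical relaxed controls where only the marginals enter, but it cannot recover the relaxed--control joint law with $\Wh$ that the McKean--Vlasov/common--noise setting demands.
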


	\begin{proof}
	We will adapt the arguments in \cite[Theorem 4.9.]{el1987compactification} to approximate, under each $\Ph_{\omb}$,
	the process
	\[
		\Xh_t
		=
		\Xh_0 
		+
		\iint_{[0,t] \times A}
		b \big(r, \Xh, \mu(\omb), a \big) \Lambdah_r(\mathrm{d}a, \mathrm{d}r)
		+
		\iint_{[0,t] \times A}
		\sigma\big(r, \Xh, \mu(\omb), a \big) \widehat{N}^{\omb}(\mathrm{d}a,\mathrm{d}r)
		+ \sigma_0 B_t(\omb),\; t\in[0,T],
		\;\Ph_{\omb} \mbox{\rm --a.s.},
	\]
	and at the same time check the measurability property at each step.
	
\medskip

	$\underline{Step ~1.}$
	We first show that one can assume w.l.o.g. that $A \subset \R^j$ is a compact set.
	Indeed, for each $e \ge 1$, let us denote $A_e := A \cap [-e, e]^j$, $\pi_e : A \longrightarrow A_e$ the projection from $A$ to $A_e$,
	and then define $\Lambdah^e$ and for all $\omb \in \Omb,$ $\widehat{N}^{\omb, e}$ by
	\[
		\iint_{[0,T] \times A} \phi(s,a) \Lambdah^e (\mathrm{d}a,\mathrm{d}r) := \iint_{[0,T] \times A} \phi_e(s,a) \Lambdah(\mathrm{d}a,\mathrm{d}r),
		\;
		\iint_{[0,T] \times A} \phi(s,a) \Nh^{\omb, e} (\mathrm{d}a,\mathrm{d}r) := \iint_{[0,T] \times A} \phi_e(s,a) \Nh^{\omb}(\mathrm{d}a,\mathrm{d}r),
	\]
	for all $\phi \in C_b([0,T] \x A)$ and $\phi_e (s,a) := \phi(s, \pi_e(a))$.
	Denote also $(b_e, \sigma_e)(t, \xb, \nu, a) := (b, \sigma) (t, \xb, \nu, \pi_e(a))$.
	For $\omb \in \Omb,$ let
	$\Xh^{\omb,e}$ be the unique solution to
	\begin{align*}
		\Xh^{\omb,e}_t
		=&\ 
		\Xh_0 
		+
		\iint_{[0,t] \times A}
		b \big(r, \Xh^{\omb, e}, \mu(\omb), a \big) \Lambdah^e_r(\mathrm{d}a)\mathrm{d}r
		+
		\iint_{[0,t] \times A}
		\sigma \big(r, \Xh^{\omb, e}, \mu(\omb), a \big) \Nh^{\omb, e}(\mathrm{d}a,\mathrm{d}r)
		\\
		&+ \sigma_0 B_t(\omb),\; t\in[0,T],\; 
	\Ph_{\omb} \mbox{\rm --a.s.}
	\end{align*}
	Then by similar arguments as in the proof of \Cref{lemma:strongApp_step1}, it is standard to deduce that, for some constant $C>0$ independent of $e \ge 1$ and $\omb$, and which may change value from line to line
	\[
		\E^{\Ph_{\omb}}  \bigg[\sup_{t \in [0,T]}| \Xh^{\omb, e}_t - \Xh_s |^p  \bigg]
		\le
		C
		 \E^{\Ph_{\omb}}  \bigg[ \iint_{[0,t] \times A}
		 	\big| \big( (b,\sigma) - (b_e, \sigma_e) \big) (t, \Xh, \Ph_{\omb} \circ (\Xh)^{-1},a)\big|^p  \Lambdah_t (\mathrm{d}a)\mathrm{d}t
            \bigg].
	\]
	Using the growth conditions on $(b, \sigma)$ in \Cref{assum:main1},
	we have
	\[
		\iint_{[0,T] \times A} \big| \big( (b,\sigma) - (b_e, \sigma_e) \big) \big( t, \Xh, \Ph_{\omb} \circ (\Xh)^{-1},a \big) \big|^p  \Lambdah_t (\mathrm{d}a)\mathrm{d}t
	        \le
	        C \bigg( \big\| \Xh \big\|^p + \E^{\Ph_{\omb}} \big[ \big\| \Xh \big\|^p \big] \iint_{[0,T] \times A} |a|^p \Lambdah_t (\mathrm{d}a)\mathrm{d}t \bigg).
	\]
	It follows by the dominated convergence theorem that, for $\Pb$--a.e. $\omb \in \Omb$,
	\[
		\lim_{e \to \infty} \E^{\Ph_{\omb}}  \bigg[\sup_{t \in [0,T]} \big| \Xh^{\omb, e}_t - \Xh_s \big|^p  \bigg] = 0.
	\]
	Moreover, as $A$ is a Polish subspace of $\R^j$, then $A$ is closed, and hence $A_e$ is compact. This allows to reduce the problem to the case where $A$ is compact.

	\medskip

	$\underline{Step~2.}$
	We now assume in addition that $A$ is compact and proceed the proof.
	By compactness of $A$, there is a sequence of positive reel numbers $(\delta_e)_{e \ge 1}$ such that $\lim_{e \to \infty} \delta_e = 0$,
	and for each $e \ge 1$, one can find a partition $(A_1^e, \dots, A_e^e)$  of $A$ 
	and $(a_1^e, \dots, a_e^e)$
	satisfying $a_i^e \in A_i^e$ and $|a_i^e-a|< \delta_e$ for all $a \in A_i^e$, $i \in \{1,\dots,e\}$. 
	For $\omb \in \Omb,$ let $\Xh^{\omb,e}$ be the unique solution to the SDE
	\begin{equation} \label{eq:Xtm}
		\Xh^{\omb, e}_t
		=
		\Xh_0 
		+ 
		\sum_{i=1}^e \int_0^t b\big(r, \Xh^{\omb, e},\Ph_{\omb} \circ ( \Xh^{\omb, e})^{-1}, a^e_i\big)  \Lambdah_r(A^e_i) \mathrm{d}r 
		+ 
		\int_0^t \sigma \big(r, \Xh^{\omb,e}, \Ph_{\omb} \circ (\Xh^{\omb, e})^{-1},a^e_i \big)  \mathrm{d} \Nh^{\omb}_r(A^e_i)
		+
		\sigma_0 B_t(\omb).
	\end{equation}
	Using again standard arguments as in the proof of \Cref{lemma:strongApp_step1}, we obtain that, for some constant $C> 0$ (independent of $e$ and $\omb$), which may change from line to line
	\[
		\E^{\Ph_{\omb}} 
		\bigg[\sup_{t \in [0,T]} \big| \Xh^{\omb, e}_t - \Xh_t \big|^p \bigg]
		\le
		C \E^{\Ph_{\omb}}  \bigg[ \sum_{i=1}^e \iint_{[0,T] \times A^e_i} 
			\big|
				(b,\sigma)(r, \Xh, \Ph_{\omb} \circ (\Xh)^{-1},a) -(b,\sigma)(r, \Xh,\Pb_{\omb} \circ (\Xh)^{-1},a^e_i)
			\big|^p \Lambda_r(\mathrm{d}a)\mathrm{d}r 
		\bigg].
	\]
	For every fixed $(r, \mathbf{x}, \nu)$, the map $a \longmapsto (b, \sigma)(r, \mathbf{x}, \nu, a)$ is continuous and hence uniformly continuous. Using dominated convergence,
	it follows that, for $\Pb$--a.e. $\omb \in \Omb$,
	\begin{equation} \label{eq:Xe2X}
		\lim_{e \to \infty}
		\E^{\Ph_{\omb}} 
		\bigg[\sup_{t \in [0,T]} \big| \Xh^{\omb, e}_t - \Xh_t \big|^p \bigg]
		=
		0.
	\end{equation}
	Recall that the space $\Omh^\star$ is equipped with a sequence of i.i.d. Brownian motion $(W^{\star,i})_{i \ge 1}$.
	Let us define, for each $i =1, \dots, e$,
	\[
		\Zh^{\omb, e, i}_t
		:=
		\int_0^t ( q^{e,i}_s )^{-1/2} \mathbf{1}_{\{q^{e,i}_s \neq 0 \}} \mathrm{d} \Nh^{\omb}_s (A^e_i)
		+
		\int_0^t  \mathbf{1}_{ \{ q^{e,i}_s = 0 \}} \mathrm{d}W^{*,i}_s,
		\; \mbox{with}\; 
		q^{e,i}_s :=  \Lambdah_s (A^e_i),
		\;\mbox{for all}\;t \in [0,T].
	\]
	Then it is direct to see that: for $\Pb$--a.e. $\omb \in \Omb,$ $(\Zh^{\omb, e, 1}, \dots, \Zh^{\omb, e, e} )$ is an $e$--dimensional $\big(\Fh,\Ph_{\omb}\big)$--Brownian motion, and one can rewrite \eqref{eq:Xtm}, for any $t\in[0,T]$
	\begin{equation} \label{eq:Xtm2}
		\Xh^{\omb, e}_t
		=
		\Xh_0 
		+ 
		\sum_{i=1}^e \int_0^t b\big(r, \Xh^{\omb, e},\Ph_{\omb} \circ ( \Xh^{\omb, e})^{-1}, a^e_i\big) q^{e,i}_r \mathrm{d}r 
		+ 
		\int_0^t  \sigma \big(r, \Xh^{\omb,e}, \Ph_{\omb} \circ (\Xh^{\omb, e})^{-1},a^m_i \big) \sqrt{q^{e,i}_r} \mathrm{d} \Zh^{\omb, e, i}_r
		+
		\sigma_0 B_t(\omb),
	\end{equation}
	and
	\begin{equation} \label{eq:Wtm2}
		\Wh_t
		=
		\Zh^{\omb,e}_t :=
		\sum_{i=1}^e \int_0^t \sqrt{q^{e,i}_r} \mathrm{d} \Zh^{\omb, e, i}_r,\; t\in[0,T],\; \Ph_{\omb} \mbox{--a.s.}
	\end{equation}
	Furthermore, by considering the process $(q^{e,i}_r, i =1, \dots, e)_{r \in [0,T]}$ as a control process,
	and using \Cref{lemma:strongApp_step1},
	we can assume w.l.o.g. that $q^{e,i}$ is an $\Ft$--predictable process,
	and is in addition constant on each interval $[t_k, t_{k+1})$, for $0 = t_0 < t_1 < \dots < t_K = T$.
	Let $\Lambdah^e (\mathrm{d}a, \mathrm{d}t)  := \sum_{i=1}^e q^{e,i}_t \delta_{a^e_i}(\mathrm{d}a) \mathrm{d}t$,
	it follows by  \eqref{eq:Xe2X} and \eqref{eq:Wtm2} that
	\begin{equation} \label{eq:cvg_conditional_law2}
		\Ph_{\omb} \Big[ \lim_{e \to \infty} \Lambdah^e (\mathrm{d}a, \mathrm{d}t) = \Lambdah(\mathrm{d}a, \mathrm{d}t) \Big] = 1,
		\;
		\lim_{e \to \infty}
		\Wc_p \Big(
			\Lc^{\Ph_{\omb}} \big( \Xh^{\omb, e}, \Lambdah^e (\mathrm{d}a, \mathrm{d}t), \Zh^{\omb,e} \big),
			~
			\Lc^{\Ph_{\omb}} \big( \Xh, \Lambdah_t(\mathrm{d}a)\mathrm{d}t, \Wh \big)
		\Big) 
		= 0.
	\end{equation}
	Moreover, for every $e \ge 1$, it follows by \eqref{eq:Nt_measurability} in \Cref{Proposition:diffusion_McKV-relaxed} that one can choose $(\Zh^{\omb,e,i})_{i=1, \dots,e}$ such that
	\[
		(t, \omb, \hat \om^\star) \longmapsto
		 \big( \Zh^{\omb,e,1}_{t \wedge \cdot}(\hat \om^\star), \dots,  \Zh^{\omb,e,e}_{t \wedge \cdot}(\hat \om^\star)\big)
		~\mbox{is}~
		\Pc^{\widehat{\H}^\star}\mbox{--measurable}.
	\]
	Recall that the solution $\Xh^{\omb,e}$ of SDE \eqref{eq:Xtm2} can be defined by Picard iterations (see e.g. \cite[Theorem A.3.]{djete2019mckean}),
	then by similar arguments as in \cite[Lemma 2.6.]{possamai2015stochastic}, one can choose $\Xh^{\omb,e}$ such that
	\begin{equation} \label{eq:measur_Xe}
		(t, \omb, \hat \om^\star) \longmapsto
		 \big( \Xh^{\omb,e}_{t \wedge \cdot}(\hat \om^\star), (\Lambdah^e)^t(\hat \om^\star), \Zh^{\omb,e,1}_{t \wedge \cdot}(\hat \om^\star), \dots,  \Zh^{\omb,e,e}_{t \wedge \cdot}(\hat \om^\star)\big)
		~\mbox{is}~
		\Pc^{\widehat{\H}^\star}\mbox{--measurable}.
	\end{equation}

	$\underline{Step~3.}$
	%Notice that as convergence \eqref{eq:cvg_conditional_law2} is true for $\Pb$--a.e. $\omb \in \Omb,$ we can chose $\omb \in \Omb$ s.t. convergence \eqref{eq:cvg_conditional_law2} is satisfied.  
	We now consider the approximation of 
	$(\Xh^{\omb, e})_{ \omb \in \Omb}$, $\Lambdah^e$ and $\Zh^{\omb, e}$ for a fixed $e \ge 1$.
	For simplicity of presentation, we consider the case $e=2$, $K=2$ and $t_1 = T/2$,
	so that
	\[
		\Lambdah^2(\mathrm{d}a, \mathrm{d}t)  := q^{2,1}_t \delta_{a^2_1}(\mathrm{d}a) \mathrm{d}t + q^{2,2}_t \delta_{a^2_2}(\mathrm{d}a) \mathrm{d}t,
		\;
		q^{2,1}_t + q^{2,2}_t = 1,
		\;
		(q^{2,1}_t, q^{2,2}_t) = 
		\begin{cases}
			(q^{2,1}_0, q^{2,2}_0),\; \mbox{for}~ t \in [0, t_1), \\
			(q^{2,1}_{t_1}, q^{2,2}_{t_1}), \; \mbox{for}~t \in [t_1, T],
		\end{cases}
	\]
	where $(q^{2,1}_0, q^{2,2}_0) \in [0,1]^2$ are two deterministic constants
	and $(q^{2,1}_{t_1}, q^{2,2}_{t_1})$ are $[0,1]$--valued $\Fch^\star_{t_1}$--measurable random variables.

\medskip 
First, we consider a further discretisation of $[0,t_1]$: $0 = t^1_0 < t^1_1 < \dots < t^1_m = t_1$ 
	with $t^1_i := i \Delta t$, $\Delta t := t_1/m$,
	and then
	define two $d$--dimensional processes $(\Wh^{\omb, m, 1}, \Wh^{\omb, m,2})$.
	Let $\Wh^{\omb, m, 1}_0 = \Wh^{\omb, m,2}_0 = 0$,
	and then for each $i=0, \dots, m-1$, let
	\[
		\Wh^{\omb, m,1}_t 
		:=
		\begin{cases} 
			\Wh^{\omb, m,1}_{t^1_i} 
			+ 
			\sqrt{q^{2,1}_0}  \Big( \Zh^{\omb, 2,1}_{t^1_i + (t-t^1_i) /q^{2,1}_0 } - \Zh^{\omb, 2,1}_{t^1_i} \Big) ,
			\; t \in [t^1_i, \theta^1_i], \\[0.8em]
			\Wh^{\omb, m,1}_{\theta^1_i},
			\; t \in (  \theta^1_i, t^\mathbf{1}_{i+1}],
		\end{cases}
		~\mbox{with}~
		\theta^1_i :=   t^1_i + q^{2,1}_0 \Delta t \in [t^1_i, t^\mathbf{1}_{i+1}],
	\]
	and
	\[
		\Wh^{\omb, m,2}_t 
		:=
		\begin{cases} 
			\Wh^{\omb, m,2}_{t^1_i} ,
			\; t \in [t^1_i, \theta^1_i], \\[0.8em]
			\Wh^{\omb, m,2}_{\theta^1_i}
			+ 
			\sqrt{q^{2,2}_0}  \Big( \Zh^{\omb, 2,2}_{t^1_i + (t-\theta^1_i) /q^{2,2}_0 } - \Zh^{\omb, 2,2}_{t^1_i} \Big),
			\; t \in (  \theta^1_i, t^\mathbf{1}_{i+1}].
		\end{cases}
	\]
	Namely, one 'compresses' the increment of the Brownian motion $\Zh^{\omb, 2,1}$ from $[t^1_i, t^\mathbf{1}_{i+1}]$ to $[t^1_i, \theta^1_i]$ to obtain $\Wh^{\omb, m,1}$,
	and 'compresses' the increment of the Brownian motion $\Zh^{\omb, 2,2}$ from $[t^1_i, t^\mathbf{1}_{i+1}]$ to $[\theta^1_i, t^\mathbf{1}_{i+1}]$ to obtain $\Wh^{\omb, m,2}$.
	
\medskip
	Next, on $[t_1, T]$, we take the discretisation $t_1 = t^2_0 < \dots, t^2_m = T$ with $t^2_i := t_1 + i \Delta t$, $\Delta t := t_1/m = (T-t_1)/m$,
	and 
	for each $i =0, \dots, m-1$, let $\theta^2_i := t^2_i + q^{2,2}_{t_1} \Delta t \in [t^2_i, t^2_{i_1}]$.
	Notice that $q^{2,2}_{t_1}$ is an $\Fch^\star_{t_1}$--random variable.
	It follows that the $(\theta^2_i)_{0\leq i\leq m-a}$ are also random.
	By rewriting its definition on $[0,t_1]$ in an equivalent way, 
	we define $(\Wh^{\omb, m,1}, \Wh^{\omb, m,2})$ on $[t_1, T]$ by
	\[
	\begin{cases}
		\Wh^{\omb, m,1}_t 
		:=
			\Wh^{\omb, m,1}_{t^2_i} 
			+ 
			\sqrt{q^{2,2}_0}  \Big( \Zh^{\omb, 2,1}_{t^2_i + (t \wedge \theta^2_i - t^2_i) /q^{2,2}_0 } - \Zh^{\omb, 2,1}_{t^2_i} \Big) ,
			\\[0.8em]
		\Wh^{\omb, m,2}_t 
		:=
		\Wh^{\omb, m,2}_{t^2_i} \mathbf{1}_{\{ t \in [t^2_i, \theta^2_i]\}}
		+
		\Big(
			\Wh^{\omb, m,2}_{\theta^2_i}
			+ 
			\sqrt{q^{2,2}_0}  \Big( \Zh^{\omb, 2,2}_{t^2_i + (t-\theta^2_i) /q^{2,2}_0 } - \Zh^{\omb, 2,2}_{t^2_i} \Big)
		\Big) \mathbf{1}_{\{ t \in (  \theta^2_i, t^2_{i+1}]\}},
	\end{cases}
	\mbox{for}~
	t \in (  t^2_i, t^2_{i+1}].
	\]	
	
	Next, let us define $I^m_1 := \cup_{i=1}^{m-1} ([t^1_i, \theta^1_i) \cup [t^2_i, \theta^2_i) )$
	and $I^m_2 := \cup_{i=0}^{m-1} ([\theta^1_i, t^\mathbf{1}_{i+1}) \cup [\theta^2_i, t^2_{i+1})  )$
	\[
		\Wh^{\omb, m}_t :=  \big(\Wh^{\omb, m,1}_{t - \Delta t} + \Wh^{\omb, m,2}_{t - \Delta t} \big)\mathbf{1}_{\{t \in [\Delta t, T]\}},
		~\mbox{and}~
		\Lambdah^{2,m} (\mathrm{d}a ,\mathrm{d}t) := \delta_{\alpha^m_t}(\mathrm{d}a) \mathrm{d}t,
		~\mbox{with}~
		\alpha^m_t := a^2_1 \mathbf{1}_{\{t \in I^m_1\}} + a^2_2 \mathbf{1}_{\{t \in I^m_2 \}}.
	\]
	Notice that $\Pb$--a.e. $\omb \in \Omb,$ $\Wh^{\omb, m,1}$ and $\Wh^{\omb, m,2}$ are $\Ph_{\omb}$--martingales w.r.t. their natural filtrations with quadratic variation $c^{m,1}_{\cdot}:=\int_0^\cdot \mathbf{1}_{I^m_1}(r) \mathrm{d}r$ and $c^{m,2}_{\cdot}:=\int_0^\cdot \mathbf{1}_{I^m_2}(r) \mathrm{d}r$ respectively.
	Further, with the time shift appearing in its definition, the process $\Wh^{\omb, m}$ is $\Fh^\star$--adapted.
	Moreover,  $\Wh^{\omb, m}$ is a $\Ph_{\omb}$--Brownian motion on $[\Delta t, T]$ with respect to its natural filtration (but not $\Fh^\star$),
	and
	\begin{equation} \label{eq:cvg_Wt12}
		\big( \Lambdah^{2,m}, \widehat c^{m,1}_{\cdot},\widehat c^{m,2}_{\cdot}, \Wh^{\omb, m, 1}_{\cdot}, \Wh^{\omb, m,2}_{\cdot} \big)
		\underset{m\rightarrow\infty}{\longrightarrow}
		\bigg( \Lambdah^2, \int_0^\cdot q^{2,1}_r \mathrm{d}r,\int_0^\cdot q^{2,2}_r \mathrm{d}r,  \int_0^{\cdot} \sqrt{q^{2,1}_r} \mathrm{d} \Zh^{\omb, 2, 1}_r,  \int_0^{\cdot} \sqrt{q^{2,2}_r} \mathrm{d} \Zh^{\omb, 2, 2}_r \bigg),
		~\Ph_{\omb} \mbox{--a.s.}
	\end{equation}
	
	Let us define $\Xh^{\omb,2, m} = (\Xh^{\omb, 2, m}_t)_{t \in [0,T]}$ as the unique solution, under $\Ph_{\omb}$, to
	\begin{align} \label{eq:SDE_Xte}
		\nonumber	
		\Xh^{\omb, 2,m}_t 
		=&\
		\Xh_0 
		+ 
		\int_{\Delta t}^{t \vee \Delta t} b(r, \Xh^{\omb, 2,m}_r, \Ph_{\omb} \circ (\Xh^{\omb, 2,m})^{-1}, \alpha^m_r) \mathrm{d}r
		+
		\int_{ \Delta t}^{t\vee  \Delta t} \sigma(r, \Xh^{\omb, 2,m}_r, \Ph_{\omb} \circ (\Xh^{\omb,2, m})^{-1}, \alpha^m_r) \mathrm{d} \Wh^{\omb, m}_r
		+
		\sigma_0 B_t(\omb)  \\
\nonumber		=&\ 
		\Xh_0
		+ \sum_{i=1}^2
		\Big(
			\int_{\Delta t}^{t \vee \Delta t}  b(r, \Xh^{\omb, 2,m}_r, \Ph_{\omb} \circ (\Xh^{\omb, 2,m})^{-1}, a^2_i) \mathrm{d} \widehat{c}^{m,i}_r
			+
			\int_{ \Delta t}^{t\vee  \Delta t} \!\!\!\! \sigma(r, \Xh^{\omb, 2,m}_r, \Ph_{\omb} \circ (\Xh^{\omb, 2,m})^{-1}, a^2_i) \mathrm{d} \Wh^{\omb, m, i}_{r-\Delta t}
		\Big)
		\\
		&+
		\sigma_0 B_t(\omb).
	\end{align}
	Besides, as in \Cref{lemma:estimates}, it is standard to obtain the following estimate, for some constant $C> 0$
        \[
		\sup_{m \ge 1}
		\E^{\Ph_{\omb}}
		\bigg[\sup_{t \in [0,T]}|\Xh^{\omb,2,m}_t|^{p^{\prime}}
		\bigg]
		\le 
		C 
		\bigg(1+\int_{\R^n}|x|^{p^{\prime}}\nu(\mathrm{d}x) \bigg) 
		< 
		\infty.
	\]
	Using  \cite[Proposition B.1]{carmona2014mean}, 
	it follows that:  for $\Pb$--a.e. $\omb \in \Omb,$
	\[
		\Big( \Lc^{\Ph_{\omb} } \big( \widehat c^{m,1}_{\cdot},\widehat c^{m,2}_{\cdot}, \Wh^{\omb, m, 1}_{\cdot}, \Wh^{\omb, m,2}_{\cdot} ,  \Xh^{\omb, 2, m}_{\cdot} \big) \Big)_{m \ge 1}
		~\mbox{is tight under}~
		\Wc_p.
	\]
	Then along an arbitrary convergent sub-sequence $(m_k)_{k \ge 1}$ (which can potentially depend on $\omb$), one has
	\[
		\Lc^{\Ph_{\omb}} \big( \widehat c^{m_k,1}_{\cdot},\widehat c^{m_k,2}_{\cdot}, \Wh^{\omb, m_k, 1}_{\cdot}, \Wh^{\omb, m_k,2}_{\cdot} ,  \Xh^{\omb, 2, m_k}_{\cdot} \big)
		\underset{k\rightarrow\infty}{\longrightarrow}
		\Lc^{\P^{\star}} \big(  \widehat c^{\star,1}, \widehat c^{\star,2}, \Wh^{ \star, 1}_{\cdot}, \Wh^{ \star,2}_{\cdot} ,  \Xh^{ \star}_{\cdot} \big)
		~\mbox{weakly and under}~\Wc_p,
	\]
	for some random elements $\big( \widehat c^{\star,1}, \widehat c^{\star,2}, \Wh^{\star, 1}_{\cdot}, \Wh^{\star,2}_{\cdot} ,  \Xh^{\star}_{\cdot} \big)$ in $(\Om^\star, \Fc^\star, \P^\star)$.
	By considering the martingale problem associated with the SDE \eqref{eq:SDE_Xte},
	it is standard to check that $\Xh^\star$ satisfies
	\[
		\Xh^{\star}
		=
		\Xh_0
		+ \sum_{i=1}^2
		\int_{0}^{t} b(r, \Xh^{\star}_r, \P^\star \circ (\Xh^{\star})^{-1}, a^2_i) \mathrm{d}\widehat{c}^{\star,i}_r
		+
		\int_{0}^{t} \sigma (r, \Xh^{\star}_r, \P^\star \circ (\Xh^{\star})^{-1},  a^2_i) \mathrm{d} \Wh^{\star ,i}_{r}
		+
		\sigma_0 B_t(\omb),
		~\P^\star\mbox{--a.s.}
	\]
	Besides, by the convergence result in
	\Cref{eq:cvg_Wt12}, one has
	\[
		\Lc^{\P^\star} 
		\big(
			 \Xh_0,\widehat c^{\star,1}, \widehat c^{\star,2}, \Wh^{\star, 1}_{\cdot}, \Wh^{\star,2}_{\cdot}
		\big)
		=
		\Lc^{\Ph_{\omb}}
		\bigg( \Xh_0, \int_0^\cdot q^{2,1}_r \mathrm{d}r,\int_0^\cdot q^{2,2}_r \mathrm{d}r,  \int_0^{\cdot} \sqrt{q^{2,1}_r} \mathrm{d} \Zh^{\omb, 2, 1}_r,  \int_0^{\cdot} \sqrt{q^{2,2}_r} \mathrm{d} \Zh^{\omb, 2, 2}_r \bigg).
	\]
	Then it follows by the strong uniqueness (hence uniqueness in law) of the solution to SDE \eqref{eq:Xtm2} that
        \[
		\Lc^{\P^\star} 
		\big(
			 \widehat c^{\star,1}, \widehat c^{\star,2}, \Wh^{\star, 1}_{\cdot}, \Wh^{\star,2}_{\cdot}, \Xh^\star
		\big)
		=
		\Lc^{\Ph_{\omb}}
		\bigg( \int_0^\cdot q^{2,1}_r \mathrm{d}r,\int_0^\cdot q^{2,2}_r \mathrm{d}r,  \int_0^{\cdot} \sqrt{q^{2,1}_r} \mathrm{d} \Zh^{\omb, 2, 1}_r,  \int_0^{\cdot} \sqrt{q^{2,2}_r} \mathrm{d} \Zh^{\omb, 2, 2}_r,
		\Xh^{\omb, 2} \bigg).
	\]
	Since the limit is unique, and hence does not depend on the sub-sequence,
	we obtain that: for $\Pb$--a.e. $\omb \in \Omb$
	\[
		\Lc^{\Ph_{\omb}} \big( \Lambdah^{2,m}, \Wh^{\omb, m, 1}_{\cdot}, \Wh^{\omb, m, 2}_{\cdot} , \Wh^{\omb, m}_{\cdot} ,  \Xh^{\omb, 2, m}_{\cdot} \big)
	\underset{m\rightarrow\infty}{\longrightarrow}
		\Lc^{\Ph_{\omb}} \bigg( \Lambdah^2,  \int_0^{\cdot} \sqrt{q^{2,1}_r} \mathrm{d} \Zh^{\omb, 2, 1}_r,  \int_0^{\cdot} \sqrt{q^{2,2}_r} \mathrm{d} \Zh^{\omb, 2, 2}_r,
		\Zh^{\omb,2}_{\cdot},
		\Xh^{\omb, 2} \bigg).
	\]
	Further, using \eqref{eq:measur_Xe} and the explicit construction of $\Wh^{\omb,m}$ and the fact that the solution $\Xh^{\omb,2,m}$ of SDE \eqref{eq:SDE_Xte} can be defined by a Picard iteration,
	it follows that one can choose $\Xh^{\omb,2,m}$ such that
	\[
		(t, \omb, \hat \om^\star) \longmapsto  \Big(\Xh^{\omb,2, m}_{t \wedge \cdot}(\hat \om^\star), (\Lambdah^{2,m})^t(\hat \om^\star), \Wh^{\omb,m}_{t \wedge \cdot}(\hat \om^\star) \Big)
		~\mbox{is}~
		\Pc^{\widehat{\H}^\star}\mbox{--measurable}.
	\]
	Finally, we observe that $\Xh^{\omb,2, m}$ is only defined by SDE \eqref{eq:SDE_Xte} on $[\Delta t, T]$, with $\Delta t = t_1/e \longrightarrow 0$ when $e \longrightarrow \infty$. Thus, we can easily extend it to an SDE on $[0,T]$ as \eqref{eq:def_Xt_ombm} and preserve the same convergence and measurability properties.
	\end{proof}

 	\begin{remark} \label{rem:Gap_Weak_Relax}
		Our definition of the relaxed formulation and the proof on the approximation of relaxed control rules by weak control rules is quite different from those used by {\rm \citeauthor*{lacker2017limit} \cite{lacker2017limit}} in the non--common noise context.
		In particular, it allows to fill in a subtle technical gap in {\rm\cite[Proof of Theorem 2.4]{lacker2017limit}},
		where the approximation procedure relies on the erroneous martingale measure approximation result of {\rm\citeauthor*{meleard1992martingale} \cite{meleard1992martingale}}, as explained in {\rm \Cref{foot:meleard}}. Notice however that {\rm\cite[Paragraph before Theorem $2.4$]{lacker2017limit}} does mention the possibility of an alternative proof in the spirit of {\rm\cite{el1987compactification}} and {\rm\cite{karoui2013capacities2}}, but without more details. This is exactly the program we have carried out.
	\end{remark}

\subsubsection{Proof of Theorem \ref{thm:equivalence}}
\label{subsubsec:proofofTheorem-equivalence}
 
	\hspace{2em} $(i.1)$ Let \Cref{assum:main1} hold true, and take $\nu \in \Pc_p(\R^n)$.
	The non--emptiness of $\Pcb_W(\nu)$ follows by a stability result for the martingale problem in \Cref{assum:main1}.
	We provide a detailed proof in \Cref{Theorem:ExistenceSolutionMartingale}.
	
	\vspace{0.5em}
	
	For the convexity of $\Pcb_W(\nu)$, we first prove that $\Pcb_A(\nu)$ is convex. 
	Let us consider $(\Pb_1, \Pb_2) \in \Pcb_A(\nu)\times \Pcb_A(\nu)$, $\theta \in [0,1]$ and $\Pb := \theta \Pb_1 + (1-\theta)\Pb_2$,
	and show that $\Pb \in \Pcb_A(\nu)$.
	First, it is direct to check that $\Pb$ satisfies Conditions $(i)$ and $(iii)$ in \Cref{def:admissible_ctrl_rule}.
	To check Condition $(ii)$ in \Cref{def:admissible_ctrl_rule}, 
	we consider $t \in [0,T]$, $f \in C_b(\Cc^n \x \Cc^n \x \M \x \Cc^d)$, $\psi \in C_b(\Cc^\ell \x \Pc(\Cc^n \x \Cc^n \x \M \x \Cc^d))$, $\varphi \in C_b(\R^n \x \Cc^d)$.
	Notice that under both $\Pb_1$ and $\Pb_2$, $(X_0, W)$ has the same distribution and is independent of $(B, \muh)$,
	it follows that
	\begin{align*}
		\E^{\Pb} \big[ \varphi(X_0, W) \psi \big(B,\muh \big) \big]
		&=
		\theta \E^{\Pb_{1}} \big[ \varphi(X_0, W) \psi \big(B,\muh \big) \big]
		+
		(1- \theta) \E^{\Pb_{2}} \big[ \varphi(X_0) \beta(W) \psi \big(B,\muh \big) \big]
		\\
                &=
		\E^{\Pb} \big[ \varphi(X_0) \beta(W) \big]
		\big(\theta \E^{\Pb_{1}} \big[\psi \big(B,\muh \big) \big]
		+
		(1-\theta) \E^{\Pb_{2}} \big[ \psi \big(B,\muh \big) \big] \big)
		=
		\E^{\Pb}  \big[ \varphi(X_0, W) \big] \E^{\Pb} \big[ \psi \big(B,\muh \big) \big],
	\end{align*}
	which implies the independence of $(X_0, W)$ and $(B, \muh)$ under $\Pb$.
	Furthermore, one has, for each $i\in\{1,2\}$
	\[
		\E^{\Pb_{i}} \big[\langle f, \muh_t \rangle \psi \big(B,\muh \big) \big]
		=
		\E^{\Pb_{i}} \big[ f \big(X_{t \wedge \cdot},Y_{t \wedge \cdot},\Lambda^t,W \big) \psi \big(B,\muh \big) \big],
	\]
	then it is straightforward to obtain that
	\[
		\E^{\Pb} \big[\langle f, \muh_t \rangle \psi \big(B,\muh \big) \big]
		=
		\E^{\Pb} \big[ f \big(X_{t \wedge \cdot},Y_{t \wedge \cdot},\Lambda^t,W \big) \psi \big(B,\muh \big) \big].
	\]
	This implies that for $\Pb$--a.e. $\omb \in \Omb$
	\begin{align*}
		\muh_t (\omb)
		=
		\Pb^{\Gcb_t}_{\omb} \circ (X_{t \wedge \cdot},Y_{t \wedge \cdot},W, \Lambda^t)^{-1}
		=
		\Pb^{\Gcb_T}_{\omb} \circ (X_{t \wedge \cdot},Y_{t \wedge \cdot},W, \Lambda^t)^{-1}.
	\end{align*}
	Then $\Pb$ also satisfies Condition $(ii)$ in \Cref{def:admissible_ctrl_rule}, and hence $\Pb \in \Pcb_A(\nu)$.
	This proves that $\Pcb_A(\nu)$ is convex.
	
	\medskip
	Next, assume in addition that $(\Pb_1, \Pb_2) \in \Pcb_W(\nu)\times \Pcb_W(\nu)$, that is to say $(\Pb_1, \Pb_2) \in \Pcb_A(\nu)\times\Pcb_A(\nu)$ and $\Pb_i \big[ \Lambda \in \M_0 \big]  = 1$ for $i\in\{1,2\}$.
	It follows that $\Pb \in \Pcb_A(\nu)$ and $\Pb \big[ \Lambda \in \M_0 \big]  = 1$, so that $\Pb \in \Pcb_W(\nu)$.

	\medskip

		\hspace{2em} $(i.2)$ Let \Cref{assum:main1} and \Cref{assum:constant_case} hold true, we next show that $\Pcb_R(\nu)$ is convex for $\nu \in \Pc_p(\R^n)$.
	Let $(\Pb_1, \Pb_2) \in \Pcb_R(\nu)\times \Pcb_R(\nu)$, $\theta \in [0,1]$, and $\Pb :=\theta \Pb_1 + (1-\theta)\Pb_2.$
	Then $\Pb \in \Pcb_A(\nu)$ since $(\Pb_1, \Pb_2) \in \Pcb_R(\nu)\times\Pcb_R(\nu) \subset \Pcb_A(\nu)\times \Pcb_A(\nu)$. Let also $\varphi \in C_b(\R^n \x \R^d),$ $0\leq s \le t$ and $\zeta: \Omh \longrightarrow \R$ a bounded $\Fch_s$--measurable variable,
	then
	\[
                \E^{\Pb} \big[ \big|\E^{\hat \mu} \big[\widehat{S}^{\varphi,\mu}_t \zeta \big]-\E^{\hat \mu}\big[\widehat{S}^{\varphi,\mu}_s \zeta \big] \big|\big]
		=
		\theta \E^{\Pb_1} \big[ \big|\E^{\hat \mu} \big[\widehat{S}^{\varphi,\mu}_t \zeta \big]-\E^{\hat \mu}\big[\widehat{S}^{\varphi,\mu}_s \zeta \big] \big|\big]
		+
		(1-\theta) \E^{\Pb_2} \big[ \big|\E^{\hat \mu} \big[\widehat{S}^{\varphi,\mu}_t \zeta \big]-\E^{\hat \mu}\big[\widehat{S}^{\varphi,\mu}_s \zeta \big] \big|\big]
		= 0.
	\]
	By considering a countable dense family of $\varphi$, $0\leq s \le t$ and $\zeta$,
	it follows that for $\Pb$--a.e. $\omb \in \Omb$, 
	$(\widehat{S}^{\varphi,\mu(\omb)}_t)_{t \in [0,T]}$ is an $(\Fh,\muh(\omb))$--martingale for all $\varphi \in C_b^2(\R^n \x \R^d)$.
	This proves that $\P\in \Pcb_R(\nu)$.

\medskip

	\hspace{2em} 	$(i.3)$ Take $\nu \in \Pc_{p}(\R^n)$,
	we now show that $\Pcb_R(\nu)$ is closed under the $\Wc_p$--topology.
	First, from \Cref{lemma:estimates}, we have $\Pcb_R(\nu) \subset \Pc_p(\Omb).$ 
	Let $(\Pb_m)_{m \ge 1} \subset \Pcb_R(\nu)$, and $\Pb \in \Pc(\Omb)$ be such that
	$\lim_m \Wc_p (\Pb_m,\Pb)=0.$
	Then $\Pb \in \Pc_p(\Omb)$.
	
\medskip
	
	Let $f \in C_b^2(\R^n \x \R^n \x \R^d \x \R^\ell)$ and $\varphi \in C_b^2(\R^n \x \R^d)$,
	by \Cref{assum:main1},
	there exists some constant $C>0$ such that for all $(\omb,\hat \om) \in \Omb \x \Omh$ and $t \in [0,T]$ 
	\begin{align} \label{eq:Smartingale_growth}
		\big| \Sb^{f}_t(\omb) \big|
		\le
		C\bigg( 1 + \| X_{t \wedge \cdot}(\omb) \|^p + \int_{\Cc^n} \|\xb\|^p \mu(\omb)(\mathrm{d} \xb) + \iint_{[0,T]\times A} \rho(a_0,a)^p \Lambda_r(\omb) (\mathrm{d}a) \mathrm{d}r \bigg),
	\end{align}
	and
	\begin{align} \label{eq:Mmartingale_growth}
		\big| \widehat{S}^{\varphi,\mu(\omb)}_t(\hat \om) \big|
		\le
		C\bigg( 1 + \big\| \Yh_{t \wedge \cdot}(\hat \om) \big\|^p + \int_{\Cc^n} \|\xb\|^p \mu(\omb)(\mathrm{d} \xb) + \iint_{[0,T]\times A} \rho(a_0,a)^p \Lambdah_r(\tilde \om) (\mathrm{d}a) \mathrm{d}r \bigg).
	\end{align}
	Let $0\leq s \le t$, $\zeta: \Cc^n \x \Cc^n \x \M \x \Cc^d \longrightarrow \R$ and $\phi: \Cc^n \x \Cc^n \x \M \x \Cc^d \x \Cc^\ell \x \Pc(\Omb) \longrightarrow \R$ be two bounded continuous functions. Using the regularity of the coefficient functions $(b,\sigma,\sigma_0)$, together with \eqref{eq:Smartingale_growth} and \eqref{eq:Mmartingale_growth}, it follows that
	\begin{align*}
		0&=\lim_{m\rightarrow\infty} \E^{\Pb_m} \big[ \big|\E^{\hat \mu} \big[\widehat{S}^{\varphi,\mu}_t \zeta \big(\Xh_{s \wedge \cdot},\Yh_{s \wedge \cdot}, \Lambdah^s, \Wh_{s \wedge \cdot} \big) \big]-\E^{\hat \mu}\big[\widehat{S}^{\varphi,\mu}_s \zeta\big(\Xh_{s \wedge \cdot},\Yh_{s \wedge \cdot}, \Lambdah^s, \Wh_{s \wedge \cdot} \big) \big] \big|\big]
		\\
		&=
		\E^{\Pb} \big[ \big|\E^{\hat \mu} \big[\widehat{S}^{\varphi,\mu}_t \zeta \big(\Xh_{s \wedge \cdot},\Yh_{s \wedge \cdot}, \Lambdah^s, \Wh_{s \wedge \cdot} \big) \big]-\E^{\hat \mu}\big[\Mh^{\varphi,\mu}_s \zeta\big(\Xh_{s \wedge \cdot},\Yh_{s \wedge \cdot}, \Lambdah^s, \Wh_{s \wedge \cdot} \big) \big] \big|\big],
	\end{align*}
	and 
	\begin{align*}
		0&=\lim_{m\to\infty} \big|\E^{\Pb_m}[\Sb^f_t\phi \big(X_{s \wedge \cdot},Y_{s \wedge \cdot},\Lambda^s, W_{s \wedge \cdot}, B_{s \wedge \cdot}, \muh_s \big)]
		-
		\E^{\Pb_m}[\Sb^f_s\phi \big(X_{s \wedge \cdot},Y_{s \wedge \cdot}, \Lambda^s, W_{s \wedge \cdot}, B_{s \wedge \cdot}, \muh_s \big)]
		\big|
		\\
		&=
		\big|\E^{\P}[\Sb^f_t\phi \big(X_{s \wedge \cdot},Y_{s \wedge \cdot}, \Lambda^s, W_{s \wedge \cdot}, B_{s \wedge \cdot}, \muh_s \big)]
		-
		\E^{\Pb}[\Sb^f_s\phi \big(X_{s \wedge \cdot},Y_{s \wedge \cdot}, \Lambda^s, W_{s \wedge \cdot}, B_{s \wedge \cdot}, \muh_s \big)]
		\big|.
	\end{align*}
	This implies that for $\Pb$--a.e. $\omb \in \Omb$, 
	$(\Mh^{\varphi,\mu(\omb)}_t)_{t \in [0,T]}$ is an $(\Fh,\muh(\omb))$--martingale for all $\varphi \in C_b^2(\R^n \x \R^d)$,
	and $(\Sb^f_t)_{t \in [0,T]}$ is an $(\Fb,\Pb)$--martingale for all $f \in C_b^2(\R^n \x \R^n \x \R^d \x \R^\ell)$.

\medskip
	
	Finally, it is straightforward to check all the other conditions in \Cref{def:relaxed_ctrl_rule}, and we 
	can conclude that $\Pb \in \Pcb_R(\nu)$.
	
\medskip
 
	$(ii)$ Fix $\nu \in \Pc_{p}(\R^n)$. 
	First, one has clearly $V_S(\nu) \le V_W(\nu)$.
	Furthermore, for any $\Pb \in \Pcb_W(\nu)$, by \Cref{prop:approximation} and under condition $\ell \ge 1$,
	there is a sequence of probability measures $(\Pb^m)_{m \ge 1} \subset \Pcb_S(\nu)$ such that
	\begin{equation} \label{eq:Strong2Weak_var}
		\lim_{m \rightarrow\infty} 
		\Lc^{\Pb^m} \big(X,Y,  \Lambda,  W,B,\muh,\delta_{(\mub_t,\alpha_t)}(\mathrm{d} \nub, \mathrm{d}a)\mathrm{d}t \big)
		=
		\Lc^{\Pb} \big(X,Y,  \Lambda, W,B,\muh,  \delta_{(\mub_t,\alpha_t)}(\mathrm{d} \nub, \mathrm{d}a)\mathrm{d}t \big),
	\end{equation}
	in $\Pc_p\big( \Omb \x \M(\Pc(\Cc^n \x A) \x A)\big)$ under $\Wc_p$.
	This implies in particular that $\Pb^m \longrightarrow \Pb$ in $\Pc_p(\Omb)$ under $\Wc_p$.

\medskip
	Besides, although $\Pb \longmapsto J(\Pb)$ is not continuous in general (see \Cref{rem:nonContinuityJ}),
	the convergence in \eqref{eq:Strong2Weak_var} is stronger than the convergence $\Pb^m \longrightarrow \Pb$.
	With the growth and lower semi--continuity conditions of $L$ and $g$ in \Cref{assum:main1},
	and by a slight extension of \cite[Lemma 4.1]{lacker2017limit}, 
	the convergence \eqref{eq:Strong2Weak_var} implies that
	\[
		V_S(\nu) 
		\ge
		\lim_{m \to \infty} J(\Pb^m) 
		\ge
		J(\Pb).
	\]
	It follows that $V_S(\nu) = V_W(\nu)$.
	
\medskip
	
	When $\ell = 0$, using \Cref{prop:approximation}, 
	it is enough to  consider a convex combination of strong control rules and apply the same argument as above to conclude the proof.

\medskip

	$(iii)$ We assume here that $A \subset \R^j$,  $\nu \in \Pc_{p^\prime}(\R^n)$.
	It is enough to use \Cref{Equivalence-Proposition_General} to deduce that $\Pcb_W(\nu)$ is dense in $\Pcb_R(\nu)$ with respect to $\Wc_p$.
	Next, under \Cref{assum:constant_case}, together with the growth condition of $L$ and $g$ in \Cref{assum:main1},
	$\Pb \longmapsto J(\Pb)$ is lower semi--continuous (see \Cref{rem:ContinuityJ}) on $\Pc_p(\Omb)$.
	This is enough to prove that $V_W(\nu) = V_R(\nu)$. 
	
\medskip
	
	Finally, when $L$ and $g$ are continuous, under \Cref{assum:main1} and \Cref{assum:constant_case}, 
	$\Pb \longmapsto J(\Pb)$ is continuous on $\Pc_p(\Omb)$.
	Let $(\Pb^m)_{m \ge 1} \subset \Pcb_R(\nu)$ be a sequence such that
	\[
		\lim_{m \to \infty} J(\Pb^m) = V_R(\nu) < \infty.
	\]
	The coercivity condition \eqref{eq:cond_coercive} in \Cref{assum:main1} ensures that 
	$(\Pb^m)_{m \ge 1}$ is relatively compact w.r.t. $\Wc_p$ (see also \Cref{Propostion:convergence} below for a more detailed argument).
	By the closedness of $\Pcb_R(\nu)$,
	it follows that there exists $\Pb \in \Pcb_R(\nu)$, such that $\Wc_p(\Pb^m, \Pb) \longrightarrow 0$, possibly along a subsequence.
	Together with the continuity of $J: \Pc_p(\Omb) \longrightarrow \R$, this implies that $\Pb$ is an optimal relaxed control rule.
	\qed

\subsection{Proof of Theorem \ref{thm:limit} and Proposition \ref{Proposition:Continuity-Existence}}
	Based on the equivalence result and the closedness property of $\Pcb_R(\nu)$ in \Cref{thm:equivalence},
	we can provide the proof of the limit theory result in \Cref{thm:limit}
	and the continuity result in \Cref{Proposition:Continuity-Existence}.

\subsubsection{Approximation of McKean--Vlasov SDEs by large population SDEs}
    
	We show in this section that, for any control $\alpha \in \Ac_p(\nu)$ and the controlled process $X^{\alpha}$ defined in \eqref{eq:MKV_SDE},
	they can be approximated by a large population controlled SDE $(X^{\alpha,1}, \dots, X^{\alpha, N})$ as in \eqref{eq:N-agents_StrongMV_CommonNoise}. 
	Let us enforce \Cref{assum:main1}, and assume that $A \subset \R^j$ for some $j \ge 1$.
	
\medskip
	
	Recall from \Cref{subsec:strong_form} that $\Om := \R^n \x \Cc^d \x \Cc^{\ell}$ is equipped with the canonical element $(X_0, W, B)$, the canonical filtration $\F$ and a sub--filtration $\G$.
	We consider a probability measure $\P_\star$, under which $X_0$, $W$, $B$ are mutually independent, $(W,B)$ is an $\F$--Brownian motion, and $X_0 \sim \Uc[0,1]$.
	In particular, the probability space $(\Om, \Fc_0, \P_\star)$ is rich enough to support an $\R^n$--valued random variable of any distribution.
	Let $\xi$ be an $\Fc_0$--measurable random variable such that $\E[|\xi|^{p}] < \infty$, 
	$\alpha$ be an $\F$--predictable process satisfying the integrability condition \eqref{eq:alpha_Hp}.
	We denote by $X^{\xi, \alpha}$ the unique strong solution of the controlled McKean--Vlasov SDE
	\begin{align} \label{eq:initial_SDE}
		X^{\xi, \alpha}_t
		=
		\xi
		+ 
		\int_0^t \!\! b \big(r,X^{\xi, \alpha},\mub^{\xi, \alpha}_r, \alpha_r \big) \mathrm{d}r 
		+ 
		\int_0^t \!\! \sigma \big(r,X^{\xi, \alpha},\mub^{\xi, \alpha}_r, \alpha_r \big) \mathrm{d}W_r 
		+ 
		\int_0^t \!\! \sigma_0 \big(r,X^{\xi, \alpha},\mub^{\xi, \alpha}_r, \alpha_r \big) \mathrm{d}B_r,
		~\P_\star\mbox{--a.s.},
	\end{align}
	with $\mub^{\xi, \alpha}_r := \Lc^{\P_\star}(X^{\xi, \alpha}_{r \wedge \cdot}, \alpha_r) | \Gc_r)$, $\P_\star$--a.s. and satisfying $\E^{\P_\star} \big[ \|X^{\xi,\alpha}\|^p \big]< \infty.$
	As for \eqref{eq:MKV_SDE}, $X^{\xi, \alpha}$ is an $\F^\star$--adapted continuous process.

\medskip
Given in addition a $\G$--optional $\Pc(\Cc^n \x A)$--valued process $\mub = (\mub_t)_{t \in [0,T]}$ satisfying the integrability condition 
	\begin{equation} \label{eq:cond_mub}
		\E^{\P_\star} \bigg[ \iiint_{[0,T]\times\Cc^n \x A} ( \| \mathbf{x} \|^p + \| a - a_0 \|^p ) \mub_t(\mathrm{d} \mathbf{x}, \mathrm{d} a) \mathrm{d}t \bigg] < \infty,
	\end{equation}
	we denote by $X^{\xi, \mub, \alpha}$ the unique solution of the standard SDE
	\begin{equation} \label{eq:SDE_mu}
		X^{\xi,\mub, \alpha}_t
		=
		\xi
		+ 
		\int_0^t b \big(r, X^{\xi,\mub, \alpha}, \mub_r, \alpha_r \big) \mathrm{d}r 
		+ 
		\int_0^t \sigma \big(r, X^{\xi,\mub, \alpha}, \mub_r, \alpha_r \big) \mathrm{d}W_r
		+ 
		\int_0^t \sigma_0 \big(r, X^{\xi,\mub, \alpha}, \mub_r, \alpha_r \big) \mathrm{d}B_r,
		\; \P_\star\mbox{--a.s.},
	\end{equation}
	with $\E^{\P_\star} \big[ \|X^{\xi,\mub,\alpha}\|^p \big]< \infty.$
	In above, $X^{\xi, \mub, \alpha}$ is defined as an $\F$--adapted process with continuous paths, $\P_\star$-a.s.
	In particular, one has $X^{\xi,\mub^{\xi,\alpha}, \alpha}=X^{\xi, \alpha}$, $\P_\star$--a.s. and
	\[
		\Lc^{\P_\star} ( X^{\xi, \alpha},W,B) = \Lc^{\P_\star} (X^{\xi^\prime, \alpha},W,B),\; \mbox{and}\;
		\Lc^{\P_\star} ( X^{\xi, \mub, \alpha},W,B) = \Lc^{\P_\star} (X^{\xi^\prime, \mub,\alpha},W,B),\; \mbox{whenever}\; 
		\Lc^{\P_\star} (\xi) = \Lc^{\P_\star} (\xi^\prime).
	\]

	\begin{lemma} \label{lemm:continuity_initial-variable}
		Let $(\xi^m)_{m \ge 0}$ be a sequence of $\Fc_0$--measurable random variables such that 
		\[
			\lim_{m \to \infty} \Wc_p \big(\P_\star \circ (\xi^m)^{-1}, \P_\star \circ (\xi^0)^{-1} \big) 
			=
			 0,
		\] 
		and $\sup_{m \ge 0} \E^{\P_\star} [|\xi^m |^{p^\prime}]< \infty.$
		Let $\phi: [0,T] \x \R^n \x \Cc^d \x \Cc^\ell \longrightarrow A$ be a bounded continuous function,
		and $(\alpha^m)_{m \ge 0}$ be defined by $\alpha^m_t := \phi(t, \xi^m, W_{t \wedge \cdot}, B_{t \wedge \cdot})$ for all $t \in [0,T]$.	
		Then, for each $t \in [0,T]$, we have
		\[
			\lim_{m \to \infty}
			\E^{\P_\star} \Big[
				\Wc_p \Big( 
					\Lc^{\P_\star} \big( X^{\xi^m, \alpha^m}_{t \wedge \cdot}, \alpha^m_t \big | \Gc_t \big),
					\Lc^{\P_\star} \big(X^{\xi^0, \alpha^0}_{t \wedge \cdot}, \alpha^0_t \big| \Gc_t \big) 
				\Big)
			\Big]
			=
			0,
		\]
		and, for any fixed $\mub = (\mub_t)_{t \in [0,T]}$ satisfying \eqref{eq:cond_mub},
		\[
			\lim_{m \to \infty}
			\E^{\P_\star} \Big[
				\Wc_p \Big( 
					\Lc^{\P_\star} \big( X^{\xi^m, \mub, \alpha^m}_{t \wedge \cdot}, \alpha^m_t \big | \Gc_t \big),
					\Lc^{\P_\star} \big(X^{\xi^0, \mub, \alpha^0}_{t \wedge \cdot}, \alpha^0_t \big| \Gc_t \big) 
				\Big)
			\Big]
			=
			0.
		\]
	\end{lemma}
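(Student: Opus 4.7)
The plan is to first replace the sequence $(\xi^m)_{m \ge 0}$ by a convenient coupling, then establish $L^p$--stability of the controlled SDEs, and finally pass from pathwise $L^p$--convergence to convergence of the conditional laws in $\Wc_p$. Since the joint law $\Lc^{\P_\star}\big(X^{\xi,\alpha}, \alpha, W, B\big)$ depends on $\xi$ only through $\Lc^{\P_\star}(\xi)$ (because $\xi$ is $\Fc_0$--measurable and hence independent of $(W, B)$), and since $\Fc_0$ is rich enough under $\P_\star$, we may, by the hypothesis $\Wc_p\big(\Lc^{\P_\star}(\xi^m), \Lc^{\P_\star}(\xi^0)\big) \to 0$, assume without loss of generality that $(\xi^m, \xi^0)$ is an optimal $\Wc_p$--coupling, which yields $\E^{\P_\star}[|\xi^m - \xi^0|^p] \to 0$ while preserving $\sup_m \E^{\P_\star}[|\xi^m|^{p^\prime}] < \infty$. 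The continuity and boundedness of $\phi$ together with dominated convergence then give $\lim_{m \to \infty} \E^{\P_\star}\big[\int_0^T \rho(\alpha^m_t, \alpha^0_t)^p \mathrm{d}t\big] = 0$.

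For the frozen--measure SDE \eqref{eq:SDE_mu}, Burkholder--Davis--Gundy, the uniform Lipschitz property of $(b, \sigma, \sigma_0)$ in $\xb$ from \Cref{assum:main1}.$(i)$, and Gronwall's lemma deliver
\[
	\E^{\P_\star}\bigg[\sup_{s \in [0,T]}\big|X^{\xi^m, \mub, \alpha^m}_s - X^{\xi^0, \mub, \alpha^0}_s\big|^p\bigg]
	\le
	C \bigg( \E^{\P_\star}\big[|\xi^m - \xi^0|^p\big] + \E^{\P_\star}\bigg[\int_0^T \rho(\alpha^m_t, \alpha^0_t)^p \mathrm{d}t\bigg] \bigg),
\]
which tends to $0$. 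For the McKean--Vlasov SDE \eqref{eq:initial_SDE} one runs the same argument, with the extra Lipschitz--in--$\nub$ contribution being absorbed through the conditional coupling bound
\[
	\Wc_p\big(\mub^{\xi^m, \alpha^m}_s, \mub^{\xi^0, \alpha^0}_s\big)^p
	\le
	\E^{\P_\star}\Big[\sup_{r \le s}\big|X^{\xi^m, \alpha^m}_r - X^{\xi^0, \alpha^0}_r\big|^p + \rho(\alpha^m_s, \alpha^0_s)^p\,\Big|\,\Gc_s\Big],
\]
where one uses that the $\Gc_s$--conditional joint distribution of $\big((X^{\xi^m,\alpha^m}_{s\wedge\cdot}, \alpha^m_s), (X^{\xi^0,\alpha^0}_{s\wedge\cdot}, \alpha^0_s)\big)$ is a legitimate coupling of the two marginal conditional laws. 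Taking expectations and applying Gronwall again yields $\E^{\P_\star}[\sup_{s \le T}|X^{\xi^m, \alpha^m}_s - X^{\xi^0, \alpha^0}_s|^p] \to 0$.

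Finally, writing $Y^m_t := (X^{\xi^m, \alpha^m}_{t \wedge \cdot}, \alpha^m_t)$ and using once more that the $\Gc_t$--conditional joint distribution of $(Y^m_t, Y^0_t)$ couples the two conditional laws, we obtain
\[
	\Wc_p\big(\Lc^{\P_\star}(Y^m_t | \Gc_t), \Lc^{\P_\star}(Y^0_t | \Gc_t)\big)^p
	\le
	\E^{\P_\star}\big[\|X^{\xi^m, \alpha^m}_{t \wedge \cdot} - X^{\xi^0, \alpha^0}_{t \wedge \cdot}\|^p + \rho(\alpha^m_t, \alpha^0_t)^p \,\big|\, \Gc_t\big],
\]
and taking $\E^{\P_\star}$ followed by Jensen's inequality produces the desired limit. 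An identical chain of estimates applies to the frozen--$\mub$ case.

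The main obstacle is ensuring that the Lipschitz--in--$\Wc_p$ dependence of the coefficients on the \emph{conditional} distribution $\mub^{\xi,\alpha}_s$ can be controlled by pathwise $L^p$--distance between the controlled processes; this is the point at which one must be careful to invoke Lipschitz continuity at the level of conditional laws rather than joint laws, and it is precisely handled by the coupling inequality above. All other steps are classical moment estimates for controlled SDEs in the spirit of \Cref{lemma:estimates}.
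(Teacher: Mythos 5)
Your plan is essentially the paper's plan, with one cosmetic substitution (optimal $\Wc_p$--coupling instead of Skorokhod's representation theorem) that is perfectly legitimate here: since $\xi^m$ is $\Fc_0$--measurable and independent of $(W,B)$, each conditional law $\Lc^{\P_\star}(X^{\xi^m,\alpha^m}_{t\wedge\cdot},\alpha^m_t\,|\,\Gc_t)$ is, for a.e.\ realization of $B$, determined by $\Lc^{\P_\star}(\xi^m)$ alone, so replacing $(\xi^m,\xi^0)$ by a pair with the same marginals but better coupled is harmless, and the richness of $(\Om,\Fc_0,\P_\star)$ allows it. This even spares you the uniform integrability argument the paper uses to pass from a.s.\ to $L^p$ convergence of $\xi^m-\xi^0$.

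However, there is a genuine gap in your stability estimate. You claim
\[
\E^{\P_\star}\Big[\sup_{s\in[0,T]}\big|X^{\xi^m,\mub,\alpha^m}_s-X^{\xi^0,\mub,\alpha^0}_s\big|^p\Big]
\le C\bigg(\E^{\P_\star}\big[|\xi^m-\xi^0|^p\big]+\E^{\P_\star}\bigg[\int_0^T\rho(\alpha^m_t,\alpha^0_t)^p\,\mathrm{d}t\bigg]\bigg),
\]
but this bound would require the coefficients $(b,\sigma,\sigma_0)$ to be uniformly Lipschitz in the control variable $a$, which \Cref{assum:main1} does \emph{not} grant: Item~$(i)$ gives Lipschitz continuity in $(\xb,\nub)$ only, and mere continuity in $a$. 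Consequently the Gronwall argument cannot be closed with only the two terms you wrote; one must keep a residual term of the form
\[
C_m := \E^{\P_\star}\bigg[\int_0^T \big|(b,\sigma,\sigma_0)\big(r,X^{\xi^0,\mub,\alpha^0},\mub_r,\alpha^m_r\big)-(b,\sigma,\sigma_0)\big(r,X^{\xi^0,\mub,\alpha^0},\mub_r,\alpha^0_r\big)\big|^p\,\mathrm{d}r\bigg],
\]
and then prove $C_m\to 0$ separately. This is done by dominated convergence: along a subsequence, $\xi^m\to\xi^0$ a.s., hence $\alpha^m_r\to\alpha^0_r$ pointwise by continuity of $\phi$, hence the integrand vanishes pointwise by continuity of $(b,\sigma,\sigma_0)$ in $a$; and the growth condition in \Cref{assum:main1}.$(ii)$ combined with boundedness of $\phi$ gives an integrable dominating function (of the form $C(1+\|X^{\xi^0,\mub,\alpha^0}\|^p+\ldots)$). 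The same correction is needed in the McKean--Vlasov case. The rest of your argument — the conditional coupling inequalities for $\Wc_p(\mub^{\xi^m,\alpha^m}_s,\mub^{\xi^0,\alpha^0}_s)$ and for the final conditional laws, and the closing Jensen step — is correct and matches the paper.
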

	\begin{proof}
	We will only prove the first convergence result, since the second follows by almost the same arguments.

\medskip
	
	First, without loss of generality, one can use Skorokhod's representation theorem and assume that $\lim_{n \to \infty} \xi^n = \xi^0$, $\P_\star$--a.s. Then, using the Lipschitz properties and the polynomial growth of the coefficient functions, we have using classical arguments (see notably $Step \; 1$ of the proof of \Cref{Lemm:app-Weak_Relaxed}), that there exists a constant $C>0$ such that, for $m\ge 1$,
	\begin{align} \label{ineq:McKV_initialValue-dependence}
		\E^{\P_\star} \bigg[
		\sup_{t \in [0,T]}|X_t^{\xi^m,\alpha^m}-X_t^{\xi^0,\alpha^0}|^p
		\bigg]
		\le 
		C \bigg(
		\E^{\P_\star} \big[|\xi^m-\xi^0|^p \big]
		+
		\E^{\P_\star} \bigg[ \int_0^T  \big|\alpha^m_t -\alpha^0_t \big|^p \mathrm{d}t\bigg] + C_m
		\bigg),
	\end{align}
	where
	\begin{align*}
		C_m:= 
		\E^{\P_\star} \bigg[\int_0^T \big|(b,\sigma,\sigma_0)\big(r,X^{\xi^0,\alpha^0},\mub^{\xi^0,\alpha^0}_r,\alpha^m_r \big)-(b,\sigma,\sigma_0)\big(r,X^{\xi^0,\alpha^0},\mub^{\xi^0,\alpha^0}_r,\alpha^0_r \big) \big|^p \mathrm{d}r \bigg].
	\end{align*}
	Next, since $\sup_m \E^{\P_\star}[|\xi^m|^{p^\prime}]< \infty$, for some $p^\prime > p$,
	then $(|\xi^m-\xi^0|^p)_{m \ge 1}$ is $\P_\star$--uniformly integrable and it follows that $\lim_{m \to \infty} \E^{\P_\star} \big[|\xi^m-\xi^0|^p \big]=0$.
	Moreover, since $\phi: [0,T] \x \R^n \x \Cc^d \x \Cc^\ell \longrightarrow A$ is bounded continuous,
	we obtain that
	\[
		\lim_{m \to \infty}  |\alpha^m_t - \alpha^0_t| = \lim_{m \to \infty}  C_m  = 0,
		\;\mbox{and hence}\;
		\lim_{m \to \infty}
		\E^{\P_\star} \bigg[
		\sup_{t \in [0,T]} \big|X_t^{\xi^m,\alpha^m}-X_t^{\xi^0,\alpha^0} \big|^p
		\bigg]
		=
		0.
	\]
	To conclude, it is enough to notice that, as $m \longrightarrow 0$,
	\[
		\E^{\P_\star} \Big[
		\Wc_p \Big( 
			\Lc^{\P_\star} \big( X^{\xi^m, \alpha^m}_{t \wedge \cdot}, \alpha^m_t \big | \Gc_t \big),
			\Lc^{\P_\star} \big(X^{\xi^0, \alpha^0}_{t \wedge \cdot}, \alpha^0 \big| \Gc_t \big) 
		\Big)
		\Big]
		\le
	        \E^{\P_\star} \Big[
			\big| X_{t\wedge \cdot}^{\xi^m,\alpha^m}-X_{t\wedge \cdot}^{\xi^0,\alpha^0} \big|^p
		\Big]^{1/p}
		+
		\E^{\P_\star} \big[  \big|\alpha^m_t - \alpha^0_t \big|^p \big]^{1/p}
		\longrightarrow 0.
	\]
	\end{proof}

	To proceed, let us consider, for each $N \ge 1$, the space $\Om^N := (\R^n)^N \x (\Cc^d )^N \x \Cc^{\ell}$ defined in \Cref{subsec:N_controlpb}, equipped with canonical elements $(X^1_0, \dots, X^N_0, W^1, \dots, W^N)$ and canonical filtration $\F^N$.
	On $\Om^N$, we also introduce a sub--filtration
	\[
		\G^N := (\Gc^N_t)_{t \in [0,T]},\; 
		\mbox{with}\;
		\Gc^N_t := \sigma (B_s:s \in [0,t]).
	\]
	Given $\nu \in \Pc_p(\R^n)$ and a sequence $(\nu^i)_{i \ge 1} \subset \Pc_p(\R^n)$, 
	we take the first $N$ elements to define $\P^N_{\nu}$ on $\Om^N$, under which $X^i_0 \sim \nu^i$, and $B$, $W^i$ are standard Brownian motions, and $(X^1_0, \dots, X^N_0, W^1, \dots, W^N, B)$ are mutually independent.

\medskip
	
	Further, in Lemma \ref{lemm:continuity_initial-variable},
	we keep using the bounded continuous function $\phi$ to define the control process $\alpha$.
	Together with an initial random variable $\xi \sim \nu$, one obtain a $\G$--optional process $\mub^{\xi, \alpha}$ in $\Om$.
	Notice that in $\Om$, the process $\mub^{\xi, \alpha}$ is a functional of the common noise process $B$,
	one can then extend it as a $\G^N$--optional process in $\Om^N$ while keeping the same notation for simplicity.

\medskip
	
	Finally, with the same bounded continuous function $\phi: [0,T] \x \R^n \x \Cc^d \x \Cc^\ell \longrightarrow A$ in \Cref{lemm:continuity_initial-variable},
	we introduce the control processes $(\alpha^1, \dots, \alpha^N)$ by 
	$\alpha^i_t := \phi(t,X^i_0,W^i_{t \wedge \cdot}, B_{t \wedge \cdot})$,
	and then define a sequence of processes $\Xb^{\alpha^i,i}$, $i=1, \dots, N$, by
	\begin{equation} \label{eq:def_Xi}
		\Xb^{\alpha^i, i}_t
		= 
		X^i_0
		+ 
		\int_0^t b\big(r, \Xb^{\alpha^i,i}, \mub^{\xi,\alpha}_r,  \alpha^i_r \big) \mathrm{d}r 
		+ 
		\int_0^t \sigma \big(r, \Xb^{\alpha^i,i}, \mub^{\xi,\alpha}_r,  \alpha^i_r \big) \mathrm{d}W^i_r 
		+ 
		\int_0^t  \sigma_0 \big(r, \Xb^{\alpha^i,i}, \mub^{\xi,\alpha}_r,  \alpha^i_r \big) \mathrm{d}B_r,\;\P^N_{\nu}\mbox{--a.s.}
	\end{equation}
	Notice that the above SDE is almost the same as \eqref{eq:SDE_mu}, except that we use here $(X^i_0, \mub^{\xi, \alpha}, W^i)$ instead of $(\xi, \mub, W)$.

	\begin{lemma} \label{lemma of chaos-1}
		Assume that $\nu$ and $(\nu^i)_{i \ge 1}$ satisfy
		\[
			\lim_{N \to \infty}
			\Wc_{p} \bigg(\frac{1}{N}\sum_{i=1}^N \nu^i, \nu \bigg)
			=
			0,\; \mbox{\rm and}\;
			\sup_{N \ge 1} 
			\frac{1}{N}\sum_{i=1}^N \int_{\R^n} | x |^{p^{\prime}} \nu^i(\mathrm{d} x ) 
			< 
			\infty.
		\]
		Then
		\begin{equation} \label{eq:empir_dist_cvg}
			\lim_{N \to \infty} \E^{\P^N_{\nu}} 
			\bigg[
			\int_0^T \Wc_p \big( 
				\overline \varphi^{N}_t,
				\mub^{\xi,\alpha}_t
			\big) \mathrm{d}t
			\bigg]
			= 0,
			\;\mbox{\rm with}\;\;
			\overline \varphi^{N}_t(\mathrm{d} \xb, \mathrm{d}a)
			:=
			\frac{1}{N}\sum_{i=1}^N \delta_{ \big(\Xb^{\alpha^i,i}_{t \wedge \cdot},  \alpha^i_t \big)}(\mathrm{d} \xb, \mathrm{d}a).
		\end{equation}
	\end{lemma}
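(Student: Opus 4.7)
The strategy is to exploit the conditional independence of the particles given the common noise. Since $\mub^{\xi,\alpha}$ appearing in \eqref{eq:def_Xi} is $\G$--predictable (hence a Borel functional of $B$) and $\alpha^i_t = \phi(t, X^i_0, W^i_{t\wedge\cdot}, B_{t\wedge\cdot})$ with $\phi$ bounded continuous, strong well--posedness for \eqref{eq:def_Xi} yields a Borel functional $\Psi$ such that $(\Xb^{\alpha^i,i}_{t\wedge\cdot}, \alpha^i_t) = \Psi_t(X^i_0, W^i, B)$ for each $i$, $\P^N_\nu$--a.s. Because $(X^i_0, W^i)_{i \ge 1}$ is mutually independent under $\P^N_\nu$ and jointly independent of $B$, conditionally on $B$ the pairs $(\Xb^{\alpha^i,i}_{t\wedge\cdot}, \alpha^i_t)$ are mutually independent, with marginal conditional laws
\[
Q^i_B(t) \;=\; \int_{\R^n} R^x_B(t)\, \nu^i(\mathrm{d}x),
\]
where $R^x_B(t)$ denotes the $B$--conditional law of $\Psi_t(x, W^1, B)$. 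The same functional representation applied to \eqref{eq:initial_SDE} yields $\mub^{\xi, \alpha}_t = \int_{\R^n} R^x_B(t)\, \nu(\mathrm{d}x)$, $\P_\star$--a.s.

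\textbf{Bias--fluctuation decomposition.} By the triangle inequality,
\[
\Wc_p\big(\overline \varphi^N_t, \mub^{\xi,\alpha}_t\big) \;\le\; \Wc_p\Big(\overline \varphi^N_t, \tfrac{1}{N}\textstyle\sum_{i=1}^N Q^i_B(t)\Big) \;+\; \Wc_p\Big(\tfrac{1}{N}\textstyle\sum_{i=1}^N Q^i_B(t), \mub^{\xi,\alpha}_t\Big).
\]
For the second (bias) term, convexity of $\Wc_p^p$ combined with the standard Lipschitz dependence of SDE solutions on the initial condition (from Assumption \ref{assum:main1} and Gronwall) and the continuity of $\phi$ in $x$ (handled via approximation by Lipschitz functions together with the uniform $p'$--moment control on the $\nu^i$'s) show that $x \mapsto R^x_B(t)$ is $\Wc_p$--continuous with a random modulus controlled by $(1+|x|+|x'|)$--type bounds whose $p$--th moments are finite; taking $\Wc_p$--optimal couplings of $\bar\nu^N := N^{-1}\sum_i \nu^i$ and $\nu$ and applying H\"older's inequality yields $\E[\Wc_p(N^{-1}\sum_i Q^i_B(t), \mub^{\xi,\alpha}_t)] \to 0$ uniformly in $t \in [0,T]$, by the assumption $\Wc_p(\bar\nu^N, \nu) \to 0$ and dominated convergence.

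\textbf{Fluctuation term and conclusion.} For the first (fluctuation) term, we invoke a conditional Wasserstein law of large numbers for independent---but not identically distributed---samples: conditionally on $B$, the $(\Xb^{\alpha^i,i}_{t\wedge\cdot}, \alpha^i_t)_{i=1,\ldots,N}$ are independent, and by Lemma \ref{lemma:estimates}, the boundedness of $\phi$, and the uniform moment hypothesis $\sup_N N^{-1}\sum_i \int |x|^{p'}\nu^i(\mathrm{d}x) < \infty$, their conditional laws $Q^i_B(t)$ admit a uniform $p'$--moment bound. A direct variance--style estimate, in the spirit of Fournier--Guillin but adapted to non--iid independent samples on the path space $\Cc^n \times A$, then yields $\E[\Wc_p(\overline \varphi^N_t, N^{-1}\sum_i Q^i_B(t))] \to 0$ with a rate independent of $t$. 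Combining the bias and fluctuation bounds, taking expectations, and integrating in time via Fubini (justified by the uniform $p$--moment control of Lemma \ref{lemma:estimates}) delivers \eqref{eq:empir_dist_cvg}. The main technical obstacle is establishing the Wasserstein LLN for non--iid independent samples on path space with a rate uniform in the conditioning variable $B$; this is standard in spirit, but requires a careful quantitative adaptation of classical empirical convergence estimates together with the $p'$--moment bounds to transfer weak convergence to $\Wc_p$--convergence.
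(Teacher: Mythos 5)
Your strategy---conditioning on $B$, then splitting into a ``fluctuation'' part (empirical measure around its conditional mean) and a ``bias'' part (conditional mean vs.\ limiting measure)---is genuinely different from the paper's. The paper does not attempt any Wasserstein law of large numbers on path space. Instead, it shows that the time--averaged law $\Lambdab^N := \E^{\P^N_\nu}[\delta_{(\overline\varphi^N_t,\mub^{\xi,\alpha}_t)}\mathrm{d}t]$ is relatively compact under $\Wc_p$ (via the $p'$--moment bounds and the Carmona--Delarue compactness criteria), and then identifies the unique limit by computing moments of the form $\int \prod_i \langle g_i,\nub\rangle f(t,\nub')\Lambdab^\infty(\mathrm{d}\nub,\mathrm{d}\nub',\mathrm{d}t)$, using conditional independence given $\Gc^N_t$ to factor the cross--terms and then Lemma~\ref{lemm:continuity_initial-variable} (applied with $U^N \sim N^{-1}\sum_i \nu^i$) to show the conditional laws converge. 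Proposition~\ref{Prop-identification_probability} then pins down the limit. This buys two things your route does not automatically give: (i) it never needs an empirical--measure LLN on the infinite--dimensional space $\Cc^n\times A$, and (ii) continuity (rather than Lipschitz) dependence in the initial condition and in the control suffices, since the identification is entirely qualitative.

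Two spots in your argument would need real substantiation before it is complete. First, the ``variance--style estimate in the spirit of Fournier--Guillin'' with a rate on $\Cc^n\times A$ does not exist: Fournier--Guillin rates degenerate as the ambient dimension grows and vanish on infinite--dimensional path spaces. What is true, and what you actually need, is the \emph{qualitative} convergence $\E[\Wc_p(\overline\varphi^N_t, N^{-1}\sum_i Q^i_B(t))]\to 0$; this requires a Glivenko--Cantelli--type argument for independent, non--identically--distributed path--valued samples (a.s.\ weak convergence of the centered empirical measure along a countable determining family of test functions, then upgrading to $\Wc_p$ via the uniform $p'$--moment bound), with the extra wrinkle that the target $N^{-1}\sum_i Q^i_B(t)$ itself moves with $N$. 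This is doable but is exactly the work the paper's compactness--plus--identification strategy is designed to avoid. Second, your bias estimate invokes a Lipschitz--in--$x$ bound on $x\mapsto R^x_B(t)$, but the map cannot be Lipschitz: $x$ enters not only through the initial condition but also through $\alpha^i_t=\phi(t,x,W^i,B)$, and $\phi$ is only bounded continuous, while $(b,\sigma,\sigma_0)$ are only continuous (not Lipschitz) in $a$. You therefore get $\Wc_p$--continuity, not Lipschitz continuity, and the convexity--of--$\Wc_p^p$ argument must be replaced by the continuity--plus--uniform--integrability argument of Lemma~\ref{lemm:continuity_initial-variable} (Skorokhod representation and dominated convergence), which is precisely how the paper treats it.
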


	\begin{proof} 
	Notice that to prove \eqref{eq:empir_dist_cvg}, it is enough to prove that,  in the space $ (\M \big(\Pc(\Cc^n \x A) \x \Pc(\Cc^n \x A) \big),\Wc_{p})$,
	\[
		\Lambdab^N(\mathrm{d} \nub,\mathrm{d} \nub^{\prime},\mathrm{d}t)
		:=
		\E^{\P^N_\nu} \Big[ \delta_{\big(\overline \varphi^{N}_t, \mub^{\xi,\alpha}_t \big)}(\mathrm{d} \nub,\mathrm{d} \nub^{\prime})\mathrm{d}t \Big]
		\underset{N \to \infty}{\longrightarrow}
		\Lambdab^0(\mathrm{d}\nub,\mathrm{d}\nub^{\prime},\mathrm{d}t)
		:=
		\E^{\P_\star} \Big[ \delta_{\mub^{\xi,\alpha}_t}(\mathrm{d} \nub)\delta_{\mub^{\xi,\alpha}_t}(\mathrm{d} \nub^{\prime})\mathrm{d}t \Big].
	\]
	
	First, by a trivial extension of \Cref{lemma:estimates}, there exists a constant $C$ independent of $i \ge 1$, s.t.
	\begin{align*}
		\E^{\P^N_\nu} \bigg[
		\frac{1}{N}\sum_{i=1}^N \sup_{[0,T]} \big|\Xb^{\alpha^i,i}_t\big|^{p^\prime}
		+
		\int_0^T |a_0 -\alpha^i_t|^{p^\prime} \mathrm{d}t
		\bigg] 
		&\le 
		\frac{1}{N}\sum_{i=1}^N 
		C
		\bigg(1+\E^{\P^N_\nu}[|X^i_0|^{p^\prime}] + \E^{\P_\star}[|\xi|^{p^\prime}] +\E^{\P^N_\nu} \bigg[\int_0^T |a_0 - \alpha^i_t|^{p^\prime} \mathrm{d}t \bigg] \bigg)
		\\
		&\le C\bigg(1+\int_{\R^n}|x|^{p^\prime}\frac{1}{N}\sum_{i=1}^N\nu^i(\mathrm{d} x) \bigg) < \infty,
	\end{align*}
	where the second inequality follows by the fact that $\phi$ is bounded.
	Since $p^\prime > p$, it follows by \cite[Proposition-A.2.]{carmona2014mean} and \cite[Proposition-B.1.]{carmona2014mean}
	that $(\Lambdab^N)_{N \in \N}$ is relatively compact in $(\M \big(\Pc(\Cc^n \x A) \x \Pc(\Cc^n \x A) \big),\Wc_p)$.
	
\medskip

	Let $(N_m)_{m \ge 1}$ be a subsequence such that 
	$\Lambdab^{N_m} \longrightarrow_{m\to\infty} \Lambdab^{\infty}$ under $\Wc_p$.
	We only need to show that $\Lambdab^{\infty} = \Lambdab^0$, or equivalently (see \Cref{Prop-identification_probability}), that
	for every $k \ge 1$, 
	$g_1,\dots,g_k \in C_b(\Cc^n \x A)$, $f \in C_b([0,T] \x \Pc(\Cc^n \x A))$, we have
	\begin{equation} \label{eq:Lambda_cvg}
		\int_0^T \int_{\Pc(\Cc^n \x A)^2} \prod_{i=1}^k \langle g_i, \nub \rangle f(t,\nub^{\prime}) \Lambdab^{\infty} (\mathrm{d}\nub,\mathrm{d}\nub^{\prime},\mathrm{d}t)
		=
		\int_0^T \int_{\Pc(\Cc^n \x A)^2} \prod_{i=1}^k \langle g_i, \nub \rangle f(t,\nub^{\prime}) \Lambdab^0 (\mathrm{d} \nub,\mathrm{d} \nub^{\prime},\mathrm{d}t).
	\end{equation}
	In the following, we provide the proof of \eqref{eq:Lambda_cvg} for the case $k=2$, since the proof for the general case is identical.
	
\medskip

	Notice that $\mub^{\xi,\alpha}$ is $\G^N$--adapted, 
	and $X^{\alpha^i, i}$ depends only on $(X^i_0, W^i, B)$. It therefore follows that 
	$(\Xb^{\alpha^i,i}_{t \wedge \cdot},\alpha^i_t)$ and $(\Xb^{\alpha^j,j}_{t \wedge \cdot},\alpha^j_t)$ are conditionally independent given the $\sigma$--algebra $\Gc^N_t,$ for all $t \in [0,T].$ 
	Thus for $i \neq j$,
	\[
		\E^{\P^N_{\nu}} 
		\big[
			g_1 \big(\Xb^{\alpha^i,i}_{t \wedge \cdot},\alpha^i_t \big) 
			g_2 \big( \Xb^{\alpha^j,j}_{t \wedge \cdot},\alpha^j_t \big) 
			f \big(t,\mub^{\xi,\alpha}_t \big)
		\big]
		=
		\E^{\P^N_\nu}
		\Big[ 
			\E^{\P^{N}_\nu} \big[g_1(\Xb^{\alpha^i,i}_{t \wedge \cdot},\alpha^i_t) \big| \Gc^N_t \big]
			\E^{\P^{N}_\nu} \big[g_2(\Xb^{\alpha^j,j}_{t \wedge \cdot},\alpha^j_t) \big| \Gc^N_t \big]
			f \big( t,\mub^{\xi,\alpha}_t \big)
		\Big].
	\]
	Since $f$, $g_1$, and $g_2$ are bounded, it follows that
	\begin{align*}
		&\
		\int_0^T \int_{\Pc(\Cc^n \x A)^2} \langle g_1, \nub \rangle \langle g_2, \nub \rangle f(t,\nub^{\prime})  \Lambdab^{\infty} (\mathrm{d}\nub,\mathrm{d}\nub^{\prime},\mathrm{d}t) \\
		=&\
		\lim_{m \to \infty} 
		\int_0^T \frac{1}{N^2_m}\sum_{i,j=1}^{N_m} 
			\E^{\P^{N_m}_\nu} \Big[
				g_1 \big(\Xb^{\alpha^i,i}_{t \wedge \cdot},\alpha^i_t \big) 
				g_2 \big(\Xb^{\alpha^j,j}_{t \wedge \cdot},\alpha^j_t \big) 
				f \big(t,\mub^{\xi,\alpha}_t \big) 
			\Big]
		\mathrm{d}t
		\\
		=&\
		\lim_{m \to \infty} 
		\int_0^T \frac{1}{N^2_m}\sum_{i,j=1}^{N_m}
			\E^{\P^{N_m}_\nu}\Big[ 
				\E^{\P^{N_m}_\nu} \big[g_1(\Xb^{\alpha^i,i}_{t \wedge \cdot},\alpha^i_t) \big|\Gc^N_t \big]
				~\E^{\P^{N_m}_\nu} \big[g_2(\Xb^{\alpha^j,j}_{t \wedge \cdot},\alpha^j_t) \big| \Gc^N_t \big]
				~f(t,\mub^{\xi,\alpha}_t) 
			\Big]
		\mathrm{d}t
		\\
		=&\
		\lim_{m \to \infty} 
		\int_0^T \int_{\Pc(\Cc^n \x A)^2} 
			\langle g_1, \nub \rangle 
			\langle g_2, \nub \rangle 
			f(t, \nub^{\prime}) 
			\E^{\P^{N_m}_\nu}\Big[
				\delta_{\big(\frac{1}{N_m}\sum_{i=1}^{N_m}
					\Lc^{\P^{N_m}_\nu}(\Xb^{\alpha^i,i}_{t \wedge \cdot},\alpha^i_t|\Gc^N_t),\mub^{\xi,\alpha}_t \big)}
				(\mathrm{d}\nub,\mathrm{d} \nub^{\prime}) 
			\Big]
		\mathrm{d}t.
	\end{align*}
	Let $U^N$ be a random variable on $(\Om, \Fc_0, \P_\star)$ such that $\Lc^{\P_\star}(U^N)=\frac{1}{N}\sum_{i=1}^N \nu^i.$ If we note $\alpha^{\star,N}_t:=\phi(t,U^N,W_{t \wedge \cdot},B_{t \wedge \cdot}),$ we have, from \Cref{lemm:continuity_initial-variable} that, for all $t \in [0,T]$
	\begin{align*}
		&\ 
		\lim_{m \to \infty}
		\E^{\P^{N_m}_{\nu}} \bigg[
			\Wc_p \bigg(
				\frac{1}{N_m}\sum_{i=1}^{N_m}
					\Lc^{\P^{N_m}_\nu} \big(\Xb^{\alpha^i,i}_{t \wedge \cdot},\alpha^i_t \big|\Gc^N_t \big),
				\mub^{\xi,\alpha}_t 
			\bigg)
		\bigg]
		\\
		&=
		\lim_{m \to \infty}
		\E^{\P_\star} \Big[
			\Wc_p \Big( 
				\Lc^{\P_\star} \big(X^{U^N, \mub^{\xi,\alpha}, \alpha^{\star,N}}_{t \wedge \cdot},\alpha^{\star,N}_t \big|\Gc_t \big),
				~\Lc^{\P_\star} \big(X^{\xi, \mub^{\xi,\alpha}, \alpha}_{t \wedge \cdot}, \alpha_t \big| \Gc_t \big) 
				\Big)
		\Big]
		=
		0.
	\end{align*}
	Consequently
	\[
		\int_0^T \int_{\Pc(\Cc^n \x A)^2} \langle g_1, \nub \rangle \langle g_2, \nub \rangle f(t,\nub^{\prime})  \Lambdab^{\infty} (\mathrm{d}\nub,\mathrm{d}\nub^{\prime},\mathrm{d}t) 
		=
		\int_0^T \int_{\Pc(\Cc^n \x A)^2} \langle g_1, \nub \rangle \langle g_2, \nub \rangle f(t,\nub^{\prime})  \Lambdab^0 (\mathrm{d}\nub,\mathrm{d}\nub^{\prime},\mathrm{d}t),
	\]
	and the proof is concluded.
	\end{proof}

\vspace{0.5em} 

	Given a probability measure $\nu \in \Pc_p(\R^n)$ and a sequence $(\nu^i)_{i \ge 1} \subset \Pc_p(\R^n)$,
	we consider the probability spaces $(\Om, \Fc, \P_{\nu})$ and $(\Om^N, \Fc^N, \P^N_{\nu})$, introduced respectively in  \Cref{subsec:strong_form} and \Cref{subsec:N_controlpb}.	
	Let us fix a bounded continuous function $\phi: [0,T] \x \R^n \x \Cc^d \x \Cc^\ell \longrightarrow A$,
	and define a control process $\alpha := (\alpha_t)_{t \in [0,T]}$ on $(\Om,\Fc)$,
	and control processes $(\alpha^1, \dots, \alpha^N)$ on $(\Om^N,\Fc^N)$ by
	\begin{equation} \label{eq:def_alpha_phi}
		\alpha_t := \phi(t, X_0, W_{t \wedge \cdot}, B_{t \wedge \cdot}),
		\;
		\alpha^i_t := \phi(t, X^i_0, W^i_{t \wedge \cdot}, B_{t \wedge \cdot}),
		\;t \in [0,T],\; i = 1, \dots, N.
	\end{equation}
	Using the control process $\alpha$, $(X^{\alpha}, \mub^{\alpha})$ is defined by \eqref{eq:MKV_SDE} under $\P_{\nu}$.
	In particular, in the probability space $(\Om, \Fc, \P_\star)$, let $\xi \sim \nu$, and $(X^{\xi, \alpha}, \mub^{\xi, \alpha})$ be defined 
	by \eqref{eq:initial_SDE}. We have $\P_\star \circ (\mub^{\xi, \alpha})^{-1} = \P_{\nu} \circ (\mu^{\alpha})^{-1}$.
	Next, let $\xi$ be a random variable on $(\Om, \Fc, \P_\star)$ satisfying $\P_\star \circ \xi^{-1} = \nu$. We also naturally extend the $\G$--optional process $\mub^{\xi, \alpha}$ on $\Om$ into a $\G^N$--optional process on $\Om^N$. 
	Then with the bounded control processes $(\alpha^1, \dots, \alpha^N)$, 
	$(X^{\alpha,i} )_{i=1, \dots, N}$ is defined by \eqref{eq:N-agents_StrongMV_CommonNoise} under $\P^N_{\nu}$,
	and $(\Xb^{\alpha^i,i})_{i =1, \dots, N}$ is defined by \eqref{eq:def_Xi}.
	Recall also that
	\[
		\varphi^{N,X}_{t}(\mathrm{d} \xb) := \frac{1}{N}\sum_{i=1}^N \delta_{(X^{\alpha,i}_{t \wedge \cdot})}(\mathrm{d} \xb),\;
		\varphi^{N}_{t}(\mathrm{d} \xb, \mathrm{d}a) := \frac{1}{N}\sum_{i=1}^N \delta_{(X^{\alpha,i}_{t\wedge \cdot},\alpha^i_t )}(\mathrm{d} \xb, \mathrm{d}a),
		~\mbox{and}~
		\overline \varphi^{N}_t(\mathrm{d} \xb, \mathrm{d}a)
			:=
			\frac{1}{N}\sum_{i=1}^N \delta_{ \big(\Xb^{\alpha^i,i}_{t \wedge \cdot},  \alpha^i_t \big)}(\mathrm{d} \xb, \mathrm{d}a).
	\]	

	\begin{proposition} \label{propogation of chaos-2}
		Let $\alpha$ and $(\alpha^i)_{1\leq i\leq N}$ be defined in \eqref{eq:def_alpha_phi}, together with the Borel measurable function $\phi: [0,T] \x \R^n \x \Cc^d \x \Cc^\ell \longrightarrow A$.
		Assume that
		\[
			\alpha \in \Ac_p(\nu),
			~
			\sup_{N \ge 1} \frac{1}{N}\sum_{i=1}^N \int_{\R^n} | x |^{p^\prime} \nu^i(\mathrm{d} x) < \infty,
			~\mbox{\rm and}~
			\lim_{N \to \infty} \Wc_{p} \bigg(\frac{1}{N}\sum_{i=1}^N \nu^i,\nu \bigg)=0.
		\]
		Then
		\begin{equation}  \label{res_propogation_of_chas}
			\lim_{N \to \infty}
			\E^{\P^N_{\nu}} \bigg[\int_0^T\Wc_p(\varphi_t^{N}, \mub^{\xi, \alpha}_t) \mathrm{d}t \bigg]
			=
			0,
			~\mbox{\rm and}~
			\lim_{N \to \infty}
			\Lc^{\P^N_{\nu}} \big(\delta_{\varphi^{N}_{t} }(\mathrm{d} \nub)\mathrm{d}t, \varphi^{N,X} \big)
			= 
			\Lc^{\P_\nu} \big(\delta_{\mub^\alpha_t }(\mathrm{d} \nub)\mathrm{d}t, \mu^{\alpha} \big)
			~\mbox{\rm under}~\Wc_{p}.
		\end{equation}
            	Consequently
		\[
			V_S(\nu) 
			\le
			\liminf_{N \to \infty} V_S^N(\nu^1,\dots,\nu^N).
		\]
	\end{proposition}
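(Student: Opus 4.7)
The plan is in three steps: couple the interacting $N$-particle system to the decoupled system $(\Xb^{\alpha^i,i})_{i\le N}$ introduced in \eqref{eq:def_Xi}, leverage the propagation of chaos result \Cref{lemma of chaos-1} to obtain \eqref{res_propogation_of_chas}, then combine this with a density argument on strong controls to conclude the value-function inequality.

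\textbf{Step 1 (coupling).} The processes $X^{\alpha,i}$ and $\Xb^{\alpha^i,i}$ solve the same SDE with identical initial condition $X_0^i$, idiosyncratic noise $W^i$, common noise $B$ and control $\alpha^i$; they differ solely in the measure argument, $\varphi^N_r$ versus $\mub^{\xi,\alpha}_r$. The Lipschitz bound in \Cref{assum:main1}.$(i)$, the Burkholder--Davis--Gundy inequality, and the natural empirical coupling
$$\Wc_p\big(\varphi^N_r,\overline\varphi^N_r\big)^p \le \frac{1}{N}\sum_{i=1}^N \sup_{u\le r}\big|X^{\alpha,i}_u-\Xb^{\alpha^i,i}_u\big|^p$$
lead, after averaging over $i$ and applying Gronwall's lemma, to a constant $C$ independent of $N$ such that
$$\frac{1}{N}\sum_{i=1}^N \E^{\P^N_\nu}\Big[\sup_{t\in[0,T]}\big|X^{\alpha,i}_t-\Xb^{\alpha^i,i}_t\big|^p\Big] \le C\,\E^{\P^N_\nu}\bigg[\int_0^T \Wc_p\big(\overline\varphi^N_r,\mub^{\xi,\alpha}_r\big)^p\mathrm{d}r\bigg].$$

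\textbf{Step 2 (passage to the limit).} \Cref{lemma of chaos-1} gives convergence of the right-hand side to $0$ in mean; the upgrade from $\Wc_p$ to $\Wc_p^p$ is supplied by the uniform $p'$-moment bound on $(\nu^i)_i$, the boundedness of $\phi$, and \Cref{lemma:estimates}, which together enforce uniform integrability. The triangle inequality $\Wc_p(\varphi^N_r,\mub^{\xi,\alpha}_r) \le \Wc_p(\varphi^N_r,\overline\varphi^N_r)+\Wc_p(\overline\varphi^N_r,\mub^{\xi,\alpha}_r)$ then yields the first limit in \eqref{res_propogation_of_chas}. For the joint convergence, the process $\mub^{\xi,\alpha}$ extended to $\Om^N$ is a functional of the common noise $B$ and has the same $\P^N_\nu$-law as $\mub^\alpha$ under $\P_\nu$; as $\varphi^N_t$ converges to $\mub^{\xi,\alpha}_t$ in $\Wc_p$-mean and $\varphi^{N,X}$ is its first marginal, one deduces the claimed joint weak convergence on $\M(\Pc(\Cc^n\x A))\x \Pc(\Cc^n)$ under $\Wc_p$.

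\textbf{Step 3 (value inequality).} Given $\eps>0$, pick $\alpha^\eps\in\Ac_p(\nu)$ of the form $\phi(t,X_0,W_{t\wedge\cdot},B_{t\wedge\cdot})$, with $\phi$ bounded continuous, satisfying $J(\alpha^\eps)\ge V_S(\nu)-\eps$. Such $\alpha^\eps$ exists by a standard density argument: any $\F$-predictable $\alpha\in\Ac_p(\nu)$ can be represented and approximated in the $L^p$-sense by bounded continuous functionals of $(X_0,W,B)$, and the Lipschitz and continuity assumptions on the coefficients, together with the polynomial growth of $(L,g)$ in \Cref{assum:main1}.$(iii)$, make $\alpha\mapsto J(\alpha)$ continuous enough for this purpose. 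Using this $\phi$ to build $(\alpha^i)_{i\le N}\in\Ac^N_p(\nu_N)$ as in \eqref{eq:def_alpha_phi}, the conclusions of Step 2 combined with the polynomial growth and lower semi-continuity of $(L,g)$, the uniform $p'$-moment bounds supplied by \Cref{lemma:estimates} (hence uniform integrability), and Fatou's lemma give
$$\liminf_{N\to\infty} V_S^N(\nu^1,\dots,\nu^N) \ge \liminf_{N\to\infty} J_N(\alpha^1,\dots,\alpha^N) \ge J(\alpha^\eps) \ge V_S(\nu)-\eps,$$
and letting $\eps\to 0$ concludes.

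\textbf{Main obstacle.} The crux lies in closing the Gronwall estimate in Step 1: it is precisely the $1/N$-weighted empirical coupling that lets the averaged difference absorb the $\varphi^N$-versus-$\overline\varphi^N$ term with a constant independent of $N$. A secondary but nontrivial subtlety is the approximation in Step 3, which must preserve both $\Ac_p(\nu)$-membership and the value $J(\alpha)$ within the prescribed $\eps$ while producing a bounded continuous $\phi$, so that \Cref{lemma of chaos-1} applies.
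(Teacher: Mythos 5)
Your proposal follows the same three-step decomposition as the paper: couple the interacting system to the decoupled system $(\Xb^{\alpha^i,i})_i$ via Gronwall and the empirical-coupling estimate, invoke \Cref{lemma of chaos-1} and upgrade $\Wc_p$ to $\Wc_p^p$ through uniform integrability, then approximate a general $\alpha\in\Ac_p(\nu)$ by controls generated by bounded continuous $\phi$ and pass to the $N$-particle limit using lower semi-continuity of $(L,g)$ and the growth bounds. This is exactly the paper's proof of the proposition.

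One wording issue you should tighten: in Step~3 you claim the assumptions make $\alpha\mapsto J(\alpha)$ "continuous enough." Under \Cref{assum:main1}.$(iii)$ only lower semi-continuity of $L$ and $g$ is available, so $J$ is not continuous along the approximating sequence $\alpha^m\to\alpha$; what one actually uses is the one-sided inequality $\liminf_m J(\alpha^m)\ge J(\alpha)$ (which follows from lower semi-continuity together with the polynomial-growth/uniform-integrability bounds), combined with the trivial upper bound $J(\alpha^m)\le V_S(\nu)$, to extract an $\eps$-optimal bounded continuous $\phi$. The paper phrases this instead as a chain of $\le$ inequalities through $\lim_m J(\alpha^m)$, which is the same idea; your $\eps$-optimal reformulation is equivalent once the one-sidedness is made explicit.
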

	\begin{proof}
	$(i)$ 		
	Using \Cref{assum:main1}, together with Burkholder--Davis--Gundy inequality and Gronwall's lemma,
	it follows by classical arguments that
	there exist positive constants $K$, and $K^\prime$ such that for all $N \ge 1$, $i=1, \dots, N$ and $t \in [0,T]$
	\[
		\E^{\P_{\nu}^N} \bigg[ \sup_{r \in [0,t]} \big|X^{\alpha,i}_r-\Xb^{\alpha^i,i}_r \big|^p \bigg]
		\le
		K\E^{\P_{\nu}^N}\bigg[\int_0^t \Wc_p(\varphi_r^{N},\mub^{\xi,\alpha}_r)^p \mathrm{d}r\bigg]
		\le
		K^\prime\E^{\P^N_{\nu}} 
		\bigg[\int_0^t \Big(
			\Wc_p(\varphi_r^{N}, \overline \varphi_r^{N})^p
			+
			\Wc_p(\overline \varphi_r^{N},\mub^{\xi,\alpha}_r)^p
		\Big)
		\mathrm{d}r\bigg].
	\]
	Further, notice that
	\[
		\E^{\P^N_{\nu}}\big[ \Wc_p(\varphi_t^{N}, \overline \varphi_t^{N})^p \big]
		\le
		\frac{1}{N} \sum_{i=1}^N\E^{\P^N_{\nu}}\bigg[ \sup_{r \in [0,t]} |X^{\alpha,i}_r-\Xb^{\alpha^i,i}_r|^p \bigg]
		\le
		K\E^{\P^N_{\nu}} 
		\bigg[\int_0^t \Big(
			\Wc_p(\varphi_r^{N}, \overline \varphi_r^{N})^p
			+
			\Wc_p( \overline \varphi_r^{N},\mub^{\xi,\alpha}_r)^p
		\Big)
		\mathrm{d}r\bigg],
	\]
	it follows by  Gronwall's lemma and then by \Cref{lemma of chaos-1} that
	\[
		\lim_{N \to \infty}
		\E^{\P^N_{\nu}}\big[ \Wc_p(\varphi_t^{N}, \overline \varphi_t^{N})^p \big]
		\le
		\lim_{N \to \infty}
		K\E^{\P^N_{\nu}} \bigg[\int_0^t
		\Wc_p(\overline \varphi_r^{N},\mub^{\xi,\alpha}_r)^p \mathrm{d}r\bigg]
		=0,
		~\mbox{and thus}~
		\lim_{N \to \infty}
		\E^{\P^N_{\nu}}\bigg[ \int_0^T \Wc_p(\varphi_t^{N},\mub^{\xi,\alpha}_t)^p \mathrm{d}t\bigg]
		=
		0.
	\]
	As an immediate consequence, we also have
	\[
		\lim_{N \to \infty}
			\Lc^{\P^N_{\nu}} \big(\delta_{\varphi^{N}_{t} }(\mathrm{d} \nub)\mathrm{d}t, \varphi^{N,X} \big)
			= 
			\Lc^{\P_\nu} \big(\delta_{\mub^\alpha_t }(\mathrm{d} \nub)\mathrm{d}t, \mu^{\alpha} \big),
			~\mbox{under}~\Wc_{p}.
	\]

	$(ii)$ Let us now consider an arbitrary control process $\alpha \in \Ac_p(\nu)$, 
	so that there exists a Borel measurable function $\phi: [0,T] \x \R^n \x \Cc^d \x \Cc^\ell \longrightarrow A$ 
	such that $\alpha_t=\phi(t,\xi,W_{t \wedge},B_{t \wedge \cdot})$ for all $t \in [0,T],$ $\P_\nu$--a.s.
	Then there exists (see e.g. \cite[Proposition C.1.]{carmona2014mean}) 
	a sequence of bounded continuous functions $(\phi^m)_{m \ge 1} :  [0,T] \x \R^n \x \Cc^d \x \Cc^\ell \longrightarrow A$ 
	such that 
	\[
		\lim_{m \to \infty}
		\alpha^m_t 
		:=
		\lim_{m \to \infty}
		\phi^m(t,\xi,W_{t \wedge},B_{t \wedge \cdot})
		=
		\phi(t,\xi,W_{t \wedge},B_{t \wedge \cdot})
		=
		\alpha_t,\;\mathrm{d}\P_\nu \otimes \mathrm{d}t\;\mbox{--a.e.}
	\]
	Then, in the probability space $(\Om,\F,\Fc,\P_\nu)$, 
	it follows by standard arguments (see e.g.  the proof of \Cref{prop:approximation} or \Cref{Lemm:app-Weak_Relaxed}) that
	\[
		\Lim_{m \to \infty} 
		\E^{\P_\nu}\bigg[
			\sup_{t \in [0,T]} \big| X_t^{\alpha^m} - X_t^{\alpha} \big|^p 
		\bigg]
		=
		0,
		~\mbox{and}~
		\Lim_{m \to \infty}
		\E^{\P_{\nu}} \bigg[
			\int_0^T \Wc_p \big( \mub^{\alpha^m}_t, \mub^\alpha_t \big)^p \mathrm{d}t 
		\bigg]
		= 
		0.
	\]
	Finally, for each $m \ge 1$, consider the bounded continuous function $\phi^m$.
	For each $N \ge 1$, on the space $(\Om^N, \Fc^N, \P^N_{\nu})$,
	we can define control processes $(\alpha^{m,i})_{1\leq i\leq N}$ by 
	$\alpha^{m,i}_t:=\phi^m(t,X^i_0,W^i_{t \wedge \cdot},B_{t \wedge \cdot})$, $t\in[0,T]$, $i\in\{1,\dots,N\}$,
	and then define $(X^{\alpha^m,1},\dots,X^{\alpha^m,N})$ as the unique solution of \Cref{eq:N-agents_StrongMV_CommonNoise} 
	with control processes $(\alpha^{m,i})_{i =1, \dots, N}$.
	
	\medskip
	Define then 
	\[\varphi^{m, N, X}_{t}(\mathrm{d} \xb) := \frac{1}{N}\sum_{i=1}^N \delta_{(X^{\alpha^m,i}_{t \wedge \cdot})}(\mathrm{d} \xb)\;\mbox{and}\;\varphi^{m, N}_{t}(\mathrm{d} \xb, \mathrm{d}a) := \frac{1}{N}\sum_{i=1}^N \delta_{(X^{\alpha^m,i}_{t \wedge \cdot},\alpha^{m,i}_t )}(\mathrm{d} \xb, \mathrm{d}a).\]
	We have, thanks to \Cref{res_propogation_of_chas},
	\[\Lim_{N \to \infty} \Lc^{\P_{\nu}^N} \big( \delta_{\varphi^{m,N}_t}(\mathrm{d}m)\mathrm{d}t, \varphi^{m, N, X} \big) 
		= \Lc^{\P_{\nu}} \big( \delta_{\mu^{\alpha^m}_t}(\mathrm{d}m)\mathrm{d}t, \mu^{\alpha^m}  \big),\; \mbox{under}\; \Wc_p.\]
	It follows then	
	\begin{align*}
		J(\alpha)
		&=
		\E^{\P_{\nu}} \bigg[ \int_0^T \big \langle L(t,\cdot,\mub^\alpha_t),\mub^\alpha_t \big \rangle \mathrm{d}t 
		+
		\big\langle g(\cdot,\mu^\alpha_T), \mu^\alpha_T \big \rangle
		\bigg]\\
		&\le
		\lim_{m \to \infty}
		\E^{\P_{\nu}} \bigg[ \int_0^T \big \langle L(t,\cdot,\mub^{\alpha^m}_t), \mub^{\alpha^m}_t \big \rangle \mathrm{d}t 
		+
		\big \langle g(\cdot,\mu^{\alpha^m}_T), \mu^{\alpha^m}_T \big \rangle
		\bigg]
		\\
		&\le
		\lim_{m \to \infty} \lim_{N \to \infty}
		\E^{\P^N_{\nu}} \bigg[ \int_0^T \big \langle L(t,\cdot,\varphi_t^{m, N}),\varphi_t^{m,N} \big \rangle \mathrm{d}t 
		+
		\big \langle g(\cdot, \varphi_T^{m,N,X}), \varphi_T^{m,N,X} \big \rangle
		\bigg]
		\\
		&\le
		\lim_{m \to \infty} \lim_{N \to \infty}
		\frac{1}{N} \sum_{i=1}^N
		\E^{\P^N_{\nu}} \bigg[ 
			\int_0^T L \big(t,X^{\alpha^m,i},\alpha^{m,i}_t,\varphi_t^{m, N} \big) \mathrm{d}t 
			+
			g \big(X^{\alpha^m,i},\varphi_T^{m, N,X} \big)
		\bigg] 
		\le
		\Liminf_{N \to \infty} 
		V_S^N(\nu^1,\dots,\nu^N).
	\end{align*}
	By arbitrariness of $\alpha \in \Ac_p(\nu)$, it follows that
	$V_S(\nu) \le \Liminf_{N\to\infty} V_S^N(\nu^1,\cdot,\nu^N)$.
	\end{proof}

	Using exactly the same arguments and \Cref{lemm:continuity_initial-variable} 
	we can obtain the following result, whose proof is therefore omitted.

	\begin{proposition}   \label{propogation of chaos-2p}
		Assume that
		\[
			\sup_{m \ge 1} \int_{\R^n} | x |^{p^{\prime}} \nu^m(\mathrm{d} x) < \infty,
			~\mbox{\rm and}~
			\lim_{m \to \infty} \Wc_{p}(\nu^m,\nu)=0.
		\]
		Then with the control process $\alpha$ defined in \eqref{eq:def_alpha_phi},
		we have
		\[
			\lim_{m \to \infty}
			\Lc^{\P_{\nu^m}} \big(\delta_{\mub^{\alpha}_t}(\mathrm{d} \nub)\mathrm{d}t, \mu^{\alpha} \big)
			=
			\Lc^{\P_{\nu}} \big(\delta_{\mub^{\alpha}_t}(\mathrm{d} \nub) \mathrm{d}t, \mu^{\alpha} \big),
			~\mbox{\rm under}~\Wc_{p},
			~\mbox{\rm and consequently}~
			V_S(\nu) \le \liminf_{m \to \infty} V_S(\nu^m).
		\]
	\end{proposition}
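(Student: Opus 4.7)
\textbf{Proof proposal for Proposition \ref{propogation of chaos-2p}.} The strategy closely parallels the proof of Proposition \ref{propogation of chaos-2}, except that the ``large population'' approximation is replaced by a direct continuity argument in the initial distribution, based on Lemma \ref{lemm:continuity_initial-variable}. The plan is to first establish the convergence of the relevant laws when $\alpha$ comes from a bounded continuous feedback function $\phi$, then deduce the inequality on value functions by approximating a general admissible control by such bounded continuous feedbacks.

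For the law convergence, I would first move to the common reference space $(\Omega,\Fc,\P_\star)$ introduced in Section~4.2, where one may select, by Skorokhod's representation theorem, $\Fc_0$--measurable random variables $(\xi^m)_{m\ge 1}$ and $\xi$ with $\Lc^{\P_\star}(\xi^m)=\nu^m$, $\Lc^{\P_\star}(\xi)=\nu$, and $\xi^m\to\xi$, $\P_\star$--a.s. The uniform $p'$--moment bound $\sup_m \E^{\P_\star}[|\xi^m|^{p'}]<\infty$ together with $p'>p$ provides uniform integrability, hence $\E^{\P_\star}[|\xi^m-\xi|^p]\to 0$. Setting $\alpha^m_t:=\phi(t,\xi^m,W_{t\wedge\cdot},B_{t\wedge\cdot})$ and $\alpha_t:=\phi(t,\xi,W_{t\wedge\cdot},B_{t\wedge\cdot})$, Lemma \ref{lemm:continuity_initial-variable} yields, for each $t\in[0,T]$,
\[
	\lim_{m\to\infty}\E^{\P_\star}\Big[\Wc_p\big(\Lc^{\P_\star}(X^{\xi^m,\alpha^m}_{t\wedge\cdot},\alpha^m_t|\Gc_t),\Lc^{\P_\star}(X^{\xi,\alpha}_{t\wedge\cdot},\alpha_t|\Gc_t)\big)\Big]=0.
\]
Combining this with the uniform moment estimate from Lemma \ref{lemma:estimates} (with exponent $p'>p$, which ensures uniform integrability of the relevant $p$--moments), a routine argument gives $\Wc_p$--convergence of $\Lc^{\P_{\nu^m}}(\delta_{\mub^\alpha_t}(\mathrm{d}\nub)\mathrm{d}t,\mu^\alpha)$ to $\Lc^{\P_{\nu}}(\delta_{\mub^\alpha_t}(\mathrm{d}\nub)\mathrm{d}t,\mu^\alpha)$, which is the first conclusion.

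For the value function inequality, I would fix an arbitrary $\alpha\in\Ac_p(\nu)$ and, following the same reasoning as in step $(ii)$ of the proof of Proposition \ref{propogation of chaos-2}, approximate it by controls of the feedback form $\alpha^k_t=\phi^k(t,X_0,W_{t\wedge\cdot},B_{t\wedge\cdot})$ with $(\phi^k)_{k\ge 1}$ bounded continuous, so that $\alpha^k\to \alpha$ in $L^p(\mathrm{d}t\otimes\mathrm{d}\P_\nu)$ and consequently
\[
	\lim_{k\to\infty}\E^{\P_\nu}\bigg[\sup_{t\in[0,T]}|X^{\alpha^k}_t-X^\alpha_t|^p+\int_0^T \Wc_p(\mub^{\alpha^k}_t,\mub^\alpha_t)^p\mathrm{d}t\bigg]=0,
\]
which, together with the growth and lower semi--continuity conditions on $L$ and $g$ in Assumption \ref{assum:main1}, gives $J(\alpha)\le\liminf_{k\to\infty} J(\alpha^k)$, where $J(\cdot)$ is computed under $\P_\nu$. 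For each such $\phi^k$, on the space $(\Omega,\Fc,\P_{\nu^m})$ define $\alpha^{k,m}_t:=\phi^k(t,X_0,W_{t\wedge\cdot},B_{t\wedge\cdot})$. The already established convergence of laws, combined with the uniform $p'$--integrability coming from Lemma \ref{lemma:estimates} and the fact that $\phi^k$ is bounded, allows to pass to the limit in the cost functional (using \cite[Lemma 4.1]{lacker2017limit} for the lower semi--continuous piece, and continuity for the bounded continuous parts of $L,g$ on compacts), yielding
\[
	J(\alpha^k)=\lim_{m\to\infty}J_{\nu^m}(\alpha^{k,m})\le\liminf_{m\to\infty}V_S(\nu^m),
\]
where $J_{\nu^m}$ denotes the cost on $(\Omega,\Fc,\P_{\nu^m})$. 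Letting $k\to\infty$ gives $J(\alpha)\le\liminf_m V_S(\nu^m)$, and then taking the supremum over $\alpha\in\Ac_p(\nu)$ concludes the proof.

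The main technical obstacle, as in the previous proposition, is handling the polynomial growth of $L$ and $g$ when passing to the limit in the cost functional: weak $\Wc_p$--convergence alone is not sufficient and one must leverage the stronger assumption $\sup_m\int |x|^{p'}\nu^m(\mathrm{d}x)<\infty$ with $p'>p$ to obtain uniform $p$--integrability of $\|X^{\alpha^{k,m}}\|^p$ and $\int_0^T\rho(a_0,\alpha^{k,m}_t)^p\mathrm{d}t$ via Lemma \ref{lemma:estimates}, so that the polynomially growing pieces of $L$ and $g$ behave as if they were bounded continuous along the sequence. The rest is essentially a bookkeeping exercise mirroring step $(ii)$ of the proof of Proposition \ref{propogation of chaos-2}.
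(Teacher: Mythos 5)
Your proposal is correct and follows exactly the route the paper intends: the authors omit the proof with the remark that it uses ``exactly the same arguments'' as Proposition \ref{propogation of chaos-2} together with Lemma \ref{lemm:continuity_initial-variable}, which is precisely your two-step structure (law convergence for bounded continuous feedback via Lemma \ref{lemm:continuity_initial-variable} and Skorokhod, then approximation of a general control by bounded continuous feedbacks as in step $(ii)$ of Proposition \ref{propogation of chaos-2}). The only small slip is the claimed equality $J(\alpha^k)=\lim_{m\to\infty}J_{\nu^m}(\alpha^{k,m})$: since $L$ and $g$ are only lower semi--continuous under Assumption \ref{assum:main1}, this should be $J(\alpha^k)\le\liminf_{m\to\infty}J_{\nu^m}(\alpha^{k,m})$, which is exactly what the coercivity/growth bounds and the $p^\prime$--uniform integrability deliver, and which suffices for the final conclusion.
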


\subsubsection{Tightness of the optimal control rules}

	Let us now stay in the context of \Cref{thm:limit} and prove that the set of optimal or $\varepsilon$--optimal control rules is tight.
	Recall that \Cref{assum:main1} and \Cref{assum:constant_case} hold true, 
	$A \subset \R^j$ for some $j \ge 1$, and both $L$ and $g$ are continuous in all their arguments.
	Let $N \ge 1$, $(\nu, \nu^1, \dots, \nu^N) \subset \Pc_p(\R^n)$, $\alpha \in \Ac(\nu)$ and $(\alpha^1, \dots, \alpha^N) \in \Ac^N(\nu_N)$.
	$\P^N(\alpha^1, \dots, \alpha^N)$ is a probability measure on $\Omb$ defined by \eqref{eq:def_PN}.

	\begin{proposition} \label{Propostion:convergence}
    
		$(i)$ In the context of {\rm\Cref{thm:limit}},
		Let $(\nu^i)_{i \ge 1} \subset \Pc_p(\R^n)$ satisfy
		$\sup_{N \ge 1} 
			\frac{1}{N}\sum_{i=1}^N \int_{\R^n}| x|^{p^\prime} \nu^i(\mathrm{d} x) < 
			\infty
		$
		and
		$(\Pb^N)_{N \ge 1} \subset \Pc_p(\Omb)$ satisfy \eqref{eq:eps_optimal_ctrl},
		then both $(\frac1N \sum_{i=1}^N \nu^i)_{ N \ge 1}$ and $(\Pb^N)_{N \ge 1}$ are relatively compact under $\Wc_{p}$.
		{\color{black}
		Moreover, for any converging subsequence $(\Pb^{N_m})_{m \ge 1}$, we have
		\[
			\lim_{m \to \infty} \Wc_p \bigg( \frac{1}{N_{m}}\sum_{i=1}^{N_{m}} \nu^i , \nu \bigg) = 0,
			~\mbox{\rm for some}\; \nu \in \Pc_p(\R^n),
			~\mbox{\rm and}~
			\lim_{m \to \infty} \Wc_{p} \big( \Pb^{N_{m}}, \Pb^\infty \big)=0,
			~\mbox{\rm for some}
			~\Pb^\infty \in  \Pcb_R(\nu).
		\]
		}

		\medskip

		$(ii)$ In the context of {\rm\Cref{Proposition:Continuity-Existence}}, 
		let  $(\varepsilon_m)_{m \ge 1} \subset \R_+$ be such that $\lim_{m \to \infty} \varepsilon_m = 0$,
		$(\Pb^m)_{m \ge 1}$ be a sequence such that
		\[
			\Pb^m \in \Pcb_R(\nu^m),
			~\mbox{\rm and}~
			J(\Pb^m) \ge V_S(\nu^m) - \varepsilon_m.
		\]
		Then the sequence $(\Pb^m)_{m \ge 1}$ is relatively compact,
		and moreover, any cluster point of $(\Pb^m)_{m \ge 1}$ belongs to $\Pcb_R(\nu)$.
	\end{proposition}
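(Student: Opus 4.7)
My proof would establish $(i)$ in three stages: a moment bound, a tightness argument, and an identification of the limit as a relaxed control rule. The starting point is the coercivity condition in \eqref{eq:cond_coercive}. The constant control $\alpha^i \equiv a_0$ is admissible and, by \Cref{lemma:estimates} and the growth bounds of \Cref{assum:main1}, gives a value $J_N(a_0,\dots,a_0)$ that is uniformly bounded below by a constant depending only on $\sup_N N^{-1}\sum_i \int |x|^{p'}\nu^i(\mathrm{d}x)$. Combined with the $\varepsilon_N$-optimality \eqref{eq:eps_optimal_ctrl}, the upper bound $L(t,\xb,\nub,a) \leq C(1+\|\xb\|^p + \int(\|\xb'\|^p+\rho(a_0,a')^p)\nub) - C_L\rho(a_0,a)^{p'}$, and the growth of $g$, rearranging yields
\[
\sup_{N \geq 1} \frac{1}{N}\sum_{i=1}^N \E^{\P^N_\nu}\!\bigg[\int_0^T \rho(\alpha^{N,i}_t,a_0)^{p'}\mathrm{d}t\bigg] < \infty.
\]
Applying \Cref{lemma:estimates} with $q=p'$ then propagates this to a uniform $p'$-bound on $\E[\|X^{\alpha,i}\|^{p'}]$ and $\E[\|Y^{\alpha,i}\|^{p'}]$.

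Second, I would deduce relative compactness of $(\Pb^N)$ in $(\Pc_p(\Omb), \Wc_p)$. Tightness on each coordinate follows by standard arguments: $(X,Y)$ are tight on $\Cc^n \times \Cc^n$ by Kolmogorov--Chentsov applied to the SDE \eqref{eq:N-agents_StrongMV_CommonNoise} using moment estimates on $(b,\sigma,\sigma_0)$ and the Burkholder--Davis--Gundy inequality; $\Lambda$ is tight on $\M$ because the $p'$-moment bound on controls keeps most of its mass inside compact subsets of $A$; $W$ and $B$ are Wiener distributed; and $\muh$ is tight on $\Pc(\Omh)$ because its expectation under $\Pb^N$ is the $\Omh$-marginal of $\Pb^N$, itself already tight. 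The strict inequality $p' > p$ upgrades weak convergence to $\Wc_p$-convergence via uniform $p$-integrability, and the same $p'$-bound gives $\Wc_p$-relative compactness of $(N^{-1}\sum_i \nu^i)$ in $\Pc_p(\R^n)$.

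Third, for any convergent subsequence $\Pb^{N_m} \to \Pb^\infty$ with $N_m^{-1}\sum_i \nu^i \to \nu$, I would verify $\Pb^\infty \in \Pcb_R(\nu)$ by checking all conditions in \Cref{def:admissible_ctrl_rule} and \Cref{def:relaxed_ctrl_rule}. The initial marginal and the pathwise martingale property for $\Sb^\varphi$ pass to the limit by $\Wc_p$-continuity of the coefficients. The key step is the conditional structure \eqref{eq:muh_property} and the conditional martingale property for $\widehat S^{\varphi,\mu(\omb)}$, which under $\Pb^N$ are only approximate: the empirical measure $\bar\varphi_N$ constructed from the symmetric $N$-particle system approximates the conditional law given $B$ up to an $L^p$-error vanishing by a conditional propagation-of-chaos argument in the spirit of \Cref{lemma of chaos-1}, using the conditional independence of the particles given $B$. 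The main obstacle is that conditional expectations are generically discontinuous in the joint law; this is circumvented exactly as in the closedness proof in part $(i.3)$ of \Cref{thm:equivalence}, by exploiting that $\muh$ is an explicit canonical coordinate, so that the conditioning reduces to a deterministic evaluation at $\mu(\omb)$ for which continuity is available (cf.\ \Cref{rem:ContinuityJ}). Part $(ii)$ is analogous but simpler: \Cref{lemma:estimates} together with the coercivity argument applied to $\varepsilon_m$-optimality yields tightness of $(\Pb^m) \subset \Pcb_R(\nu^m)$ in $\Pc_p(\Omb)$, and since each $\Pb^m$ is already a relaxed control rule, the closedness arguments from \Cref{thm:equivalence}.$(i)$, adapted to a varying initial condition via $\Wc_p$-continuity of the initial marginals, place every cluster point in $\Pcb_R(\nu)$.
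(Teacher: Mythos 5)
Stages 1 and 2 of your proposal are essentially identical to the paper: the coercivity condition \eqref{eq:cond_coercive} applied to $\varepsilon_N$-optimality against the benchmark constant control $a_0$ yields a uniform $p'$-bound on $\frac1N\sum_i\E[\int_0^T\rho(\alpha^{N,i}_t,a_0)^{p'}\mathrm dt]$, which with \Cref{lemma:estimates} and $p'>p$ gives $\Wc_p$-relative compactness, exactly as in the paper.

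Stage 3 has a genuine gap. You write that the empirical measure $\bar\varphi_N$ "approximates the conditional law given $B$ up to an $L^p$-error vanishing by a conditional propagation-of-chaos argument in the spirit of \Cref{lemma of chaos-1}, using the conditional independence of the particles given $B$." This is the wrong mechanism, and it would fail here: \Cref{lemma of chaos-1} is proved for controls of the feedback form $\alpha^i_t = \phi(t,X^i_0,W^i_{t\wedge\cdot},B_{t\wedge\cdot})$ with a common $\phi$, which makes the particles conditionally i.i.d.\ given $B$. In \Cref{Propostion:convergence}, the controls $\alpha^{N,i}$ are arbitrary $\varepsilon_N$-optimal controls in $\Ac^N_p(\nu_N)$; in particular $\alpha^{N,i}$ may depend on $W^j$ for $j\neq i$, so the particles are \emph{not} conditionally independent given $B$, and the empirical measure need not converge to any conditional law. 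Propagation of chaos is simply not available at this stage.

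The paper proceeds differently. For the identification of the conditional-law structure \eqref{eq:muh_property}, it does not approximate anything: it uses the \emph{exchangeability of the weighting} in $\Pb^N=\frac1N\sum_i\Lc^{\P^N_\nu}(X^{\alpha,i},Y^{\alpha,i},\dots,\overline\varphi_N)$, which yields the exact algebraic identity $\frac1N\sum_i\E^{\P^N_\nu}[\phi(X^i_{t\wedge\cdot},\dots)\psi(B,\overline\varphi_N)]=\E^{\P^N_\nu}[\E^{\overline\varphi_N}[\phi(\Xh,\dots)]\psi(B,\overline\varphi_N)]$; passing to the limit is then a continuity statement. For the verification that $B$ is a Brownian motion, the paper conditions on $\Fc^{N,W}=\sigma(W^1,\dots,W^N)$ (not on $B$), exploiting the structural independence of $B$ from $(X_0,W^1,\dots,W^N)$ under $\P^N_\nu$. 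Finally, for the conditional martingale property of $\widehat S^{\varphi,\mu(\omb)}$ — which you wave at but do not address — the crucial tool is the \emph{orthogonality} of the martingales $\widehat S^{\varphi,N,i}$, $i=1,\dots,N$ (each driven by its own $W^i$): orthogonality turns $\E^{\Pb^N}\big[|\langle(\Mh^{\varphi,\mu}_t-\Mh^{\varphi,\mu}_r)\Psi_r,\muh\rangle|^2\big]$ into an average of $N$ uncorrelated terms of size $O(1)$, producing the $O(1/N)$ bound that lets one multiply by arbitrary $\phi(\muh)$ and still get $0$ in the limit. Your proposal does not supply this ingredient, and without it the argument cannot close: knowing $\E^{\Pb}[\langle(\Mh^{\varphi,\mu}_t-\Mh^{\varphi,\mu}_r)\Psi_r,\muh\rangle]=0$ is strictly weaker than the required $\Pb$-a.s.\ statement.
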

	\begin{proof}
	We will only consider $(i)$, since the proof of $(ii)$ is identical.
	
\medskip
	
	\underline{\it Tightness}: 
	To prove the tightness of $(\Pb^N)_{N \ge 1}$ under $\Wc_p$, we adapt the proof of \cite[Proposition 3.5.]{lacker2017limit} to our context.
	First, let us define control processes $(\alpha^{0,i})_{i \ge 1}$ by $ \alpha^{0,i}_t \equiv  a_0$ for all  $t \in [0,T]$ and $i \ge 1$, and denote  $\Pb^N_0:=\P^N(\alpha^{0,1}, \dots, \alpha^{0,N})$.
	By Lemma \ref{lemma:estimates},  there exist some constants $K$, $K^\prime>0$, such that for all $N \ge 1$
	\[
		J\big(\Pb^N_0\big) 
		\ge
		-K
		\bigg(1+\E^{\Pb^N_0} \bigg[ \sup_{t \in [0,T]}|X_t|^p \bigg]
		\bigg) 
		= 
		-K
		\bigg(
			1+\frac{1}{N}\sum_{i=1}^N\E^{\P^N_\nu} \bigg[\sup_{t \in [0,T]} \big| X^{\alpha^0,i}_t \big|^p \bigg]
		\bigg)
		\ge
		-K^\prime\bigg( 
			1+ \frac{1}{N}\sum_{i=1}^N \int_{\R^n} |x|^p \nu^i(\mathrm{d}x)  
		\bigg).
	\]
	Since by \eqref{eq:eps_optimal_ctrl} 
	\[ J\big(\Pb^N\big) \ge V^N_S(\nu^1, \dots, \nu^N) -\varepsilon_N \ge J\big(\Pb^N_0\big)-\varepsilon_N,\] 
	it follows that
	$J\big(\Pb^N\big) \ge -C$, for some constant $C$ independent of $N$.
	Using again \Cref{lemma:estimates}, the coercivity condition \eqref{eq:cond_coercive}, and the growth conditions in \Cref{assum:main1},
	it follows that 
	\[
		J\big(\Pb^N\big)
		\le 
		K
		\bigg( 
			1 + \int_{\R^n}|x^\prime|^p \frac{1}{N}\sum_{i=1}^N \nu^i(\mathrm{d}x^{\prime})
			+ 
			\frac{1}{N}\sum_{i=1}^N\E^{\P^N_\nu}\bigg[ \int_0^T  \big| \alpha^{i,N}_t - a_0 \big|^{p}\mathrm{d}t \bigg]  
		\bigg)
		- 
		C_L \frac{1}{N}\sum_{i=1}^N\E^{\P^N_\nu}\bigg[ \int_0^T \big| \alpha^{i,N}_t - a_0 \big|^{p^{\prime}} \mathrm{d}t \bigg].
        \]
	Then, there exists some constant $C >0$, independent of $N$, such that
	\[
		C_L \frac{1}{N}\sum_{i=1}^N\E^{\P^N_\nu}\bigg[ \int_0^T  \big| \alpha^{i,N}_t - a_0 \big|^{p^{\prime}}\mathrm{d}t \bigg] 
		-
		K\frac{1}{N}\sum_{i=1}^N \E^{\P^N_\nu}\bigg[ \int_0^T  \big| \alpha^{i,N}_t - a_0 \big|^{p} \mathrm{d}t \bigg] 
		< 
		C.
	\]
	Since $p^\prime > p$, it follows that
	\begin{equation} \label{eq:integ_alpha_inter}
		\sup_{N \ge 1}  \frac{1}{N}\sum_{i=1}^N\E^{\P^N_\nu}\bigg[ \int_0^T  \big| \alpha^{i,N}_t - a_0 \big|^{p^{\prime}}\mathrm{d}t \bigg] 
		< 
		\infty.
	\end{equation}
	With the condition $\sup_{N\geq 1} \frac{1}{N}\sum_{i=1}^N \int_{\R^n} | x |^{p^\prime} \nu^i(\mathrm{d}x) < \infty$,
	and by similar arguments as in \cite[Proposition 3.5.]{lacker2017limit}, 
	it is easy to deduce that both $(\frac1N \sum_{i=1}^N \nu^i)_{ N \ge 1}$ and $(\Pb^N)_{N \ge 1}$ are relatively compact under $\Wc_{p}$.
	 
\medskip

	\underline{\it Identification of the limit}:
	Up to a subsequence, let us assume w.l.o.g. that
	\[
	{\color{black}
		\Lim_{N \to \infty} \Wc_{p} \big( \Pb^{N}, \Pb \big)=0,
		~\mbox{for some}~
		\Pb \in \Pc_p(\Omb),
		~\mbox{so that}~
		\lim_{N \to \infty} \Wc_p \bigg( \frac{1}{N}\sum_{i=1}^{N} \nu^i , \nu \bigg) = 0,
		~\mbox{with}~\nu := \Pb \circ X_0^{-1} \in \Pc_p(\R^n),		
	}
	\]
	and then prove that $\Pb \in \Pcb_R(\Omb)$.
 	To this end, it is enough, by \Cref{prop:eqivalence_def_relaxed}, to prove that $\Pb$ satisfies the following properties
	\begin{itemize}
	\item[$(i)$] $\Pb \big[ \muh \circ (X_0)^{-1}=\nu,X_0=Y_0, W_0=0, B_0=0 \big] = 1$;
	
	\item[$(ii)$] $\E^{\Pb} \big[ \|X\|^p + \int_{[0,T]\x A} \big( \rho(a_0, a) \big)^p \Lambda_t(\mathrm{d}a) \mathrm{d}t \big]  < \infty$;
	
	\item[$(iii)$] $\muh$ satisfies \eqref{eq:muh_property} under $\Pb$;
	\item[$(iv)$] $(B_t)_{t \in [0,T]}$ is an $(\Fb,\Pb)$--Brownian motion;
	\item[$(v)$] the process $( S^{f}_t)_{t \in [0,T]}$ (defined in \eqref{eq:K_process}) is an $(\Fb^\circ,\Pb)$--martingale w.r.t. the filtration 
	$\Fb^\circ=(\Fc^{\circ}_t)_{t \in [0,T]}$ defined by $\Fcb^{\circ}_t:=\sigma (X_{t \wedge \cdot},Y_{t \wedge \cdot},B_{t \wedge \cdot}, \mu_t)$ for all $f \in C^2_b(\R^n \x \R^\ell)$;
	\item[$(vi)$] finally, for $\Pb$--a.e. $\omb \in \Omb$,
	$\big(\Mh^{\varphi,\mu(\omb)}_t\big)_{t \in [0,T]}$ (defined in \eqref{eq:Mvarphi})
	is an $\big(\Fh,\muh(\omb) \big)$--martingale for all $\varphi \in C^2_b(\R^n \x \R^d)$.
\end{itemize}

	First, let us consider two bounded continuous functions $h^1$, $h^2$ in $C_b(\R^n)$, we have
	\begin{align*}
		\E^{\Pb}
		\big[ 
			\langle h^1 ,\muh \circ (X_{0})^{-1} \rangle
			\langle h^2 ,\muh \circ (X_{0})^{-1} \rangle
		\big]
		&=
		\lim_{N\to\infty} \frac{1}{N^2}\sum_{i,j=1}^N \E^{\P^N_{\nu}}\big[  h^1(X^{i}_0) h^2(X^{j}_0)\big] 
		\\
		&=
		\lim_{N\to\infty} \frac{1}{N^2}\sum_{i=1}^N\langle h^1 h^2,\nu^i \rangle
		+
		\lim_{N\to\infty} \frac{1}{N^2}\sum_{i \neq j}^N\langle h^1,\nu^i \rangle \langle h^2,\nu^j \rangle\\
		&=
		\lim_{N\to\infty}
		\bigg \langle h^1, \frac{1}{N}\sum_{i=1}^N \nu^i \bigg \rangle \bigg \langle h^2, \frac{1}{N}\sum_{i=1}^N \nu^i \bigg \rangle
		=
		\langle h^1,\nu \rangle
		\langle h^2,\nu \rangle.
        \end{align*}
	Using similar arguments, we can deduce that for all $k \ge 1$ and bounded continuous functions $h^1, \dots, h^k \in C_b(\R^n)$
	\[
		\E^{\Pb}
		\big[ 
			\Pi_{i=1}^i \langle h^i ,\muh \circ (X_{0})^{-1} \rangle
		\big]
		=
		\Pi_{i=1}^k \langle h^i,\nu \rangle,
		~\mbox{and hence}~
		\Pb[ \muh \circ (X_0)^{-1} = \nu ] = 1.
	\]
	Besides, with the definition of $\P^N_{\nu}$ in \Cref{subsec:N_controlpb},
	and then by \eqref{eq:integ_alpha_inter}, it is easy to deduce that
	\[
		\Pb \big[ X_0=Y_0, W_0=0, B_0=0 \big]=1,
		~\mbox{and}~
		\E^{\Pb} \bigg[\|X\|^p + \iint_{[0,T]\times A} (\rho(a_0,a))^p \Lambda_t(\mathrm{d}a)\mathrm{d}t \bigg] < \infty.
	\]
        
	Next, notice that, for all $\phi \in C_b(\Cc^n \x \Cc^n \x \M \x \Cc^d)$ and  $\psi \in C_b(\Cc^{\ell} \x \Pc(\Omh))$,
	\begin{align*}
	    &\E^{\Pb}
		\big[ \phi \big(X_{t \wedge \cdot}, Y_{t \wedge \cdot}, \Lambda^t,W \big) \psi \big(B,\muh \big) \big]
		\\
		&=
		\Lim_{N\to\infty} \E^{\Pb^N}
		\big[ \phi \big(X_{t \wedge \cdot}, Y_{t \wedge \cdot}, \Lambda^t,W \big) \psi \big(B,\muh \big) \big]
		=\Lim_{N\to\infty} \frac{1}{N} \sum_{i=1}^N \E^{\P^N_{\nu}}
		\big[ \phi \big(X^i_{t \wedge \cdot}, Y^i_{t \wedge \cdot}, (\delta_{\alpha^{i,N}_s}(\mathrm{d}a)\mathrm{d}s)^t,W^i \big) \psi \big(B,\overline{\varphi}_{N}  \big) \big]
		\\
		&=\Lim_{N\to\infty} \E^{\P^N_{\nu}}
		\Big[ \E^{\overline{\varphi}_N} \big[ \phi \big(\Xh_{t \wedge \cdot}, \Yh_{t \wedge \cdot}, (\Lambdah)^t,\Wh \big) \big] \psi \big(B,\overline{\varphi}_N  \big) \Big]
		=
		\E^{\Pb}
		\Big[ 
			\E^{{\hat \mu}}\big[ \phi \big( \Xh_{t \wedge \cdot}, \Yh_{t \wedge \cdot},\Lambdah^t, \Wh \big) \big] 
			\psi \big(B,\muh \big) 
		\Big],
	\end{align*}
	which implies that $\muh$ satisfies \eqref{eq:muh_property} under $\Pb$
	that is, for $\Pb$--a.e. $\omb \in \Omb$, 
	\[
		\muh_t(\omb)
		=
		\Pb^{\Gcb_T}_{\omb} \circ \big(X_{t \wedge \cdot},Y_{t \wedge \cdot},\Lambda^t,W \big)^{-1}.
	\]

	We next show that $(B_t)_{t \in [0,T]}$ is an $(\Fb,\Pb)$--Brownian motion.
	First, since $\P^N_{\nu} \circ B^{-1}$ is the Wiener measure, 
	it is clear that $\Pb \circ B^{-1}$ is also the Wiener measure.
	Next, let $\phi \in C_b(\Omb)$, for all $s \in [0,T]$, we define the random variables
	\[
		\Phi_s 
		:= 
		\phi \big(X_{s \wedge \cdot}, Y_{s \wedge \cdot}, \Lambda^s, W_{s \wedge \cdot}, B_{s \wedge \cdot}, \muh_{s \wedge \cdot} \big)
		~\mbox{on}~\Omb,
		~\mbox{and}~
		\Phi^i_s
		:=
		\phi \big(X^{\alpha,i}_{s \wedge \cdot}, Y^{\alpha,i}_{s \wedge \cdot}, (\delta_{\alpha^{i,N}_t}(\mathrm{d}a)\mathrm{d}t)^s, W^i_{s \wedge \cdot}, B_{s \wedge \cdot},\overline{\varphi}^N_{s \wedge \cdot} \big)
		~\mbox{on}~(\Om^N,\Fc^N).
	\]
	On $(\Om^N,\Fc^N)$, we introduce the $\sigma$--algebra $\Fc^{N,W} := \sigma \{ W^1,\dots,W^N\}$.
	Then, for all $\psi \in C_b(\R^{\ell})$ and $t \ge s$
	\begin{align*}
		\E^{\Pb} \big[ \psi (B_t-B_s) \Phi_s \big]
		&=
		\lim_{N\to \infty} \frac{1}{N} \sum_{i=1}^N \E^{\P^N_\nu} \big[ \psi(B_t-B_s) \Phi^{i}_s \big]
		=
		\lim_{N\to \infty} \frac{1}{N} \sum_{i=1}^N 
			\E^{\P^N_\nu} \Big[ \E^{\P^N_\nu} \big[ \psi(B_t-B_s)  \Phi^{i}_s \big| \Fc^{N,W} \big] \Big] \\
		&=
		\lim_{N\to \infty} \frac{1}{N} \sum_{i=1}^N 
			\E^{\P^N_\nu} \Big[ \E^{\P^N_\nu} \big[ \psi(B_t-B_s)\big| \Fc^{N,W} \big] 
				\E^{\P^N_\nu} \big[ {\Phi}^{i}_s \big| \Fc^{N,W} \big] 
			\Big]\\
		&=
		\lim_{N\to \infty}  
			\frac{1}{N} \sum_{i=1}^N
			\E^{\P^N_\nu} \big[ \psi(B_t-B_s) \big]
			\E^{\P^N_\nu} \big[ {\Phi}^{i}_s \big] 
		=
		\E^{\Pb} \big[ \psi(B_t-B_s)\big] \E^{\Pb} \big[\Phi_s \big].
	\end{align*}
	This implies that $B$ is an $(\Fb,\Pb)$--Brownian motion.

\medskip	
	We finally consider the two martingale problems in \Cref{prop:eqivalence_def_relaxed},
	for which we can adapt the proofs in \cite[Proposition 5.1.]{lacker2017limit}.
	Let $\varphi \in C_b^2 (\R^{n}\x \R^d)$, 
	$f \in C_b(\R^n \x \R^\ell)$,
	$\psi \in C_b(\Omh)$, 
	$\phi \in C_b(\Pc(\Omh))$
	and $\beta \in C_b(\Cc^n \x \Cc^n \x \Cc^\ell \x \Pc(\Cc^n))$.
	In addition, on $(\Om^N,\Fc^N)$, we define the processes $\widehat{S}^{\varphi,N,i}$ for $i =1,\dots,N$ by
	\[
		\widehat{S}^{\varphi,N,i}_t
		:=
		\varphi \big(Y^{\alpha,i}_t, W^i_t \big)-\varphi(Y^{\alpha,i}_0, W^i_0)
		-
		\int_0^t  \widehat \Lc_s \varphi\big(X^{\alpha,i}, Y^{\alpha,i}, W^i, \alpha^{i,N}_s, \varphi^{N,X}_s \big) \mathrm{d}s,
		~t \in [0,T],
	\]
	where $\widehat \Lc$ is defined in \eqref{eq:conditionnal_generator}.
	Then $(\widehat{S}^{\varphi,N,i})_{i \in \{1,\dots,N\}}$ are $(\P^N_\nu,\F^N)$--orthogonal martingales with quadratic variation
	\[
		\bigg(\int_0^t \big|\sigma \big(r, X^{\alpha,i}, \varphi^{N,X}_r,\alpha^{i,N}_s \big) \nabla \varphi(X^{\alpha,i}_r) \big|^2 \mathrm{d}r \bigg)_{t \in [0,T]},
		~i=1, \dots, N.
	\]
	Denote
	\[
		\big \langle \big(\Mh^{\varphi,\mu}_t - \Mh^{\varphi,\mu}_r\big) \Psi_r,\muh \big \rangle
		:=
		\E^{\muh} \Big[ 
			\big(\Mh^{\varphi,\mu}_t- \Mh^{\varphi,\mu}_r \big) 
			\psi(\Xh_{r \wedge \cdot}, \Yh_{r \wedge \cdot},\Lambdah^r,\Wh_{r \wedge \cdot})  
		\Big],
	\]
	it follows by direct computation that, for some constant $C>0$ whose value may vary from line to line
	\begin{align*}
		&\
		\Big|\E^{\Pb}
		\big[
			\phi(\muh) ~ \big\langle \big(\Mh^{\varphi,\mu}_t - \Mh^{\varphi,\mu}_r \big) \Psi_r, \muh \big \rangle 
		\big] \Big|
		=
		\lim_{N\to\infty}
		\Big| \E^{\Pb^{N}}
		\big[
			\phi(\muh) ~\big\langle \big( \Mh^{\varphi,\mu}_t - \Mh^{\varphi,\mu}_r \big) \Psi_r, \muh \big\rangle 
		\big] \Big|
		\\
		\le&\
		\limsup_{N\to\infty}
		\E^{\Pb^{N}}\big[ \big| \phi(\muh) \big|^2 \big]^{1/2}
		\E^{\Pb^{N}}\Big[
			\big| \big\langle \big( \Mh^{\varphi,\mu}_t- \Mh^{\varphi,\mu}_r \big) \Psi_r, \muh \big \rangle \big|^2 
		\Big]^{1/2}
		\\
		=&\
		\limsup_{N\to\infty}
		C\E^{\Pb}\big[ \big| \phi(\muh) \big|^2 \big]^{1/2}
		\E^{\P^N_\nu}\bigg[
			\bigg|\frac{1}{N} \sum_{i=1}^N ( \widehat{S}^{\varphi,N,i}_t - \widehat{S}^{\varphi,N,i}_r)\psi \big(X^{\alpha,i}_{r \wedge \cdot},Y^{\alpha,i}_{r \wedge \cdot},(\Lambda^i)^r,W^i_{r \wedge \cdot} \big)\bigg|^2 
		\bigg]^{1/2}
		\\
		\le&\
		\limsup_{N\to\infty}
		C\E^{\Pb}\big[ \big|\phi(\muh) \big|^2 \big]^{1/2}
		\bigg(\frac{1}{N^2} \sum_{i=1}^N\E^{\P^N_\nu}\bigg[
		\int_r^t \big|\sigma \big(s, X^{\alpha,i}, \varphi^{N,X}_s,\alpha^{i,N}_s \big) \nabla \varphi(X^{\alpha,i}_s) \big|^2 \mathrm{d}s 
		\bigg] \bigg)^{1/2}
		\\
		 \le &\
		\limsup_{N\to\infty}
		C\E^{\Pb}\big[ \big|\phi(\muh) \big|^2 \big]^{1/2}
		\bigg(\frac{1}{N^2} \sum_{i=1}^N\E^{\P^N_\nu}\bigg[
		\int_r^t \big|X^{\alpha,i}_{s \wedge \cdot} \big|^p +\rho (a_0,\alpha^{i,N}_s)^p \mathrm{d}s 
		\bigg] \bigg)^{1/2}
		\le
		\limsup_{N\to\infty}
		\frac{C}{\sqrt{N}}
		=
		0.
        \end{align*}
	This implies that, for  $\Pb$--a.e. $\omb \in \Omb$
	\[
		\big \langle \big(\Mh^{\varphi,\mu(\omb)}_t- \Mh^{\varphi,\mu(\omb)}_r \big) \Psi_r,\muh(\omb) \big \rangle
		=
		0.
	\]

	Similarly,  with $\hat a_0$ defined in equation \eqref{eq:def_hat_a_0} and  $Z^i:=X^{\alpha,i}-Y^{\alpha,i}$,
	let us introduce 
	$(S^{f,N, i}_t)_{t \in [0,T]}$ on $\Om^N$ by
	\[
		S^{f,N, i}_t
		:=
		f \big(Z^i_t, B_t \big)-\varphi(Z^i_0, B_0)
		-
		\int_0^t \frac{1}{2}\mathrm{Tr}\big[ \hat a_0(s, X^{\alpha,i}, \varphi^{N,X}_s) \nabla^2 \varphi(Z^i_s,B_s)\big]\mathrm{d}s,
		~ t \in [0,T].
	\]
	Denoting $\Lambda^i(\mathrm{d}a)\mathrm{d}t:=\delta_{\alpha^{i,N}_t}(\mathrm{d}a)\mathrm{d}t$,
	and applying the same arguments as above, it follows that
	\[
		\E^{\Pb} \big[ 
			\big(S^f_t-S^f_r \big) \beta \big( X_{r \wedge \cdot}, Y_{r \wedge \cdot}, B_{r \wedge \cdot}, \mu_{r} \big) 
		\big]
		=
		\Lim_{N\to \infty} 
		\frac{1}{N} \sum_{i=1}^N
		\E^{\P^N_\nu}\Big[ \big(S^{f,N, i}_t-S^{f,N, i}_r \big) \beta \big( X^{\alpha,i}_{r \wedge \cdot}, Y^{\alpha,i}_{r \wedge \cdot}, B_{r \wedge \cdot}, \varphi^{N,X}_{r} \big) \Big]
		=
		0.
	\]
	Finally, by considering $(r,t, \psi, \phi)$ in a countable dense subset of 
	$[0,T] \x [0,T] \x C_b(\Omh) \x C_b(\Pc(\Omh))$, 
	it follows that the process $\big( S^{f}_t\big)_{t \in [0,T]}$ is an $( \Fb^\circ,\Pb)$--martingale 
	for all $f \in C^2_b(\R^n \x \R^\ell)$,
	and for $\Pb$--a.e. $\omb \in \Omb$,
	$(\Mh^{\varphi,\mu(\omb)}_t)_{t \in [0,T]}$
	is an $\big(\Fh,\muh(\omb) \big)$--martingale for all $\varphi \in C^2_b(\R^n \x \R^d)$.
	We then conclude that $\Pb \in \Pcb_R(\nu)$.
	\end{proof}

\subsubsection{Proof of Proposition  \ref{Proposition:Continuity-Existence}}

	Let $\nu \in  \Pc_{p^{\prime}}(\R^n)$ and $(\nu^m)_{m \ge 1} \subset \Pc_{p^{\prime}}(\R^n)$ be such that
	$\sup_{m \ge 1} \int_{\R^n}\|x^\prime\|^{p^\prime} \nu^m(\mathrm{d}x^\prime) < \infty$ and  $\lim_{m \to \infty} \Wc_{p}(\nu^m,\nu)=0$.  
	
\medskip
	
	We first consider two sequences $(\eps_m)_{m \ge 1} \subset \R_+$ and $(\Pb^m)_{m \ge 1}$ such that
	\[
		\lim_{m \to \infty} \eps^m=0,
		\;
		\Pb^m \in \Pcb_R(\nu^m),
		~\mbox{and}~ 
		J(\Pb^m) \ge V_R(\nu^m) - \eps^m,
		~\mbox{for all}~m \ge 1.
	\]
	It follows by \Cref{Propostion:convergence} that $(\Pb^m)_{m \in \N}$ is relatively compact under $\Wc_p$.
	Via a subsequence, let us assume that $\Pb^m \to \Pb^{\infty}$ under $\Wc_p$,
	so that $\Pb^{\infty} \in \Pcb_R(\nu)$.
	Using the continuity and growth conditions of $(L,g)$ in \Cref{assum:main1} and \Cref{assum:constant_case}, 
	it follows that $\lim_{m \to \infty} J(\Pb^m) = J(\Pb^{\infty})$, and therefore
	\[
		\limsup_{m \to \infty} V_R(\nu^{m}) 
		\le
		\lim_{m \to \infty} J(\Pb^{m})
		=
		J(\Pb^\infty)
		\le 
		V_R(\nu)
		=
		V_S(\nu).
	\]
	Together with the inequality from \Cref{propogation of chaos-2p},
	we then conclude the proof.
	\qed

\subsubsection{Proof of Theorem \ref{thm:limit}}
    
	$(i)$
	By \Cref{Propostion:convergence},
	the sequence $(\Pb^N)_{N \ge 1}$ is relatively compact under $\Wc_p$. 
	Further, for any convergent subsequence  $(\Pb^{N_m})_{m \ge 1}$, one has
	\[
		\lim_{m \to \infty} \Wc_p \bigg( \frac{1}{N_m}\sum_{i=1}^{N_m} \nu^i , \nu \bigg) = 0,
		~\mbox{for some}~\nu \in \Pc_p(\R^n),
		~\mbox{and}~
		\Lim_{m \to \infty} \Wc_{p} \big( \Pb^{N_m}, \Pb^{\infty} \big)=0,
		~\mbox{for some}~
		\Pb^{\infty} \in \Pcb_R(\nu).
	\]
	Moreover,
	under \Cref{assum:main1} and \Cref{assum:constant_case}, 
	it follows  by \eqref{eq:eps_optimal_ctrl}  and \Cref{rem:ContinuityJ} that
	\[
		\limsup_{N \to \infty} V_S^N(\nu^1,\dots,\nu^N) 
		\le
		\lim_{m \to \infty} J(\Pb^{N_m})
		=
		J(\Pb^{\infty})
		\le
		V_R(\nu)
		=
		V_S(\nu).
	\]
	Together with \Cref{propogation of chaos-2}, one obtains that
	\begin{equation} \label{eq:VN2V_inter}
		\lim_{N \to \infty} V_S^N(\nu^1,\dots,\nu^N)
		=
		J(\Pb^{\infty})
		=
		V_R(\nu)
		=
		V_S(\nu),
	\end{equation}
	and hence $\Pb^{\infty} \in \Pcb_R^\star(\nu)$.

\medskip
	
	{\color{black} $(ii)$ The second item is in fact a direct consequence of \Cref{prop:approximation}, \Cref{Equivalence-Proposition_General} and \Cref{propogation of chaos-2}.}

\medskip
	
	$(iii)$ 
	Finally, let  $(N_m)_{m \in \N}$ be a sequence such that
	\[
		\limsup_{N \to \infty} \bigg|V_S^N(\nu^1,\dots,\nu^N)-V_S \bigg( \frac{1}{N}\sum_{i=1}^{N}\nu^i \bigg) \bigg|
		=
		\lim_{m \to \infty} \bigg|V_S^{N_m}(\nu^1,\dots,\nu^{N_m})-V_S \bigg( \frac{1}{N}\sum_{i=1}^{N_m}\nu^i \bigg) \bigg|.
	\]
	One more time, through a subsequence, we can assume that 
	\[
		\frac{1}{N_m}\sum_{i=1}^{N_m}\nu^i \underset{m\to\infty}{\longrightarrow} \nu,
		~\mbox{under}~\Wc_p,
		~\mbox{for some}~
		\nu \in \Pc_p(\R^n).
	\]
	Using \eqref{eq:VN2V_inter} and \Cref{Proposition:Continuity-Existence}, we obtain that
	\[
		\limsup_{N \to \infty} \bigg|V_S^N(\nu^1,\dots,\nu^N)-V_S \bigg( \frac{1}{N}\sum_{i=1}^{N}\nu^i \bigg) \bigg|
		\le
		\lim_{m \to \infty} \bigg|V_S^{N_m}(\nu^1,\dots,\nu^{N_m})-V_S (\nu) \bigg|
		+
		\lim_{m \to \infty} \bigg|V_S(\nu) -V_S \bigg( \frac{1}{N}\sum_{i=1}^{N_m}\nu^i \bigg) \bigg|
		=
		0,
	\]
	and thus \eqref{eq:continuity_VNS_VS} holds true.
	\qed

\begin{appendix}   
\section{Some technical results} 

\subsection{Existence of weak solution to the McKean-Vlasov equations}
    
	Exceptionally, we do not impose Assumption \ref{assum:main1} in this subsection,
	but consider a weaker condition to prove the existence of weak solution to the McKean-Vlasov equation with initial distribution $\nu \in \Pc(\R^n)$
	(see Definition \ref{def:admissible_ctrl_rule}).
	
	\begin{assumption} \label{assum:weak-existence}
		The coefficient functions $(b, \sigma, \sigma_0)$ are continuous in $(\xb, \nub, a)$, and satisfy one of the following two conditions
		\begin{itemize}
			\item $p = 0$ and $(b, \sigma, \sigma_0)$ are bounded$;$
		
			\item $p \ge (1 \vee \hat p)$, for some $\hat p \in [0,2]$, and for all $(t,\xb, \nub, a) \in [0,T] \x \Cc^n \x \Pc(\Cc^n \x A)\x A $
			\begin{align*}
				| b(t,\xb, \nub, a)|
				&\le 
				C \bigg ( 1+ \| \xb \| + \Big( \int_{\Cc^n \x A}\big( \| \xb^\prime\|^p + \rho(a_0,a^\prime)^p\big) \nub(\mathrm{d}\xb^\prime,\mathrm{d}a^\prime) \Big)^{\frac{1}{p}} + \rho(a_0,a) \bigg ),\\
				| (\sigma,\sigma_0)(t, \xb, \nub, a)|^2
				&\le 
				C \bigg ( 1+ \| \xb \|^{\hat p} + \Big( \int_{\Cc^n\x A} \big(\| \xb^\prime\|^p + \rho(a_0,a^\prime)^p\big) \nub(\mathrm{d}\xb^\prime,\mathrm{d}a^\prime) \Big)^{\frac{\hat p}{p}} + \rho(a_0,a)^{\hat p} \bigg ).
			\end{align*}
		\end{itemize}

	\end{assumption}

	\begin{theorem} \label{Theorem:ExistenceSolutionMartingale}
		Let {\rm\Cref{assum:weak-existence}} hold true. Then
		
		\begin{itemize}
			\item when $p = 0$, then there exists $\Pb \in \Pcb_W(\nu)$ for all $\nu \in \Pc(\R^n);$
		
			\item when $p \ge 1$, assume in addition that $\nu \in \Pc(\R^n)$ for some  $p^{\prime} \in (p, \infty) \cup [2, \infty)$. 
			Then there exists $\Pb \in \Pcb_W(\nu)$ and it holds that $\E^{\Pb}\big[ \|X\|^{p^{\prime}} \big] < \infty$.
		\end{itemize}
	\end{theorem}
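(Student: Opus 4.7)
\textbf{Proof plan for \Cref{Theorem:ExistenceSolutionMartingale}.}

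The plan is to construct a weak control rule by a two--level approximation: first regularize the coefficients into globally Lipschitz coefficients to apply the strong well--posedness theory, then pass to the limit on the canonical space using compactness and the martingale problem formulation. Throughout, fix the constant control $\alpha_t\equiv a_0$, so that $\Lambda_t(\mathrm{d}a)\mathrm{d}t=\delta_{a_0}(\mathrm{d}a)\mathrm{d}t$ automatically, reducing the task to producing some $\Pb\in\Pcb_A(\nu)$ with $\Pb[\Lambda\in\M_0]=1$.

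\textbf{Step 1 (regularization and approximate solution).} Choose a sequence $(b_k,\sigma_k,\sigma_{0,k})_{k\ge 1}$ of coefficients that are globally Lipschitz in $(\xb,\nub)$ and that agree with $(b,\sigma,\sigma_0)$ on balls of radius $k$, while preserving the growth bounds of \Cref{assum:weak-existence} uniformly in $k$ (standard truncation and mollification on $\Cc^n\times\Pc(\Cc^n\x A)$). On the probability space $(\Om,\Fc,\P_\nu)$ from \Cref{subsec:strong_form}, the associated McKean--Vlasov SDE with control $a_0$ admits a unique strong solution $X^k$ (as in \cite[Theorem A.3]{djete2019mckean}). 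Setting $Y^k_t:=X^k_t-\int_0^t\sigma_{0,k}(r,X^k,\mub^{k}_r,a_0)\mathrm{d}B_r$ and $\muh^k:=\Lc^{\P_\nu}(X^k,Y^k,\delta_{a_0}(\mathrm{d}a)\mathrm{d}t,W\,|\,\Gc_T)$, the induced probability
\[
\Pb^k:=\P_\nu\circ\big(X^k,Y^k,\delta_{a_0}(\mathrm{d}a)\mathrm{d}t,W,B,\muh^k\big)^{-1}
\]
lies in $\Pcb_W(\nu)$ by \Cref{prop:strong_canonical} (it is a strong, hence weak, control rule for the regularized problem).

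\textbf{Step 2 (uniform moments and tightness).} Applying Burkholder--Davis--Gundy and Gr\"onwall to the SDE using only the growth bounds of \Cref{assum:weak-existence} (which hold uniformly in $k$), and using $\nu\in\Pc_{p^\prime}(\R^n)$ when $p\ge 1$ (or boundedness when $p=0$), we obtain $\sup_{k\ge 1}\E^{\Pb^k}[\|X\|^{p^\prime}]<\infty$ and an analogous bound on $\|Y\|^{p^\prime}$. Combined with the standard estimate on increments of It\^o integrals, Kolmogorov's tightness criterion yields relative compactness of $(\Lc^{\Pb^k}(X,Y,W,B))_{k\ge 1}$ in $\Pc_p$ over $\Cc^n\x\Cc^n\x\Cc^d\x\Cc^\ell$. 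Tightness of $\muh^k$ follows because it is a $\Pc_p$--valued random variable whose mean is the law $\Lc^{\Pb^k}(X,Y,\delta_{a_0}(\mathrm{d}a)\mathrm{d}t,W)$, already tight. Hence $(\Pb^k)_{k\ge 1}$ is relatively compact in $\Pc_p(\Omb)$ under $\Wc_p$; extract a subsequence (still indexed by $k$) with $\Pb^k\to\Pb$.

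\textbf{Step 3 (identification of the limit).} The properties $\Pb[X_0=Y_0,\ W_0=B_0=0]=1$, $\Pb\circ X_0^{-1}=\nu$, and the integrability of $\|X\|$ follow immediately from weak convergence and the uniform moment bounds. Since $\Pb^k\in\Pcb_W(\nu)$, the independence of $(X_0,W)$ and $\Gcb_T$ and the conditional--law identity \eqref{eq:muh_property} are encoded as identities between joint laws, namely
\[
\E^{\Pb^k}\big[\langle\phi,\muh_t\rangle\,\psi(B,\muh)\big]=\E^{\Pb^k}\big[\phi(X_{t\wedge\cdot},Y_{t\wedge\cdot},\Lambda^t,W)\,\psi(B,\muh)\big],
\]
for all $\phi\in C_b(\Cc^n\x\Cc^n\x\M\x\Cc^d)$ and $\psi\in C_b(\Cc^\ell\x\Pc(\Omh))$; by the $\Wc_p$ convergence $\Pb^k\to\Pb$, these identities pass to $\Pb$, yielding the H--hypothesis type condition in \Cref{def:admissible_ctrl_rule}.$(ii)$. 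To check the martingale problem \Cref{def:admissible_ctrl_rule}.$(iii)$, write for each $\varphi\in C^2_b(\R^{n+n+d+\ell})$
\[
\Sb^{\varphi,k}_t:=\varphi(X_t,Y_t,W_t,B_t)-\iint_{[0,t]\x A}\overline{\Lc}^k_s\varphi(X_s,Y_s,W_s,B_s,\mub_s,a)\Lambda_s(\mathrm{d}a)\mathrm{d}s,
\]
where $\overline{\Lc}^k$ uses $(b_k,\sigma_k,\sigma_{0,k})$; by construction $\Sb^{\varphi,k}$ is an $(\Fb,\Pb^k)$--martingale. Using the pointwise convergence $(b_k,\sigma_k,\sigma_{0,k})\to(b,\sigma,\sigma_0)$ on any compact set, the uniform $p^\prime$--moment bounds (which give uniform integrability in $\Wc_p$), and continuity of $(b,\sigma,\sigma_0)$ in $(\xb,\nub,a)$, we can pass to the limit in the integral defining $\Sb^{\varphi,k}_t$ along test functionals of $(X_{s\wedge\cdot},Y_{s\wedge\cdot},\Lambda^s,W_{s\wedge\cdot},B_{s\wedge\cdot},\muh_{s\wedge\cdot})$, yielding that $\Sb^{\varphi}$ is an $(\Fb,\Pb)$--martingale. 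Finally, $\Pb[\Lambda\in\M_0]=1$ by construction, so $\Pb\in\Pcb_W(\nu)$.

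\textbf{Main obstacle.} The delicate point is Step~3: neither the map $\muh\mapsto\mub_t$ nor the conditional--law operator is continuous on $\Pc_p(\Omh)$ in general (\Cref{rem:nonContinuityJ}), so the passage to the limit in the drift and diffusion coefficients that depend on $\mub_s$ requires carefully exploiting $\Wc_p$--convergence together with the continuity of $(b,\sigma,\sigma_0)$ in $\nub$ under the $\Wc_p$ metric on $\Pc_p(\Cc^n\x A)$ and the uniform $p^\prime$--integrability, rather than mere weak convergence of the joint law. This is reminiscent of the corresponding step in the proof of \Cref{Propostion:convergence}, which handles an analogous issue for the limit of the $N$--particle system, and the same machinery can be imported here.
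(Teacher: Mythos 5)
Your overall architecture — reduce to the constant control $a_0$, build approximating solutions, extract a $\Wc_p$--convergent subsequence of induced laws on $\Omb$, and identify the limit via the martingale problem using uniform integrability and continuity of the coefficients — is essentially the one the paper follows. Steps~2 and~3 of your proposal are aligned with the paper's Steps (including the uniform moment bound that gives tightness, and the compact-set argument for passing to the limit in $\Sb^{\varphi}$). The genuine difference is Step~1, and that is where your proposal has a gap.

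The paper constructs approximating solutions by an \emph{Euler time-discretization}: the frozen-argument SDE
\[
X^m_t = X_0 + \int_0^t b\big(r,X^m_{[r]^m\wedge\cdot},\mub^m_{[r]^m},a_0\big)\mathrm{d}r + \int_0^t\sigma\big(r,X^m_{[r]^m\wedge\cdot},\mub^m_{[r]^m},a_0\big)\mathrm{d}W_r + \int_0^t\sigma_0\big(r,X^m_{[r]^m\wedge\cdot},\mub^m_{[r]^m},a_0\big)\mathrm{d}B_r
\]
is explicitly solvable step by step, since within each time interval $[iT2^{-m},(i+1)T2^{-m})$ the coefficients are $\Fc_{iT2^{-m}}$--measurable constants. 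This requires no Lipschitz regularity of $(b,\sigma,\sigma_0)$ at all — only the growth bounds — which is crucial because \Cref{assum:weak-existence} does not grant Lipschitz continuity. You instead propose a \emph{coefficient regularization}: a sequence $(b_k,\sigma_k,\sigma_{0,k})$ that is globally Lipschitz in $(\xb,\nub)$, ``agrees with $(b,\sigma,\sigma_0)$ on balls of radius $k$'', and preserves the growth bounds, obtained by ``standard truncation and mollification on $\Cc^n\times\Pc(\Cc^n\x A)$''. This does not hold up: a globally Lipschitz function cannot coincide on a ball with a function that is merely continuous there, so exact agreement on balls is impossible; mollification by convolution requires a translation-invariant structure (Haar measure) that does not exist on $\big(\Pc(\Cc^n\times A),\Wc_p\big)$, and even on the Banach component $\Cc^n$ the convolution of a continuous map with linear growth does not automatically produce Lipschitz approximants; and the Moreau--Yosida (inf-convolution) device, which does produce Lipschitz approximants, only gives pointwise convergence of each component after an additional truncation to control unboundedness, and one would then need to track this extra limit and verify that the growth and non-degeneracy structure survive. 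None of this is routine in the Wasserstein-space setting, and your proposal leaves all of it implicit while leaning on the strong well-posedness theory (\cite[Theorem~A.3]{djete2019mckean}) which requires Lipschitz coefficients that you have not shown how to construct. The Euler scheme is not just a cosmetic alternative — it is precisely what makes Step~1 work under \Cref{assum:weak-existence}. If you wish to keep the regularization route, you would need to produce an explicit Lipschitz-approximation lemma on $\Cc^n\times\Pc(\Cc^n\x A)$ (with quantitative growth control and convergence uniform on compacts), at which point you have done more work than the Euler scheme requires.
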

	
	\begin{proof}
	Without loss of generality, we can assume that $A$ is a singleton given by $A = \{a_0\}$ (otherwise, we can use a constant control process equals to $a_0$).

	\vspace{0.5em}

	First, recall that the filtered probability space $(\Om, \F, \Fc, \P_{\nu})$ is defined in \Cref{subsec:strong_form} and equipped with an initial random variable $X_0$, together with Brownian motions $(W, B)$.
	For each $m \ge 1$,
	we consider the solution  $(X^m_t)_{t \in [0,T]}$ of the Euler scheme of McKean--Vlasov equation \eqref{eq:MKV_SDE}, 
	that is
	\[
		X^m_t
		:= 
		X_0 + \int_0^t b\big(r,X^m_{[r]^m \wedge \cdot },\mub^m_{[r]^m},a_0\big)\mathrm{d}r + \int_0^t \sigma \big(r,X^m_{[r]^m \wedge \cdot },\mub^m_{[r]^m},a_0\big) \mathrm{d}W_r  + \int_0^t\sigma_0 \big(r,X^m_{[r]^m \wedge \cdot },\mub^m_{[r]^m},a_0\big) \mathrm{d}B_r,\; \P_{\nu}\mbox{--a.s.},
	\]
	where $\mub^m_t:=\Lc^{\P_{\nu}}(X^m_{t \wedge \cdot}|\Gc_t) \otimes \delta_{a_0}$,
	$[t]^m=i T 2^{-m}$ for all $ t \in \big[ i T 2^{-m}, (i+1)T 2^{-m} \big)$ and $i \in \{0,\dots,2^m-1\}$. 
	Under Assumption \ref{assum:weak-existence},
	it is straightforward to check that for each $m \in \N^\star$
	\begin{itemize}
		\item when $p =0$, $\E^{\P_{\nu}}\big[\sup_{t \in [0,T]}|X^m_t-X_0|^{q} \big] < \infty$ for all $q \ge 0;$
		
		\item when $p \ge 1$, $\E^{\P_{\nu}}\big[\sup_{t \in [0,T]}|X^m_t|^{p^\prime} \big] < \infty$. 
	\end{itemize}
	
	As in  \Cref{lemma:estimates}, 
	by classical arguments using Gronwall's lemma as in \citeauthor*{lacker2017limit} \cite[Lemma 3.1]{lacker2017limit},
	it follows that, for some constant $C > 0$ independent of $m$
	\begin{itemize}
		\item when $p=0$
		\begin{align} \label{eq:estimates-exis-bounded}
		\E^{\P_{\nu}}\bigg[\sup_{t \in [0,T]}|X^m_t-X_0|^{p^\prime} \bigg] 
		\le
		C;
	\end{align}
			
		\item when $p \ge 1$
	\begin{align} \label{eq:estimates-exis}
		\E^{\P_{\nu}}\bigg[\sup_{t \in [0,T]}|X^m_t|^{p^\prime} \bigg] 
		\le
		C \bigg(1+ \int_{\R^n}|x |^{p^\prime} \nu(\mathrm{d}x) \bigg).
	\end{align}
	\end{itemize}
	
	Let us denote 
	$Y^m_{\cdot}:=X^m_\cdot-\int_0^\cdot\sigma_0 \big(r,S^m_{[r]^m \wedge \cdot },\mub^m_{[r]^m},a_0\big) \mathrm{d}B_r$,
	and
	\[
		\Pb^m
		:=
		\P_{\nu} \circ \Big (X^m,Y^m,\Lambda^\circ,W,B,\muh^m \Big)^{-1},\;\mbox{with}\;\Lambda^\circ_t(\mathrm{d}a)\mathrm{d}t:=\delta_{a_0}(\mathrm{d}a)\mathrm{d}t,\;\mbox{and}\;\muh^m:=\Lc^{\P_{\nu}} \big(X^m,Y^m,\Lambda^\circ,W \big| \Gc_{T} \big).
	\]
	When $p=0$,
	the sequence $(\Pb^m)_{m \ge 1}$ is relatively compact under the weak convergence topology, by  \eqref{eq:estimates-exis-bounded}.
	When $p \ge 1$, by  \eqref{eq:estimates-exis} and \cite[Proposition A.2]{carmona2014mean},
	the sequence $(\Pb^m)_{m \ge 1}$ is relatively compact under $\Wc_{p}$.
	By possibly taking a subsequence, we can assume that,  
	for some $\Pb \in \Pc(\Omb)$, $\Pb^m \longrightarrow_{m\to\infty} \Pb$ under the weak convergence topology or $\Wc_p$, according to the value of $p$.
	
	\medskip
	
	We next show that $\Pb \in \Pcb_W(\nu)$.
	Recall that, for each $\varphi \in  C^2_b(\R^n \x \R^n \x \R^d \x \Cc^\ell)$,
	the process $\Sb^{\varphi}$ is defined on $\Omb$ by \eqref{eq:associate-martingale}.
	Similarly, we define the processes $\Sb^{\varphi, m} = (\Sb^{\varphi, m}_t)_{t \in [0,T]}$ on $\Omb$ by
	\[
		\Sb^{\varphi,m}_t 
		:=
		\varphi (X_t, Y_t, W_t, B_t)
		-
		\int_0^t
		\Lc_r \varphi \big(
			X_{[r]^m \wedge \cdot}, 
			Y_{[r]^m \wedge \cdot},
			W_{[r]^m \wedge \cdot},
			B_{[r]^m \wedge \cdot},
			a_0,
			\mub_{[r]^m}
		\big) \mathrm{d}r.
	\]
	By the continuity of the coefficient functions $(b, \sigma, \sigma_0)$, then uniform continuity on a compact set, it is straightforward to  check that 
	on each compact subset $\Omb_c \subset \Omb$, one has 	
	\begin{align} \label{eq_compacitness}
		\lim_{m \to \infty} \sup_{\omb\;\in\; \Omb_c} 
		\big| \Sb^{\varphi,m}_t(\omb)- \Sb^{\varphi}_t(\omb)\big|=0,\;\mbox{for every}\;t \in [0,T].
	\end{align}
	Further, whatever the case $p=0$ or $p \ge 1$ in Assumption \ref{assum:weak-existence},
	one has
	\begin{align} \label{eq:growth-martingale}
		\sup_{m \ge 1} \E^{\Pb^m} \big[ \big| \Sb^{\varphi, m}_t \big|^{p^{\prime}} \big] < \infty,
		~\mbox{for all}~t \in [0,T], ~\varphi \in  C^2_b(\R^n \x \R^n \x \R^d \x \Cc^\ell).
	\end{align}
	Now, as $(\Pb^m)_{m \in \N^{\star}}$ is relatively compact, for each $\eps > 0$, we can find a compact subset $\Omb_{\eps} \subset \Omb$ such that 
	$\Pb^m[ \Omb_{\eps}] \ge 1- \eps$ for all $m \ge 1$.
	For any bounded continuous function $\phi \in C_b( \Cc^n \x \Cc^n \x \Cc^d \x \Cc^\ell \x \Pc(\Cc^n \x \Cc^n \x \Cc^d) )$
	and $s \le t$, we denote $\Phi_s := \phi \big(X_{s \wedge \cdot},Y_{s \wedge \cdot},W_{s \wedge \cdot},B_{s \wedge \cdot}, \muh_s \big)$, with $(b,\sigma,\sigma_0)$ bounded or with $(b, \sigma, \sigma_0)$ satisfying Assumption \ref{assum:weak-existence} and $\nu \in \Pc_{p^{\prime}}(\R^n),$
	it follows that
	\begin{align*}
		\big|\E^{\Pb}\big[\big( \Sb^{\varphi}_t - \Sb^{\varphi}_s \big)\Phi_s \big]  \big|
		&=
		\lim_{m\to \infty}
		\big|\E^{\Pb^m}\big[\big( \Sb^{\varphi}_t - \Sb^{\varphi}_s \big) \Phi_s \big]  \big| 
		\\
		&\le
		\limsup_{m \to \infty} 
		\big|\E^{\Pb^m}\big[\big( \Sb^{\varphi}_t- \Sb^{\varphi}_s \big)\Phi_s \mathbf{1}_{\Omb_{\eps}}\big]  \big|
		+
		\limsup_{m \to \infty} 
		\big|\E^{\Pb^m}\big[\big( \Sb^{\varphi}_t- \Sb^{\varphi}_s\big)\Phi_s \mathbf{1}_{\Omb_{\eps}^c}\big]  \big|
		\\
		&\le
		\limsup_{m \to \infty} 
		\Big[
			\big|\E^{\Pb^m}\big[\big(\Sb^{\varphi,m}_t- \Sb^{\varphi,m}_s \big)\Phi_s \big]  \big|
			+
			\big|\E^{\Pb^m}\big[\big(\Sb^{\varphi,m}_t- \Sb^{\varphi,m}_s \big)\Phi_s \mathbf{1}_{\Omb_{\eps}^c}\big]  \big| 
			+
			\big|\E^{\Pb^m}\big[\big(\Sb^{\varphi}_t- \Sb^{\varphi}_s \big)\Phi_s \mathbf{1}_{\Omb_{\eps}^c}\big]  \big| 
		\Big]
		\\
		&\le  
		C \eps^{\frac{p^{\prime}-1}{p^{\prime}}},
	\end{align*}
	where the last inequality follows by H\"older's inequality together with \eqref{eq:growth-martingale} and the fact that $\Pb^m[\Omb_{\eps}^c] \le \eps$, for all $m \ge 1$. 
	Let $\eps \longrightarrow 0$, it follows that
	$\Sb^{\varphi}$ is an $(\Fb,\Pb)$--martingale for all $\varphi \in  C^2_b(\R^n \x \R^n \x \R^d \x \Cc^\ell)$. Further, by almost the same arguments as in the proof of \Cref{Propostion:convergence}, 
we have for all  $t \in [0,T]$
	\[
		\muh_t
		=
		\Lc^{\Pb} (X_{t \wedge \cdot},Y_{t \wedge \cdot },\Lambda^t,W \big| \Gcb_T)
		=
		\Lc^{\Pb} (X_{t \wedge \cdot},Y_{t \wedge \cdot },\Lambda^t,W \big| \Gcb_t),\; \Pb\mbox{--a.s.}
	\]
	Moreover, it is easy to see that $\Pb \circ (X_0)^{-1}=\nu$ and $(B,\muh)$ is independent of $(W,X_0)$ under $\Pb$, so that $\Pb \in \Pcb_W(\nu)$.
	
	\medskip

	Finally, for the case $p \ge 1$ in Assumption \ref{assum:weak-existence},
	using  \eqref{eq:estimates-exis}, the fact that $\nu \in \Pc_{p^{\prime}}(\R^n)$ together with Fatou's lemma, 
	it follows that $\E^{\Pb}\big[ \|X\|^{p^{\prime}} \big] < \infty$.
	\end{proof}

\subsection{Characterisation of probability measures on a set of probability measures}

	\begin{proposition} \label{Prop-identification_probability}
		Let $E$ be a Polish space, $(\Upsilon_1,\Upsilon_2) \in \Pc(\Pc(E)) \times  \Pc(\Pc(E))$ be such that
		\begin{equation} \label{eq:Up1_Up2}
			\int_{\Pc(E)} \Prod_{i=1}^k \langle \varphi_i,\nu \rangle \Upsilon_1(\mathrm{d}\nu)
			=
			\int_{\Pc(E)} \Prod_{i=1}^k \langle \varphi_i,\nu \rangle \Upsilon_2(\mathrm{d}\nu),
			~\mbox{\rm for all}~ 
			k \ge 1,
			~\mbox{\rm and}~ (\varphi_i)_{i \in \{1,\dots,k\}} \subset C_b(E).
		\end{equation}
		Then $\Upsilon_1=\Upsilon_2.$
	\end{proposition}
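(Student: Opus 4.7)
Let $\mathcal{V}$ denote the $\mathbb{R}$-vector space spanned inside $C_b(\Pc(E))$ by all functions of the form
\[
F_{\varphi_1,\dots,\varphi_k}(\nu) := \prod_{i=1}^k \langle \varphi_i, \nu \rangle, \quad k \ge 0, \quad \varphi_1, \dots, \varphi_k \in C_b(E),
\]
with the convention that the empty product is the constant $1$. Then $\mathcal{V}$ is closed under pointwise multiplication (the product of two such elementary functions is again one, after appending the $\varphi$'s), contains the constants, and separates points of $\Pc(E)$ (since $C_b(E)$ separates probability measures on the Polish space $E$). By bilinearity of \eqref{eq:Up1_Up2} in the sums defining elements of $\mathcal{V}$, the hypothesis is equivalent to $\int F\, \mathrm{d}\Upsilon_1 = \int F\, \mathrm{d}\Upsilon_2$ for every $F \in \mathcal{V}$.

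The plan is to conclude via the functional monotone class theorem. Set
\[
\mathcal{H} := \Big\{ f : \Pc(E) \longrightarrow \mathbb{R}\; \mbox{bounded Borel measurable} : \int f\, \mathrm{d}\Upsilon_1 = \int f\, \mathrm{d}\Upsilon_2 \Big\}.
\]
Then $\mathcal{H}$ is a vector space containing the constants, closed under uniform limits, and closed under bounded monotone pointwise convergence by the dominated convergence theorem. By the previous paragraph, $\mathcal{V} \subset \mathcal{H}$. Since $\mathcal{V}$ is a multiplicative vector space containing $\mathbf{1}$, the functional monotone class theorem (Dynkin's $\pi$--$\lambda$ theorem for functions, see e.g. Dellacherie--Meyer) yields that $\mathcal{H}$ contains every bounded $\sigma(\mathcal{V})$-measurable function.

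To conclude, it suffices to identify $\sigma(\mathcal{V})$ with the Borel $\sigma$-algebra $\Bc(\Pc(E))$. Since $E$ is Polish, so is $\Pc(E)$ under the weak convergence topology, and that topology is by definition the coarsest one making the evaluation maps $\nu \longmapsto \langle \varphi, \nu \rangle$ continuous for every $\varphi \in C_b(E)$; these maps therefore generate $\Bc(\Pc(E))$. Hence $\sigma(\mathcal{V}) = \Bc(\Pc(E))$, so $\mathbf{1}_B \in \mathcal{H}$ for every Borel $B \subset \Pc(E)$, and $\Upsilon_1 = \Upsilon_2$ follows.

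There is no real obstacle here: all three ingredients (the multiplicative algebraic structure of $\mathcal{V}$, the functional monotone class theorem, and the generation of $\Bc(\Pc(E))$ by the evaluation maps) are standard. The only point requiring mild care is the last measurability fact, which can be proved either by quoting the general Polish-space result or by fixing a countable dense subset of $C_b(E)$ and showing directly that the corresponding cylinder sets form a generating family for the Borel $\sigma$-algebra of $\Pc(E)$.
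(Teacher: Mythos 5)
Your proof is correct. The underlying strategy is the same as the paper's — show that the two measures agree on a class of functions (or sets) generating $\Bc(\Pc(E))$, then finish with a monotone-class/Dynkin argument using that the evaluation maps $\nu \mapsto \langle \varphi, \nu\rangle$ generate the Borel $\sigma$-algebra of $\Pc(E)$ — but your execution is more direct. The paper works through an explicit chain of approximations: it first passes from products of linear functionals to products $\prod_i \psi^i(\langle \varphi_i,\nu\rangle)$ with $\psi^i$ polynomials (by expanding and using \eqref{eq:Up1_Up2}), then to continuous $\psi^i$ by polynomial approximation uniformly on the compact range of $\langle \varphi_i,\cdot\rangle$, then to indicator functions $\mathbf{1}_{\{\langle\varphi_i,\nu\rangle < r_i\}}$ by bounded pointwise approximation, and finally observes that the resulting cylinder sets form a $\pi$-system generating $\Bc(\Pc(E))$. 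You short-circuit all of this by noting that the linear span $\mathcal{V}$ of such products is already a multiplicative vector space containing the constants, so the functional monotone class theorem applies in one step. Your route buys brevity and avoids the (mildly delicate) polynomial and indicator approximation arguments; the paper's route is more elementary in the sense of not invoking the functional version of Dynkin's theorem, at the cost of a longer chain of intermediate reductions. Either way the essential content — that cylinder functions determine a measure on $\Pc(E)$ — is the same, and your measurability claim $\sigma(\mathcal{V}) = \Bc(\Pc(E))$ is exactly the fact the paper also relies on in its final line.
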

    
	\begin{proof}
	First, using \eqref{eq:Up1_Up2}, we have, for all $k \ge 1$, for every family of polynomial
	functions $(\psi^i)_{i \in \{1,\dots,k\}}$, and every $(\varphi_i)_{i \in \{1,\dots,k\}} \subset C_b(E;\R)$,
	\begin{equation} \label{eq:Up1_Up2p}
		\int_{\Pc(E)} \Prod_{i=1}^k \psi^i(\langle \varphi_i,\nu \rangle) \Upsilon_1(\mathrm{d}\nu)
		=
		\int_{\Pc(E)} \Prod_{i=1}^k \psi^{i}(\langle \varphi_i,\nu \rangle)  \Upsilon_2(\mathrm{d}\nu).
	\end{equation}
Since we can approximate any continuous function by polynomial functions, uniformly on compact sets,
	it follows that \eqref{eq:Up1_Up2p} still holds true for all
	$k \ge 1$, $(\psi^i)_{i \in \{1,\dots,k\}} \subset C_b(\R;\R)$ and $(\varphi_i)_{i \in \{1,\dots,k\}} \subset C_b(E;\R)$.
	This further implies that, for all $(r_1,\dots,r_k) \in  \R^k$
	\begin{align*}
		\int_{\Pc(E)} \Prod_{i=1}^k \mathbf{1}_{\{\nu:\langle \varphi_i,\nu \rangle < r^i \}} \Upsilon_1(\mathrm{d}\nu)
		=
		\int_{\Pc(E)} \Prod_{i=1}^k \mathbf{1}_{\{\nu:\langle \varphi_i,\nu \rangle < r^i \}} \Upsilon_2(\mathrm{d}\nu).
	\end{align*}
	In other words, $\Upsilon_1[A] = \Upsilon_2[A]$ for all $A \in \Psi$, where
	\[
		\Psi
		:=
		\Big\{
			A[r_1, \dots,r_m; \varphi_1, \dots,\varphi_m]
			: m \ge 1,\;(r_1,\dots,r_m) \in \R^m\;\mbox{and}\;(\varphi_1,\dots,\varphi_m) \in C_b(E)^m   
		\Big \},
	\]
	with 
	\[
		A[r_1, \dots,r_m; \varphi_1, \dots,\varphi_m]
		:=
		\big\{
			\lambda \in \Pc(E) : 
			\langle \varphi_i,\lambda \rangle < r_i, 
			~i = 1, \dots,m 
		\big \}.
	\]
	Notice that  the weak convergence topology on $\Pc(E)$ is generated by the open sets in $\Psi$,
	it follows by the monotone class theorem that $\Upsilon_1= \Upsilon_2$ on the Borel $\sigma$--field of $\sigma(\Psi)$.
	\end{proof}

\section{Proof of some technical results} 

	We finally provide here the proof of the approximation result (of relaxed control by weak control rules) in \Cref{Equivalence-Proposition_General},
	and some related technical results.
	In this Section, \Cref{assum:main1} and \Cref{assum:constant_case} are imposed,
	in particular, $A$ is a subset of $\R^j$ for some $j \ge 1$.
	
\subsection{An equivalent reformulation for relaxed control rules}
 
	On $\Omb$, let us introduce a filtration $\Fb^\circ=(\Fc^{\circ}_t)_{t \in [0,T]}$ 
	and a process $S^f = (S^f_t)_{t \in [0,T]}$, for every $f \in C^2_b(\R^{n + \ell})$, by
	\begin{align} \label{eq:K_process}
		\Fcb^{\circ}_t:=\sigma (X_{t \wedge \cdot},Y_{t \wedge \cdot},B_{t \wedge \cdot}, \mu_t),
		~\mbox{and}~
		S^{f}_t
		:=
		f \big(Z_t, B_t \big)-\varphi(Z_0, B_0)
		-
		\int_0^t \frac{1}{2}\mathrm{Tr}\big[  a_0(s, X, \mu) \nabla^2 \varphi(Z_s,B_s)\big]\mathrm{d}s,
		~t \in [0,T],
	\end{align}
	where $Z := X-Y$ and
	\begin{align} \label{eq:def_hat_a_0}
		 a_0 \big(t, \xb, \nu \big)
		:=
		\begin{pmatrix} 
			\sigma_0(t,\xb,\nu)  \\ 
			\mathrm{I}_{\ell} 
		\end{pmatrix}
		\begin{pmatrix} 
			\sigma_0(t,\xb,\nu)  \\ 
			\mathrm{I}_{\ell}
		\end{pmatrix}^{\top},
		~\mbox{for each}~
		(t,\xb,\nu) \in [0,T] \x \Cc^n \x \Pc(\Cc^n).
	\end{align}

    	\begin{proposition} \label{prop:eqivalence_def_relaxed}
		Let $\nu \in \Pc(\R^n)$, then a probability measure $\Pb \in \Pc(\Omb)$ belongs to $\Pcb_R(\nu)$ if and only if 
		\begin{itemize}
			\item[$(i)$] $\Pb \big[ \muh \circ (\Xh_0)^{-1} =\nu, Y_0=X_0,W_0=0,B_0=0 \big] = 1$, 
			$\E^{\Pb} \Big[ \| X\|^p + \int_{[0,T]\x A} \big( \rho(a_0, a) \big)^p \Lambda_t(\mathrm{d}a) \mathrm{d}t \Big]  < \infty$, 
			and
			\[
			    \muh_t (\omb)
			    =
			    \Pb^{\Gcb_T}_{\omb} \circ (X_{t \wedge \cdot},Y_{t \wedge \cdot},W, \Lambda^t)^{-1},\;\mbox{for}\;\Pb\;\mbox{\rm--a.e.}\;\omb \in \Omb,
			    ~\mbox{for all}~t \in [0,T];
			\]
			
			\item[$(ii)$] $(B_t)_{t \in [0,T]}$ is an $(\Fb,\Pb)$--Brownian motion, 
			and
			the process $\big( S^{f}_t\big)_{t \in [0,T]}$ is an $(\Fb^\circ,\Pb)$--martingale for all $f \in C^2_b(\R^n \x \R^\ell)$;

			\item[$(iii)$] 
			for $\Pb$--{\rm a.e.} $\omb \in \Omb$,
			the process $ \big( \Mh^{\varphi, \mu(\omb)}_t\big)_{t \in [0,T]}$ is an $\big(\Fh,\muh(\omb) \big)$--martingale for all $\varphi \in C^2_b(\R^n \x \R^d)$.
		\end{itemize}
    	\end{proposition}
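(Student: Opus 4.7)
The plan is to recognise that the three conditions $(i)$--$(iii)$ are precisely the natural decomposition of the definition of $\Pcb_R(\nu)$ into a piece governing the pair $(Y,W)$ and a piece governing the pair $(Z,B)$, where $Z := X-Y$. The structural observation is that the bar--generator $\overline{\Lc}$ splits along these two blocks: since $\bar b = (b,b,0_d,0_\ell)^\top$, the drift cancels in the $X-Y$ direction, so the $(Z,B)$-subsystem reduces to a driftless martingale problem with covariance matrix $a_0$ of \eqref{eq:def_hat_a_0}; symmetrically, the $(Y,W)$-subsystem reproduces exactly the conditional generator $\widehat{\Lc}$ from \eqref{eq:conditionnal_generator}.

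\textbf{Forward direction.} Assume $\Pb \in \Pcb_R(\nu)$. Part $(i)$ is immediate from Definition~\ref{def:admissible_ctrl_rule}. For part $(ii)$, I would substitute two families of test functions into $\bar S^\varphi$: first $\varphi(x,y,w,b) = \phi(b)$, whose image under $\overline{\Lc}$ collapses onto $\tfrac12\Delta\phi(b)$, so that L\'evy's characterisation identifies $B$ as an $(\Fb,\Pb)$-Brownian motion. Second, $\varphi(x,y,w,b) = f(x-y,b)$ annihilates $\bar b \cdot \nabla \varphi$ (because $\bar b$ has equal $x$- and $y$-components), and reduces the Hessian contribution to $\tfrac12\mathrm{Tr}[a_0(s,X,\mu)\nabla^2 f(Z_s,B_s)]$, matching exactly the generator appearing in $S^f$; adaptedness of $S^f$ to $\Fb^\circ$ and the tower property then descend the $(\Fb,\Pb)$-martingale property to an $(\Fb^\circ,\Pb)$-martingale property. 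Part $(iii)$ is the very definition of the conditional martingale problem in $\Pcb_R(\nu)$.

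\textbf{Reverse direction.} Assume $(i)$--$(iii)$. The two nontrivial items left to recover Definition~\ref{def:admissible_ctrl_rule} are the independence $(X_0,W)\perp\Gcb_T$ and the full bar--martingale problem. For the independence, substituting $\varphi(y,w) = \phi(w)$ into $(iii)$ yields that $\Wh$ is an $(\Fh,\muh(\omb))$-Brownian motion for $\Pb$-a.e.\ $\omb$; together with $\muh \circ \Xh_0^{-1} = \nu$ and the initial conditions $\Yh_0 = \Xh_0$, $\Wh_0 = 0$ forcing $\Fch_0 = \sigma(\Xh_0)$, the $\muh(\omb)$-law of $(\Xh_0,\Wh)$ is the product of $\nu$ and Wiener measure, independently of $\omb$; the $\muh_t$ identity in $(i)$ then translates this into $(X_0,W)\perp \Gcb_T$ under $\Pb$. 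For the bar--martingale problem, I would first lift $(iii)$ from $\Omh$ to $\Omb$: for each $\varphi \in C^2_b(\R^n\x\R^d)$, the process $\varphi(Y_t,W_t) - \iint_{[0,t]\x A}\widehat{\Lc}_s\varphi(X,Y,W,\mu,a)\Lambda_s(\mathrm{d}a)\mathrm{d}s$ becomes a martingale under $\Pb^{\Gcb_T}_{\omb}$ w.r.t.\ the natural filtration of $(X,Y,\Lambda,W)$ on $\Omb$, and via the tower property combined with the containment $\Fcb_s \subseteq \sigma(X_{s\wedge\cdot},Y_{s\wedge\cdot},\Lambda^s,W_{s\wedge\cdot}) \vee \Gcb_T$ (valid because $B$ and $\muh$ are $\Gcb_T$-measurable), it upgrades to an $(\Fb,\Pb)$-martingale. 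From $(ii)$, $B$ being an $\Fb$-Brownian motion together with the uniqueness of the degenerate martingale problem generated by $a_0$ and $Z_0 = 0$ force $Z = \int_0^{\cdot}\sigma_0(s,X,\mu)\mathrm{d}B_s$ as an $\Fb$-stochastic integral against $B$. Applying It\^o's formula to any $\varphi \in C^2_b(\R^{n+n+d+\ell})$ rewritten as $\tilde\varphi(Y,W,Z,B)$ and assembling the $(Y,W)$-contribution from the lifted $(iii)$ with the $(Z,B)$-contribution from $(ii)$ then reconstitutes $\bar S^\varphi$ as an $(\Fb,\Pb)$-martingale for every such $\varphi$.

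\textbf{Main obstacle.} The genuinely delicate step is the lift of the conditional $(\Fh,\muh(\omb))$-martingale statement on $\Omh$ to an $(\Fb,\Pb)$-martingale statement on $\Omb$: it requires reinterpreting $\Fh$-adaptedness on $\Omh$ as the natural filtration of $(X,Y,\Lambda,W)$ on $\Omb$, using the $\muh_t$ identity to rewrite a $\muh(\omb)$-conditional expectation as a $\Gcb_T$-conditional expectation under $\Pb$, and then exploiting the containment $\Fcb_s \subseteq \sigma(X_{s\wedge\cdot},Y_{s\wedge\cdot},\Lambda^s,W_{s\wedge\cdot})\vee \Gcb_T$, which relies crucially on $B$ and $\muh$ being $\Gcb_T$-measurable. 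A secondary subtlety is ensuring that the $(\Fb^\circ,\Pb)$-martingale property of $S^f$ together with $B$ being $\Fb$-Brownian really identifies $Z$ as an $\Fb$-stochastic integral against $B$, and not merely as an $\Fb^\circ$-semimartingale --- this is handled by uniqueness for the (degenerate) martingale problem with covariance $a_0$ and initial condition $Z_0 = 0$.
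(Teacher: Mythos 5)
Your forward direction matches the paper's exactly, and your observation that the bar-generator $\overline{\Lc}$ splits along $(Y,W)$- and $(Z,B)$-blocks is the right structural picture. However, the reverse direction takes a genuinely different route from the paper, and as written it has a gap at precisely the step you call ``assembling.'' The paper shows instead that $\widetilde{S}^{\varphi}_t\exp(\theta\cdot B_t - |\theta|^2 t/2)$ is an $(\Fb,\Pb)$--martingale for \emph{all} $\theta\in\R^\ell$ and then invokes Stroock--Varadhan [Theorems 4.2.1 and 8.1.1], whereas you only establish $\widetilde{S}^\varphi$ as an $(\Fb,\Pb)$--martingale (the $\theta=0$ case) and then try to apply It\^o to $\tilde\varphi(Y,W,Z,B)$.

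The problem is that $\widetilde{S}^\varphi$ being a martingale, $B$ being an $\Fb$--Brownian motion, and $Z=\int_0^\cdot\sigma_0\,\mathrm{d}B$ do not by themselves determine the cross-covariation $\langle M^Y,B\rangle$, where $M^Y := Y - Y_0 - \iint \hat b\,\Lambda_s(\mathrm{d}a)\mathrm{d}s$. One needs $\langle M^Y,B\rangle = 0$ (equivalently the vanishing of the $(Y,B)$-block of $\bar a$) for the It\^o reconstruction to produce $\overline{\Lc}$; without it the covariation terms do not match. Note also that in the relaxed setting $M^Y$ is \emph{not} forced to be $\int\sigma_\Lambda\,\mathrm{d}W$ — the relaxed quadratic variation $\int\int\sigma\sigma^\top\,\Lambda_s(\mathrm{d}a)\,\mathrm{d}s$ generically strictly dominates $\int\sigma_\Lambda\sigma_\Lambda^\top\,\mathrm{d}s$ with $\sigma_\Lambda := \int_A\sigma(\cdot,a)\Lambda(\mathrm{d}a)$ — so you cannot deduce $\langle M^Y,B\rangle = 0$ from $\langle W,B\rangle = 0$ either. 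The exponential factor in the paper is exactly what encodes this orthogonality: differentiating in $\theta$ at $\theta=0$ yields $\langle\widetilde{S}^\varphi,B\rangle=0$, hence $\langle M^Y,B\rangle=0$ and $\langle W,B\rangle=0$ simultaneously.

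Your proposal is salvageable, though, and the fix sits inside your own ``lift.'' When you use the containment $\Fcb_s \subseteq \sigma(X_{s\wedge\cdot},Y_{s\wedge\cdot},\Lambda^s,W_{s\wedge\cdot})\vee\Gcb_T$ to upgrade the conditional statement, you actually obtain the strictly stronger property
\[
\E^{\Pb}\big[M^Y_{i,t}-M^Y_{i,s}\,\big|\,\Fcb_s\vee\Gcb_T\big]=0,
\]
i.e.\ $M^Y$ is a martingale with respect to the $\Gcb_T$--initially--enlarged filtration. You then discard this and only keep the $\Fb$--martingale property, which is a mistake: since $B_{k,t}-B_{k,s}$ is $\Gcb_T$--measurable, the stronger property gives
\[
\E^{\Pb}\big[(M^Y_{i,t}-M^Y_{i,s})(B_{k,t}-B_{k,s})\,\big|\,\Fcb_s\big]
=\E^{\Pb}\Big[(B_{k,t}-B_{k,s})\,\E^{\Pb}\big[M^Y_{i,t}-M^Y_{i,s}\,\big|\,\Fcb_s\vee\Gcb_T\big]\,\Big|\,\Fcb_s\Big]=0,
\]
which forces $\langle M^Y,B\rangle\equiv 0$. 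With this in hand (and $\langle W,B\rangle=0$ by the same argument, or from the independence $(X_0,W)\perp\Gcb_T$), all cross-covariations of $(Y,W,Z,B)$ are pinned down and your It\^o assembly goes through. So the gap is a genuine missing step in the write-up — the orthogonality $\langle Y,B\rangle=0$ — but it follows from a consequence you already derived and then threw away; making it explicit is mandatory for the argument to be correct.
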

    	
    	\begin{proof}
	First, let $\Pb \in \Pcb_R(\nu)$,
	then $\Sb^{\varphi}$ (recall  \eqref{eq:associate-martingale}) is an $(\Fb,\Pb)$--martingale for all $\varphi \in C^2_b \big(\R^n \x \R^n \x \R^d \x \R^{\ell} \big)$,
	which implies immediately that $(B_t)_{t \in [0,T]}$ is an $(\Fb,\Pb)$--Brownian motion and
	$S^f$ is an $(\Fb^\circ,\Pb)$--martingale for all $f \in C^2_b(\R^n \x \R^\ell)$.
	It follows that $\Pb$ satisfies Conditions $(i)$--$(ii)$--$(iii)$ in the statement.

	\medskip

	Next, let $\Pb \in \Pc(\Omb)$ satisfying Conditions $(i)$--$(ii)$--$(iii)$ in the statement.
	It is immediate to check that $\Lc^{\Pb} \big(X_0,W,B,\muh \big)=\Lc^{\Pb} (X_0 ) \otimes \Lc^{\Pb} (W ) \otimes \Lc^{\Pb} (B,\muh)$,
	i.e. $X_0$, $W$ and $(B,\muh)$ are mutually independent under $\Pb$.
	Then by comparing Conditions $(i)$--$(ii)$--$(iii)$ in the statement 
	with Definitions \ref{def:admissible_ctrl_rule} and \ref{def:relaxed_ctrl_rule}, it is enough to prove that $\Sb^{\varphi}$ is an $(\Fb,\Pb)$--martingale for all
	$\varphi \in C_b^2(\R^n \x \R^n \x \R^d \x \R^{\ell})$ to conclude that $\Pb \in \Pcb_R(\nu)$.
	
	\medskip
	
	To this end, let us introduce, for every $\varphi \in C^2_b(\R^n \x \R^d)$, 
	a process $\widetilde{S}^{\varphi} = (\widetilde{S}^{\varphi}_t)_{t \in [0,T]}$ on $(\Omb, \Fcb)$ by
	(recall also the definitions of $\widehat \Lc$ and $\Mh^{\varphi, \nu}$ in \eqref{eq:conditionnal_generator} and \eqref{eq:Mvarphi})
	\begin{equation}\label{eq:Mvarphi_canonical}
		\widetilde{S}^{\varphi}_t
		:=
		\varphi \big(Y_t, W_t \big)- \varphi(Y_0, W_0)
		-
		\iint_{[0,t]\x A}  \widehat \Lc_s \varphi \big(X, Y, W, \mu_s, a \big) \Lambda_s(\mathrm{d}a)\mathrm{d}s.
	\end{equation} 
	Since $B$ is an $(\Fb,\Pb)$--Brownian motion, 
	we have, for all $\theta \in \R^\ell$, $0\leq s \le t$, $\phi \in C_b(\Cc^n \x \Cc^n \x \Cc^d \x \M)$, and $\psi \in C_b(\Cc^\ell \x \Pc(\Omh))$, 
	\begin{align*}
		&\ \E^{\Pb} \Big[ 
			\widetilde{S}^{\varphi}_t 
			\exp\big(\theta \cdot B_t -  |\theta|^2 t/2  \big)
			\phi \big(X_{s \wedge \cdot}, Y_{s \wedge \cdot}, W_{s \wedge \cdot}, \Lambda^s \big) 
			\psi \big( B_{s \wedge \cdot}, \muh_s \big) 
		\Big]
		\\
		=&\
		\E^{\Pb} \Big[ 
			\E^{\widehat \mu} \Big[ \Mh^{\varphi,\mu}_t \phi \big(\Xh_{s \wedge \cdot}, \Yh_{s \wedge \cdot}, \Wh_{s \wedge \cdot}, \Lambdah^s \big) \Big] 
			\exp\big(\theta \cdot B_t -  |\theta|^2 t/2  \big)
			\psi \big( B_{s \wedge \cdot}, \muh_s \big) \Big]
		\\
		=&\
		\E^{\Pb} \Big[ 
			\E^{\widehat \mu} \Big[ \Mh^{\varphi,\mu}_s \phi \big(\Xh_{s \wedge \cdot}, \Yh_{s \wedge \cdot}, \Wh_{s \wedge \cdot}, \Lambdah^s \big) \Big] 
			\exp\big(\theta \cdot B_s -  |\theta|^2 s/2  \big)
			\psi \big( B_{s \wedge \cdot}, \muh_s \big) \Big]
		\\
		=&\
		\E^{\Pb} \Big[ 
			\widetilde{S}^{\varphi}_s
			\exp\big(\theta \cdot B_s -  |\theta|^2 s / 2  \big)
			\phi \big(X_{s \wedge \cdot}, Y_{s \wedge \cdot}, W_{s \wedge \cdot}, \Lambda^s \big) 
			\psi \big( B_{s \wedge \cdot}, \muh_s \big) 
		\Big].
	\end{align*}
	In other words, 
	$(\widetilde{S}^{\varphi}_t \exp(\theta B_s - |\theta_s|^2 s/2) )_{t \in [0,T]}$ is an $(\Fb,\Pb)$--martingale for any $\varphi \in C^2_b(\R^{n + d})$ and $\theta \in \R^\ell$.
	From Condition $(ii)$ in the statement, we know that $B$ is an $(\Fb,\Pb)$--Brownian motion,
	and 
	\[
		Y_{\cdot}=X_\cdot-\int_0^\cdot \sigma_0(s, X, \mu ) \mathrm{d}B_s,\; \Pb\text{\rm--a.s.}
	\]
	Then it follows by \cite[Theorems 4.2.1 and 8.1.1]{stroock2007multidimensional} that 
	$\Sb^{\varphi}$ is an $(\Fb, \Pb)$--martingale for all $\varphi \in C^2_b \big(\R^n \x \R^n \x \R^d \x \R^{\ell} \big)$.
    	\end{proof}

\subsection{Proof of Proposition \ref{Proposition:diffusion_McKV-relaxed}} 
\label{proof_Proposition-measurability-check}

%\subsubsection{Martingale measure, stochastic integral and their measurability}

	We first recall the definition of the martingale measures (see e.g. \citeauthor*{el1990martingale} \cite{el1990martingale}), but in a special context, and then discuss the associated stochastic integration and some measurability issues.
	Let us consider the Polish space $A$, and an abstract filtered probability space $(\Om^{\star}, \Fc^{\star}, \F^{\star}, \P^{\star})$, equipped with a random measure $\nu_t(\mathrm{d}a) \mathrm{d}t$ on $[0,T] \x A$, where $t \longmapsto \nu_t(\mathrm{d}a)$ is $\Pc(A)$--valued predictable process.
	Let $\Pc^{\F^{\star}}$ denote the predictable $\sigma$--field w.r.t. the filtration $\F^{\star}$,
	and $\Mc(A)$ the space of all Borel signed measure on $A$.

	\begin{definition} \label{def:notion_martingale-measure}
		An $\Mc(A)$--valued process $(N_t(\mathrm{d}a))_{t \in [0,T]}$ is called an $(\F^{\star},\P^{\star})$--martingale measure of intensity $\nu_t(\mathrm{d}a) \mathrm{d}t$ if

		\medskip
		$(i)$ for all $B \in \Bc(A)$, $(N_t(B))_{t \in [0,T]}$ is a $(\F^{\star},\P^{\star})$--martingale with quadratic variation $\int_0^{\cdot} \nu_s(B) \mathrm{d}s$, and with $N_0(B)=0;$

		\medskip
		$(ii)$ let $B_1, B_2 \in \Bc(A)$ be such that $B_1 \cap B_2=\emptyset$, then $(N_t(B_1))_{t \in [0,T]}$ and $(N_t(B_2))_{t \in [0,T]}$ are two orthogonal martingales.
	\end{definition}

	Given an $(\F^{\star},\P^{\star})$--martingale measure $(N_t(\mathrm{d}a))_{t \in [0,T]}$ of intensity $\nu_t(\mathrm{d}a) \mathrm{d}t$,
	and a $\Pc^{\F^{\star}} \otimes \Bc(A)$--measurable function $f:[0,T] \x \Om^{\star} \x A \longrightarrow \R$ such that 
	\[
		\E^{\P^{\star}} \bigg[\iint_{[0,T] \times A} |f(s,a)|^2 \nu_s(\mathrm{d}a) \mathrm{d}s \bigg]< \infty,
	\]
	one can first approximate $f$ by a sequence $(f^m)_{m \ge 1}$ of simple functions of the form $f^m(s, a) := \sum_{k=1}^m f_k^m \mathbf{1}_{(s^m_k, t^m_k]}(s) \mathbf{1}_{B^m_k}(a)$,
	where $B^m_k \subset \Bc(A)$,
	\[
		s^m_k < t^m_k,
		~
		f^m_k ~\mbox{is}~\Fc^\star_{s^m_k}\mbox{--measurable, for all}~k=1, \dots,m,~\mbox{and}~
		\Lim_{m\to \infty} \E^{\P^{\star}} \bigg[\iint_{[0,T] \times A} \big| f(s,a)-f^m(s,a) \big|^2 \nu_s(\mathrm{d}a) \mathrm{d}s \bigg]=0.
	\]
	Then one can define the stochastic integral, for $t \in \Q$,
	\[
		N_t(f)
		=
		\iint_{[0,t] \times A} f(s,a)N(\mathrm{d}a,\mathrm{d}s)
		:=
		\Lim_{m\to \infty} N_t(f^m)
		:=
		\Lim_{m\to \infty} \sum_{k=1}^m f_k^m \big( N_{t^m_{k}\wedge t}(B_k^m)- N_{s^m_k \wedge t}(B_k^m) \big),\;\mbox{with the limit in}\;\L^2,
	\]
	and then, for all $t \in [0,T]$
	\[
		N_t(f) 
		=
		\iint_{[0,T] \times A} f(s,a)N(\mathrm{d}a,\mathrm{d}s)
		:=
		\limsup_{\Q \ni s \nearrow t} N_s(f).
	\]

	Notice that $(N_t(f))_{t \in [0,T]}$ is an $(\F^{\star}, \P^{\star})$--continuous martingale with quadratic variation $\int_0^\cdot \int_A f(s,a) \nu_s(\mathrm{d}a)\mathrm{d}s$,
	and it is in fact independent of the approximating sequence $(f^m)_{m \ge 1}$ (see e.g. \cite[Section 1]{el1990martingale}).

	\vspace{0.5em}
	
	Let us now consider another abstract measurable space $(E, \Ec)$,
	a family of probability measures $(\P^{\star}_e)_{e \in E}$ on $(\Om^{\star}, \Fc^{\star}, \F^{\star})$ under which $N$ is a martingale measure with intensity $\nu_t(\mathrm{d}a) \mathrm{d}t$,
	and the random measure $\nu_t(\mathrm{d}a) \mathrm{d}t$ has the same distribution under each $\P^\star_e$. 
	In addition, the family $(\P^{\star}_e)_{e \in E}$ satisfies that,
	for all bounded Borel function $\varphi: \Om^\star \x E \longrightarrow \R$,
	\begin{align} \label{eq:cond-measure}
		E \ni e \longmapsto \int_{\Om^\star} \varphi(\om^\star,e) \P^\star_e(\mathrm{d}\om^\star) \in \R\;\mbox{is}\;\Ec\mbox{--measurable}.
	\end{align}
	Let $f: [0,T] \x \Om^\star \x A \x E \to \R$ be $\Pc^{\F^{\star}} \otimes \Bc(A) \x \Ec$--measurable function such that
	\begin{equation} \label{eq:fe_integ}
		\E^{\P^{\star}_e} \bigg[ \iint_{[0,T] \times A} |f^e(s,a)|^2 \nu_s(\mathrm{d}a) \mathrm{d}s \bigg]< \infty,
		~\mbox{for each}~e \in E.
	\end{equation}

	\begin{lemma} \label{eq:StochInt_N}
		One can construct a family of processes $ \{ (N_t(f^e) )_{t \in [0,T]} \big\}_{ e \in E}$ such that
		\begin{equation} \label{eq:meas_sto_int_N}
			(t, \om^\star, e) \longmapsto N_t(f^e, \om^{\star}) ~\mbox{\rm is}~\Pc^{\F^\star} \otimes \Ec \mbox{\rm --measurable},
		\end{equation}
		and
		\begin{equation} \label{eq:sto_int_Ne}
			N_t(f^e, \om^{\star}) = \bigg(\iint_{[0,t] \times A} f^e(s,a)N(\mathrm{d}a,\mathrm{d}s) \bigg)(\om^\star), ~t \in [0,T],
			~\P^\star_e\mbox{\rm --a.s. for each}~e \in E.
		\end{equation}
	\end{lemma}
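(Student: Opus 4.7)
The plan is to use a standard monotone class/density argument adapted so that the measurability in the parameter $e$ is preserved at every step.

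First I would build an approximating sequence of simple integrands that is jointly measurable in all arguments. For each $m\ge 1$, set $f^{e,m}(s,\omega^\star,a) := f^e(s,\omega^\star,a)\mathbf{1}_{\{|f^e(s,\omega^\star,a)|\le m\}}$, which is bounded and $\Pc^{\F^\star}\otimes\Bc(A)\otimes\Ec$--measurable. By a monotone class argument (identical to the one used to prove the density of step processes in the $L^2$--space of predictable processes on a single probability space, but carried out on $[0,T]\times\Om^\star\times A$ without reference to any particular $\P^\star_e$) one finds, for each $m$, a sequence of $(\Pc^{\F^\star}\otimes\Bc(A)\otimes\Ec)$--measurable simple processes of the form
\[
g^{e,m,k}(s,\omega^\star,a) \;=\; \sum_{j=1}^{J(m,k)} \xi^{m,k}_j(\omega^\star,e)\,\mathbf{1}_{(s^{m,k}_j,t^{m,k}_j]}(s)\,\mathbf{1}_{B^{m,k}_j}(a),
\]
where each $\xi^{m,k}_j(\cdot,e)$ is $\Fc^\star_{s^{m,k}_j}$--measurable and $(\omega^\star,e)\mapsto \xi^{m,k}_j(\omega^\star,e)$ is $\Fc^\star_{s^{m,k}_j}\otimes \Ec$--measurable, such that $g^{e,m,k}\longrightarrow f^{e,m}$ pointwise as $k\to\infty$. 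Using the growth bound implied by boundedness and the fact that the law of $\nu_t(\mathrm{d}a)\mathrm{d}t$ under $\P^\star_e$ does not depend on $e$, by dominated convergence along a diagonal extraction $k=k(m)$ one obtains a single sequence $h^{e,m} := g^{e,m,k(m)}$ of jointly measurable simple processes such that
\[
\lim_{m\to\infty}\E^{\P^\star_e}\bigg[\iint_{[0,T]\times A}|f^e(s,a)-h^{e,m}(s,a)|^2\,\nu_s(\mathrm{d}a)\mathrm{d}s\bigg] \;=\; 0,\quad \text{for each } e\in E,
\]
where measurability of this integral as a function of $e$ follows from \eqref{eq:cond-measure}.

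For each simple process $h^{e,m}$ the stochastic integral is, by construction, the finite sum
\[
N_t(h^{e,m},\omega^\star) \;:=\; \sum_{j=1}^{J(m,k(m))} \xi^{m,k(m)}_j(\omega^\star,e)\Big(N_{t^{m,k(m)}_j\wedge t}(B^{m,k(m)}_j)(\omega^\star) - N_{s^{m,k(m)}_j\wedge t}(B^{m,k(m)}_j)(\omega^\star)\Big),
\]
which, being a finite algebraic combination of $\Pc^{\F^\star}\otimes\Ec$--measurable objects, is itself $\Pc^{\F^\star}\otimes\Ec$--measurable in $(t,\omega^\star,e)$. Moreover, for each fixed $e$, the sequence $(N_\cdot(h^{e,m}))_{m\ge 1}$ is Cauchy in the space of continuous $(\F^\star,\P^\star_e)$--martingales under the $L^2$--norm, by the It\^o isometry and the convergence in the preceding display. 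The natural candidate for the desired integral is therefore
\[
N_t(f^e,\omega^\star) \;:=\; \limsup_{m\to\infty} N_t(h^{e,m},\omega^\star),
\]
which preserves $\Pc^{\F^\star}\otimes\Ec$--measurability automatically, thus yielding \eqref{eq:meas_sto_int_N}. For each fixed $e$, by Doob's inequality one extracts a subsequence $(m_l)_{l\ge 1}$ (possibly depending on $e$) along which the convergence holds $\P^\star_e$--a.s. uniformly in $t$, and the $\P^\star_e$--a.s. limit coincides with the usual stochastic integral, yielding \eqref{eq:sto_int_Ne}.

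The main potential obstacle is the simultaneous control of measurability in $e$ and of the limiting procedure, since the stochastic integral is defined only up to $\P^\star_e$--null sets which a priori vary with $e$. This is precisely why one cannot define $N_t(f^e)$ by passing to an almost--sure convergent subsequence that depends on $e$ and hoping for joint measurability; taking a $\limsup$ of the deterministically constructed simple--integrand integrals is the clean way around this, and it works because the simple processes $h^{e,m}$ themselves have been built jointly measurably in $(s,\omega^\star,a,e)$ from the outset. Finally, to deduce Proposition \ref{Proposition:diffusion_McKV-relaxed}, one applies this construction with $(E,\Ec)=(\Omb,\Fcb)$ and $\P^\star_e = \Ph_{\omb}$, combined with the martingale--measure representation of \cite{el1990martingale} applied under each $\Ph_{\omb}$ to the It\^o semimartingales $\Yh$ and $\Wh$ whose characteristics under $\Ph_{\omb}$ are inherited from the conditional martingale problem in Definition \ref{def:relaxed_ctrl_rule}; condition \eqref{eq:cond-measure} holds because $\omb\mapsto \muh(\omb)$ is $\Gcb_T$--measurable by construction.
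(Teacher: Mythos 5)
Your plan mirrors the paper's in structure — start from simple jointly measurable integrands, build up by a density/monotone-class argument, and work around the $e$-dependence of $\P^\star_e$-null sets by defining the integral as a $\limsup$ of deterministically built approximants. However, there is a genuine gap at the final step, and it is precisely the step that requires the nontrivial input the paper supplies.

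You define $N_t(f^e, \om^\star) := \limsup_{m\to\infty} N_t(h^{e,m},\om^\star)$ over the \emph{full} sequence $m$, and then for the a.s. identification \eqref{eq:sto_int_Ne} you pass to an $e$-dependent subsequence $(m_l)_{l\ge 1}$ along which $N_t(h^{e,m_l}) \to \iint f^e \mathrm{d}N$ a.s. But the $\limsup$ over the full sequence need not coincide with the limit along that subsequence: $L^2(\P^\star_e)$-convergence does not imply a.s. convergence, and a sequence converging in $L^2$ can perfectly well have $\limsup_m N_t(h^{e,m}) > \iint f^e\mathrm{d}N$ on a set of positive $\P^\star_e$-measure (think of a sequence that oscillates with Borel--Cantelli-failing excursions). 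So the formula $N_t(f^e) = \limsup_m N_t(h^{e,m})$ gives a $\Pc^{\F^\star}\otimes\Ec$-measurable object, but it has no reason to $\P^\star_e$-a.s. agree with the stochastic integral. You must take the $\limsup$ along an a.s. convergent subsequence, and that subsequence must be chosen $\Ec$-measurably in $e$; otherwise \eqref{eq:meas_sto_int_N} is lost again. This measurable subsequence selection is exactly what the paper imports from \cite[Lemma~3.2]{neufeld2014measurability} together with condition \eqref{eq:cond-measure}, and it is the essential ingredient your argument is missing.

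A secondary issue is the diagonal extraction $k = k(m)$: you assert that one can pick $k(m)$ \emph{independently of $e$} so that $h^{e,m} := g^{e,m,k(m)}$ converges to $f^e$ in $L^2(\P^\star_e\otimes\nu_s(\mathrm{d}a)\mathrm{d}s)$ for every $e$. For each fixed $e$ dominated convergence gives $k$-convergence, but the rate depends on $e$, so an $e$-uniform diagonal is not available without further argument. The paper sidesteps this entirely by working with a monotone increasing family $f_m \uparrow f$ (no inner limit in $k$ at all) and relegating the extraction to the single limit in $m$, handled by the measurable-subsequence lemma. If you want to salvage the simple-process route, you would again have to make $k(m,e)$ depend measurably on $e$, i.e.\ you end up needing the same tool.
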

	\begin{proof}
		Let us first consider the simple functions $f: [0,T] \x \Om^\star \x A \x E \to \R$ in form $f(s, a) := \sum_{k=1}^m f_k \mathbf{1}_{(s_k, t_k]}(s) \mathbf{1}_{B_k}(a)$,
		where for each $k=1, \dots, m$,
		\[
			s_k < t_k,
			~
			f_k: \Om^\star \x E \to \R ~\mbox{is}~\Fc^\star_{s_k}\otimes \Ec\mbox{--measurable, and}~
			B_k \in \Bc(A).
		\]
		Then it is clear that
		\[
			(t, \om^\star, e) \longmapsto N_t(f^e, \om^{\star}) 
			:= 
			\sum_{k=1}^m f_k(\om^{\star}, e) \big( N_{t_{k}\wedge t}(\om^{\star}, B_k)- N_{s_k \wedge t}(\om^{\star}, B_k) \big)
			~\mbox{is}~\Pc^{\F^\star} \otimes \Ec \mbox{--measurable}.
		\]
		Next, let $f_1, f_2: [0,T] \x \Om^\star \x A \x E \to \R$ be two bounded $\Pc^{\F^{\star}} \otimes \Bc(A) \x \Ec$--measurable functions.
		Assume that both $f_1$ and $f_2$, one can construct the stochastic integrals satisfying \eqref{eq:meas_sto_int_N} and \eqref{eq:sto_int_Ne},
		then it is clear that for $f := f_1 \pm f_2$, $N_t(f) := N_t(f_1) \pm N_t(f_2)$ satisfies also \eqref{eq:meas_sto_int_N} and \eqref{eq:sto_int_Ne}.
		
		\medskip
		
		Further, let $(f_m)_{ m \ge 1}$ be a sequence of positive bounded functions increasely converging to bounded function $f$ pointwisely, 
		all $f, f_m$ are $\Pc^{\F^{\star}} \otimes \Bc(A) \x \Ec$--measurable functions,
		and for each $m \ge 1$, one can construct $N_t(f_m)$ satisfying \eqref{eq:meas_sto_int_N} and \eqref{eq:sto_int_Ne}.
		Then it is clear that for each $e \in E$,
		\[
			N_t(f_m^e) \longrightarrow \iint_{[0,t] \x A} f^e(s,a) N(\mathrm{d}a,\mathrm{d}s) ~\mbox{in}~\L^2(\P^\star_e),~\mbox{as}~ m \longrightarrow \infty.
		\]
		Following \cite[Lemma 3.2.]{neufeld2014measurability}, toegether with Condition \eqref{eq:cond-measure}, one can find a family of sub-sequence $(m_k(e))_{k \ge 1, e \in E}$ which is $\Ec$--measurable and
		\[
			N_t(f^e) := \limsup_{k \to \infty} N_t(f_{m_k(e)}^e) = \iint_{[0,t] \x A} f^e(s,a) N(\mathrm{d}a,\mathrm{d}s),~\P^\star_e \mbox{--a.s., for each}~e \in E.
		\]
		In other words, one can choose a version $N_t(f)$ satisfying  \eqref{eq:meas_sto_int_N} and \eqref{eq:sto_int_Ne}.
		By the monotone class theorem, 
		it follows that the statement holds true for all bounded functions $f: [0,T] \x \Om^\star \x A \x E \to \R$ which is $\Pc^{\F^{\star}} \otimes \Bc(A) \x \Ec$--measurable.
		%i.e. for some discrete grid $0 = t_0 < \dots < t_m = T$, $f^e(t,\om^\star,a) = f^e(t_k, \om^\star, a)$ for all $t \in (t_{k-1}, t_k]$.
		
	\medskip
		
		{\color{black}
		Finally, let $f: [0,T] \x \Om^\star \x A \x E \to \R$ be a $\Pc^{\F^{\star}} \otimes \Bc(A) \x \Ec$--measurable function satisfying \eqref{eq:fe_integ}. For each $m \in \N^{\star},$ define  $f_m:=f\mathbf{1}_{|f| \le m},$  then $(f_m)_{m \in \N^{\star}}$ is a sequence  of $\Pc^{\F^{\star}} \otimes \Bc(A) \x \Ec$--measurable functions satisfying
		\[
			\E^{\P^{\star}_e} \bigg[ \iint_{[0,T] \times A} |f^e(s,a) -f^e_m(s,a)|^2 \nu_s(\mathrm{d}a) \mathrm{d}s \bigg]< \infty,
			~\mbox{for each}~e \in E.
		\]
		Then it is enough to use the arguments in \cite[Lemma 3.2]{neufeld2014measurability} with the condition \eqref{eq:cond-measure} again to define $N_t(f^e)$ as limit of $N_t(f^e_m)$, which satisfies  \eqref{eq:meas_sto_int_N} and \eqref{eq:sto_int_Ne}.
		}
	\end{proof}

\paragraph*{Proof of Proposition \ref{Proposition:diffusion_McKV-relaxed}}
	Recall that the probability space $(\Om^\star, \Fc^\star, \P^\star)$ is equipped with $2 (n+d)$ i.i.d. martingale measures $(N^{\star,i})_{i=1, \dots, 2(n+d)}$ with intensity $\nu_0(\mathrm{d}a) \mathrm{d}t$ for some diffuse probability measure $\nu_0$ on $A$,
	which is extended on $(\Omh^\star, \Fch^\star, \Ph_{\omb})$ for every $\omb \in \Omb$.
	We will now follow the technical steps in \citeauthor*{el1990martingale} \cite{el1990martingale} to construct the family of martingale measures $(\Nh^{\omb})_{\omb \in \Omb}$ satisfying \eqref{eq:X_repres_Nt}, 
	and then check the measurability property in \eqref{eq:Nt_measurability}.

\medskip

	Let us first denote
	\begin{align} \label{eq:matrix}
		\Sigma(t,\xb,a,\nu)
		:=
	        \begin{pmatrix} 
			\sigma(t,\xb,a,\nu) & 0_{n \x n}  \\ 
			\mathrm{I}_{d} & 0_{d \x n} 
		\end{pmatrix},
		~\mbox{and}~
		\big(\Sigma \Sigma^{\top})^{+}(t,\xb,a,\nu\big)
		:=
		\lim_{\eps \searrow 0} 
			\big( \eps \mathrm{I}_{d+n} + (\Sigma \Sigma^{\top})(t,\xb,a,\nu) \big)^{-1},
	\end{align}
	where $(\Sigma \Sigma^{\top})^{+}$ is the pseudo--inverse of $\Sigma \Sigma^{\top}$. Then for all bounded Borel measurable function $f: [0,T] \x A \longrightarrow \R $, let
	\[
		\Gamma^{\omb}(s,f)
		:=
		\int_A \Sigma\Sigma^{\top} (s, \Xh, a, \mu_s(\omb))f(s,a) \Lambdah_s (\mathrm{d}a),
		~\mbox{and its pseudo--inverse}~
		\Gamma^{\omb, +}(s,f)
		:=
		\limsup_{\eps \searrow 0} 
			\big( \eps \mathrm{I}_{d+n} +
			\Gamma(s,\omb,f)
		\big)^{-1}.
	\]
	Denote also by $\mathbf{1}$ the constant function on $[0,T] \x A$ which equals to $1$.
	Furthermore, let $\pi_i: \R^{n+d} \longrightarrow \R$, $i = 1, \dots, n+d$, be the projection function defined by 
	$\pi_i( (z) := z_i$ for every $z := (z_1, \dots, z^{n+d})$,
	and $\Mh^{\omb, i} := \Mh^{\pi_i, \mu(\omb)}$ be the martingale defined in \eqref{eq:Mvarphi},
	whose quadratic variation process is given by
	\begin{align*}
		\big \langle \Mh^{\omb, i}, \Mh^{\omb, j} \big \rangle_t
		=
		\int_0^t \Gamma^{\omb}_{i,j}(s, \mathbf{1}) \mathrm{d}s,
		~t \in [0,T],~\Ph_{\omb} \mbox{--a.s.}
	\end{align*}
	
	$(i)$ 
	By \cite[Theorem III-2.]{el1990martingale}, there exists a $\Pc^{\Fh} \otimes \Bc(A)$--measurable function $\varphi: [0,T] \x \Omh \x A \longrightarrow A$ such that
	\begin{align*}
		\Lambdah_s(\hat \om, B)=\int_A \mathbf{1}_{B}(\varphi( s, \hat \om, a)) \nu_0 (\mathrm{d}a),
		~\mbox{for all}~
		(s, \hat \om) \in [0,T] \x \Omh,
		~B \in \Bc(A).
	\end{align*}
	This allows to define, for every $\omb \in \Omb$,
	two independent martingale measures 
	$(\Nt^{\star,\omb,i})_{1\leq i \leq n+d}$
	and $(\Nt^{\star,\omb,i})_{n+d+1\leq i \leq2(n+d)}$ from $(N^{\star,i})_{1\leq i\leq n+d}$ and $(N^{\star,i})_{n+d+1\le i\le 2(n+d)}$ as follows.
	For each $\omb \in \Omb$, let us define
	for all $B \in \Bc(A)$, $i=1,\dots, n+d$,
	\[
		\Nt^{\star, \omb, i}_t (B)
		:= 
		\sum_{k=1}^{n+d}\iint_{[0,t]\times A} 1_B(\varphi(s,a))\Sigma_{ik}
		(s, \Xh, \varphi(s,a),\mu_s(\omb)) N^{\star,k}(\mathrm{d}a,\mathrm{d}s),
		~t \in [0,T],~
		{\Ph}_{\omb}\mbox{--a.s.},
	\]
	and for all $B \in \Bc(A)$, $i=n+d+1, \dots, 2(n+d)$,
	\[
		\Nt^{\star, \omb, i}_t(B) 
		:= 
		\sum_{k=n+d+1}^{2(n+d)}\iint_{[0,t]\times A} 1_B(\varphi(s,a))\Sigma_{ik}
		(s, \Xh, \varphi(s,a),\mu_s(\omb)) N^{\star,k}(\mathrm{d}a,\mathrm{d}s),
		~t \in [0,T],~
		{\Ph}_{\omb}\mbox{--a.s.}
	\]
	By \cite[Theorem III-3.]{el1990martingale}, $(\Nt^{\star,\omb,i})_{1\leq i\leq n+d}$ and $(\Nt^{\star,\omb,i})_{n+d+1\leq i\leq  2(n+d)}$ are two independent martingale measures with intensity 
	$\Lambdah^{\Sigma, \omb}_t (\mathrm{d}a) \x  \mathrm{d}t$ defined by $\Lambdah^{\Sigma, \omb}_t (B):= \Gamma^{\omb}(t,\mathbf{1}_B)$ for all $B \in \Bc(A)$.
	
	\vspace{0.5em}
	
	Next, we define the martingale measure $(\Nt^{\omb,i})_{i=1, \dots, n+d}$,
	from $(\Nt^{\star,\omb,i}, \Mh^{\omb,i} )_{i=1, \dots, n+d}$ as follows.
	For each bounded $\Pc^{\Fh^\star} \otimes \Bc(A)$--measurable function $f: [0,T] \x \Omh^\star \x A \longrightarrow \R$, 
	and $i=1,\dots, n+d$, let
	\begin{align*}
		\iint_{[0,t]\times A} f(s,a) \Nt^{\omb, i}(\mathrm{d}a,\mathrm{d}s)
		:=&\
		\sum_{k=1}^{n+d} \int_0^t \Big(
			\Gamma^{\omb}(s,f) 
			\big(\Gamma^{\omb,+} \Gamma^{\omb} \Gamma^{\omb,+} \big)(s,\mathbf{1}) 
		\Big)^{i,k} 
		\mathrm{d} \Mh^{\omb, k}_s \\
		& +
		\sum_{k=1}^{n+d} 
		\int_0^t \int_A \Big( 
			f(s,a) I_{n+d} 
			- 
			\Gamma^{\omb}(s,f) \big(\Gamma^{\omb,+} \Gamma^{\omb} \Gamma^{\omb,+} \big)(s,\mathbf{1}) 
			\Big)^{i,k} 
		\Nt^{\star,\omb,k}(\mathrm{d}a,\mathrm{d}s).
	\end{align*}
	Let us refer to the proof of \cite[Proposition III-9., Theorem III-10.]{el1990martingale}) for the fact the above does define a martingale measure $(\Nt^{\star,\omb,i})_{i=1, \dots, n+d}$ with intensity 
	$\Lambdah^{\Sigma, \omb}_t (\mathrm{d}a) \x  \mathrm{d}t$,
	and that it satisfies $\Nt^{i,\omb}_t(A) =\Mh^{\omb, i}_t$, for each $i = 1, \dots, n+d$.

	\vspace{0.5em}
	
	Finally, let
	\[
		\Sigma^{-1} (s,\omb,\widehat{\om},a)
		:=
		\Sigma
		(\Sigma \Sigma^{\top})^+ \Sigma \Sigma^{\top} (\Sigma \Sigma^{\top})^+ (s,\omb,\widehat{\om},a),
	\]	
	we define $(\Nh^{\omb,i})_{i=1,\dots, n+d}$ as follows.
	For every bounded $\Pc^{\Fh^\star} \otimes \Bc(A)$--measurable function $f: [0,T] \x \Omh^\star \x A \longrightarrow \R$, $i=1, \dots, n+d$, 
	let
	\[
		\Nh^{\omb,i}_t(f)
		:=
		\sum_{k=1}^{n+d} \iint_{[0,t]\times A} \!\!\!\! f(s,a) \Sigma^{-1}_{ik}(s,\omb,a) \Nt^{\omb,k}(\mathrm{d}a,\mathrm{d}s)
		+
		\iint_{[0,t]\times A} \!\!\!\! \big( \mathrm{I}_{n+d} - \Sigma\Sigma^{\top} (\Sigma\Sigma^{\top})^+ \big)(s,\omb,a) 
				f(s,a) 
				\Nt^{\star,\omb,n+d+i}(\mathrm{d}a,\mathrm{d}s),
	\]
	where we notice that $\Sigma\Sigma^{\top} (\Sigma\Sigma^{\top})^+$ is the projection from $\R^{n+d}$ to the range of 
	$\Sigma\Sigma^{\top}$.
	It follows then $(\Nh^{\omb,i})_{i = 1, \dots, d}$ is a martingale measure with intensity $\Lambdah_t (\mathrm{d}a) \x  \mathrm{d}t$
	and satisfies \eqref{eq:X_repres_Nt}.
	
	\medskip
	
	$(ii)$ 
	Let us now consider a bounded $\Pc^{\widehat{\H}^\star} \otimes \Bc(A)$--measurable function $f:[0,T] \times \Omb \times \Omh^\star \times A  \longrightarrow \R$.
	By the above explicit construction of $\Nh^{\omb}$, it is clear that one can rewrite the stochastic integral
	\[
		\iint_{[0,t] \times A}  f^{\omb}(s,a) \Nh^{\omb}(\mathrm{d}s, \mathrm{d}a)
		=
		\sum_{i=1}^{2(n+d)} \iint_{[0,t] \x A} \phi^{\omb}(s,a) N^{\star,i}(\mathrm{d}a,\mathrm{d}s) 
		+
		\sum_{i=1}^{n+d} \int_0^t \psi^{\omb}_s \mathrm{d}\Mh^{\omb,i}_s,
		~\Ph_{\omb}\mbox{--a.s.},
	\]
	for some $\Pc^{\widehat{\H}^\star} \otimes \Bc(A)$--measurable function $\phi:[0,T] \times \Omb \times \Omh^\star \times A  \longrightarrow \R$,
	and $\Pc^{\widehat{\H}^\star}$--measurable function $\psi:[0,T] \times \Omb \times \Omh^\star  \longrightarrow \R$.
	Then one can apply the same arguments as in Lemma \ref{eq:StochInt_N} to choose a good version of the stochastic integral s.t.
	\[
		(t, \omb, \hat \om^\star)
		\longmapsto	
			\bigg(\iint_{[0,t] \times A}  f^{\omb} (s, a) \Nh^{\omb}(\mathrm{d}a,\mathrm{d}s) \bigg)(\hat \om^\star)\; \mbox{\rm is}\; 
		\Pc^{\widehat{\H}^\star}\mbox{\rm --measurable},
	\]
	i.e.  \eqref{eq:Nt_measurability} holds true.
	\qed

\subsection{Proof of Proposition \ref{Equivalence-Proposition_General} (general case)}
\label{subsec:Equivalence-Proposition}

	We finally provide the proof of the \Cref{Equivalence-Proposition_General} in the general case,
	where  the coefficients $(b,\sigma,\sigma_0)$ can be simplified to be
	\begin{align} \label{hypothesis_generalUncontrolled}
		(b,\sigma)(t,\xb,\nub,a)
		=
		(b,\sigma)(t,\xb,\nu,a),\;\mbox{and}\;
		\sigma_0(t,\xb,\nub,a):=\sigma_0(t,\xb,\nu),
	\end{align}
	for all $(t,\xb,\nub,a) \in [0,T] \x \Cc^n \x \Pc(\Cc^n \x A) \x A$ with $\nu(\mathrm{d}\xb):=\nub(\mathrm{d}\xb,A)$.
 	The main idea of the proof is the same as for the case where $\sigma_0$ is a constant function,
	and will provide an outline of the proof.
	
	\medskip

	In a nutshell, we aim to approximate the relaxed control $\Pb$ by weak control rules on $\Omb$, where $(X,Y,B,\muh)$ satisfies
	\begin{align} \label{eq:Y-X_equality}
		Y_{\cdot}=X_\cdot-\int_0^\cdot \sigma_0(s, X, \mu ) \mathrm{d}B_s,\; \Pb\text{\rm--a.s.},
	\end{align}    
	and for $\Pb$--a.e. $\omb \in \Omb$,
	the canonical processes $(\Xh,\Yh,\Lambdah,\Wh)$ satisfies $\Wh_\cdot=\int_0^\cdot \int_A \Nh^{\omb}(\mathrm{d}a,\mathrm{d}s),~\Ph_{\omb} \mbox{\rm --a.s.}$ and
	\begin{align} \label{eq:initial-equation}
		\Yh_t
		=
		\Xh_0 
		+
		\iint_{[0,t]\times A}
		b \big(r, \Xh, \mu(\omb), a \big) \Lambdah_r(\mathrm{d}a, \mathrm{d}r)
		+
		\iint_{[0,t]\times A}
		\sigma\big(r, \Xh, \mu(\omb), a \big) \Nh^{\omb}(\mathrm{d}a,\mathrm{d}r),\;\mbox{for all}\;t \in [0,T],\;\Ph_{\omb} \mbox{\rm --a.s.}
	\end{align}
	
	\vspace{0.5em}
	\underline{\it Step $1$}: In this first step, we rewrite \eqref{eq:initial-equation} as an equation that takes into account only $\Xh,$ and not $\Yh.$
	To do this, observe that, we can find a Borel measurable function $\Ic: (t,\xb,\pi,\bb) \in [0,T] \x \Cc^n \x \Pc(\Cc^n) \x \Cc^\ell \longrightarrow \Ic(t,\xb,\pi,\bb) \in \R^n$ satisfying $\Ic(t,\xb,\pi,\bb)=\Ic(t,\xb_{t \wedge \cdot},\pi \circ (\Xh_{t \wedge \cdot})^{-1},\bb_{t \wedge \cdot})$ and 
	\begin{align} \label{def_sto-integral}
	    \Ic \big(t,X,\mu,B\big)=\int_0^t \sigma_0(r,X,\mu) \mathrm{d}B_r,\;\Pb\mbox{--a.s.}
	\end{align}
	Using \eqref{eq:muh_property}, i.e. $\muh_t (\omb)=\Pb^{\Gcb_T}_{\omb} \circ (X_{t \wedge \cdot},Y_{t \wedge \cdot},W, \Lambda^t)^{-1},\;\mbox{for}\;\Pb\mbox{--a.e.}\;\omb \in \Omb$, $t \in [0,T]$,
	we obtain an equivalent formulation of \eqref{eq:Y-X_equality} on $\Omh,$ 
	\[
		\Yh_{\cdot}=\Xh_\cdot-\Ic \big(\cdot,\Xh,\mu(\omb),B(\omb)\big),\;\Ph_{\omb}\mbox{--a.s},\;\mbox{for}\;\Pb\;\mbox{--a.e.}\;\omb \in \Omb.
	\]
	and then, a reformulation of \eqref{eq:initial-equation} involving only $\Xh$,
	and one can consider $\Ic\big(\cdot, \Xh, \mu(\omb),B(\omb)\big)$ as a `conditional` stochastic integral w.r.t $B$ given the $\sigma$--field $\Gcb_T.$

	\medskip

	Further, for any $\R^n$--valued $\widehat{\H}^\star$--adapted continuous process $(U_t)_{t \in [0,T]}$, there exists a measurable map $\phi: \Omb \x \Cc^n \x \Cc^n \x \M \x \Cc^d \x \Om^\star \longrightarrow \Cc^n$ such that $U^{\omb}_t(\hat \om, \om^\star) = \phi_t\big(\omb,\Xh_{t \wedge \cdot}(\hat \om),\Yh_{t \wedge \cdot}(\hat \om), \Lambdah^t(\hat \om), \Wh_{t \wedge \cdot}(\hat \om), \om^\star \big)$, for all $(t,\omb,\hat \om,\om^\star) \in [0,T] \x \Omb \x \Omh \x \Om^\star$.
	notice that, for $\Pb\mbox{--a.e.}\;\omb \in \Omb$,
	one has $\E^{\Ph_{\omb}} \big[\sup_{t \in [0,T]} |\widehat{S}^{\omb}_t|^p \big] < \infty$,
	then it follows by \eqref{eq:muh_property} that, for $\Pb\mbox{--a.e.}\;\omb \in \Omb$,
	\begin{align} \label{eq:fund-property_I}
		\Lc^{\Ph_{\omb}} \Big( \Ic \big(\cdot,\widehat{S}^{\omb},\beta(\omb),B(\omb)\big), \widehat{S}^{\omb},\beta(\omb),B(\omb)  \Big)
		=
		\big( \Pb^{\Gcb_T}_{\omb} \otimes \P^\star \big) \circ \bigg( \int_0^\cdot \sigma_0\big(s, S\big(\omb,X,Y, \Lambda, W \big), \beta \big) \mathrm{d}B_s,S\big(\omb,X,Y, \Lambda, W \big),\beta,B  \bigg)^{-1},
	\end{align}
	for all Borel measurable functions $\beta: \Omb \longrightarrow \Pc(\Cc^n)$ such that $(\beta \circ (\Xh_{t \wedge \cdot})^{-1})_{t \in [0,T]}$ is a $\Gb$--predictable process satisfying $\E^{\Pb}[\int_{\Cc^n} \|\xb\|^p \beta(\mathrm{d}\xb)]< \infty.$

\medskip
	\underline{\it Step $2$}: 
	We now approximate $\Xh$ using $\Yh$ and under each $\Ph_{\omb}$,
	where the arguments are almost the same as in the proof of \Cref{Equivalence-Proposition_General} when $\sigma_0$ is constant. 
	More precisely, for $k \ge 1$,
	there exists $a^k_1,\dots,a^k_k \in A$ together with a sub--division $t^k_0=0<\dots<t^k_k=T$,
	as well as $\Pc(A)$--valued $\Fh^\star$--predictable processes $(\Lambdah^{k,1},\dots,\Lambdah^{k,k})$, 
	which are constant on each interval $[t^k_i,t^k_{i+1}],$ 
	and $(\Fh,\Ph^\star_{\omb})$--independent Brownian motions  $(\Zh^{\omb,k,1},\dots,\Zh^{\omb,k,k})$ 
	for $\Pb$--a.e. $\omb \in \Omb$.
	Moreover, let 
	\begin{align*}
		\Yh^{\omb,k}_t
		:=
		\Xh_0
		+
		\sum_{i=1}^k \int_{0}^{t} b(r, \Xh, \mu(\omb), a^k_i) \Lambdah^{k,i}_r \mathrm{d}r
		+
		\int_{0}^{t} \sigma(r, \Xh, \mu(\omb), a^k_i) \sqrt{\Lambdah^{k,i}_r}\mathrm{d}\Zh^{\omb,k,i}_{r},\;t \in [0,T],\;\Ph_{\omb}\mbox{--a.s.},
	\end{align*}
	and $\Xh^{\omb,k}_{\cdot}:=\Yh^{\omb,k}_{\cdot} + \Ic \big(\cdot,\Xh,\mu(\omb),B(\omb)\big)$, one has
	\begin{align*}
		\Lim_{k\to\infty} \E^{\Ph_{\omb}} \bigg[ \sup_{t \in [0,T]} \big| \Yh^{\omb,k}_t-\Yh_t \big|^p \bigg]=0,\;\mbox{then}\;\Lim_{k\to\infty} \E^{\Ph_{\omb}} \bigg[ \sup_{t \in [0,T]} \big| \Xh^{\omb,k}_t-\Xh_t \big|^p \bigg]=0.
	\end{align*}
	Further, for each $k \ge 1$
	\[
		(t, \omb, \omt^\star) \longmapsto
		\Big( \Yt^{\omb,k}_{t \wedge \cdot}(\hat \om^\star),\Xh^{\omb,k}_{t \wedge \cdot}(\hat \om^\star),(\Lambdah^{1,k})^t(\hat \om^\star),\dots, (\Lambdah^{k,k})^t(\hat \om^\star),\Zh^{\omb,k,1}_{t \wedge \cdot}(\hat \om^\star),\dots,\Zh^{\omb,k,k}_{t \wedge \cdot}(\hat \om^\star)   \Big)
		~\mbox{is}~
		\Pc^{\widehat{\H}^\star}\mbox{--measurable}.
	\]

	\medskip
	
	Next, let us introduce $X^{k,\circ}$ an $\R^n$--valued $\widehat{\H}^\star$--adapted continuous process satisfying,
	for $\Pb$--a.e. $\omb \in \Omb$,
	$X^{\omb,k,\circ}$ is the unique strong solution of:
	\begin{align} \label{eq:random-SDE}
		\Xh^{\omb,k,\circ}_t
		=&\
		\Xh_0
		+
		\sum_{i=1}^k\int_{0}^{t} b(r, \Xh^{\omb,k,\circ}, \Ph_{\omb} \circ (\Xh^{\omb,k,\circ})^{-1}, a^k_i)  \Lambdah^{k,i}_r \mathrm{d}r
		+
		\int_{0}^{t} \sigma(r, \Xh^{\omb,k,\circ}, \Ph_{\omb} \circ (\Xh^{\omb,k,\circ})^{-1}, a^k_i) \sqrt{\Lambdah^{k,i}_r}\mathrm{d}\Zh^{\omb,k,i}_{r} \nonumber
		\\
		&+
		\Ic \big(t,\Xh^{\omb,k,\circ},\Ph^\star_{\omb} \circ (\Xh^{\omb,k,\circ})^{-1},B(\omb)\big),
		~t \in [0,T],~\Ph_{\omb}\mbox{--a.s.},
	\end{align}
	and $\E^{\Ph_{\omb}} \big[\|X^{\omb,k,\circ} \|^p \big] < \infty$.
	The existence and uniqueness of solution to \eqref{eq:random-SDE} is just an extension of the classical Picard iteration scheme as in \cite[Theorem A.3.]{djete2019mckean}, adapted to this context.

	\medskip

	We next define $\Yh^{\omb,k,\circ}_\cdot:=\Xh^{\omb,k,\circ}_\cdot-\Ic \big(\cdot,\Xh^{\omb,k,\circ},\Ph^\star_{\omb} \circ (\Xh^{\omb,k,\circ})^{-1},B(\omb)\big)$.
	By the same arguments as in the constant $\sigma_0$ case (using \eqref{eq:Nt_measurability} and Picard iteration argument), 
	we can deduce that, for each $k \ge 1$,
	\[
		(t, \omb, \hat \om^\star) \longmapsto 
		\big( \Xh^{\omb,k,\circ}_{t \wedge \cdot}(\hat \om^\star), \Yh^{\omb,k,\circ}_{t \wedge \cdot}(\hat \om^\star), (\Lambdah^{1,k})^t(\hat \om^\star),\dots, (\Lambdah^{k,k})^t(\hat \om^\star),\Zh^{\omb,k,1}_{t \wedge \cdot}(\hat \om^\star),\dots,\Zh^{\omb,k,k}_{t \wedge \cdot}(\hat \om^\star)   \big)
		~\mbox{is}~
		\Pc^{\widehat{\H}^\star}\mbox{--measurable}.
	\]
  	Then by the definition of $\Ic$ in \eqref{def_sto-integral}, together with \eqref{eq:fund-property_I}, it is straightforward to check that
	\begin{align} \label{eq:property_I}
		&\ \int_{\Omb} \E^{\Ph^\star_{\omb}} \Big[ \Big|\Ic\big(t,\Xh^{\om,k,\circ}, \Lc^{\Ph^\star_{\omb}}(\Xh^{\om,k,\circ}),B(\omb) \big)- \Ic\big(t,\Xh, \mu(\omb),B(\omb) \big) \Big|^p\Big] \Pb(\mathrm{d}\omb) \nonumber
		\\
		\le&\ 
		\int_{\Omb} \int_0^t \E^{\Ph^\star_{\omb}} \Big[ \Big|\sigma_0 \big(r,\Xh^{\om,k,\circ}, \Lc^{\Ph^\star_{\omb}}(\Xh^{\om,k,\circ}) \big)- \sigma_0 \big(r,\Xh, \mu(\omb)\big) \Big|^p\Big] \mathrm{d}r\Pb(\mathrm{d}\omb).
	\end{align}
	It follows that, for some constant $C >0$ independent of $k$,
	\[
		\int_{\Omb} \E^{\Ph^\star_{\omb}} \bigg[\sup_{s \in [0,T]}|\Xh^{\omb,k,\circ}_s-\Xh^{\omb,k}_s|^p \bigg] \Pb(\mathrm{d}\omb)
		\le
		C
		\int_0^T \int_{\Omb} \E^{\Ph^\star_{\omb}} \bigg[\sup_{s \in [0,t]}|\Xh^{\omb,k}_s-\Xh_s|^p \bigg] \Pb(\mathrm{d}\omb) \mathrm{d}t,
	\]
	and further 
	\[
		\lim_{k \to \infty} \int_{\Omb} \E^{\Ph^\star_{\omb}} \big[\sup_{s \in [0,T]}|\Xh^{\omb,k,\circ}_s-\Xh_s|^p \big] \Pb(\mathrm{d}\omb)=0.
	\]

	\medskip \underline{\it Step $3$}: 
	Finally, let us construct the approximating weak control rules,
	where the arguments are the same as in the constant $\sigma_0$ case. 
	For each $k \ge 1$ and $m \ge 1$,
	there exist a sequence of Borel sets $(I^{k,1}_m,\dots,I^{k,k}_m)$ such that $\cup_{i=1}^k I^{k,i}_m=[0,T],$ 
	and for $\Pb$--a.e. $\omb \in \Omb,$ 
	$(\Wh^{\omb,m,1},\dots,\Wh^{\omb,m,k})$ is $(\Fh^\star,\Ph_{\omb})$--martingales with quadratic variation $\langle \Wh^{\omb,m,i} \rangle_{\cdot}=\hat c^{m,i}_{\cdot}:=\int_0^\cdot \mathbf{1}_{I^{k,i}_m}(r)\mathrm{d}r,$, $i\in\{1,\dots,k\}$. 
	Furthermore, it holds that
	\begin{align} \label{eq:conv-laststep}
		\Lim_{m\to\infty} \Big( \Wh^{\omb,m,i},\hat c^{m,i} \Big)
		=
		\bigg( \int_0^\cdot \sqrt{\Lambdah^{k,i}_r}\mathrm{d}\Zh^{\omb,m,i}_{r}, \int_0^\cdot \Lambdah^{k,i}_r\mathrm{d}r \bigg),
		~i \in \{1,\dots,k\},\;\Ph_{\omb}\mbox{\rm --a.e.}
	\end{align}

	Next, let $\Xh^{\omb, k,m}_t(\omt^\star)$ be a $\R^n$--valued $\widehat{\H}^\star$--adapted process
	such that, for $\Pb$--a.e. $\omb \in \Omb$,
	$\Xh^{\omb,k,m}$ is the unique strong solution of
	\begin{align*}
        		\Xh^{\omb,k,m}_t
		=&\
		\Xh_0
		+
		\sum_{i=1}^k\int_{0}^{t} b(r, \Xh^{\omb,k,m}, \Ph_{\omb} \circ (\Xh^{\omb,k,m})^{-1}, a^k_i)  \mathrm{d}\hat c^{m,i}_r
		+
		\int_{0}^{t} \sigma(r, \Xh^{\omb,k,m}, \Ph_{\omb} \circ (\Xh^{\omb,k,m})^{-1}, a^k_i) \mathrm{d}\Wh^{\omb,m,i}_{r}
		\\
		&+
		\Ic \big(\cdot,\Xh^{\omb,k,m},\Ph^\star_{\omb} \circ (\Xh^{\omb,k,m})^{-1},B(\omb)\big),\; t\in[0,T], \;\Ph_{\omb}\mbox{--a.s.}
	\end{align*}
	Let $\Yh^{\omb,k,m}_\cdot:=\Xh^{\omb,k,m}_\cdot-\Ic \big(\cdot,\Xh^{\omb,k,m},\Ph^\star_{\omb} \circ (\Xh^{\omb,k,m})^{-1},B(\omb)\big)$,
	it follows that
	\[
		(t, \omb,\hat \om^\star) \longmapsto
		 \Big(\Yh^{\omb,k,m}_{t \wedge \cdot}(\hat \om^\star),\Xh^{\omb,k,m}_{t \wedge \cdot}(\hat \om^\star),\Wh^{\omb,m,1}_{t \wedge \cdot}(\hat \om^\star),\dots,\Wh^{\omb,m,k}_{t \wedge \cdot}(\hat \om^\star)  \Big)
		~\mbox{is}~
		\Pc^{\widehat{\H}^\star}\mbox{--measurable}.
	\]
	Define, for each $m \ge 1$,
	a probability on $\Omb^{\star,k}:=\Cc^n \x \Cc^n \x (\Cc)^k \x (\Cc^d)^k \x \Cc^\ell \x \Pc(\Cc^n \x \Cc^n \x (\Cc)^k \x (\Cc^d)^k)$ by
	\begin{equation} \label{eq:def_Pb_star_m}
		\Pb^{\star,m}
		:=
		\int_{\Omb}\Lc^{\Ph_{\omb}} \Big(\Xh^{\omb,k,m},\Yh^{\omb,k,m},\hat c^{m,1},\dots,\hat c^{m,k}, \Wh^{\omb,m,1},\dots,\Wh^{\omb,m,k},B(\omb),\muh^{m}(\omb) \Big)\Pb(\mathrm{d}\omb), 
	\end{equation}
	where $\muh^m(\omb):=\Lc^{\Ph_{\omb}} \Big(\Xh^{\omb,k,m},\hat c^{m,1},\dots,\hat c^{m,k}, \Wh^{\omb,m,1},\dots,\Wh^{\omb,m,k} \Big).$

\medskip
	Similarly to \Cref{lemma:estimates}, by using an inequality of type \eqref{eq:property_I}, we get, for some constant $C> 0$
        \[
		\sup_{m \ge 1}
		\int_{\Omb}\E^{\Ph_{\omb}}
		\bigg[\sup_{t \in [0,T]}|\Xh^{\omb,k,m}_t|^{p^\prime} + \sup_{t \in [0,T]}|\Yh^{\omb,k,m}_t|^{p^\prime}
		\bigg] \Pb(\mathrm{d}\omb)
		\le 
		C 
		\bigg(1+\int_{\R^n}|x|^{p^\prime}\nu(\mathrm{d}x) \bigg) 
		< 
		\infty.
	\]
	Therefore, the sequence $(\Pb^{\star,m})_{m \ge 1}$ is relatively compact for the Wasserstein metric $\Wc_p.$	
	Along a possible subsequence $(m_j)_{j \ge 1}$, one has
	\begin{equation} \label{eq:cvg_P_star_m}
		\Lim_{j\to\infty} \Pb^{\star,m_j}
		=
		\Lc^{\Pb^\star} \Big( X^\star,Y^\star,c^{1,\star},\dots,c^{k,\star}, W^{1,\star},\dots,W^{k,\star},B^\star,\muh^{\star} \Big), \mbox{under}~\Wc_p,
	\end{equation}
	for some random elements $\big( X^\star,Y^\star,c^{1,\star},\dots,c^{k,\star}, W^{1,\star},\dots,W^{k,\star},B^\star,\muh^{\star} \big)$ in $(\Omb^\star, \Fb^\star, \Pb^\star)$. 
	Now, using \eqref{eq:fund-property_I}, it follows that
	\[
		Y^\star_{\cdot}=X^\star_\cdot-\int_0^\cdot \sigma_0(s, X^\star, \Lc^{\P^\star}(X^\star | B^\star,\muh^\star ) ) \mathrm{d}B_s,\; \Pb^\star\text{\rm--a.s.}
	\]
	Let us define, for all $t \in [0,T]$, 
	\[
		\muh^\star_t:=\muh^\star \circ \big( \Xh^\star_{t \wedge \cdot},\Yh^\star_{t \wedge \cdot},\hat c^{1,\star}_{t \wedge \cdot},\dots,\hat c^{k,\star}_{t \wedge \cdot}, \Wh^{1,\star}_{t \wedge \cdot},\dots,\Wh^{k,\star}_{t \wedge \cdot} \big)^{-1},
	\]
	where $(\Xh^\star,\Yh^\star,\hat c^{1,\star},\dots,\hat c^{k,\star}, \Wh^{1,\star},\dots,\Wh^{k,\star})$ is the canonical processes on $\Cc^n \x \Cc^n \x (\Cc)^k \x (\Cc^d)^k$,
	we obtain that
	\begin{equation} \label{eq:H_Hypo_star}
		\muh^\star_t
		=
		\Lc^{\P^\star} \Big( X^\star_{t \wedge \cdot},Y^\star_{t \wedge \cdot},c^{1,\star}_{t \wedge \cdot},\dots,c^{k,\star}_{t \wedge \cdot}, W^{1,\star}_{t \wedge \cdot},\dots,W^{k,\star}_{t \wedge \cdot} \big| \muh^\star,B^\star \Big),\;\P^\star\mbox{--a.s}.
	\end{equation}

	In addition, by the definition of $\Pb^{\star, m}$ in \eqref{eq:def_Pb_star_m} 
	together with the convergence results \eqref{eq:conv-laststep} and \eqref{eq:cvg_P_star_m}, 
	it follows that, for $\Pb^\star$--a.e. $\omb \in \Omb^\star,$ and for all $t \in [0,T]$
	\begin{align*}
		\Yh^{\star}_t
		=
		\Xh^\star_0
		+
		\sum_{i=1}^k\int_{0}^{t} b(r, \Xh^{\star}, \muh^\star(\omb) \circ (\Xh^{\star})^{-1}, a^k_i)  \mathrm{d}\hat c^{i,\star}_r
		&+
		\int_{0}^{t} \sigma(r, \Xh^{\star}, \muh^\star(\omb) \circ (\Xh^{\star})^{-1}, a^k_i) \mathrm{d}\Wh^{i,\star}_{r},\;\muh^\star(\omb)\mbox{--a.s.}
	\end{align*}
	Using \eqref{eq:H_Hypo_star},
	one has $\Yh^\star_{\cdot}=\Xh^\star_\cdot-\Ic \big(\cdot,\Xh^\star,\muh^\star(\omb) \circ (\Xh^{\star})^{-1},B^\star(\omb)\big),\;\Pb^\star_{\omb}\mbox{--a.s},\;\mbox{for}\;\Pb^\star\;\mbox{--a.e.}\;\omb \in \Omb^\star$.
	Then by \eqref{eq:conv-laststep}, one deduces that
	\[
		\Lc^{\Pb} \Big(B, \widehat{\beta} \circ \big(\Xh_0,\hat c^{1,\star},\dots,\hat c^{k,\star}, \Wh^{1,\star},\dots,\Wh^{k,\star} \big)^{-1} \Big)=\Lc^{\Pb^\star} \Big(B^\star,\muh^\star \circ \big(\Xh^\star_0,\hat c^{1,\star},\dots,\hat c^{k,\star}, \Wh^{1,\star},\dots,\Wh^{k,\star} \big)^{-1} \Big),
	\]
	where
	\[
		\widehat{\beta}(\omb):=\Lc^{\Ph^\star_{\omb}}\bigg( \Xh^{\omb,k,\circ}, \Yh^{\omb,k,\circ}, \int_0^\cdot \Lambdah^{k,1}_r\mathrm{d}r,\dots,\int_0^\cdot \Lambdah^{k,k}_r\mathrm{d}r, \int_0^\cdot \sqrt{\Lambdah^{k,k}_r}\mathrm{d}\Zh^{\omb,m,i}_{r},\dots,\int_0^\cdot \sqrt{\Lambdah^{k,k}_r}\mathrm{d}\Zh^{\omb,m,i}_{r} \bigg).
	\]
	This implies that that $ \Lim_{j\to\infty} \Pb^{\star,m_j} = \Pb^{\star, \infty}$, with
	\begin{align} \label{eq:last-convergence}
		\Pb^{\star, \infty}
		:=
		\int_{\Omb}\Lc^{\Ph^\star_{\omb}}\bigg( \Xh^{\omb,k,\circ}, \Yh^{\omb,k,\circ}, \int_0^\cdot \Lambdah^{k,1}_r\mathrm{d}r,\dots,\int_0^\cdot \Lambdah^{k,k}_r\mathrm{d}r, \int_0^\cdot \sqrt{\Lambdah^{k,k}_r}\mathrm{d}\Zh^{\omb,m,i}_{r},\dots,\int_0^\cdot \sqrt{\Lambdah^{k,k}_r}\mathrm{d}\Zh^{\omb,m,i}_{r}, B(\omb), \widehat{\beta}(\omb) \bigg) \Pb(\mathrm{d}\omb).
	\end{align}
	As the above holds true for any subsequence $(\Pb^{\star,m_j})_{j \ge 1}$,
	one obtains that $ \Lim_{m \to\infty} \Pb^{\star,m} = \Pb^{\star, \infty}$.
	
	\medskip
	
	To conclude, it is enough to use the same arguments as in the constant $\sigma_0$ case,
	together with \eqref{eq:fund-property_I}, 
	to define a sequence of weak control rules $(\overline{\Q}^{\star,k,m})_{(k,m) \in \N^\star \x \N^\star}$ by
	\begin{align*}
		\overline{\Q}^{\star,k,m}
		:=
		\int_{\Omb}\Lc^{\Ph_{\omb}} \Big(\Xh^{\omb,k,m},\Yh^{\omb,k,m},\Lambdah^{k,m}, \Wh^{\omb,k,m},B(\omb),\Lc^{\Ph_{\omb}} \Big(\Xh^{\omb,k,m},\Yh^{\omb,k,m},\Lambdah^{k,m}, \Wh^{\omb,k,m} \Big) \Big)\Pb(\mathrm{d}\omb),
	\end{align*}
	with $\Wh^{\omb,k,m}:=\sum_{i=1}^k\Wh^{\omb,m,i},$ and $\Lambdah^{k,m}(\mathrm{d}a,\mathrm{d}t):=\sum_{i=1}^k\delta_{a^k_i}\mathbf{1}_{I^{k,i}_m}(t)(\mathrm{d}a)\mathrm{d}t$.
	In particular, \eqref{eq:last-convergence} implies the convergence
	\[
		\Lim_{k\to \infty} \Lim_{m\to\infty} \Wc_p\big(\overline{\Q}^{\star,k,m},\Pb\big)=0.
	\]
	\qed
    
\end{appendix}

\bibliography{bibliographyDylan}

\begin{thebibliography}{76}
\providecommand{\natexlab}[1]{#1}
\providecommand{\url}[1]{\texttt{#1}}
\expandafter\ifx\csname urlstyle\endcsname\relax
  \providecommand{\doi}[1]{doi: #1}\else
  \providecommand{\doi}{doi: \begingroup \urlstyle{rm}\Url}\fi

\bibitem[Acciaio et~al.(2018)Acciaio, Backhoff-Veraguas, and
  Carmona]{acciaio2018extended}
B.~Acciaio, J.~Backhoff-Veraguas, and R.~Carmona.
\newblock Extended mean field control problems: stochastic maximum principle
  and transport perspective.
\newblock \emph{SIAM Journal on Control and Optimization}, to appear, 2018.

\bibitem[Ahuja(2016)]{ahuja2016wellposedness}
S.~Ahuja.
\newblock Wellposedness of mean field games with common noise under a weak
  monotonicity condition.
\newblock \emph{SIAM Journal on Control and Optimization}, 54\penalty0
  (1):\penalty0 30--48, 2016.

\bibitem[Andersson and Djehiche(2011)]{andersson2011maximum}
D.~Andersson and B.~Djehiche.
\newblock A maximum principle for {SDE}s of mean--field type.
\newblock \emph{Applied Mathematics \& Optimization}, 63\penalty0 (3):\penalty0
  341--356, 2011.

\bibitem[Bahlali et~al.(2014)Bahlali, Mezerdi, and
  Mezerdi]{bahlali2014existence}
K.~Bahlali, M.~Mezerdi, and B.~Mezerdi.
\newblock Existence of optimal controls for systems governed by mean--field
  stochastic differential equations.
\newblock \emph{Afrika Statistika}, 9\penalty0 (1):\penalty0 627--645, 2014.

\bibitem[Bahlali et~al.(2017)Bahlali, Mezerdi, and
  Mezerdi]{bahlali2017existence}
K.~Bahlali, M.~Mezerdi, and B.~Mezerdi.
\newblock Existence and optimality conditions for relaxed mean--field
  stochastic control problems.
\newblock \emph{Systems \& Control Letters}, 102:\penalty0 1--8, 2017.

\bibitem[Bahlali et~al.(2018)Bahlali, Mezerdi, and Mezerdi]{bahlali2018relaxed}
K.~Bahlali, M.~Mezerdi, and B.~Mezerdi.
\newblock On the relaxed mean--field stochastic control problem.
\newblock \emph{Stochastics and Dynamics}, 18\penalty0 (3):\penalty0 1850024,
  2018.

\bibitem[Bahlali et~al.(2019)Bahlali, Mezerdi, and
  Mezerdi]{bahlali2019stability}
K.~Bahlali, M.~Mezerdi, and B.~Mezerdi.
\newblock Stability of {M}c{K}ean--{V}lasov stochastic differential equations
  and applications.
\newblock \emph{Stochastics and Dynamics}, to appear, 2019.

\bibitem[Basei and Pham(2019)]{basei2019weak}
M.~Basei and H.~Pham.
\newblock A weak martingale approach to linear--quadratic {M}c{K}ean--{V}lasov
  stochastic control problems.
\newblock \emph{Journal of Optimization Theory and Applications}, 181\penalty0
  (2):\penalty0 347--382, 2019.

\bibitem[Bayraktar et~al.(2018)Bayraktar, Cosso, and
  Pham]{bayraktar2016randomized}
E.~Bayraktar, A.~Cosso, and H.~Pham.
\newblock Randomized dynamic programming principle and {F}eynman--{K}ac
  representation for optimal control of {M}c{K}ean--{V}lasov dynamics.
\newblock \emph{Transactions of the American Mathematical Society},
  370\penalty0 (3):\penalty0 2115--2160, 2018.

\bibitem[Bensoussan et~al.(2015)Bensoussan, Frehse, and
  Yam]{bensoussan2015master}
A.~Bensoussan, J.~Frehse, and S.~Yam.
\newblock The master equation in mean field theory.
\newblock \emph{Journal de Math{\'e}matiques Pures et Appliqu{\'e}es},
  103\penalty0 (6):\penalty0 1441--1474, 2015.

\bibitem[Bj{\"o}rk and Murgoci(2014)]{bjork2014theory}
T.~Bj{\"o}rk and A.~Murgoci.
\newblock A theory of {M}arkovian time--inconsistent stochastic control in
  discrete time.
\newblock \emph{Finance and Stochastics}, 18\penalty0 (3):\penalty0 545--592,
  2014.

\bibitem[Bj{\"o}rk et~al.(2017)Bj{\"o}rk, Khapko, and Murgoci]{bjork2017time}
T.~Bj{\"o}rk, M.~Khapko, and A.~Murgoci.
\newblock On time--inconsistent stochastic control in continuous time.
\newblock \emph{Finance and Stochastics}, 21\penalty0 (2):\penalty0 331--360,
  2017.

\bibitem[Buckdahn et~al.(2011)Buckdahn, Djehiche, and Li]{buckdahn2011general}
R.~Buckdahn, B.~Djehiche, and J.~Li.
\newblock A general stochastic maximum principle for {SDE}s of mean--field
  type.
\newblock \emph{Applied Mathematics \& Optimization}, 64\penalty0 (2):\penalty0
  197--216, 2011.

\bibitem[Budhiraja et~al.(2012)Budhiraja, Dupuis, and
  Fischer]{budhiraja2012large}
A.~Budhiraja, P.~Dupuis, and M.~Fischer.
\newblock Large deviation properties of weakly interacting processes via weak
  convergence methods.
\newblock \emph{The Annals of Probability}, 40\penalty0 (1):\penalty0 74--102,
  2012.

\bibitem[Cardaliaguet et~al.(2019)Cardaliaguet, Delarue, Lasry, and
  Lions]{cardaliaguet2015master}
P.~Cardaliaguet, F.~Delarue, J.-M. Lasry, and P.-L. Lions.
\newblock \emph{The master equation and the convergence problem in mean field
  games}, volume 201 of \emph{Annals of mathematics studies}.
\newblock Princeton University Press, 2019.

\bibitem[Carmona and Delarue(2014)]{carmona2014master}
R.~Carmona and F.~Delarue.
\newblock The master equation for large population equilibriums.
\newblock In D.~Crisan, B.~Hambly, and T.~Zariphopoulou, editors,
  \emph{Stochastic analysis and applications 2014: in honour of Terry Lyons},
  volume 100 of \emph{Springer proceedings in mathematics and statistics},
  pages 77--128. Springer, 2014.

\bibitem[Carmona and Delarue(2015)]{carmona2015forward}
R.~Carmona and F.~Delarue.
\newblock Forward--backward stochastic differential equations and controlled
  {M}c{K}ean--{V}lasov dynamics.
\newblock \emph{The Annals of Probability}, 43\penalty0 (5):\penalty0
  2647--2700, 2015.

\bibitem[Carmona and Delarue(2018)]{carmona2018probabilisticII}
R.~Carmona and F.~Delarue.
\newblock \emph{Probabilistic theory of mean field games with applications
  {II}}, volume~84 of \emph{Probability theory and stochastic modelling}.
\newblock Springer International Publishing, 2018.

\bibitem[Carmona et~al.(2013)Carmona, Delarue, and
  Lachapelle]{carmona2013control}
R.~Carmona, F.~Delarue, and A.~Lachapelle.
\newblock Control of {M}c{K}ean--{V}lasov dynamics versus mean field games.
\newblock \emph{Mathematics and Financial Economics}, 7\penalty0 (2):\penalty0
  131--166, 2013.

\bibitem[Carmona et~al.(2015)Carmona, Fouque, and Sun]{carmona2013mean2}
R.~Carmona, J.-P. Fouque, and L.-H. Sun.
\newblock Mean field games and systemic risk.
\newblock \emph{Communications in Mathematical Sciences}, 13\penalty0
  (4):\penalty0 911--933, 2015.

\bibitem[Carmona et~al.(2016)Carmona, Delarue, and Lacker]{carmona2014mean}
R.~Carmona, F.~Delarue, and D.~Lacker.
\newblock Mean field games with common noise.
\newblock \emph{The Annals of Probability}, 44\penalty0 (6):\penalty0
  3740--3803, 2016.

\bibitem[Chala(2014)]{chala2014relaxed}
A.~Chala.
\newblock The relaxed optimal control problem for mean--field {SDE}s systems
  and application.
\newblock \emph{Automatica}, 50\penalty0 (3):\penalty0 924--930, 2014.

\bibitem[Dellacherie and Meyer(1978)]{dellacherie1978probabilities}
C.~Dellacherie and P.-A. Meyer.
\newblock Probabilities and potential.
\newblock \emph{North--Holland Mathematics Studies}, 29, 1978.

\bibitem[Djete et~al.(2019)Djete, Possama{\"\i}, and Tan]{djete2019mckean}
M.~Djete, D.~Possama{\"\i}, and X.~Tan.
\newblock Mc{K}ean--{V}lasov optimal control: the dynamic programming
  principle.
\newblock \emph{arXiv preprint arXiv:1907.08860}, 2019.

\bibitem[El~Karoui and M{\'e}l{\'e}ard(1990)]{el1990martingale}
N.~El~Karoui and S.~M{\'e}l{\'e}ard.
\newblock Martingale measures and stochastic calculus.
\newblock \emph{Probability Theory and Related Fields}, 84\penalty0
  (1):\penalty0 83--101, 1990.

\bibitem[El~Karoui and Tan(2013)]{karoui2013capacities2}
N.~El~Karoui and X.~Tan.
\newblock Capacities, measurable selection and dynamic programming part {II}:
  application in stochastic control problems.
\newblock \emph{arXiv preprint arXiv:1310.3364}, 2013.

\bibitem[El~Karoui et~al.(1987)El~Karoui, Huu~Nguyen, and
  Jeanblanc-Picqu{\'e}]{el1987compactification}
N.~El~Karoui, D.~Huu~Nguyen, and M.~Jeanblanc-Picqu{\'e}.
\newblock Compactification methods in the control of degenerate diffusions:
  existence of an optimal control.
\newblock \emph{Stochastics}, 20\penalty0 (3):\penalty0 169--219, 1987.

\bibitem[\'Elie et~al.(2019{\natexlab{a}})\'Elie, Hubert, Mastrolia, and
  Possama{\"\i}]{elie2019mean}
R.~\'Elie, E.~Hubert, T.~Mastrolia, and D.~Possama{\"\i}.
\newblock Mean--field moral hazard for optimal energy demand response
  management.
\newblock \emph{arXiv preprint arXiv:1902.10405}, 2019{\natexlab{a}}.

\bibitem[\'Elie et~al.(2019{\natexlab{b}})\'Elie, Mastrolia, and
  Possama{\"\i}]{elie2016tale}
R.~\'Elie, T.~Mastrolia, and D.~Possama{\"\i}.
\newblock A tale of a principal and many many agents.
\newblock \emph{Mathematics of Operations Research}, 44\penalty0 (2):\penalty0
  440--467, 2019{\natexlab{b}}.

\bibitem[Filippov(1962)]{filippov1962certain}
A.~Filippov.
\newblock On certain questions in the theory of optimal control.
\newblock \emph{Journal of the Society for Industrial and Applied Mathematics,
  Series A: Control}, 1\penalty0 (1):\penalty0 76--84, 1962.

\bibitem[Fischer and Livieri(2016)]{fischer2016continuous}
M.~Fischer and G.~Livieri.
\newblock Continuous time mean--variance portfolio optimization through the
  mean field approach.
\newblock \emph{ESAIM: Probability and Statistics}, 20:\penalty0 30--44, 2016.

\bibitem[G{\"a}rtner(1988)]{gartner1988mckean}
J.~G{\"a}rtner.
\newblock On the {M}c{K}ean--{V}lasov limit for interacting diffusions.
\newblock \emph{Mathematische Nachrichten}, 137\penalty0 (1):\penalty0
  197--248, 1988.

\bibitem[Graber(2016)]{graber2016linear}
P.~Graber.
\newblock Linear quadratic mean field type control and mean field games with
  common noise, with application to production of an exhaustible resource.
\newblock \emph{Applied Mathematics \& Optimization}, 74\penalty0 (3):\penalty0
  459--486, 2016.

\bibitem[Graham and M{\'e}l{\'e}ard(1997)]{graham1997stochastic}
C.~Graham and S.~M{\'e}l{\'e}ard.
\newblock Stochastic particle approximations for generalized {B}oltzmann models
  and convergence estimates.
\newblock \emph{The Annals of Probability}, 25\penalty0 (1):\penalty0 115--132,
  1997.

\bibitem[Gu{\'e}ant et~al.(2011)Gu{\'e}ant, Lasry, and Lions]{gueant2011mean}
O.~Gu{\'e}ant, J.-M. Lasry, and P.-L. Lions.
\newblock Mean field games and applications.
\newblock In J.-M. Morel, F.~Takens, and B.~Teissier, editors,
  \emph{Paris--Princeton lectures on mathematical finance 2010}, volume 2003 of
  \emph{Lecture notes in mathematics}, pages 205--266. Springer Berlin
  Heidelberg, 2011.

\bibitem[Haussmann and Lepeltier(1990)]{haussmann1990existence}
U.~Haussmann and J.-P. Lepeltier.
\newblock On the existence of optimal controls.
\newblock \emph{SIAM Journal on Control and Optimization}, 28\penalty0
  (4):\penalty0 851--902, 1990.

\bibitem[Hern{\'a}ndez and Possama{\"\i}(2020)]{hernandez2020me}
C.~Hern{\'a}ndez and D.~Possama{\"\i}.
\newblock Me, myself and i: a general theory of non--{M}arkovian
  time--inconsistent stochastic control for sophisticated agents.
\newblock \emph{arXiv preprint arXiv:2002.12572}, 2020.

\bibitem[Huang et~al.(2003)Huang, Caines, and Malham{\'e}]{huang2003individual}
M.~Huang, P.~Caines, and R.~Malham{\'e}.
\newblock Individual and mass behaviour in large population stochastic wireless
  power control problems: centralized and {N}ash equilibrium solutions.
\newblock In C.~Abdallah and F.~Lewis, editors, \emph{Proceedings of the 42nd
  IEEE conference on decision and control, 2003}, pages 98--103. IEEE, 2003.

\bibitem[Huang et~al.(2006)Huang, Malham{\'e}, and Caines]{huang2006large}
M.~Huang, R.~Malham{\'e}, and P.~Caines.
\newblock Large population stochastic dynamic games: closed--loop
  {M}c{K}ean--{V}lasov systems and the {N}ash certainty equivalence principle.
\newblock \emph{Communications in Information \& Systems}, 6\penalty0
  (3):\penalty0 221--252, 2006.

\bibitem[Huang et~al.(2007{\natexlab{a}})Huang, Caines, and
  Malham{\'e}]{huang2007invariance}
M.~Huang, P.~Caines, and R.~Malham{\'e}.
\newblock An invariance principle in large population stochastic dynamic games.
\newblock \emph{Journal of Systems Science and Complexity}, 20\penalty0
  (2):\penalty0 162--172, 2007{\natexlab{a}}.

\bibitem[Huang et~al.(2007{\natexlab{b}})Huang, Caines, and
  Malham{\'e}]{huang2007large}
M.~Huang, P.~Caines, and R.~Malham{\'e}.
\newblock Large--population cost--coupled {LQG} problems with nonuniform
  agents: individual--mass behavior and decentralized $\varepsilon$--{N}ash
  equilibria.
\newblock \emph{IEEE Transactions on Automatic Control}, 52\penalty0
  (9):\penalty0 1560--1571, 2007{\natexlab{b}}.

\bibitem[Huang et~al.(2007{\natexlab{c}})Huang, Caines, and
  Malham{\'e}]{huang2007nash}
M.~Huang, P.~Caines, and R.~Malham{\'e}.
\newblock The {N}ash certainty equivalence principle and {M}c{K}ean--{V}lasov
  systems: an invariance principle and entry adaptation.
\newblock In D.~Castanon and J.~Spall, editors, \emph{46th IEEE conference on
  decision and control, 2007}, pages 121--126. IEEE, 2007{\natexlab{c}}.

\bibitem[Jacod(1985)]{jacod1985grossissement}
J.~Jacod.
\newblock Grossissement initial, hypoth{\`e}se $({H}^\prime)$ et
  th{\'e}or{\`e}me de {G}irsanov.
\newblock In T.~Jeulin and M.~Yor, editors, \emph{Grossissements de
  filtrations: exemples et applications}, volume 1118 of \emph{Lecture notes in
  mathematics}, pages 15--35. Springer--Verlag Berlin Heidelberg, 1985.

\bibitem[Jourdain and M{\'e}l{\'e}ard(1998)]{jourdain1998propagation}
B.~Jourdain and S.~M{\'e}l{\'e}ard.
\newblock Propagation of chaos and fluctuations for a moderate model with
  smooth initial data.
\newblock \emph{Annales de l'institut Henri Poincar{\'e}, Probabilit{\'e}s et
  Statistiques $(${\rm B}$)$}, 34\penalty0 (6):\penalty0 727--766, 1998.

\bibitem[Jourdain and Reygner(2013)]{jourdain2013propagation}
B.~Jourdain and J.~Reygner.
\newblock Propagation of chaos for rank--based interacting diffusions and long
  time behaviour of a scalar quasilinear parabolic equation.
\newblock \emph{Stochastic partial differential equations: analysis and
  computations}, 1\penalty0 (3):\penalty0 455--506, 2013.

\bibitem[Kac(1956)]{kac1956foundations}
J.~Kac.
\newblock Foundations of kinetic theory.
\newblock In J.~Neyman, editor, \emph{Proceedings of the third Berkeley
  symposium on mathematical statistics and probability, volume 3: contributions
  to astronomy and physics}, pages 171--197. University of California Press,
  1956.

\bibitem[Kolokoltsov and Troeva(2019)]{kolokoltsov2019mean}
V.~Kolokoltsov and M.~Troeva.
\newblock On mean field games with common noise and {M}c{K}ean--{V}lasov
  {SPDE}s.
\newblock \emph{Stochastic Analysis and Applications}, 37\penalty0
  (4):\penalty0 522--549, 2019.

\bibitem[Kurtz(2014)]{kurtz2014weak}
G.~Kurtz, T.
\newblock Weak and strong solutions of general stochastic models.
\newblock \emph{Electronic Communications in Probability}, 19\penalty0
  (58):\penalty0 1--16, 2014.

\bibitem[Lacker(2015)]{lacker2015mean}
D.~Lacker.
\newblock Mean field games via controlled martingale problems: existence of
  {M}arkovian equilibria.
\newblock \emph{Stochastic Processes and their Applications}, 125\penalty0
  (7):\penalty0 2856--2894, 2015.

\bibitem[Lacker(2016)]{lacker2016general}
D.~Lacker.
\newblock A general characterization of the mean field limit for stochastic
  differential games.
\newblock \emph{Probability Theory and Related Fields}, 165\penalty0
  (3-4):\penalty0 581--648, 2016.

\bibitem[Lacker(2017)]{lacker2017limit}
D.~Lacker.
\newblock Limit theory for controlled {M}c{K}ean--{V}lasov dynamics.
\newblock \emph{SIAM Journal on Control and Optimization}, 55\penalty0
  (3):\penalty0 1641--1672, 2017.

\bibitem[Lacker and Webster(2015)]{lacker2015translation}
D.~Lacker and K.~Webster.
\newblock Translation invariant mean field games with common noise.
\newblock \emph{Electronic Communications in Probability}, 20\penalty0
  (42):\penalty0 1--13, 2015.

\bibitem[Lacker et~al.(2020)Lacker, Shkolnikov, and
  Zhang]{lacker2020superposition}
D.~Lacker, M.~Shkolnikov, and J.~Zhang.
\newblock Superposition and mimicking theorems for conditional
  {M}c{K}ean--{V}lasov equations.
\newblock \emph{in preparation}, 2020.

\bibitem[Lasry and Lions(2006{\natexlab{a}})]{lasry2006jeux}
J.-M. Lasry and P.-L. Lions.
\newblock Jeux {\`a} champ moyen. {I}--{L}e cas stationnaire.
\newblock \emph{Comptes Rendus Math{\'e}matique}, 343\penalty0 (9):\penalty0
  619--625, 2006{\natexlab{a}}.

\bibitem[Lasry and Lions(2006{\natexlab{b}})]{lasry2006jeux2}
J.-M. Lasry and P.-L. Lions.
\newblock Jeux {\`a} champ moyen. {II}--{H}orizon fini et contr{\^o}le optimal.
\newblock \emph{Comptes Rendus Math{\'e}matique}, 343\penalty0 (10):\penalty0
  679--684, 2006{\natexlab{b}}.

\bibitem[Lasry and Lions(2007)]{lasry2007mean}
J.-M. Lasry and P.-L. Lions.
\newblock Mean field games.
\newblock \emph{Japanese Journal of Mathematics}, 2\penalty0 (1):\penalty0
  229--260, 2007.

\bibitem[Lauri\'ere and Pironneau(2014)]{lauriere2014dynamic}
M.~Lauri\'ere and O.~Pironneau.
\newblock Dynamic programming for mean--field type control.
\newblock \emph{Comptes Rendus Math\'ematique}, 352\penalty0 (9):\penalty0
  707--713, 2014.

\bibitem[Lions(2006--2012)]{lions2007theorie}
P.-L. Lions.
\newblock Th{\'e}orie des jeux de champ moyen et applications.
\newblock \emph{Cours du Coll\`ege de France.
  \url{http://www.college-de-france.fr/default/EN/all/equder/audiovideo.jsp}},
  2006--2012.

\bibitem[Liptser and Shiryaev(1977)]{lipster1977statistics}
R.~Liptser and A.~Shiryaev.
\newblock \emph{Statistics of random processes}.
\newblock Springer--Verlag, 1977.

\bibitem[McKean~Jr.(1969)]{mckean1969propagation}
H.~McKean~Jr.
\newblock Propagation of chaos for a class of non--linear parabolic equations.
\newblock In \emph{Lecture series on differential equations. Session 7.
  Stochastic differential equations}, pages 41--57. Fort Belvoir Defense
  Technical Information Center, 1969.

\bibitem[M{\'e}l{\'e}ard(1992)]{meleard1992martingale}
S.~M{\'e}l{\'e}ard.
\newblock Martingale measure approximation, application to the control of
  diffusions.
\newblock \emph{Pr{\'e}publication du laboratoire de probabilit\'es,
  universit\'e Paris VI}, 1992.

\bibitem[M\'el\'eard(1992)]{meleard1992representation}
S.~M\'el\'eard.
\newblock Representation and approximation of martingale measures.
\newblock In B.~Rozovskii and R.~Sowers, editors, \emph{Stochastic partial
  differential equations and their applications. Proceedings of IFIP WG 7/1
  international conference, university of North Carolina at Charlotte, NC, June
  6--8, 1991}, volume 176 of \emph{Lecture notes in control and information
  sciences}. Springer--Verlag Berlin Heidelberg New York, 1992.

\bibitem[M\'el\'eard(1996)]{meleard1996asymptotic}
S.~M\'el\'eard.
\newblock Asymptotic behaviour of some interacting particle systems;
  {M}c{K}ean--{V}lasov and {B}oltzmann models.
\newblock In D.~Talay and L.~Tubaro, editors, \emph{Probabilistic models for
  nonlinear partial differential equations. Lectures given at the 1st session
  of the Centro Internazionale Matematico Estivo $($C.I.M.E.$)$ held in
  Montecatini Terme, Italy, May 22--30, 1995}, volume 1627 of \emph{Lecture
  notes in mathematics}, pages 42--95. Springer--Verlag Berlin Heidelberg,
  1996.

\bibitem[M{\'e}l{\'e}ard and Roelly-Coppoletta(1987)]{meleard1987propagation}
S.~M{\'e}l{\'e}ard and S.~Roelly-Coppoletta.
\newblock A propagation of chaos result for a system of particles with moderate
  interaction.
\newblock \emph{Stochastic Processes and their Applications}, 26:\penalty0
  317--332, 1987.

\bibitem[Neufeld and Nutz(2014)]{neufeld2014measurability}
A.~Neufeld and M.~Nutz.
\newblock Measurability of semimartingale characteristics with respect to the
  probability law.
\newblock \emph{Stochastic Processes and their Applications}, 124\penalty0
  (11):\penalty0 3819--3845, 2014.

\bibitem[Oelschl{\"a}ger(1984)]{oelschlager1984martingale}
K.~Oelschl{\"a}ger.
\newblock A martingale approach to the law of large numbers for weakly
  interacting stochastic processes.
\newblock \emph{The Annals of Probability}, 12\penalty0 (2):\penalty0 458--479,
  1984.

\bibitem[Oelschl{\"a}ger(1985)]{oelschlager1985law}
K.~Oelschl{\"a}ger.
\newblock A law of large numbers for moderately interacting diffusion
  processes.
\newblock \emph{Zeitschrift f{\"u}r Wahrscheinlichkeitstheorie und verwandte
  Gebiete}, 69\penalty0 (2):\penalty0 279--322, 1985.

\bibitem[Pham(2016)]{pham2016linear}
H.~Pham.
\newblock Linear quadratic optimal control of conditional {M}c{K}ean--{V}lasov
  equation with random coefficients and applications.
\newblock \emph{Probability, Uncertainty and Quantitative Risk}, 1\penalty0
  (7):\penalty0 1--26, 2016.

\bibitem[Pham and Wei(2017)]{pham2016dynamic}
H.~Pham and X.~Wei.
\newblock Dynamic programming for optimal control of stochastic
  {M}c{K}ean--{V}lasov dynamics.
\newblock \emph{SIAM Journal on Control and Optimization}, 55\penalty0
  (2):\penalty0 1069--1101, 2017.

\bibitem[Pham and Wei(2018)]{pham2018bellman}
H.~Pham and X.~Wei.
\newblock Bellman equation and viscosity solutions for mean--field stochastic
  control problem.
\newblock \emph{ESAIM: Control, Optimisation and Calculus of Variations},
  24\penalty0 (1):\penalty0 437--461, 2018.

\bibitem[Possama{\"\i} et~al.(2018)Possama{\"\i}, Tan, and
  Zhou]{possamai2015stochastic}
D.~Possama{\"\i}, X.~Tan, and C.~Zhou.
\newblock Stochastic control for a class of nonlinear kernels and applications.
\newblock \emph{The Annals of Probability}, 46\penalty0 (1):\penalty0 551--603,
  2018.

\bibitem[Shkolnikov(2012)]{shkolnikov2012large}
M.~Shkolnikov.
\newblock Large systems of diffusions interacting through their ranks.
\newblock \emph{Stochastic Processes and their Applications}, 122\penalty0
  (4):\penalty0 1730--1747, 2012.

\bibitem[Snitzman(1991)]{snitzman1991topics}
A.-S. Snitzman.
\newblock Topics in propagation of chaos.
\newblock In P.~Hennequin, editor, \emph{\'Ecole d'\'et{\'e} de
  probabilit{\'e}s de Saint--Flour XIX -- 1989}, number 1464 in Lecture notes
  in mathematics, pages 165--251. Springer Berlin Heidelberg, 1991.

\bibitem[Stroock and Varadhan(1997)]{stroock2007multidimensional}
D.~Stroock and S.~Varadhan.
\newblock \emph{Multidimensional diffusion processes}, volume 233 of
  \emph{Grundlehren der mathematischen Wissenschaften}.
\newblock Springer--Verlag Berlin Heidelberg, 1997.

\bibitem[Yong(2013)]{yong2013linear}
J.~Yong.
\newblock Linear--quadratic optimal control problems for mean--field stochastic
  differential equations.
\newblock \emph{SIAM Journal on Control and Optimization}, 51\penalty0
  (4):\penalty0 2809--2838, 2013.

\bibitem[Zalashko(2017)]{zalashko2017causal}
A.~Zalashko.
\newblock \emph{Causal optimal transport: theory and applications}.
\newblock PhD thesis, Universit{\"a}t Wien, 2017.

\end{thebibliography}
\end{document}